\numberwithin{equation}{section}
\newtheorem{lemma}{Lemma}[section]
\newtheorem{thm}[lemma]{Theorem}
\newtheorem{prop}[lemma]{Proposition}
\newtheorem{corollary}[lemma]{Corollary}
\newtheorem{thmciting}[lemma]{Theorem}
\newtheorem{propciting}[lemma]{Proposition}
\newenvironment{thmc}[2]{\begin{thmciting}{\textup{\textbf{\cite[#2]{#1}.}}}}{\end{thmciting}}
\newenvironment{propc}[2]{\begin{propciting}{\textup{\textbf{\cite[#2]{#1}.}}}}{\end{propciting}}
\theoremstyle{definition}
\newtheorem{Remark}[lemma]{Remark}
\newtheorem{Examp}[lemma]{Example}
\newtheorem*{defn}{Definition}
\theoremstyle{remark}
\newcommand\Comment[2][\relax]{\space\par\medskip\noindent%
   \fbox{\begin{minipage}{0.98\textwidth}\textbf{Comment\ifx\relax#1\else---#1\fi}\newline%
        #2\end{minipage}}\medskip
}
\def\iso{\stackrel{\sim}{\longrightarrow}}
\renewcommand{\phi}{\varphi}
\newcommand{\lam}{\lambda}
\newcommand{\la}{\lambda}
\newcommand{\Sym}{\mathfrak{S}}
\DeclareMathOperator{\Std}{\mathsf{Std}}
\DeclareMathOperator{\Arc}{\mathsf A}
\DeclareMathOperator{\DStd}{\mathsf{DStd}}
\DeclareMathOperator{\IStd}{\mathsf{IStd}}
\DeclareMathOperator{\Shape}{Shape}
\def\tab(#1){\,\mbox{\tiny$\young(#1)$}\,}
\newcommand{\reg}{\operatorname{reg}}
\newcommand{\mtt}{\mathtt}
\DeclareMathOperator{\diag}{diag}
\newcommand{\aff}{\rm aff}
\newcommand{\Pa}{\mathsf{P}}
\DeclareMathOperator{\res}{res}
\DeclareMathOperator{\Hom}{Hom}
\DeclareMathOperator{\DHom}{DHom}
\DeclareMathOperator{\Char}{char}
\DeclareMathOperator{\rad}{rad}
\newcommand{\Z}{\mathbb Z}
\newcommand{\Q}{\mathbb Q}
\newcommand{\Ga}{\Gamma}
\newcommand{\noarrow}{\:\notslash\:}
\newcommand{\Si}{\mathfrak S}
\newcommand{\cl}{\mathcal}
\newcommand{\lan}{\langle}
\newcommand{\ran}{\rangle}
\renewcommand{\O}{\mathcal O}
\newcommand{\CC}{\mathcal C}
\newcommand{\fch}{\operatorname{ch}_q}
\renewcommand{\geq}{\geqslant}
\renewcommand{\leq}{\leqslant}
\renewcommand{\ge}{\geqslant}
\renewcommand{\le}{\leqslant}
\newcommand{\dom}{\trianglerighteqslant}
\newcommand{\domby}{\trianglelefteqslant}
\newcommand{\Par}{\operatorname{\mathsf{Par}}}
\newcommand{\RPar}{\operatorname{\mathsf{RPar}}}
\newcommand{\CT}{\operatorname{\mathsf{CT}}}
\newcommand{\bi}{\text{\boldmath$i$}}
\newcommand{\bw}{\mathbf w}
\newcommand{\bj}{\text{\boldmath$j$}}
\newcommand{\de}{\delta}
\renewcommand{\k}{\kappa}
\newcommand{\La}{\Lambda}
\newcommand{\al}{\alpha}
\newcommand{\ct}{\mathtt c}
\newcommand{\be}{\beta}
\def\Y#1{\llbracket #1 \rrbracket}
\renewcommand{\tt}{\mathtt t}
\newcommand{\Tt}{\mathtt T}
\newcommand{\St}{\mathtt S}
\newcommand{\st}{\mathtt s}
\newcommand{\ut}{\mathtt u}
\newcommand{\rt}{\mathtt r}
\newcommand{\sm}{\setminus}
\newcommand{\mR}{\mathbb R}
\newcommand{\eps}{\varepsilon}
\newcommand{\C}{\mathbb C}
\newcommand{\fra}{\mathfrak}
\newcommand{\GL}{\operatorname{GL}}
\newcommand{\da}{\hspace{-1mm}\downarrow}
\begin{document}

\title[Bases of simple modules]{On bases of some simple modules of symmetric groups and Hecke algebras}

\author{Melanie de Boeck}
\email{melaniedeboeck@hotmail.com}

\author{Anton Evseev}

\author{Sin\'ead Lyle}
\address{School of Mathematics, University of East Anglia, Norwich NR4 7TJ, UK}
\email{s.lyle@uea.ac.uk}

\author{Liron Speyer}
\address{Osaka University, Suita, Osaka 565-0871, Japan}
\email{l.speyer@ist.osaka-u.ac.jp}
\subjclass[2010]{20C30, 20C08, 05E10}
\thanks{The first and second authors were supported by the EPSRC grant EP/L027283/1. The fourth author was supported by an LMS Postdoctoral Mobility Grant and a Japan Society for the Promotion of Science Fellowship.}
\dedicatory{We record with deep sadness the passing of Anton Evseev on 21st February 2017.}

\begin{abstract}
We consider simple modules for a Hecke algebra with a parameter of quantum characteristic $e$.
Equivalently, we consider simple modules $D^\la$, labelled by $e$-restricted partitions $\la$ of $n$, for
 a cyclotomic KLR algebra $R_n^{\La_0}$ 
 over a field of characteristic $p\ge 0$, with mild restrictions on $p$.
If all parts of $\la$ are at most $2$, we identify a set 
$\DStd_{e,p}(\la)$ of standard $\la$-tableaux, which is defined combinatorially and naturally labels a basis of $D^\la$. In particular, we prove that the $q$-character of $D^\la$ can be described in terms of $\DStd_{e,p}(\la)$.
We show that a certain natural approach to constructing a basis of an arbitrary $D^\la$  does not work in general, giving a counterexample to a conjecture of Mathas.
\end{abstract}

\maketitle


\section{Introduction}\label{SIntro}
Let $K$ be a field with a Hecke parameter $0\ne \xi \in K$ of quantum characteristic $e\in \Z_{\ge 2}$. 
We consider the Iwahori--Hecke $K$-algebra $\cl H_n (\xi)$. An important special case occurs when $\xi=1$ and $K$ has characteristic $e$, which implies that $\cl H_n (\xi) = K \Sym_n$ is the group algebra of a symmetric group. 

The Specht $\cl H_n (\xi)$-modules 
$S^\la_{\cl H}$, parameterised by partitions $\la$ of $n$, play an important role in the representation theory of $\cl H_n (\xi)$. 
In particular, if $\la$ is an $e$-restricted partition, then 
$S^\la_{\cl H}$ has a simple head $D^\la$, and all simple $\cl H_n (\xi)$-modules occur in this way. The Specht module $S^\la_{\cl H}$ has a Murphy basis indexed by the set $\Std(\la)$ of all standard $\la$-tableaux. In this paper, we investigate whether there is a subset of 
$\Std(\la)$ that 
naturally labels a basis of $D^\la$. 

This question can be made much more precise via the language of Khovanov--Lauda--Rouquier (KLR) algebras~\cite{KL09, R08}, which is used throughout the paper. 
Given an arbitrary commutative ring $\O$, we consider the cyclotomic KLR $\O$-algebra $R^{\La_0}_{n,\O}$ of type $A_{e-1}^{(1)}$, which has a natural $\Z$-grading, see~\S\ref{SSKLR}.
Brundan and Kleshchev~\cite{bkisom} and Rouquier~\cite{R08} proved that $R^{\La_0}_{n,K}$ is isomorphic to $\cl H_n (\xi)$. 
Further, Kleshchev, Mathas and Ram~\cite{KMR12} constructed a universal Specht $R^{\La_0}_{n,\O}$-module $S^\la_\O$ by explicit generators and relations such that, in particular, $S^\la_K$ is isomorphic to the $\cl H_n (\xi)$-module $S^\la_{\cl H}$. 
We denote by $D^\la_K$ the (simple) head of $S^\la_K$ 
if the partition $\la$ is $e$-restricted and set $D^\la_K:=0$ otherwise. 

The algebra $R^{\La_0}_{n,\O}$ is equipped with an orthogonal family of idempotents $\{1_{\bi} \mid \bi \in I^n\}$, where $I:= \Z/e\Z$. 
The {\em $q$-character} of a finite-dimensional $R^{\La_0}_{n,\O}$-module $M$ is defined by 
\begin{equation}\label{Ech1}
\fch M:= \sum_{\bi \in I^n} \dim_q (1_{\bi} M) \cdot \bi \in 
\langle I^n\rangle, 
\end{equation}
where $\langle I^n\rangle$ is the free $\Z[q,q^{-1}]$-module with basis $I^n$ and $\dim_q (1_{\bi} M) \in \Z[q,q^{-1}]$ is the graded dimension of $1_{\bi} M$, see~\S\ref{SSGraded}.

Let $\la$ be a partition  of $n$.
To each standard tableau $\tt\in\Std(\la)$ one attaches its {\em residue sequence} $\bi^\tt \in I^n$ and {\em degree} $\deg(\tt)\in \Z$, which are both defined combinatorially, see~\cite{BKW11} or~\S\ref{SSPartitions}. 
Then the Specht module $S^\la_\O$ has an $\O$-basis 
$\{ v^\tt \mid \tt\in \Std(\la) \}$ such that 
$1_{\bi} v^\tt = \delta_{\bi,\bi^\tt} v^\tt$ for any $\bi \in I^n$ and 
$v^\tt$ is homogeneous of degree $\deg(\tt)$ for each $\tt$. 
In particular, defining the {\em $q$-character}
of any finite set $\cl T$ of standard tableaux by 
\begin{equation}\label{Ech2}
\fch \cl T:= \sum_{\tt \in \cl T} q^{\deg (\tt)} \cdot \bi^\tt,
\end{equation}
we have
\begin{equation}\label{ESpechtChar}
 \fch S^\la_K = \fch \Std(\la).
\end{equation}
Therefore, it is reasonable to require that a desired 
subset of $\Std(\la)$ corresponding to a basis of $D^\la_K$ should have 
$q$-character equal to $\fch D^\la_K$. 
Our main results give a combinatorial construction of such a subset 
of $\Std(\la)$ for an arbitrary field $K$ (as above) 
when $\la=(\la_1,\dots,\la_l)$ satisfies $\la_1\le 2$; we refer to such partitions $\la$ as {\em 2-column partitions}. 
We refer the reader to~\cite[\S 3.3]{HM15} for a further discussion of the problem in general. 

In \S\ref{SSTwo}, we give a combinatorial definition of a subset
 $\DStd_e (\la)$ of $\Std(\la)$ for every 2-column partition $\la$. 
In order to describe $\DStd_e (\la)$, we represent standard tableaux as paths in a weight space of Dynkin type $A_1$ and construct a {\em regularisation map} $\reg_e$ on standard tableaux, which plays a key role throughout. 
 
The following theorem shows that, when $\Char K=0$, the set 
$\DStd_e(\la)$ 
labels a basis of $D^\la_K$ and, moreover,
 the composition series of 
$S^\la_K$ can be lifted to an arbitrary commutative ring 
 $\O$ in an explicit way. 
 
 \begin{thm}\label{TMain1}
 Let $\la$ be a 2-column partition of $n$. 
 \begin{enumerate}[(i)]
 \item The $\O$-span $U^\la_\O$ of $\{ v^\tt \mid \tt\in \Std(\la)\sm \DStd_e (\la) \}$ is an $R^{\La_0}_{n,\O}$-submodule of $S^\la_\O$. 
 \item 
If $\Char K=0$ then 
 there is an isomorphism $S^\la_K/U^\la_K \cong D^\la_K$
 of graded $R^{\La_0}_{n,K}$-modules. In particular, 
 $\fch D^\la_K = \fch \DStd_e (\la)$. 
 \item 
 Set $\tilde D^\la_\O:=S^\la_\O/U^\la_\O$. Let $\la=(2^x,1^y)$ 
 with 
 $y \equiv -j-1 \pmod e$ for some $0\le j<e$. 
 There is 
 an isomorphism of graded $R^{\La_0}_{n,\O}$-modules
 \[
 U^\la_\O \cong 
 \begin{dcases*}
  \tilde D^\mu_\O \langle 1 \rangle & if $j\neq 0$ and $x\ge j$; \\
  0 & otherwise,
 \end{dcases*}
 \]
 where $\mu=(2^{x-j}, 1^{y+2j})$ and 
 $\tilde D^\mu_\O \langle 1 \rangle$ denotes $\tilde D^\mu_\O$ with the grading shifted by $1$.
 \end{enumerate}
 \end{thm}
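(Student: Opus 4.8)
The three parts of Theorem~\ref{TMain1} build on one another, so I would prove them in order, with the combinatorial identification of $\DStd_e(\la)$ and the regularisation map $\reg_e$ as the main engine throughout. For part (i), the goal is to show that $U^\la_\O$ — the $\O$-span of the Murphy-type basis vectors $v^\tt$ with $\tt\in\Std(\la)\sm\DStd_e(\la)$ — is closed under the action of the KLR generators $\psi_r$, $y_r$ and $e(\bi)$. The idempotents and the $y_r$ act diagonally (up to scalars and lower terms) on the $v^\tt$ with respect to the dominance order on tableaux, so the real content is the action of the $\psi_r$. Here I would work paths-in-the-$A_1$-weight-space picture from \S\ref{SSTwo}: a 2-column standard tableau is a $\pm$-path, the residue sequence records the heights mod $e$, and $\DStd_e(\la)$ is characterised by a condition on the path that is stable under the "swap two adjacent steps'' combinatorics governing $\psi_r$. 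Concretely, I expect that $\psi_r v^\tt$ is a $\Z$-linear combination of $v^\ut$ with $\ut$ obtained from $\tt$ by local moves, and one checks case-by-case (the $\pm\mp$ versus $\mp\pm$ versus equal-sign cases) that if $\tt\notin\DStd_e(\la)$ then every $\ut$ appearing is also outside $\DStd_e(\la)$; the regularisation map $\reg_e$ is the bookkeeping device that makes "outside $\DStd_e(\la)$'' manifestly preserved, since $\reg_e$ detects exactly the obstruction to a tableau labelling a basis element of the head.

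For part (ii), with $\Char K = 0$, I would combine part (i) with a dimension count. By part (i) we have a surjection $S^\la_K \twoheadrightarrow S^\la_K/U^\la_K$, and $\fch(S^\la_K/U^\la_K) = \fch S^\la_K - \fch U^\la_K = \fch\Std(\la) - \fch(\Std(\la)\sm\DStd_e(\la)) = \fch\DStd_e(\la)$ by~\eqref{ESpechtChar} and~\eqref{Ech2}. So it suffices to show $S^\la_K/U^\la_K \cong D^\la_K$, equivalently that $U^\la_K$ is the radical of $S^\la_K$, equivalently (since $D^\la_K$ is the simple head and the quotient is nonzero) that $\dim D^\la_K = \dim\DStd_e(\la)$. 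In characteristic $0$ the dimension (and $q$-character) of $D^\la_K$ for $2$-column partitions is known — via the LLT/Ariki theory and the known decomposition numbers for two-column partitions, or via an explicit recursion — so I would match $\fch\DStd_e(\la)$ against that known formula. The cleanest route is probably to verify that $\fch\DStd_e(\la)$ satisfies the same branching recursion (restriction/induction of $\bi$-divided powers, or the Kashiwara crystal operators on the relevant Fock space) that characterises $\fch D^\la_K$; the combinatorics of $\reg_e$ on $\pm$-paths should make this recursion transparent.

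For part (iii), I would identify the submodule $U^\la_\O$ explicitly. Writing $\la = (2^x, 1^y)$ with $y\equiv -j-1 \pmod e$, the claim is that $U^\la_\O$ is either $0$ or a grading-shift of $\tilde D^\mu_\O = S^\mu_\O/U^\mu_\O$ for $\mu = (2^{x-j}, 1^{y+2j})$. The strategy is to exhibit a degree-$1$ homomorphism $S^\mu_\O\langle 1\rangle \to S^\la_\O$ whose image is $U^\la_\O$ and whose kernel is $U^\mu_\O$. A natural candidate comes from the theory of homomorphisms between (universal) Specht modules for KLR algebras — a "Carter--Payne'' or one-box-move type homomorphism determined by a single standard tableau or a dominant map — and the point $\la \leftrightarrow \mu$ corresponds to moving $j$ boxes between the two columns, which is exactly the combinatorics of the $j$ encoded in $y\equiv -j-1$. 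I would (a) construct this map via its effect on the cyclic generator $v^{\tt^\la}$, checking the Specht relations survive, (b) show its image is precisely $U^\la_\O$ by comparing residue sequences and degrees of the basis vectors in the image against the definition of $\DStd_e(\la)$ (this is where the condition $j\neq 0$ and $x\ge j$ enters: if $j=0$ then $\DStd_e(\la)=\Std(\la)$ and $U^\la_\O=0$; if $x<j$ there are not enough boxes to move and again $U^\la_\O=0$), and (c) identify the kernel as $U^\mu_\O$ by the same $\reg_e$-combinatorics applied to $\mu$, then induct on $x$ (or on $n$). Part (ii) is the case $y\equiv -1$, i.e. $j=0$, so the induction is anchored correctly.

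\medskip

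\noindent\textbf{Main obstacle.} The hardest part is (iii): constructing the Specht-module homomorphism over an arbitrary commutative ring $\O$ and pinning down both its image and kernel on the nose. Homomorphism constructions between Specht modules are delicate in positive and mixed characteristic — one must verify the KLR relations hold for the proposed image of the generator, and the "obvious'' scalar coefficients can fail to be units — so getting an honest integral statement (rather than one valid only after base change to a field) will require care with the $\psi$-action and possibly a reduction to the universal/graded setting of~\cite{KMR12}. By contrast, part (i) is a finite case-check in the $A_1$ path model, and part (ii) reduces to a known characteristic-$0$ dimension/character formula once (i) is in hand.
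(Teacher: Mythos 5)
Your plan for parts (ii) and (iii) is essentially the paper's: part (ii) is indeed a character count against the known characteristic-zero two-column decomposition numbers (Theorem~\ref{p0_thm}), and part (iii) is proved by exhibiting a degree-$1$ dominated homomorphism $\phi_{\la,\mu}\colon S^\mu_\O\to S^\la_\O$ and identifying its kernel with $U^\mu_\O$ and its image with $U^\la_\O$ (Theorems~\ref{Tphilamu} and~\ref{TKerIm}); the paper obtains the homomorphism cheaply by applying the row-removal theorem of~\cite{fs16} $x-j$ times to the easy case $\mu=(1^n)$, rather than by verifying the Specht relations directly, and the $j=0$ or $x<j$ cases are exactly the degenerate ones you identify.

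The genuine gap is in your part (i). You propose to check directly that $\langle v^\tt\mid \tt\in\Std(\la)\sm\DStd_e(\la)\rangle_\O$ is closed under the $\psi_r$ via ``local moves'' in the path model, claiming this is a finite case-check. It is not: when $s_r\tt$ is non-standard, or when $s_r\tt\doms\tt$, the element $\psi_r v^\tt$ is a linear combination of $v^\ut$ with $\ut\doms\tt$ governed by the Garnir/straightening relations, and the set $\Std(\la)\sm\DStd_e(\la)$ is \emph{not} an up-set for dominance (e.g.\ $\tt_\la$ is typically non-regular while $\tt^\la$ is regular), so there is no a priori reason these garbage terms stay inside the span. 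The paper never proves (i) directly: it \emph{deduces} (i) from (iii), since the image of $\phi_{\la,\mu}$ is automatically a submodule, so the whole weight of the argument falls on showing that this image equals the span of the non-regular $v^\st$. The containment $\supseteq$ (Proposition~\ref{generate}) is the real work and cannot be done by comparing residue sequences and degrees as your step (b) suggests: one needs an explicit chain of tableaux $\Tt_{x-j},\St_{x-j},\Tt_{x-j-1},\dots$ linking $v^{\Tt^\la_e}$ to every $v^\st$ with $\st$ non-regular, using the quadratic and braid relations together with the vanishing $\psi_{2r+me}v^{\Tt_r}=0$ (Lemmas~\ref{sidemove}--\ref{crossmove}); the reverse containment then follows from a rank count using the bijection $\reg_e\colon\Std(\la)\sm\DStd_e(\la)\iso\DStd_e(\mu)$ of Lemma~\ref{LPreimage} and the vanishing of the composite $\phi_{\la,\mu}\phi_{\mu,\nu}$. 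If you reorganise your argument so that (i) is a corollary of (iii) and supply the analogue of Proposition~\ref{generate}, the proof goes through; as written, part (i) is unproved and part (iii)(b) is under-powered.
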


Remarkably, the aforementioned construction of $\DStd_e(\la)$ also leads to a combinatorial description of the $q$-character of $D^\la_K$ when $K$ has positive characteristic. Indeed, given a prime $p$ and a 2-column partition $\la$, define 
\[
\DStd_{e,p}(\la):= \bigcap_{z\in \Z_{\ge 0}} \DStd_{ep^z} (\la).
\]

\begin{thm}\label{TMainChar}
If $\Char K=p>0$ then $\fch D^\la_K = \fch \DStd_{e,p}(\la)$ for every 2-column partition $\la$. 
\end{thm}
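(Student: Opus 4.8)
The plan is to deduce Theorem~\ref{TMainChar} from Theorem~\ref{TMain1} by playing off the modules $D^\la$ over characteristic-zero fields with the various quantum characteristics $e,ep,ep^2,\dots$ against the single module $D^\la_K$ over our characteristic-$p$ field. Each $\DStd_{ep^z}(\la)$ is a finite subset of $\Std(\la)$, so the intersection defining $\DStd_{e,p}(\la)$ stabilises, and it is enough to show that the images of $\{v^\tt\mid\tt\in\DStd_{e,p}(\la)\}$ under the natural surjection $S^\la_K\twoheadrightarrow D^\la_K$ form a homogeneous basis. This splits into a spanning statement and a linear independence statement; recalling that for $\cl T\subseteq\Std(\la)$ the vector $\fch\cl T$ merely counts the tableaux of $\cl T$ in each graded weight space, these two statements give $\fch D^\la_K\preceq\fch\DStd_{e,p}(\la)$ and $\fch D^\la_K\succeq\fch\DStd_{e,p}(\la)$ respectively.

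For spanning it suffices to prove that $v^\tt\in\rad S^\la_K$ whenever $\tt\in\Std(\la)\sm\DStd_{ep^z}(\la)$ for \emph{some} $z\ge0$, because the $K$-span of all such $v^\tt$ is precisely the span of $\{v^\tt\mid\tt\in\Std(\la)\sm\DStd_{e,p}(\la)\}$. For $z=0$ this is immediate from Theorem~\ref{TMain1}(i): $S^\la_K$ has simple head $D^\la_K$, so its unique maximal submodule is $\rad S^\la_K$, and this contains the proper submodule $U^\la_K$. For $z\ge1$ the obstacle is that $\DStd_{ep^z}(\la)$ is attached to the cyclotomic KLR algebra of type $A^{(1)}_{ep^z-1}$, a different algebra, so one cannot simply reduce $U^\la_{\O,ep^z}$ modulo $p$. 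The key step I would carry out is a comparison, in the spirit of the James--Mathas runner-removal theorem, between the graded decomposition number $[S^\mu_K:D^\la_K]_q$ at quantum characteristic $e$ in characteristic $p$ and the characteristic-zero graded decomposition numbers at quantum characteristic $ep^z$; this is tractable for $2$-column partitions precisely because all of these numbers are known to be $0$ or $1$, so one only has to decide which of them vanish. Feeding in the iterated extension $0\subset U^\la_\O\subset S^\la_\O$ with $U^\la_\O\cong\tilde D^\mu_\O\langle1\rangle$ from Theorem~\ref{TMain1}(iii), which lets one compute $p$-local data recursively, one obtains that $v^\tt\in\rad S^\la_K$ exactly when $\tt$ fails the combinatorial condition cutting out some $\DStd_{ep^z}(\la)$. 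I expect this reconciliation of the path/regularisation combinatorics of $\reg_{ep^z}$, simultaneously over all $z$, with the $p$-adic bookkeeping to be the main difficulty.

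For linear independence I would use the graded cellular bilinear form on $S^\la_K$, whose radical is the kernel of $S^\la_K\twoheadrightarrow D^\la_K$. Working over a discrete valuation ring $\O$ with residue characteristic $p$ and fraction field of characteristic $0$, one orders $\Std(\la)$ by a dominance-type order so that the Gram matrix of $\{v^\tt\}$ becomes unitriangular up to scalars -- for $2$-column partitions this follows from the explicit shape of the Kleshchev--Mathas--Ram basis -- and the task is then to pin down which elementary divisors have unit valuation. By Theorem~\ref{TMain1}(ii) applied with quantum characteristic $ep^z$ over a characteristic-zero field, the directions surviving into $D^\la$ there are exactly those indexed by $\DStd_{ep^z}(\la)$; pushing this through the same recursive computation used for the spanning step shows that the submatrix of the Gram matrix indexed by $\DStd_{e,p}(\la)=\bigcap_z\DStd_{ep^z}(\la)$ has unit determinant over $\O$, hence is nonsingular over $K$. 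This yields the required independence, and combining the two halves gives $\fch D^\la_K=\fch\DStd_{e,p}(\la)$ along with the explicit homogeneous basis of $D^\la_K$.
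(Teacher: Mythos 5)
Your outline leaves its central step unproved, and that step is where essentially all of the content of the theorem lies. For the spanning half you reduce to showing $v^\tt\in\rad S^\la_K$ whenever $\tt\notin\DStd_{ep^z}(\la)$ for some $z\ge 1$, and you then say you ``would carry out'' a runner-removal-style comparison of decomposition numbers and ``expect this reconciliation \dots to be the main difficulty.'' That comparison is not supplied, and even if it were, it could not deliver what you need: the sets $\DStd_{ep^z}(\la)$ for $z\ge 1$ are defined by the path combinatorics of quantum characteristic $ep^z$, which belongs to a \emph{different} cyclotomic KLR algebra, and there is no morphism relating $S^\la_K$ over $R^{\La_0}_{n,K}$ (quantum characteristic $e$) to the Specht module at quantum characteristic $ep^z$. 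A comparison of graded decomposition numbers yields at best equalities of characters and dimensions; it cannot tell you that a \emph{specific} basis vector $v^\tt$ lies in $\rad S^\la_K$. The same problem afflicts the linear-independence half, where you invoke Theorem~\ref{TMain1}(ii) ``with quantum characteristic $ep^z$'' to control the Gram matrix of the characteristic-$e$ form: the bilinear forms live on different modules over different algebras. Note also that your target -- that the images of $\{v^\tt\mid\tt\in\DStd_{e,p}(\la)\}$ form a basis of $D^\la_K$ in characteristic $p$ -- is strictly stronger than Theorem~\ref{TMainChar} and is not established anywhere in the paper (Theorem~\ref{TMain1}(ii) gives the basis statement only in characteristic $0$); the theorem itself is a character identity and needs only character-level input.

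For comparison, the paper's proof never touches the radical in positive characteristic. It first determines the graded decomposition numbers $[S^\la_F:D^\mu_F]_q=f^q_{e,p}(y,u-x)$ (Theorem~\ref{TDec1}) by combining the known ungraded results of James and Donkin with bar-invariance of the graded adjustment matrix, which forces $A^p(q)=A^p$ since all entries are $0$ or $1$. It then constructs an explicit bijection $\reg'_{e,p,\la,\mu}\colon\Std_{e,p,\mu}(\la)\to\DStd_{e,p}(\mu)$ preserving residue sequences and shifting degrees by $r_{e,p,\la,\mu}$ (Theorem~\ref{Treplamu}), so that $\fch\Std_{e,p,\mu}(\la)=q^{r_{e,p,\la,\mu}}\fch\DStd_{e,p}(\mu)$; the decomposition $\Std(\la)=\bigsqcup_\mu\Std_{e,p,\mu}(\la)$ and induction on the dominance order then give $\fch D^\la_F=\fch\DStd_{e,p}(\la)$. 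The $p$-adic bookkeeping you anticipate is exactly the content of the $(e,p)$-expansion and the regularisation-set analysis (Lemmas~\ref{Lsigns_unique}, \ref{LDec}, \ref{ExpDesc}, \ref{existence}), and none of it is present in your proposal. To repair your argument you would either have to supply this combinatorial machinery and then run the paper's character induction anyway, or prove the genuinely new (and possibly false in general) module-theoretic claim that $\rad S^\la_K$ is spanned by a subset of the $\psi$-basis in positive characteristic.
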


Now suppose that $\Char K= p\ge 0$ and let $\la$ be a 2-column partition. 
James~\cite{J78,J84} and Donkin~\cite{D98}
determined the ungraded composition multiplicities of $S^\la_K$. In particular, 
each $D^\mu_K$ appears as a composition factor of $S^\la_K$ with multiplicity at most $1$.
Moreover, in the case when $p=e$ (i.e.~that of symmetric groups), Erdmann~\cite{Erdmann} has given a more direct description of the dimensions of simple modules labelled by $e$-restricted $2$-column partitions: she proved that these dimensions are coefficients in certain explicitly determined generating functions. 

Theorem~\ref{TDec1} extends the results of James and Donkin to give {\em graded} decomposition numbers. Thus, whenever $D^\mu_K$ appears as an (ungraded) composition factor of $S^\la_K$, there is an explicitly described integer $r_{e,p,\la,\mu} \in \{0,1\}$ such that 
$D^\mu_K \langle r_{e,p,\la,\mu}\rangle$ is a graded composition factor of $S^\la_K$. Combining this fact with Theorems~\ref{TMain1}(ii) and~\ref{TMainChar}, we obtain the character identity
\begin{equation}\label{EChSum1}
 \fch S^\la = \sum_\mu q^{r_{e,p,\la,\mu}} \fch \DStd_{e,p} (\mu),
\end{equation}
where the sum is over $2$-column partitions $\mu$ such that 
$D^\mu_K$ is a composition factor of $S^\la_K$,
and where we set $\DStd_{e,0} (\mu):= \DStd_e (\mu)$. 
In Section~\ref{SComb}, for any 2-column partitions $\la,\mu$, 
we identify an explicit subset 
 $\Std_{e,p,\mu} (\la)$ of $\Std(\la)$, which may be seen to correspond to the composition factors $D^\mu_K$ in $S^\la_K$. More precisely, $\Std_{e,p,\mu} (\la)\ne \varnothing$ if and only if
$D^\mu_K$ is a composition factor of $S^\la_K$, and if this is the case, then
\begin{equation}\label{EChSum2}
 \fch \Std_{e,p,\mu} (\la) = q^{r_{e,p,\la,\mu}} \fch \DStd_{e,p} (\mu),
\end{equation}
see Theorem~\ref{Treplamu}. The identity~\eqref{EChSum2} is proved via an explicit bijection 
\[
\reg'_{e,p,\la,\mu} \colon \Std_{e,p,\mu} (\la) \iso \DStd_{e,p} (\mu).
\]
Furthermore, there is a decomposition 
\[
 \Std (\la) = \bigsqcup_{\mu} \Std_{e,p,\mu} (\la),
\]
which may be viewed as a combinatorial lifting of the identity~\eqref{EChSum1}. 

The sets $\Std_{e,p,\mu} (\la)$ are defined in terms of a map 
$\reg_{e,p}$ from the set of 2-column standard tableaux to itself, which generalises the aforementioned regularisation map $\reg_e$. 
In fact, graded decomposition numbers 
for 2-column partitions can also be described in terms of 
$\reg_{e,p}$, see
Theorem~\ref{conj:83}.
The simple module $D^\la_K$ is self-dual, which implies that 
$\fch D^\la_K= \fch \DStd_{e,p} (\la)$ is invariant under the involution given by $q\mapsto q^{-1}$. A combinatorial proof of this fact is given in Remark~\ref{RInv}. \smallskip 

The paper is organised as follows. 
In Section~\ref{SPrelim}, we review cyclotomic KLR algebras, their Specht modules and the connection with representations of Hecke algebras. 
In Section~\ref{SPaths},
we associate a path in a weight space of type $A_{k-1}$ with every standard tableau whose shape is a $k$-column partition (for 
$k\in \Z_{\ge 2}$) and describe the degrees of standard tableaux in the language of paths.  We define 
the aforementioned regularisation map $\reg_e$ 
on $\Std(\la)$ and the set $\DStd_e (\la)$ when $\la$ has at most $2$ columns.
 
Section~\ref{SComb} is combinatorial: we prove 
Theorem~\ref{TMainChar} and the results outlined after the statement of 
that theorem. The order in which the results are proved is different from the one above. In particular, Theorem~\ref{TMainChar} is obtained as a consequence of the identities~\eqref{EChSum1} and~\eqref{EChSum2}. 

In Section~\ref{SHom}, we consider homomorphisms between 2-column Specht modules. Using a row removal result from~\cite{fs16}, we construct a homomorphism from $S^\mu_\O$ to $S^\la_\O$, where 
$\la$ and $\mu$ are as in Theorem~\ref{TMain1}(iii), and we describe explicitly the kernel and image of this homomorphism, 
see Theorems~\ref{Tphilamu} and~\ref{TKerIm}.
This leads to a proof of Theorem~\ref{TMain1}. 
 We also construct exact sequences of homomorphisms between 2-column Specht modules, 
see Corollary~\ref{genexact}. 

Finally, in Section~\ref{SCounter}, we remove the condition that $\la_1\le 2$ and consider a natural approach to extending the definition of the set $\DStd_{e} (\la)$ to an arbitrary partition $\lambda$ of $n$,  
based on the structure of $S^\la_\Q$ and its radical $\rad S^\la_\Q$, in the spirit of~\cite[\S 3.3]{HM15}. 
 We give an example showing that in some cases the resulting set $\DStd_e (\la)$ is `too big', which yields a counterexample to a conjecture of Mathas~\cite{M15}. 

Throughout, given $a,b\in \Z$, we write
$
[a,b]:= \{ c\in \Z \mid a\le c \le b\}.
$
If $b\ge 0$, we often abbreviate $a,\dots,a$ (with $b$ entries) as $a^b$. 
If $X$ is a collection of elements of an $\O$-module, 
we denote the $\O$-span of $X$ by 
$\langle X \rangle_{\O}$. The $\Z$-rank of a free $\Z$-module $U$ of finite rank is denoted by $\dim_\Z U$. 
If $1\le r<n$ are integers, we set $s_r:=(r,r+1)$ to be the corresponding elementary transposition in the symmetric group $\Sym_n$.


\section{KLR algebras and Specht modules}\label{SPrelim}

We fix an integer $e\ge 2$ throughout the paper.  We set 
$I=\Z/e\Z = \{0,1,\ldots,e-1\}$, abbreviating $i+e\Z$ as $i$ 
(for $0\le i<e$) when there is no possibility of confusion.
For any $n\in \Z_{\ge 0}$, we
write $I^n = I\times \cdots \times I$. 
We define $\langle I^n\rangle$ to be the free $\Z[q,q^{-1}]$-module with basis $I^n$.
The symmetric group $\Si_n$ acts on the left on $I^n$ by place permutations.
An element of $I^n$ denoted by
$\bi$ is assumed to be equal to $(i_1,\ldots,i_n)$; we adopt a similar convention for other bold symbols.

\subsection{Graded algebras and modules}\label{SSGraded}
By a graded module (over any ring) we mean a $\Z$-graded one. If $V$ is a graded module and $m\in \Z$, we denote the $m$-th graded component of $V$ by $V_m$. The {\em graded dimension} of a finite-dimensional graded vector space $V$ is 
$\dim_q V:= \sum_{m\in \Z} (\dim V_m) q^m \in \Z[q,q^{-1}]$. 

Let $A$ be a graded $\O$-algebra, where $\O$ is a commutative ring. If $M=\bigoplus_{m\in \Z} M_m$ is a graded $A$-module then, for any $k\in \Z$, we write $M\langle k\rangle$ to denote the \emph{graded shift} of $M$ by $k$, which has the same structure as $M$ as an $A$-module and grading given by 
$M\langle k\rangle_m = M_{m-k}$ for all $m\in \Z$. 
If $M$ and $N$ are graded $A$-modules, then $\Hom_A (M,N)$ denotes the $\O$-module of $A$-homomorphisms from $M$ to $N$ as ungraded modules. Moreover, if $M$ is finitely generated as an $A$-module, then 
$\Hom_A (M,N)$ is graded by the following rule:
given $\phi\in \Hom_A (M,N)$ and $m\in \Z$, $\phi\in \Hom_A(M,N)_m$ if and only if $\phi(M_k)\subseteq N_{k+m}$ for all $k\in \Z$.
If $\O$ is a field, then by a {\em composition factor} of a finite-dimensional $A$-module $M$ we mean a composition factor of $M$ as an ungraded module, unless we explicitly specify otherwise. 

For every $f=f(q)\in \Z[q,q^{-1}]$, we write $\bar f(q):=f(q^{-1})$. 
This yields an involution 
\begin{equation}\label{EInvChar}
\bar{\phantom{C}} \colon \langle I^n \rangle \to \langle I^n \rangle, \;
\sum_{\bi\in I^n} f_\bi \cdot \bi \mapsto
 \sum_{\bi\in I^n} \bar f_{\bi} \cdot \bi. 
 \end{equation}

\subsection{KLR algebras}\label{SSKLR}

Consider the quiver $\Ga$ that has vertex set $I$, an arrow 
$i\leftarrow i+1$ for each $i\in I$ and no other arrows. We write $i\to j$ and $j\leftarrow i$ if there is an arrow from $i$ to $j$ but not from $j$ to $i$, and we write $i\leftrightarrows j$ if there are arrows between $i$ and $j$ in both directions (which only happens for $e=2$). 
Further, we write $i\noarrow j$ 
if $i\ne j$ and there is no arrow between $i$ and $j$ in either direction.
The quiver $\Ga$ corresponds to the Cartan matrix $C=(\ct_{ij})_{i,j\in I}$ of the affine type $A_{e-1}^{(1)}$, given by 
\[\ct_{ij}=
\begin{dcases*}
2 & if $i=j$; \\
-1 & if $i\to j$ or $j \to i$; \\
-2 & if $i \leftrightarrows j$; \\
0 & if $i \noarrow j$. 
\end{dcases*}\]

Let $\O$ be a commutative ring and let $n\in \Z_{\ge 0}$.
The {\em KLR algebra} 
$R_{n}= R_{n,\O}$ is the $\O$-algebra generated by the elements 
\[
\{ 1_{\bi} \mid \bi \in I^n \} 
\cup 
\{ \psi_r \mid 1\le r<n \}
\cup
\{ y_r \mid 1\le r\le n\}
\]
subject only to the following relations:
\begin{align}
1_{\bi} 1_{\bj} &= \de_{\bi,\bj} 1_{\bi}, \quad 
\sum_{\bi\in I^n} 1_{\bi} =1, \\
 y_{r} 1_\bi &= 1_{\bi} y_r, \\ 
\psi_r 1_{\bi} &= 1_{s_r\bi} \psi_r, \\
 y_r y_s &= y_s y_r, \label{rel:ypsi1} \\
\psi_r y_{r+1} 1_{\bi} &= (y_r \psi_r+\de_{i_r,i_{r+1}}) 1_{\bi}, 
\label{rel:ypsi2} \\ 
y_{r+1} \psi_r 1_{\bi} &= (\psi_r y_r +\de_{i_r,i_{r+1}}) 1_{\bi}, 
\label{rel:ypsi3} \\
\label{rel:ypsi4}
\psi_r y_s &=\mathrlap{ y_s \psi_r}\hphantom{\smash{\begin{cases} (y_{r+1} - y_r)(y_r-y_{r+1})1_{\bi} \\\\\\\end{cases}}} \kern-\nulldelimiterspace \text{if } s\ne r,r+1, \\
\psi_r \psi_s &=\mathrlap{\psi_s \psi_r}
\hphantom{\smash{\begin{cases} (y_{r+1} - y_r)(y_r-y_{r+1})1_{\bi} \\\\\\\end{cases}}} \kern-\nulldelimiterspace \text{if } |r-s|>1, \label{rel:commpsi} \\
\psi_r^2 1_{\bi} &= 
\begin{cases}
 0 & \text{if } i_r = i_{r+1}; \\
 1_{\bi} & \text{if } i_r \noarrow i_{r+1}; \\
 (y_{r+1} - y_r)1_{\bi} & \text{if } i_r \to i_{r+1}; \\
 (y_r - y_{r+1}) 1_{\bi} & \text{if } i_r \leftarrow i_{r+1}; \\
  (y_{r+1} - y_r)(y_r-y_{r+1})1_{\bi} & 
 \text{if } i_r \leftrightarrows i_{r+1},
\end{cases}  \label{rel:quad} \\
(\psi_r \psi_{r+1} \psi_r - \psi_{r+1}\psi_r \psi_{r+1})1_\bi &=
\begin{cases}
\mathrlap{1_{\bi}}\hphantom{\smash{(y_{r+1} - y_r)(y_r-y_{r+1})1_{\bi}}}& \text{if } i_{r+2}=i_{r} \to i_{r+1}; \\
-1_{\bi} & \text{if } i_{r+2} = i_r \leftarrow i_{r+1}; \\
(- 2 y_{r+1} + y_r + y_{r+2})1_{\bi}
& \text{if } i_{r+2} =i_r \leftrightarrows i_{r+1}; \\
0 & \text{otherwise}
\end{cases} \label{rel:braid}
\end{align}
for all $\bi\in I^n$ and all admissible $r,s$ (see~\cite{KL09, R08}).

Consider a root system with Cartan matrix $C$, with 
simple coroots $\{ \be^{\vee}_0,\ldots,\be^{\vee}_{e-1} \}$, see~\cite{K90}. 
To each fundamental dominant weight $\La$ of this root system, 
one attaches a {\em cyclotomic} quotient $R^{\La}_{n}$ of $R_n$. 
In this paper, we will only consider the {\em cyclotomic KLR algebra}
$R^{\La_0}_n$, where $\La_0$ 
is a (level $1$) weight satisfying 
$\lan \La_0, \be_i^{\vee} \ran = \de_{i,0}$ for all $i\in I$. 
The algebra $R^{\La_0}_n=R^{\La_0}_{n,\O}$ is
defined as the quotient of $R_n$ by the 2-sided ideal that is generated by the set 
\[\{1_\bi\mid\bi\in I^n,~i_1\neq 0\}\cup\{y_1\}.\]
The algebras $R_n$ and $R^{\La_0}_n$ are both $\Z$-graded with 
\[\deg(1_\bi)=0,\quad\deg(y_r)=2,\quad\deg(\psi_r1_\bi)=-\ct_{i_r i_{r+1}}\]
for all $\bi\in I^n$ and all admissible $r$. 

We fix a {\em reduced expression} for every $w\in \Si_n$, 
i.e.~a decomposition $w=s_{r_1} \dots s_{r_m}$ as a product of elementary transpositions with $m$ as small as possible. Define 
\begin{equation}\label{Epsiw}
\psi_w: = \psi_{r_1} \dots \psi_{r_m} \in R_n,
\end{equation}
noting that (in general) $\psi_w$ 
depends on the choice of a reduced expression for $w$. By definition, the {\em length} of $w$ is $\ell(w):=m$. 

\subsection{Partitions, tableaux and Specht modules}\label{SSPartitions}

A \emph{partition} is a non-increasing sequence $\la=(\la_1,\ldots,\la_l)$ of positive integers. 
As usual, we write  $|\la| = \sum_j \la_j$ and say that $\la$ is a partition of $n:=|\la|$. The unique partition of $0$ will be denoted by $\varnothing$. We always set $\la_r: =0$ for all $r>l$. 
We say that $\la$ is {\em $e$-restricted} if $\la_r - \la_{r+1} <e$ for all $r\in \Z_{>0}$.
We denote the set of partitions of $n$  by $\Par(n)$ and the set of $e$-restricted partitions of $n$ by $\RPar_e (n)$. 
Given $k\in \Z_{>0}$, 
define 
\[
\Par_{\le k}(n):= \{\la \in \Par (n) \mid \la_1 \le k \} \quad \text{and}
\quad \RPar_{e,\le k}(n):= \RPar_e (n) \cap \Par_{\le k} (n). 
\]
The {\em dominance} partial order $\domby$ on $\Par(n)$ is defined as follows: for any $\la,\mu\in \Par(n)$, we set $\mu \domby \la$ if 
$\sum_{j=1}^r \mu_j \le \sum_{j=1}^r \la_j$ for all $r\in \Z_{>0}$. 

The Young diagram of $\la$ is the subset 
$\Y\la = \{ (a,b) \mid 1\le a\le l,~ 
1\le b\le \la_a\}$ of $\Z_{>0} \times \Z_{>0}$.  
When drawing diagrams, we represent a node $(a,b)$ as the intersection of row $a$ and column $b$, with the rows numbered from the top down and the columns from left to right.

A {\em standard tableau} of size $n\in \Z_{\ge 0}$ 
is an injective map $\mtt t\colon \{1,\dots,n\} \to \Z_{>0} \times \Z_{>0}$ such that
\begin{enumerate}[(i)]
\item 
the image of $\mtt t$ is the Young diagram of some partition $\lambda$ of $n$; and  
\item 
the entries of $\mtt t$ are increasing along rows and down columns, 
i.e.~whenever $(a,b), (c,d) \in \Y\la$ are such that $a\le c$ and $b\le d$, we have $\tt^{-1} (a,b) \le \tt^{-1} (c,d)$. 
\end{enumerate}
In this situation, we refer to $\tt$ as a standard tableau of {\em shape} 
$\lambda$ and write  $\la=\Shape(\tt)$. 
If $0\le m\le n$, we denote by $\tt \da_{m}$ the restriction of $\tt$ to 
$\{1,\dots,m\}$. 
The set of all standard tableaux of shape $\la$ is denoted by $\Std(\la)$. 
For any $k\in \Z_{>0}$, we set 
$\Std_{\le k} (n):= \bigcup_{\la\in \Par_{\le k}(n)} \Std(\la)$.
If $\tt,\st\in \Std(\la)$, we write $\tt \dom \st$ and say that $\tt$ dominates $\st$ if 
$\Shape(\tt \!\da_m) \dom \Shape (\st \!\da_m)$ for all $0\le m\le n$. 
 We define $\mtt t^{\la}$ as the standard $\la$-tableau obtained by filling each row successively, going from the top down, so that 
 $\mtt t^{\la} ( \la_1+\cdots+\la_{a-1}+b) = (a,b)$ 
 for all $(a,b) \in \Y\la$.
Similarly, $\tt_\la\in \Std(\la)$ is obtained by successively filling each column, going from left to right, so  
$\tt_\la  (\la'_1+\dots+\la'_{b-1} + a) = (a,b)$,
where $\la'_j:=\# \{ r \in [1,l] \mid \la_r \ge j \}$ 
for all $j\in \Z_{>0}$. 

The symmetric group $\Si_n$ acts on the set of all bijections 
$\tt\colon \{1,\dots,n\}\to \Y\la$ as follows: 
$(g \tt)(r) =  \tt(g^{-1} r)$ for all $g\in \Si_n$ and $1\le r\le n$. 
For every $\tt\in \Std(\la)$, let $d(\tt)\in \Si_n$ be the unique element such that $d(\tt)\tt^\la= \tt$. 

By a {\em column tableau} of size $n$ we mean an injective map 
$\tt\colon \{1,\dots,n\} \to \Z_{>0} \times \Z_{>0}$ such that, whenever $(a,b) \in \tt(\{1,\dots,n\})$ and $a>1$, we have $(a-1,b)\in \tt(\{1,\dots,n\})$ and $\tt^{-1} (a-1,b) < \tt^{-1} (a,b)$. (That is, in particular, $\tt$ is required to increase down columns.)
For any $k\in \Z_{>0}$, we denote by $\CT_{\le k} (n)$ the set of column tableaux $\tt$ of size $n$ such that the image of $\tt$ is contained in $\Z_{>0} \times \{1,\dots,k\}$ (i.e.~the entries of $\tt$ all belong to the first $k$ columns). 
Note that $\Std_{\le k} (n) \subseteq \CT_{\le k} (n)$.

The \emph{residue} of a node $(a,b)\in\Z_{>0}\times\Z_{>0}$ is defined as $\res(a,b) = b-a + e\Z \in I$. We refer to a node of residue $i$ as an $i$-\emph{node}.
 The \emph{residue sequence} of a column tableau $\mtt t$ 
 is \[
\bi^\tt:=\big(\!\res\!\big(\tt(1)\big),\dots,\res\!\big(\tt(n)\big)\big)\in I^n.\]
The set of all standard $\la$-tableaux with a given residue sequence 
$\bi\in I^n$ is denoted by $\Std(\la,\bi)$. 
 
For each standard tableau $\tt\in \Std(\la)$, the \emph{degree} $\deg_e (\mtt t)$ of $\tt$ is defined in~\cite{BKW11} as follows. 
A node $(a,b)\in \Z_{>0}$ is said to be \emph{addable} for $\la$ if $(a,b)\notin \la$ and $\Y\la \cup \{(a,b)\}$ is the Young diagram of a partition. We say that $(a,b)$ is a \emph{removable} node of $\la$ if $(a,b)\in \Y\la$ and 
$\Y\la\sm \{(a,b)\}$ is the Young diagram of a partition. 
A node $(a,b)$ is said to be below a node $(a',b')$ if $a>a'$.  
If $(a,b)$ is a removable $i$-node of $\la$, define 
\[d_{(a,b)}(\la):=\#\{\text{addable}~i\text{-nodes for}~\la~\text{below}~(a,b)\}-\#\{\text{removable}~i\text{-nodes of}~\la~\text{below}~(a,b)\}.\]
Finally, we define the degree of the unique $\varnothing$-tableau to be $0$ and define recursively
\[
\deg_e (\tt) := d_{\tt(n)} (\la) + \deg_e(\tt \da_{n-1})
\]
for $\tt \in \Std(\la)$.

If $\tt \in \Std(\la)$ and $1\le r\le s\le n$, we write $r\to_{\tt} s$ if 
$\tt (r)$ and $\tt (s)$ are in the same row of $\Y\la$. 
We also write $\bi^\la := \bi^{\tt^\la}$.  

Let $\O$ be a commutative ring.  
We refer the reader to~\cite[Section~5]{KMR12} for the definition of a {\em Garnir node} $A\in \Y\la$ and the corresponding {\em Garnir element} $g^A \in R_n=R_{n,\O}$.
The {\em universal row Specht module} $S^{\la}=S^\la_{\O}$ 
is defined in~\cite{KMR12} as the left $R_n$-module 
generated by a single generator $v^{\la}$ subject only to the relations
\begin{alignat}{2}
1_\bi v^\la &= \de_{\bi,\bi^\la}v^\la, &&  \label{rel:Sp1}\\
\psi_r v^\la &= 0 &\quad&\text{if}~r\to_{\tt^\la}r+1, \label{rel:Sp2}\\
y_r v^\la &= 0, && \label{rel:Sp3} \\
g^A v^\la &= 0 &\quad& \text{for all Garnir nodes}~A~\text{of}~\Y\la,
\end{alignat}
for all $\bi\in I^n$ and all admissible $r\in\{1,\ldots,n\}$.
By~\cite[Corollary 6.26]{KMR12}, the action of $R_n$ on $S^{\la}$ factors through $R^{\La_0}_n$, so $S^{\la}$ is naturally an $R^{\La_0}_n$-module. 
For each $\tt\in \Std(\la)$,  we set
\[v^\tt:=\psi_{d(\tt)} v^\la,\] 
noting that in general $v^{\tt}$ depends on the choice of the reduced expression for $d(\tt)$ made in~\eqref{Epsiw}. 
In particular, $v^{\tt^\la}=v^\la$.

\begin{propc}{KMR12}{Proposition~5.14 and Corollary~6.24}
\label{Pbasis}
Let $\la\in \Par(n)$. 
The Specht module $S^{\la}$ is free as an $\O$-module, with basis
$\{ v^{\tt} \mid \tt \in \Std(\la)\}$. 
Moreover, $S^{\la}$ is a graded $R^{\La_0}_n$-module, with each $v^{\tt}$  homogeneous of degree $\deg_e(\tt)$. 
\end{propc}

\begin{corollary}\label{CScalarExt}
Let $\la\in \Par(n)$.
The graded $R^{\La_0}_{n,\Z}$-module $S^\la_\Z$ is isomorphic to the 
$\Z$-span of $\{ v^{\tt} \otimes 1 \mid \tt\in \Std(\la)\}$ in the 
$R^{\La_0}_{n,\Z}$-module 
$S^\la_\Z\otimes_\Z \mathbb \Q$. 
\end{corollary}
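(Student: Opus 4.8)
The plan is to realise the claimed isomorphism as the canonical extension-of-scalars map. Write $\iota\colon S^\la_\Z \to S^\la_\Z \otimes_\Z \Q$ for the $\Z$-linear map $x\mapsto x\otimes 1$. First I would record that $S^\la_\Z\otimes_\Z\Q$ is naturally a graded $R^{\La_0}_{n,\Z}$-module: the action is $r\cdot(x\otimes a):=(rx)\otimes a$ (equivalently, restrict the obvious $R^{\La_0}_{n,\Q}$-action along the canonical ring map $R^{\La_0}_{n,\Z}\to R^{\La_0}_{n,\Q}$), and, since $\Q$ is flat over $\Z$, tensoring the decomposition $S^\la_\Z=\bigoplus_m (S^\la_\Z)_m$ with $\Q$ produces a $\Z$-grading whose degree-$m$ component is $(S^\la_\Z)_m\otimes_\Z\Q$. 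With respect to these structures $\iota$ is manifestly $R^{\La_0}_{n,\Z}$-linear and degree-preserving, and $\iota(v^\tt)=v^\tt\otimes 1$ for every $\tt\in\Std(\la)$.

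The main input is Proposition~\ref{Pbasis}, which says that $S^\la_\Z$ is free as a $\Z$-module with homogeneous basis $\{v^\tt\mid \tt\in\Std(\la)\}$. Being free, $S^\la_\Z$ is torsion-free over $\Z$, so $\iota$ is injective; hence it is an isomorphism of graded $R^{\La_0}_{n,\Z}$-modules onto its image. To finish I would identify $\im\iota$ with $\langle v^\tt\otimes 1\mid \tt\in\Std(\la)\rangle_\Z$: this holds because the $v^\tt$ span $S^\la_\Z$ over $\Z$ and $\iota$ is $\Z$-linear. The same observation shows that this $\Z$-span really is an $R^{\La_0}_{n,\Z}$-submodule of $S^\la_\Z\otimes_\Z\Q$, which is implicit in the statement: for $r\in R^{\La_0}_{n,\Z}$ we have $r\cdot(v^\tt\otimes 1)=(rv^\tt)\otimes 1$ and $rv^\tt\in S^\la_\Z$ is a $\Z$-linear combination of the $v^\st$.

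I do not anticipate a real obstacle here: the corollary is essentially a bookkeeping consequence of Proposition~\ref{Pbasis}, recording that $S^\la_\Z$ sits inside $S^\la_\Z\otimes_\Z\Q$ (which one may further identify with $S^\la_\Q$) as the evident integral form. The only points needing any care are formal: that $-\otimes_\Z\Q$ preserves the grading (by flatness of $\Q$ over $\Z$) and that $\iota$ is compatible with both the $R^{\La_0}_{n,\Z}$-action and the grading.
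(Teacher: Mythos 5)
Your argument is correct and is essentially the approach the paper intends (the corollary is stated without proof as an immediate consequence of Proposition~\ref{Pbasis}): freeness of $S^\la_\Z$ over $\Z$ makes the canonical map $x\mapsto x\otimes 1$ injective, and its image is visibly the $\Z$-span of $\{v^\tt\otimes 1\}$.
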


\subsection{Hecke algebras at roots of unity, a cellular basis and simple modules}\label{SSHecke}
Let $F$ be a field such that---setting $p:=\Char F$---we have that $p=0$, $p=e$ or $p$ is coprime to $e$.
Let the field $K$ be an extension of $F$, and assume that $\xi \in K \sm \{0\}$
has {\em quantum characteristic} $e$, 
i.e.~$e$ is the smallest positive integer such that $1+\xi+\cdots+\xi^{e-1}=0$.

The Iwahori--Hecke algebra $\cl H_{n} (\xi)$ is the $K$-algebra generated by
\[
 T_1,\ldots, T_{n-1}
\]
subject only to the relations
\begin{alignat*}{2}
(T_r-\xi)(T_r+1) &= 0, && \\
T_r T_{r+1} T_r &= T_{r+1}T_rT_{r+1}, && \\
T_r T_s &= T_s T_r &\qquad& \text{if}~|r-s|>1
\end{alignat*}
for all admissible $r$ and $s$.
The algebra 
$\cl H_{n}(\xi)$ is cellular, with cell modules $S^{\la}_{\cl H}$ parameterised by the partitions $\la$ of $n$; see~\cite{M99}.

By the following fundamental results, much of the modular representation theory of Iwahori--Hecke algebras at roots of unity can be phrased in terms of questions about KLR algebras and their universal row Specht modules.

\begin{thm}{\textup{\textbf{\cite[Theorem~1.1]{bkisom},}}}{\textup{\textbf{\cite[Theorem~6.23]{KMR12}.}}}
\label{TF}
Let $K$ be a field of characteristic $p$, and suppose that $\xi \in K \sm \{0\}$
has {\em quantum characteristic} $e$, where either $p=e$ or $p$ is coprime to $e$ if $p\neq 0,e$.
There is an algebra isomorphism $\theta\colon \cl H_{n} (\xi) \iso R^{\La_0}_{n,K}$ such that
if $\cl H_{n} (\xi)$ is identified with $R^{\La_0}_{n,K}$ via $\theta$, then 
the $R^{\La_0}_{n,K}$-module $S^{\la}_{\cl H}$ is isomorphic to $S^{\la}_K$. 
\end{thm}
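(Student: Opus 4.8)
Since the statement is the composite of the Brundan--Kleshchev graded isomorphism theorem and the Kleshchev--Mathas--Ram comparison of Specht modules, the plan is to recall the strategy behind each.

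\textbf{Construction of $\theta$.} The starting point is the family of Jucys--Murphy elements $L_1,\dots,L_n\in\cl H_n(\xi)$, which commute and whose joint generalised eigenspaces on finite-dimensional modules decompose $1$ into a complete set of orthogonal idempotents $e(\bi)$, indexed by the residue sequences $\bi\in I^n$ arising from standard tableaux. One \emph{defines} a map $R^{\La_0}_{n,K}\to\cl H_n(\xi)$ by sending $1_\bi\mapsto e(\bi)$, sending $y_r$ to an explicit polynomial in $L_r$ and the $e(\bi)$ chosen so that $L_r$ acts on $e(\bi)M$ as $\xi^{i_r}$ times a unit congruent to $1-y_r$, and sending $\psi_r$ to a renormalisation of the Hecke-algebra intertwining element ($T_r$ plus a suitable rational function of $L_r,L_{r+1}$) by a power series in $y_r,y_{r+1}$, the normalisation being forced by the requirement that the quadratic relation~\eqref{rel:quad} and the braid-type relation~\eqref{rel:braid} come out correctly. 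One checks that all the defining relations of $R_n$ hold for these elements, and that the ideal defining the cyclotomic quotient is killed: $y_1\mapsto 0$ and $1_\bi\mapsto 0$ for $i_1\neq 0$ because every occurring residue sequence begins with $0$. This gives a graded homomorphism $\theta^{-1}$, the generators having the right degrees $0$, $2$, $-\ct_{i_ri_{r+1}}$. It is surjective since each $T_r$ can be recovered from the $\psi$'s, $y$'s and $e(\bi)$, and injective by a dimension count, both algebras having $K$-dimension $n!$.

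\textbf{Identification of the Specht modules.} Viewing $S^\la_{\cl H}$ as an $R^{\La_0}_{n,K}$-module via $\theta$, one exhibits a nonzero homogeneous vector $z\in S^\la_{\cl H}$ (a scalar multiple of the standard generator of the classical cell module) on which $1_\bi$ acts as $\de_{\bi,\bi^\la}$, which is annihilated by every $y_r$ and by $\psi_r$ whenever $r\to_{\tt^\la}r+1$, and which is killed by the Garnir element $g^A$ for each Garnir node $A$ of $\Y\la$; this uses the explicit action of the $L_r$ and of the Murphy-type operators on the classical Specht module. By the homogeneous presentation of $S^\la_\O$ recalled above, this yields a surjection $S^\la_K\twoheadrightarrow S^\la_{\cl H}$ of graded $R^{\La_0}_{n,K}$-modules; since $\dim_K S^\la_K=|\Std(\la)|$ by Proposition~\ref{Pbasis} and $\dim_K S^\la_{\cl H}=|\Std(\la)|$ by the classical theory of the Murphy basis, this surjection is an isomorphism.

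\textbf{Main obstacle.} The crux is the verification that the explicitly chosen Hecke-algebra elements $y_r$ and $\psi_r$ satisfy \emph{all} the KLR relations, in particular~\eqref{rel:quad} and~\eqref{rel:braid}: this is precisely what pins down the power-series normalisations, and it amounts to a lengthy, case-by-case manipulation with rational functions of the Jucys--Murphy elements. A secondary difficulty is checking that the classical Specht generator is annihilated by the Garnir elements $g^A$ once these are rewritten in terms of the $\psi$'s and $y$'s, and that $S^\la_K$ and $S^\la_{\cl H}$ genuinely have the same $K$-dimension.
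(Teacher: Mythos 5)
This statement is not proved in the paper at all: it is quoted as a black box from Brundan--Kleshchev and Kleshchev--Mathas--Ram, with only the remark that the Specht-module identification relies on the graded Specht modules of Brundan--Kleshchev--Wang. So there is no in-paper argument to compare against; what you have written is a summary of the cited literature, and as such it is a faithful account of the actual strategy: the idempotents $e(\bi)$ from generalised eigenspaces of Jucys--Murphy elements, the power-series normalisations of $y_r$ and of the intertwiners defining $\psi_r$, the case-by-case verification of the quadratic and braid relations, and then the check that the homogeneous presentation of $S^\la_\O$ is satisfied by the classical Specht generator followed by a comparison of standard-tableaux bases. One caveat on the isomorphism step: Brundan and Kleshchev do not argue injectivity by a dimension count, because the equality $\dim_K R^{\La_0}_{n,K}=n!$ was not known independently at the time (it is a consequence of, not an input to, their theorem). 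Instead they write down explicit maps in both directions on generators, verify both are well-defined algebra homomorphisms, and check that the two composites are the identity. Your dimension-count shortcut would be circular as stated unless you first establish the graded dimension of the cyclotomic KLR algebra by other means (e.g.\ via the Hu--Mathas cellular basis, which itself postdates and partly depends on this isomorphism). Apart from that, your outline matches the cited proofs.
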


We remark that graded modules over $R^{\La_0}_{n,K}$ which can be identified with the modules $S^{\la}_{\cl H}$ were originally constructed in~\cite{BKW11} and that the proof of the identification of Specht modules in Theorem~\ref{TF} 
uses results from~\cite{BKW11}. 

Simple $\cl H_n (\xi)$-modules were classified in~\cite{DJ86}, and a corresponding classification of graded simple modules 
$R^{\La_0}_{n,K}$-modules up to isomorphism and grading shift 
is given by~\cite[Theorem 4.11]{BK09}. 
For our purposes, it is convenient to use the description of simple 
$R^{\La_0}_{n,F}$-modules resulting from the 
graded cellular basis of $R^{\La_0}_{n,F}$ constructed by Hu and Mathas~\cite{HM10}, which we now review. 
Let $\la\in \Par(n)$ and define $\mathscr Y^{\la}:=\{ r\in [1,n]\mid \tt^{\la} (r) \in \Z\times e\Z\}$.
Set 
$y^\la:=\prod_{r\in \mathscr Y^\la} y_r\in R^{\La_0}_{n,F}$. 
There is an anti-automorphism $*$ of $R^{\La_0}_{n,F}$ defined on the standard generators by 
\[
1_{\bi}^* = 1_{\bi}, \quad \psi_r^* = \psi_r, \quad y_r^* = y_r. 
\]
For $\st,\tt\in \Std(\la)$, set 
\[
 \psi_{\st\tt}:= \psi_{d(\st)} 1_{\bi^\la} y^\la \psi_{d(\tt)}^* \in R^{\La_0}_{n,F}. 
\]
Then $\psi_{\st\tt}^* = \psi_{\tt\st}$. 
\begin{thmc}{HM10}{Theorem 5.8}
\label{Tcell}
The algebra $R^{\La_0}_{n,F}$ is a graded cellular algebra with weight poset $(\Par(n), \dom)$ and cellular basis 
$\{ \psi_{\st\tt} \mid \st,\tt \in \Std(\la),~\la \in \Par(n) \}$. 
\end{thmc}

Let $\la\in \Par(n)$. 
The cell module corresponding to $\la$ in the graded cellular structure of Theorem~\ref{Tcell} is isomorphic to  $S^\la_F$ as a graded $R^{\La_0}_{n,F}$-module: this follows 
 from~\cite[Corollary~5.10]{HM10} and the proof of~\cite[Theorem~6.23]{KMR12}. In the sequel, we identify $S^\la_F$ with the corresponding graded cell module. 
The cellular structure of Theorem~\ref{Tcell} yields a symmetric bilinear form $\langle \cdot,\cdot\rangle$ on $S^\la_F$. This form is determined by the equation
\begin{equation}\label{Eform}
 \langle v^\tt, v^\st \rangle v^{\tt^\la} = \psi_{\tt^\la\tt} v^{\st} =
 1_{\bi^\la} y^\la \psi_{d(\tt)}^* \psi_{d(\st)} v^{\tt^\la}
 \quad (\tt,\st\in \Std(\la)),
\end{equation}
cf.~\cite[(3.7.2)]{M15}. 
The form $\langle\cdot,\cdot\rangle$ is homogeneous of degree $0$ and satisfies $\langle xu,v\rangle = \langle u,xv\rangle$ for all $x\in R^{\La_0}_{n,F}$ and $u,v\in S^\la_F$, see~\cite[\S2.2]{HM10}.

The radical of  $\langle \cdot,\cdot\rangle$ 
is the $R^{\La_0}_{n,F}$-submodule 
\[
 \rad S^\la_F:= \{ u\in S^\la_F \mid \langle u, v\rangle =0 \text{ for all } v\in S^\la_F \}. 
\]
Define
\[
D^\lam_F:=S^\lam_F /\rad S^\lam_F.
\] 
If $D^\lam_F\neq 0$, then $D^\lam_F$ is an irreducible graded 
$R^{\La_0}_{n,F}$-module. Moreover, by~\cite{DJ86}
and\cite[Corollary~5.11]{HM10}, 
$D^\la_F \ne 0$ if and only if $\la$ is $e$-restricted. Hence, by \cite[Theorem~2.10]{HM10}, we have:

\begin{thm} 
The family $\left\{D^\la_F \mid \la\in \RPar_e(n)\right\}$
is a complete set of graded simple 
$R^{\La_0}_{n,F}$-modules up to isomorphism and grading shift.
\end{thm}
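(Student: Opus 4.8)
The plan is to obtain the statement as an essentially immediate consequence of the general theory of graded cellular algebras, applied to $R^{\La_0}_{n,F}$ via the Hu--Mathas basis, together with the non-vanishing criterion for the modules $D^\la_F$ recalled just above.

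First I would invoke Theorem~\ref{Tcell}: the algebra $R^{\La_0}_{n,F}$ is graded cellular with weight poset $(\Par(n),\dom)$, and for each $\la\in\Par(n)$ the corresponding graded cell module is (identified with) $S^\la_F$, carrying the homogeneous bilinear form $\langle\cdot,\cdot\rangle$ of~\eqref{Eform}, so that $D^\la_F=S^\la_F/\rad S^\la_F$ is precisely the ``$D^\la$'' produced by the cellular structure. The graded analogue of the Graham--Lehrer classification, namely \cite[Theorem~2.10]{HM10}, then supplies three facts: (a) each nonzero $D^\la_F$ is graded simple (in fact absolutely irreducible); (b) for $\la\ne\mu$ one has $D^\la_F\not\cong D^\mu_F\langle k\rangle$ for every $k\in\Z$; and (c) every graded simple $R^{\La_0}_{n,F}$-module is isomorphic to $D^\la_F\langle k\rangle$ for some $k\in\Z$ and some $\la$ with $D^\la_F\ne 0$.

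It then remains only to determine which $\la$ give $D^\la_F\ne 0$, and this is exactly the content of the facts cited immediately before the theorem: combining \cite{DJ86} with \cite[Corollary~5.11]{HM10}, we have $D^\la_F\ne 0$ if and only if $\la\in\RPar_e(n)$. Feeding this index set into (a)--(c) yields the asserted classification.

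The only genuine point to be careful about is bookkeeping rather than a real obstacle: one must ensure that the cell module attached to $\la$ in Theorem~\ref{Tcell} coincides with $S^\la_F$ and that its cellular radical is $\rad S^\la_F$ as defined via~\eqref{Eform}. This compatibility has, however, already been established above using \cite[Corollary~5.10]{HM10} and the proof of \cite[Theorem~6.23]{KMR12}, so once it is invoked there is nothing further to prove and the theorem follows as a direct corollary.
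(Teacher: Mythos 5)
Your proposal is correct and follows essentially the same route as the paper: the theorem is deduced there in exactly this way, by combining the graded cellular structure of Theorem~\ref{Tcell} (and the identification of cell modules with Specht modules) with the non-vanishing criterion from \cite{DJ86} and \cite[Corollary~5.11]{HM10}, and then citing \cite[Theorem~2.10]{HM10} for the general classification of graded simples over a graded cellular algebra.
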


Let $M$ be a finite-dimensional graded $R_{n,F}^{\La_0}$-module. 
Given $\mu\in \RPar_e(n)$,
the {\em graded composition multiplicity} of $D^\mu_F$ in $M$ is 
\[
[M:D^{\mu}_F]_q:= \sum_{k\in \Z} a_k q^k \in \Z[q,q^{-1}], 
\]
where $a_k$ is the multiplicity of $D^\mu_F\langle k\rangle$ in a graded composition series of $M$, see~\cite[\S 2.4]{BK09}.
Since the algebra 
$R^{\La_0}_{n,F}$  is cellular, it is split, so 
\begin{equation}\label{EFK}
[S^\la_F : D^\mu_F ]_q = [S^\la_K : D^\mu_K]_q \qquad (\la\in \Par(n), \, \mu\in \RPar_e (n)). 
\end{equation}
It is well known that $[S^\la_F : D^\mu_F]_q=0$ unless 
$\mu \domby \la$, see e.g.~\cite[Corollary 2.17]{M99}.

\begin{Remark}
Li~\cite{Li14} proved that $R^{\La_0}_{n, \O}$ is cellular for any commutative ring $\O$, generalising Theorem~\ref{Tcell}. Both of these results hold with $\La_0$ replaced by an arbitrary dominant integral weight $\La$, as do the aforementioned results 
from~\cite{bkisom, BKW11, BK09, KMR12}. 
\end{Remark}


\section{Standard tableaux and paths in the weight space}
\label{SPaths}

In this section we fix an integer $k\ge 2$. 
In \S\ref{SSAff}--\ref{SSPaths}, we attach to each standard tableau 
$\tt$ with at most $k$ columns a path $\pi_\tt$ 
in a weight space for the Lie algebra 
$\mathfrak{sl}_k$. We show that the degree of $\tt$ can be non-recursively described in terms of interactions of $\pi_\tt$ with certain hyperplanes in that weight space (Lemma~\ref{lem:deg}) and that the residue sequence of a tableau is invariant under certain reflections of the corresponding path (Lemma~\ref{LResSeq}). In~\S\ref{SSTwo}, we specialise to the case $k=2$, which is the only one used in the rest of the paper, and define a regularisation map on paths, which plays a key role in the sequel.

\subsection{The affine Weyl group}\label{SSAff}
Let $A=(a_{ij})_{1\le i,j\le k-1}$ be the Cartan matrix of finite type $A_{k-1}$, so that 
\[
a_{ij} = \begin{dcases*}
2 & if $i=j$; \\
-1 & if $j=i\pm 1$; \\
0 & otherwise.
\end{dcases*}
\]
Let $\Phi$ be the root system of the Lie algebra $\mathfrak{sl}_k (\C)$ with respect to the Cartan subalgebra $\fra h$ of diagonal matrices in 
$\mathfrak{sl}_k (\C)$, with $\{\alpha_1,\ldots,\alpha_{k-1}\}$ being a set of simple roots. 
We consider the (real) weight space 
$V:=\mR \Phi = \mathfrak{h}_{\mathbb R}^*$, where 
$\mathfrak{h}_{\mathbb R}$ is the 
set of matrices in $\mathfrak h$ with real entries. 
For each $i=1,\ldots,k$, let $\eps_i\in V$ be the weight sending 
a diagonal matrix 
$\diag(t_1,\ldots,t_k)\in \fra h_{\mathbb R}$ to $t_i$. 
Then  $\eps_1+\cdots+\eps_k=0$, and we may assume that $\al_i = \eps_i - \eps_{i+1}$ for $i=1,\ldots,k-1$. 
The set of integral weights is the $\Z$-span $V_{\Z}$ of $\eps_1,\ldots,\eps_k$. 
The set of positive roots is 
$\Phi^+ = \{ \eps_i-\eps_j \mid 1\le i<j\le k\}$. 
Let $(\cdot,\cdot)$ be the symmetric bilinear form $V$ determined by 
$(\al_i,\al_j) = a_{ij}$ for $i,j\in \{1,\ldots,k-1\}$. 
Note that 
\begin{equation}\label{EEpsInn}
(\eps_i,\eps_r-\eps_t)=
\begin{dcases*}
1 & if $i=r$; \\
-1 & if $i=t$; \\
0 & otherwise
\end{dcases*}
\end{equation}
for all $1\le i\le k$ and $1\le r<t\le k$.
The Weyl group $W\cong \Si_k$ of $\Phi$ is the subgroup of 
$\GL (V)$ generated by the simple reflections $s_{\al}$ for $\al\in \Phi$, where
$s_{\al} v = v- (\al,v)\al$ for all $v\in V$. 
The $\Z$-submodule $\Z\Phi\subset V$ is $W$-invariant, and 
the corresponding {\em affine Weyl group} is 
$W_{\aff}:=W\ltimes\mathbb{Z}\Phi$. 
The group $W$ also acts on the set $\{1,\dots,k\}$ in the natural way.

Let $\rho=\sum_{i=1}^{k-1} (k-i) \eps_i$, so that $(\al_i,\rho) = 1$ for all $i=1,\ldots,k-1$. 
There is a faithful action of $W_{\aff}$ on $V$ defined by
\[
w\cdot v:= w(v+\rho)-\rho,\qquad \be\cdot v:=v+e\be\qquad(w\in W,~\be\in\Z\Phi,~v\in V). 
\]
If $\al\in \Phi$ and $m\in \Z$, consider the hyperplane
\[
H_{\alpha,m}:=\{ v\in V \mid(v+\rho,\alpha)=me\} \subset V.
\]
We refer to $H_{\al,m}$ as an \emph{$\al$-wall} or simply a \emph{wall}.
For each $\alpha\in\Phi$ and $m\in\mathbb{Z}$ there exists (unique)
$s_{\alpha,m}\in W_{\aff}$ such that $s_{\al,m}$ acts on $V$ by reflection with respect to $H_{\al,m}$, i.e. 
\begin{equation}\label{Edot}
s_{\alpha,m}\cdot v=v-\big((v+\rho,\alpha)-me\big)\alpha
\end{equation}
for all $v\in V$.
We consider the set
\[
 \mathcal{C}:=\{v\in V\mid(v+\rho,\alpha_i)>0\text{ for }1\leq i<k\},
 \] 
which may be viewed as the dominant chamber of the Coxeter complex corresponding to $\Phi$.
 Using~\eqref{EEpsInn}, we see that
\begin{equation}\label{EC}
\mathcal{C} =\{ c_1\eps_1+\dots +c_k\eps_k \in V \mid
c_1,\dots,c_k\in \mR,\, c_j>c_{j+1}-1 \text{ for all } j=1,\dots,k-1\}.  
\end{equation}

\subsection{Paths}\label{SSPaths}
Let $r,s\in \Z$. 
Recall that $
[r,s]:= \{ t\in \Z \mid r\le t \le s\}.
$
We define $\Pa_{[r,s]}$ to be the set of all maps 
$\pi\colon [r,s]\to V_{\Z}$ such that 
$\pi(a+1)-\pi(a) \in \{\eps_1,\ldots,\eps_k\}$ for all $a\in [r,s-1]$.
Further, set 
\[
\Pa_{[r,s]}^+:=\{ \pi\in \Pa_{[r,s]} \mid \pi(a) \in \cl C \text{ for all } a\in [r,s]\}. 
\]
Let $n\in \mathbb Z_{\geq 0}$. We define
\[
 \Pa_n:= \{ \pi \in \Pa_{[0,n]} \mid \pi(0)=0 \} \quad \text{ and } \quad
\Pa^+_n:=\Pa_n \cap \Pa_{[0,n]}^+, 
\]
so that $\Pa^+_n$ is the set of all maps $\pi \colon [0,n]\to V_\Z \cap \cl C$ such that $\pi(0)=0$ and $\pi(a+1)-\pi(a) \in \{\eps_1,\ldots,\eps_k\}$ for all $a\in [0,n-1]$.

Given $\tt\in \CT_{\le k} (n)$, for any $0\le a\le n$ and $1\le j\le k$,  set 
\[
c_{a,j}(\tt) := | \tt (\{ 1,\dots, a\}) \cap (\Z_{>0} \times \{ j \} ) |,
\]
i.e.~$c_{a,j} (\tt)$ is the number of elements of  $\tt (\{ 1,\ldots,a\})$ in the $j$th column.
Define $\pi_{\tt}\in\Pa_n$ by 
\[
 \pi_{\tt} (a) := c_{a,1}(\tt) \eps_1 + \cdots + c_{a,k}(\tt) \eps_k \qquad (a=0,\ldots,n).
\]
Note that the end-point $\pi_{\tt} (n)$ of  the path $\pi_{\tt}$ depends only on the image of $\tt$ (i.e.~only on the shape of $\tt$ in the case when $\tt$ is a standard tableau).  

\begin{lemma}\label{lem:bij}
The assignment $\tt \mapsto \pi_{\tt}$ is a bijection from $\CT_{\le k} (n)$ onto $\Pa_n$ and restricts to a bijection from $\Std_{\le k} (n)$ onto $\Pa_n^+$. 
\end{lemma}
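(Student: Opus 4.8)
The plan is to build an explicit two-sided inverse to the map $\tt\mapsto\pi_\tt$ and then check that the standardness/positivity conditions match up exactly. First I would unpack the definition: if $\tt\in\CT_{\le k}(n)$ and $1\le a\le n$, then $\pi_\tt(a)-\pi_\tt(a-1)=\eps_{j(a)}$, where $j(a)$ is the column of the node $\tt(a)$; this is immediate from the defining formula for $c_{a,j}(\tt)$, since passing from $a-1$ to $a$ adds the single node $\tt(a)$ to $\tt(\{1,\dots,a-1\})$, increasing exactly the $j(a)$th coordinate count by $1$. Conversely, given $\pi\in\Pa_n$, the sequence of differences $\pi(a)-\pi(a-1)$ is a word $(\eps_{j(1)},\dots,\eps_{j(n)})$ in $\{\eps_1,\dots,\eps_k\}$, and this word records, for each $a$, which column the $a$th entry must go into. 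The natural candidate for the inverse is: read the word left to right, and place $a$ in column $j(a)$, in the first (i.e.\ topmost) row of that column not yet occupied. Call this tableau $\ft_\pi$.

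The key steps, in order, are as follows. (1) Show $\ft_\pi$ is a well-defined column tableau with at most $k$ columns: each entry lands in some column $j(a)\le k$, the entries within a column are filled top-down in increasing order of $a$ so they increase down the column, and whenever a node $(b,j)$ with $b>1$ is occupied, the node $(b-1,j)$ was occupied at an earlier stage (it was filled before the $b$th node of column $j$), so $\ft_\pi^{-1}(b-1,j)<\ft_\pi^{-1}(b,j)$; thus $\ft_\pi\in\CT_{\le k}(n)$. (2) Check the two composites are identities. For $\pi_{\ft_\pi}=\pi$: by step~(1) the $a$th difference of $\pi_{\ft_\pi}$ is $\eps_{\text{col of }\ft_\pi(a)}=\eps_{j(a)}$, which is the $a$th difference of $\pi$, and both start at $0$, so they agree. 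For $\ft_{\pi_\tt}=\tt$: the column word of $\pi_\tt$ is $(j(1),\dots,j(n))$ with $j(a)$ the column of $\tt(a)$, and since $\tt$ is itself a column tableau the entries in each column $j$ of $\tt$ occur in increasing order $a_1<a_2<\cdots$ with $\tt(a_m)=(m,j)$ — precisely the filling rule producing $\ft_{\pi_\tt}$ — so $\ft_{\pi_\tt}=\tt$. This establishes the bijection $\CT_{\le k}(n)\iso\Pa_n$.

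For the restriction to $\Std_{\le k}(n)\iso\Pa^+_n$, I would argue that under the bijection, $\tt$ is a standard tableau if and only if every intermediate image $\pi_\tt(a)$ lies in $\cl C$. By the inclusion $\Std_{\le k}(n)\subseteq\CT_{\le k}(n)$ already noted in the text, the only extra condition distinguishing a standard tableau from a column tableau is that entries increase \emph{along rows}. Now $\tt(\{1,\dots,a\})$ is the Young diagram of a partition for every $a$ precisely when the column-count vector $(c_{a,1}(\tt),\dots,c_{a,k}(\tt))$ is non-increasing, i.e.\ $c_{a,1}(\tt)\ge c_{a,2}(\tt)\ge\cdots\ge c_{a,k}(\tt)$ for all $a$; and one checks this condition on all intermediate shapes is equivalent to $\tt$ being standard (given that it is already a column tableau, the row-increasing condition is exactly the statement that no column ever gets strictly longer than the one to its left as entries are added). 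Using~\eqref{EEpsInn} one computes $(\pi_\tt(a)+\rho,\al_i)=c_{a,i}(\tt)-c_{a,i+1}(\tt)+1$, which is $>0$ if and only if $c_{a,i}(\tt)\ge c_{a,i+1}(\tt)$; hence $\pi_\tt(a)\in\cl C$ for all $a$ iff all intermediate shapes of $\tt$ are partitions iff $\tt\in\Std_{\le k}(n)$. Combined with the bijection from the previous paragraph, this gives the restricted bijection onto $\Pa^+_n$.

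The main obstacle I anticipate is step~(2) of the second paragraph — cleanly verifying that the filling rule for $\ft_\pi$ really is inverse to $\tt\mapsto\pi_\tt$ — together with the bookkeeping in the last paragraph translating "every intermediate shape is a partition" through~\eqref{EEpsInn} into membership in $\cl C$; both are essentially routine but require care that the column word is read in the correct order and that the top-down filling convention is consistent with the column-tableau condition on $\tt$. Everything else is direct unwinding of definitions.
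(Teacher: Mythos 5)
Your proposal is correct and follows essentially the same route as the paper: the paper dismisses the first assertion as clear from the definitions (your explicit inverse $\ft_\pi$ just makes this precise), and for the second assertion it uses exactly your observation that a column tableau $\tt$ is standard if and only if $c_{a,j}(\tt)\ge c_{a,j+1}(\tt)$ for all $a$ and $j$, which translates into membership in $\cl C$ via~\eqref{EC}. Your extra computation of $(\pi_\tt(a)+\rho,\al_i)$ via~\eqref{EEpsInn} simply re-derives~\eqref{EC}, so nothing is genuinely different.
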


\begin{proof}
The first assertion of the lemma is clear from the definitions. 
For the second assertion, let $\tt \in \CT_{\le k} (n)$
and observe that $\tt$ is a standard tableau if and only if 
$c_{a,j}(\tt)\ge c_{a,j+1}(\tt)$ for all $a=1,\dots,n$ and all $j=1,\dots, k-1$.
The lemma follows by~\eqref{EC}.
\end{proof}

Let $\pi\in \Pa_n$ and suppose that $\pi(a) \in H_{\al,m}$ for some $\al\in \Phi^+$ and $m\in \Z$. We define the path $s^a_{\al,m} \cdot \pi \in \Pa_n$ by setting 
\[ (s^a_{\al,m} \cdot \pi) (b) :=
\begin{dcases*}
 \pi(b) & for $0\le b\le a$;\\
 s_{\al,m} \cdot \pi(b) & for $a<b\le n$. 
\end{dcases*} \]
That is, $s_{\al,m}^a \cdot \pi$ is obtained by reflecting a `tail' of 
$\pi$ with respect to $H_{\al,m}$.

The residue sequence $\bi^\tt$ of $\tt\in \CT_{\le k}(n)$ may be described as follows: if $1\leq a\le n$, then
\begin{equation}\label{ERes}
 i_a^{\tt} = j - c_{a,j} (\tt) + e\Z,
\end{equation}
where $j\in\{1,\dots,k\}$ is determined by the condition that 
$\tt(a)$ is in the $j$th column. 
The following lemma shows that reflecting a tail of a path as above does not change the residue sequence of the corresponding tableau. 

\begin{lemma}\label{LResSeq}
Let $\al\in\Phi^+$, $m\in\Z$ and $a\in [0,n]$.  
Suppose that $\tt\in\CT_{\le k} (n)$ and $\pi_{\tt} (a) \in H_{\al,m}$. 
If $\st \in \CT_{\le k}(n)$ is 
determined by $\pi_{\st} = s_{\al,m}^a \cdot \pi_{\tt}$,  
then $\bi^\st= \bi^{\tt}$.
\end{lemma}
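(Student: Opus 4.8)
The plan is to prove the lemma by directly tracking how the tail reflection changes each step of the path, and then reading off the effect on the residue sequence from the explicit formula~\eqref{ERes}. Write $\al = \eps_p - \eps_\ell$ with $1\le p<\ell\le k$ and let $\sigma$ denote the transposition of $p$ and $\ell$. First I would record two elementary facts. By~\eqref{EEpsInn}, the linear reflection $s_\al$ permutes $\{\eps_1,\dots,\eps_k\}$ according to $\sigma$, i.e.\ $s_\al\eps_j=\eps_{\sigma(j)}$ for all $j$. By~\eqref{Edot}, for any $v,w\in V$ we have $s_{\al,m}\cdot v-s_{\al,m}\cdot w=s_\al(v-w)$, so applying $s_{\al,m}$ turns a step $\eps_j$ of a path into the step $\eps_{\sigma(j)}$. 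It follows that the reflected tail still has all steps among the $\eps_j$ (at the junction $b=a$ one uses here that $\pi_\tt(a)$ lies on the wall $H_{\al,m}$ and is hence fixed by $s_{\al,m}$), so $s^a_{\al,m}\cdot\pi_\tt\in\Pa_n$ and thus $\st$ is well defined by Lemma~\ref{lem:bij}. The same fact shows that the column of $\st(b)$ is $\sigma$ applied to the column of $\tt(b)$ for every $b>a$, while for $b\le a$ the two columns agree since $\pi_\st$ and $\pi_\tt$ coincide on $[0,a]$.

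Next I would compute the coordinates $c_{b,j}(\st)$ in terms of those of $\tt$. For $b>a$, formula~\eqref{Edot} gives $\pi_\st(b)=s_{\al,m}\cdot\pi_\tt(b)=\pi_\tt(b)-t_b\,\al$, where $t_b:=(\pi_\tt(b)+\rho,\al)-me$; evaluating via~\eqref{EEpsInn} and the definition of $\rho$ yields $t_b=c_{b,p}(\tt)-c_{b,\ell}(\tt)+(\ell-p)-me\in\Z$ (and $t_a=0$ because $\pi_\tt(a)\in H_{\al,m}$). Since the value $\pi_\st(b)\in V_\Z$ together with the requirement that its coordinate sum equal $b$ determines the coordinates uniquely, substituting $\pi_\tt(b)=\sum_j c_{b,j}(\tt)\eps_j$ yields $c_{b,p}(\st)=c_{b,p}(\tt)-t_b$, $c_{b,\ell}(\st)=c_{b,\ell}(\tt)+t_b$, and $c_{b,j}(\st)=c_{b,j}(\tt)$ for $j\notin\{p,\ell\}$.

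Finally I would feed this into~\eqref{ERes} and check $i_b^\st=i_b^\tt$ case by case on the column $c$ of $\tt(b)$ (the case $b\le a$ being immediate). If $c\notin\{p,\ell\}$, then both the column and the relevant coordinate are unchanged, so the residue is unchanged. If $c=p$, then $\st(b)$ lies in column $\ell$, and $i_b^\st-i_b^\tt$ is the class of $(\ell-c_{b,\ell}(\st))-(p-c_{b,p}(\tt))$, which simplifies to $me$ once the formula for $t_b$ is substituted; symmetrically, if $c=\ell$ it simplifies to $-me$. In both cases the difference is a multiple of $e$, so $i_b^\st=i_b^\tt$, and together with the $b\le a$ case this gives $\bi^\st=\bi^\tt$. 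I do not anticipate a genuine obstacle: the content is a short computation. The points needing care are (i) verifying at the outset that $s^a_{\al,m}\cdot\pi_\tt$ really lands in $\Pa_n$, so that Lemma~\ref{lem:bij} may be applied, and (ii) keeping straight that $s_\al$ interchanges the two distinguished columns $p,\ell$ while the affine part of $s_{\al,m}$ is precisely what shifts the corresponding coordinates $c_{b,p},c_{b,\ell}$ by $\mp t_b$, so that the residue discrepancies in the two nontrivial cases come out to exactly $\pm me$ rather than merely to unspecified multiples of $e$.
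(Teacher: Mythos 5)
Your proposal is correct and follows essentially the same route as the paper: both compute the coordinates $c_{b,j}(\st)$ of the reflected tail (you via the explicit formula $\pi_\st(b)=\pi_\tt(b)-t_b\al$, the paper via the equivalent observation that the increments in columns $p$ and $\ell$ are swapped after step $a$), note that the column of $\st(b)$ is $s_\al$ applied to that of $\tt(b)$, and then verify through~\eqref{ERes} that the residue changes by $\pm me\equiv 0\pmod e$. The extra care you take about well-definedness of $\st$ and about the linear dependence $\eps_1+\cdots+\eps_k=0$ is sound but not a substantive departure.
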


\begin{proof}
Let $1\le r<t\le k$ be such that $\al=\eps_r-\eps_t$.
Since $\pi_\tt (a)\in H_{\al,m}$, we have 
$c_{a,r}(\tt) - c_{a,t} (\tt) = r-t+me$. 
Clearly, $i_b^\st=i_b^\tt$ for $1\le b\le a$.
Let $b\in [a+1,n]$. For any $j\in \{1,\dots,k\}$, we have 
\[
c_{b,j} (\st) =
\begin{cases}
 c_{b,j} (\tt) & \text{if } j\notin \{r,t\}; \\
 c_{b,t} (\tt) - c_{a,t} (\tt) + c_{a,r} (\tt) = c_{b,t} (\tt) +r-t+me & \text{if } j=r; \\
 c_{b,r} (\tt) - c_{a,r} (\tt) + c_{a,t} (\tt) =c_{b,r}(\tt) +t-r -me  & \text{if } j=t. 
\end{cases}
\]
Now, 
if $j\in \{1,\dots,k\}$ is determined by the condition that 
$\tt (b)$ is in the $j$th column, then 
$\st (b)$ is in the $(s_{\al} j)$th column, and it follows using~\eqref{ERes} that $i^\st_b=i^\tt_b$ in all cases.
\end{proof}

Let $u,v\in V_{\Z}\cap \CC$ 
be such that $v-u=\eps_i$ for some $i\in\{1,\ldots,k\}$. 
For every $\al\in \Phi^+$, set
\begin{equation}\label{Edeg1Step}
\deg_{e,\al}(u,v):=
\begin{dcases*}
1 & if $u\in H_{\al,m}$ and $(v+\rho,\alpha)<me$ for some $m\in \Z_{>0}$;\\
-1 & if $v\in H_{\al,m}$ and $(u+\rho,\alpha)>me$ for some $m\in \Z_{>0}$;\\
0 & otherwise.
\end{dcases*}
\end{equation}
Write $\deg_e (u,v) := \sum_{\al\in \Phi^+} \deg_{e,\al} (u,v)$, and for every 
$\pi\in \Pa^+_{[r,s]}$ define 
\begin{equation}\label{Edeg}
 \deg_e(\pi):=\sum_{a=r}^{s-1}\deg_e \! \big(\pi(a), \pi(a+1)\big). 
\end{equation}

\begin{lemma}\label{lem:deg}
For every $\mtt t\in \Std_{\le k}(n)$, 
we have $\deg_e(\pi_\tt)=\deg_e(\tt)$. 
\end{lemma}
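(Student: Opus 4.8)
The plan is to prove the identity $\deg_e(\pi_\tt) = \deg_e(\tt)$ by induction on $n$, matching the recursive definition of $\deg_e(\tt)$ against the summation defining $\deg_e(\pi_\tt)$ term by term. The base case $n=0$ is trivial since both sides are $0$. For the inductive step, writing $\la = \Shape(\tt)$ and $\mu = \Shape(\tt\!\da_{n-1})$, so that $\Y\la = \Y\mu \cup \{(a,b)\}$ where $(a,b) = \tt(n)$ lies in column $j$ with $\tt(n) = (c_{n,j}(\tt), j)$, we have by the recursive definition $\deg_e(\tt) = d_{\tt(n)}(\la) + \deg_e(\tt\!\da_{n-1})$ and by~\eqref{Edeg} $\deg_e(\pi_\tt) = \deg_e\big(\pi_\tt(n-1),\pi_\tt(n)\big) + \deg_e(\pi_{\tt\da_{n-1}})$. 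By induction it suffices to prove the single-step identity
\[
\deg_e\big(\pi_\tt(n-1),\pi_\tt(n)\big) = d_{\tt(n)}(\la).
\]

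The heart of the argument is therefore to translate the combinatorial quantity $d_{\tt(n)}(\la) = \#\{\text{addable } i\text{-nodes for } \la \text{ below } (a,b)\} - \#\{\text{removable } i\text{-nodes of } \la \text{ below } (a,b)\}$ into the wall-crossing count $\sum_{\al\in\Phi^+}\deg_{e,\al}(u,v)$ with $u = \pi_\tt(n-1)$, $v = \pi_\tt(n)$, $v - u = \eps_j$. First I would note that $v = u + \eps_j$ with $u = \sum_t c_{n-1,t}(\tt)\eps_t$, so $\deg_{e,\al}(u,v)$ is only nonzero when $\al = \eps_r - \eps_j$ or $\al = \eps_j - \eps_t$ for some index $r < j$ or $t > j$; here is where the restriction $\al\in\Phi^+$ and the structure of the bilinear form~\eqref{EEpsInn} is used, since $(\eps_j, \al)$ must be nonzero. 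Then for each column index $t$ with $j < t \le k$, the node $(c_{n,t}(\tt) + 1, t)$ is precisely the candidate addable/removable $i$-node of $\la$ below $(a,b)$ in column $t$ (and similarly for $r < j$ columns, where one checks that no addable or removable $i$-nodes below $(a,b)$ can occur because the column to the left is at least as long); the condition that this node has the same residue $i = \res(a,b)$ as $(a,b)$ translates, via~\eqref{ERes}, into the statement that $u$ or $v$ lies on the wall $H_{\eps_j - \eps_t, m}$ or $H_{\eps_r - \eps_j, m}$ for the appropriate $m$, and the sign conditions $(v+\rho,\al) < me$ versus $(u+\rho,\al) > me$ distinguish the addable case (sign $+1$) from the removable case (sign $-1$). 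The positivity requirement $m\in\Z_{>0}$ in~\eqref{Edeg1Step} should correspond exactly to the node in question lying strictly below row $a$, i.e.\ to the word ``below'' in the definition of $d_{(a,b)}(\la)$; checking this correspondence carefully — that $m > 0$ is equivalent to the relevant row index exceeding $a$ — is the bookkeeping step requiring the most care.

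The main obstacle I anticipate is precisely this last alignment: ensuring that the one-sided ``below'' condition in $d_{(a,b)}(\la)$, the standardness constraints forcing $c_{n,t}(\tt) \le c_{n,j}(\tt)$ for $t > j$ and $c_{n,r}(\tt) \ge c_{n,j}(\tt)$ for $r < j$, and the wall-membership-plus-sign conditions of~\eqref{Edeg1Step} all line up without off-by-one errors, and in particular that columns to the left of column $j$ genuinely contribute nothing (so the sum over $\al\in\Phi^+$ reduces to a sum over columns to the right). Once the dictionary is set up — addable $i$-node below $(a,b)$ in column $t$ $\leftrightarrow$ $u\in H_{\eps_j-\eps_t,m}$, $(v+\rho,\eps_j-\eps_t) < me$ for some $m > 0$; removable $i$-node below $(a,b)$ in column $t$ $\leftrightarrow$ $v\in H_{\eps_j-\eps_t,m}$, $(u+\rho,\eps_j-\eps_t) > me$ for some $m > 0$ — the identity $\deg_e(u,v) = d_{\tt(n)}(\la)$ follows by summing the contributions, and the induction closes.
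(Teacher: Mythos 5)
Your overall strategy coincides with the paper's: induct on $n$, reduce to the single-step identity $\deg_e\big(\pi_\tt(n-1),\pi_\tt(n)\big)=d_{\tt(n)}(\la)$, and match wall-crossings of the last step against addable/removable $i$-nodes column by column. However, the dictionary you set up is reversed on both sides, and this is a genuine error, not just a bookkeeping risk. On the path side: writing $v=u+\eps_j$, for a positive root $\al=\eps_j-\eps_t$ with $t>j$ one has $(v+\rho,\al)=(u+\rho,\al)+1$, which is incompatible with \emph{both} clauses of~\eqref{Edeg1Step} (if $u\in H_{\al,m}$ then $(v+\rho,\al)=me+1\not<me$; if $v\in H_{\al,m}$ then $(u+\rho,\al)=me-1\not>me$). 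So the roots indexed by columns to the \emph{right} of $j$ contribute nothing; the only roots that can give $\pm1$ are $\al=\eps_r-\eps_j$ with $r<j$, where the pairing \emph{decreases} by $1$ across the step. On the tableau side the situation is the mirror image of what you assert: precisely because a column $r<j$ satisfies $c_{n,r}(\tt)\ge c_{n,j}(\tt)=a$, its bottom-of-column addable node $(c_{n,r}(\tt)+1,r)$ sits in a row strictly greater than $a$ and hence \emph{is} below $(a,b)$, while for $t>j$ the candidate nodes are in rows $\le a+1$ and (after checking the case $c_{n,t}(\tt)=a$, where the node fails to be addable) never contribute. Executed as written, your plan would conclude that both sides vanish identically, which already fails for $\la=(2)$, $e=2$: there $d_{\tt(2)}(\la)=1$, coming from the addable node $(2,1)$ in the column to the left, and the corresponding wall-crossing is for $\al=\eps_1-\eps_2$.

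Once the two sides are swapped, the rest of your outline does go through essentially as in the paper: wall membership of $u$ (resp.\ $v$) for $\al=\eps_r-\eps_j$ corresponds to the residue match for the addable (resp.\ removable) node of column $r$, the condition $m>0$ is automatic because $c_{n,r}(\tt)\ge c_{n,j}(\tt)$ and $r<j$, and the remaining care point is the one you did not flag: a column $r<j$ whose virtual addable (or removable) node is not genuinely addable (or removable, or below) must be seen to cancel in the signed count, which happens because such defects occur in pairs of equal residue in adjacent columns.
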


\begin{proof}
Let $\la$ be the shape of $\tt$. Arguing by induction on $n$, we see that it is enough to show that 
$d_{\tt (n)} (\la) = \deg_e (\pi_{\tt} (n-1),\pi_\tt (n))$ when $n>0$.
For $j=1,\dots, k$, let $c_j$ be the size of the $j$th column of $\Y\la$, i.e.~$c_j = c_{n,j} (\tt)$. 
 Let $t$ be such that $\tt (n)=(c_t,t)$. Then $i:=t-c_t+e\Z \in I$ is the residue of $\tt(n)$. 
It is easy to see using the definitions and~\eqref{ERes} that 
\[
d_{\tt (n)} (\la) = \# \{ j\in [1,t-1] \mid j-c_j+e\Z = i+1\}
- \# \{ j\in[1,t-1] \mid j-c_j +e \Z = i \}. 
\]
Since $\pi_\tt (n) = \pi_\tt (n-1)+\eps_t$, we have 
$\deg_{e,\al} (\pi_\tt (n-1),\pi_\tt (n)) = 0$ for all $\al \in \Phi^+$ such that $\al\ne \eps_j - \eps_t$ for any $j=1,\dots,t-1$. 
Moreover, using~\eqref{EEpsInn} we see that for each $j=1,\dots,t-1$,  
\[
\deg_{e,\,\eps_j-\eps_t} (\pi_\tt (n-1),\pi_\tt (n)) = 
\begin{dcases*}
  1 & if $c_j-c_t+t - j \equiv -1 \bmod e$; \\
 -1 & if $c_j - c_t+t-j \equiv 0 \bmod e$; \\
  0 & otherwise.
\end{dcases*}
\]
Since $t-c_t + e\Z = i$, we deduce that 
$d_{\tt(n)} (\la) = \deg_e (\pi_{\tt} (n-1),\pi_\tt (n))$, as required.
\end{proof}

\begin{Remark}
The correspondence between standard tableaux and paths, as above, is considered in~\cite[Section 5]{GW99}. 
The degree function~\eqref{Edeg} is similar to the one defined in~\cite[Definition 1.4]{bcs15} in a somewhat different context. 
Also, consider a path $\pi\in P^+_n$ such that 
$\pi(n)$ does not belong to any wall $H_{\al,m}$ and 
$\pi(a) \notin H_{\al,m} \cap H_{\al',m'}$ for any distinct walls $H_{\al,m}$ and $H_{\al',m'}$ whenever $0\le a<n$. Then one can associate with $\pi$ a {\em Bruhat stroll} as defined in~\cite[\S 2.4]{EW13}, and $\deg_e (\pi)$ is precisely the {\em defect} of the corresponding Bruhat stroll.  
\end{Remark}

\subsection{Two-column tableaux}\label{SSTwo}
In this subsection, we assume that $k=2$, and we again fix $n\in \Z_{\ge 0}$.  
Then $\Phi^+ = \{\al_1\}$, and we write $\al=\al_1$.
We identify $V$ with $\mathbb R$ by sending $\eps_1$ to $1$ and $\eps_2$ to $-1$.
Then $\rho=1$, and a wall $H_{\al,m}$ is the singleton set $\{ me-1\}$.  Furthermore, $\CC = \mathbb{R}_{>(-1)}$
 and 
$$\Pa^+_n = \{\pi\colon [0,n] \rightarrow \mathbb{Z}_{\geq 0} \mid \pi(0)=0 \text{ and } \pi(a+1)=\pi(a)\pm 1 \text{ for all } 0 \leq a <n\}.$$
We write $s_m, s^a_m, H_m$ instead of $s_{\al,m}, s^{a}_{\al,m}, H_{\al,m}$ respectively. 

Let 
\begin{equation}\label{EH}
H=\bigcup_{m\in \Z_{>0}} H_m = \{ me-1 \mid m\in \Z_{>0} \}.
\end{equation}

We define a map $\reg_e\colon \Pa^+_n\to\Pa^+_n$ as follows. Given 
$\pi\in \Pa^+_n$, we set $\reg_e(\pi)=\pi$ if $\pi(a)\notin H$ for all 
$a\in \{0,\ldots,n\}$. Otherwise, let $a\in \{0,\ldots,n\}$  
be maximal such that $\pi(a) \in H$ and, if $m\in \Z_{>0}$ is given by the condition that $\pi(a)\in H_m$, define 
\[
\reg_e(\pi) := 
\begin{dcases*}
\pi & if $\pi(n)\geq me-1$;\\
s^a_m  \cdot \pi & if $\pi(n)<me-1$. 
\end{dcases*}
\] 
Less formally, we consider the last point at which the path $\pi$ meets a wall $H_m$ (if such a point exists) and, if this point is greater than the endpoint of $\pi$, we get
$\reg_e(\pi)$ by reflecting the corresponding `tail' of $\pi$ 
with respect to $H_m$. 
Further, we set
\[
r_e (\pi) := 
\begin{dcases*}
0 & if $\reg_e(\pi)=\pi$;\\
1 & if $\reg_e(\pi)\ne\pi$.
\end{dcases*}
\]
The following is clear from the definitions:
\begin{lemma}\label{LSubtr}
We have $\deg_e (\reg_e(\pi)) = \deg_e (\pi) - r_e (\pi)$ for all $\pi\in \Pa^+_n$.
\end{lemma}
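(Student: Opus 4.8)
The plan is to unwind the definitions of $\deg_e$, $\reg_e$, and $r_e$, and check the two cases separately. First I would fix $\pi\in\Pa^+_n$. If $\pi(a)\notin H$ for all $a\in\{0,\dots,n\}$, then by definition $\reg_e(\pi)=\pi$ and $r_e(\pi)=0$, so the claimed identity $\deg_e(\reg_e(\pi))=\deg_e(\pi)-r_e(\pi)$ reads $\deg_e(\pi)=\deg_e(\pi)$, which is trivially true. Otherwise, let $a$ be maximal with $\pi(a)\in H$, and let $m\in\Z_{>0}$ satisfy $\pi(a)\in H_m$, i.e.\ $\pi(a)=me-1$.

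In the case $\pi(n)\ge me-1$, we again have $\reg_e(\pi)=\pi$ and $r_e(\pi)=0$, so there is nothing to prove. The substantive case is $\pi(n)<me-1$, where $\reg_e(\pi)=s^a_m\cdot\pi$ and $r_e(\pi)=1$; here I must show $\deg_e(s^a_m\cdot\pi)=\deg_e(\pi)-1$. Write $\pi'=s^a_m\cdot\pi$. By definition~\eqref{Edeg}, $\deg_e(\pi')=\sum_{b=0}^{n-1}\deg_e(\pi'(b),\pi'(b+1))$. For $b<a$ the steps of $\pi'$ agree with those of $\pi$, and for $b>a$ the step $(\pi'(b),\pi'(b+1))$ is the image under the reflection $s_m$ (about the point $me-1$) of the step $(\pi(b),\pi(b+1))$. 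I would argue that reflection about the wall $H_m$ preserves each single-step degree contribution $\deg_{e,\al}$ when the wall $H_m$ itself is not one of the walls witnessing the $\pm1$ in~\eqref{Edeg1Step}, and more carefully track what happens for the wall $H_m$: a step crossing or touching $H_m$ that contributed $+1$ before reflection contributes $-1$ afterwards and vice versa. The cleanest route is to observe that since $a$ is maximal with $\pi(a)\in H$, the tail $\pi(a),\dots,\pi(n)$ touches $H$ only at its initial point $\pi(a)=me-1$; hence among the steps with index $b\ge a$, exactly one step is of the form $(\pi(a),\pi(a+1))$ with $\pi(a)\in H_m$, and since $\pi(n)<me-1$ this step must go downward, $\pi(a+1)=me-2<me-1$, contributing $\deg_{e,\al}(\pi(a),\pi(a+1))=+1$ (taking $m\in\Z_{>0}$ in the first clause of~\eqref{Edeg1Step}). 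After reflection this step becomes $(me-1,me)$, i.e.\ an upward step with $\pi'(a)\in H_m$ and $\pi'(a+1)=me>me-1$, so it contributes $0$ under~\eqref{Edeg1Step} (neither the first nor second clause applies since it is the start point lying on the wall but moving away upward — more precisely $v\in H_{\al,m'}$ fails for the endpoint and $u\in H_{\al,m}$ holds but $(v+\rho,\al)=me\not<me$). All other steps and all other walls contribute identically before and after reflection, because the reflection $s_m$ maps $H_{m'}\mapsto H_{2m-m'}$ bijectively and preserves the relative position (above/below, on/off) of points with respect to walls. Summing, $\deg_e(\pi')=\deg_e(\pi)-1$, as required.

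The main obstacle I expect is the bookkeeping for step contributions from walls other than $H_m$: one must verify that for each step $(\pi(b),\pi(b+1))$ with $b>a$ and each $\al$-wall $H_{m'}$, the reflected step $(\pi'(b),\pi'(b+1))$ with respect to $H_{2m-m'}$ gives the same value in~\eqref{Edeg1Step}. Since $k=2$ and $\Phi^+=\{\al\}$, this is really a one-dimensional statement about a reflection of $\mathbb R$ fixing $me-1$, and~\eqref{Edeg1Step} only involves the conditions ``$u\in H_{\al,m'}$ and $(v+\rho,\al)<m'e$'' (and its mirror), all of which transform correctly under $v\mapsto 2(me-1)-v$ together with $m'\mapsto 2m-m'$; the positivity condition $m'>0$ needs a small check, namely that the walls met by the tail $\pi(a+1),\dots,\pi(n)$ all correspond to positive $m'$, which holds because $\pi$ takes values in $\Z_{\ge0}$ and so does its reflection's tail (indeed $s_m\cdot$ fixes $\pi(a)=me-1$ and the tail of $\pi$ stays in $[0,me-1]$, hence its reflection stays in $[me-1,2me-2]\subset\Z_{\ge0}$, wait — we need the reflected path to remain in $\Pa^+_n$, which is part of the assertion that $s^a_m\cdot\pi\in\Pa^+_n$ already granted by the definition of $\reg_e$). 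Given that, the identity follows by a routine termwise comparison, so the lemma is indeed ``clear from the definitions'' once this symmetry is made explicit.
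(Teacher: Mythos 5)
Your proof is correct, and it is exactly the verification the paper has in mind: the paper gives no argument, stating only that the lemma ``is clear from the definitions,'' and your case analysis (with the key observation that, by maximality of $a$, no step after index $a$ meets any wall before or after reflection, while the single step at index $a$ changes its contribution from $+1$ to $0$) is the intended unwinding. The only blemish is the trivial slip $(v+\rho,\alpha)=me$ for $v=me$ (it equals $me+1$), which does not affect the conclusion.
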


By Lemma~\ref{lem:bij}, there is 
a well-defined map 
$\reg_e\colon \Std_{\le 2} (n)\to \Std_{\le 2} (n)$ 
determined by the condition that $\pi_{\reg_e(\tt)}=\reg_e(\pi_{\tt})$ for all 
$\tt\in \Std_{\le 2} (n)$. 
We also have a map 
$r_e\colon \Std_{\le 2}(n) \to \{0,1\}$ defined by $r_e (\tt) := r_e (\pi_\tt)$.

For any $\la\in \Par_{\le 2} (n)$, set
\[
\DStd_e (\la):= \{ \tt \in \Std(\la) \mid \reg_e (\tt) = \tt \}. 
\]
We refer to the elements of $\DStd_e (\la)$ as {\em $e$-regular} standard tableaux.
The following is easily seen:

\begin{lemma}\label{LPreimage}
Let $\mu = (2^x,1^y)\in \Par_{\le 2}(n)$ and $\st \in \Std(\mu)$. 
Write $y=me-1+j$, 
 where $m\in \Z_{\ge 0}$ and $0\le j<e$. 
If $y \geq e-1$, let 
$a\in \{0,\dots,n\}$ be maximal such that $\pi_\st (a) =me-1$, and let 
$\tt$ be the standard tableau determined by the condition that 
$\pi_{\tt}=s_m^a\cdot \pi_{\st}$.  
Then 
\[
\reg_{e}^{-1} (\st) = 
\begin{dcases*}
 \varnothing & if $\st\notin\DStd_e(\mu)$;\\
 \{\st\} & if $\st\in\DStd_e(\mu)$ and either $y<e$ or $j=0$;\\
 \{\st,\tt\} & if $\st\in\DStd_e(\mu)$, $y\geq e$ and $j>0$.
\end{dcases*}
\]
Moreover, if $y\ge e-1$ then $\Shape(\tt) = (2^{x+j},1^{me-1-j})$. 
\end{lemma}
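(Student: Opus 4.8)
The plan is to unwind the definition of $\reg_e$ given just before the statement and analyze each of the three cases directly, using the description of paths for $2$-column tableaux. First I would observe that, since $\mu = (2^x, 1^y)$, the endpoint of $\pi_\st$ is $\pi_\st(n) = y$ (the path reaches height $2x$ and comes back down, or more precisely it ends at $x - (x+y) + 2x = \ldots$; in the identification $\eps_1 \mapsto 1, \eps_2 \mapsto -1$ the endpoint is $c_{n,1}(\st) - c_{n,2}(\st) = (x+y) - x = y$). So the endpoint is determined by $\mu$ alone and equals $y$. Writing $y = me - 1 + j$ with $0 \le j < e$, the wall $H_{m}$ is the singleton $\{me-1\}$, and $y \ge me - 1$ always, with $y = me-1$ exactly when $j = 0$.

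Next I would split on whether $\pi_\st$ meets $H = \bigcup_{m' > 0} H_{m'}$ at all. If $\st \notin \DStd_e(\mu)$, then by definition $\reg_e(\st) \ne \st$, so $\st$ is not in the image of $\reg_e$ restricted to $\DStd_e(\mu)$; but one must also check $\st$ is not hit by some non-regular tableau — however $\reg_e \circ \reg_e = \reg_e$ would give $\reg_e(\st) = \st$ if $\st = \reg_e(\ut)$ for any $\ut$, contradiction; hence $\reg_e^{-1}(\st) = \varnothing$. (Idempotency of $\reg_e$ is immediate from the construction: reflecting a tail past a wall lands the endpoint on the $\ge me-1$ side, and the reflected path still has its last wall-crossing at the same point $a$ with $\pi(a) = me-1$, now on the correct side, so a second application does nothing — I'd state this as a short preliminary observation.) If $\st \in \DStd_e(\mu)$, I need to find all $\ut \in \Std_{\le 2}(n)$ with $\reg_e(\ut) = \st$. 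Certainly $\ut = \st$ works. Any other preimage $\ut$ must have $\reg_e(\ut) = s^b_{m'} \cdot \pi_\ut = \pi_\st$ for the maximal $b$ with $\pi_\ut(b) \in H_{m'}$ and $\pi_\ut(n) < m'e - 1$; applying $s^b_{m'}$ again recovers $\pi_\ut$ from $\pi_\st$, so $\ut$ is determined by the choice of wall $H_{m'}$ and of point $b$ on it. Since $s^b_{m'}$ fixes $\pi_\st$ up to position $b$ and reflects afterward, and the reflection must move the endpoint from $\ge m'e - 1$ (true for $\pi_\st$) to $< m'e - 1$, and must be such that $b$ is the \emph{last} wall-meeting of $\pi_\ut$: here is where the arithmetic $y = me-1+j$ enters. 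The endpoint $y$ of $\pi_\st$ lies strictly between walls $H_m$ and $H_{m+1}$ when $0 < j < e$, and equals $me-1 \in H_m$ when $j = 0$; also $y < m'e - 1$ forces $m' \ge m+1$ when $j > 0$ and $m' \ge m+1$ when $j = 0$ too (since we need strict inequality). So the only candidate wall producing a new preimage is $H_m$ in the case $j = 0$? No — re-examining: if $j = 0$ then $\pi_\st$ ends exactly on $H_m$, so $\pi_\st(n) = me - 1$ is not $< me-1$, and for any $m' > m$ we'd need $\pi_\st$ to reach height $m'e - 1 > me - 1$, but after reflecting a tail of such a hypothetical $\ut$ we'd need the endpoint $< m'e-1$, i.e. $y = me - 1 < m'e - 1$, fine — but then $\pi_\ut$ would reach a \emph{higher} wall $H_{m'}$ than any point of $\pi_\st$, yet $\pi_\ut$ agrees with $\pi_\st$ up to $b$ and the reflected tail of $\pi_\st$ never exceeds... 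I think the clean statement is: a nontrivial preimage exists iff $\pi_\st$ can be obtained by reflecting past a wall strictly above its endpoint while keeping that wall as the last crossing, and a short case-check on $j$ shows this is possible exactly when $y \ge e$ and $j > 0$, in which case the relevant wall is $H_m$ and the point is the maximal $a$ with $\pi_\st(a) = me - 1$. I would carry this out by explicitly describing the reflected path and verifying (a) it is still in $\Pa^+_n$ (stays $\ge 0$ — this needs $m \ge 1$, i.e. $y \ge e - 1$, and for a genuinely new path $y \ge e$), (b) its last wall-crossing is indeed at $a$ on $H_m$, and (c) its endpoint is $2(me-1) - y = me - 1 - j < me - 1$, confirming $\reg_e(\ut) = \st$.

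For the shape computation: the new tableau $\tt$ has $\pi_\tt = s_m^a \cdot \pi_\st$, which agrees with $\pi_\st$ up to $a$ (where the height is $me - 1$) and is its reflection afterward, so the endpoint of $\pi_\tt$ is $2(me-1) - y = 2me - 2 - (me - 1 + j) = me - 1 - j$. Since $\tt$ has $2$ columns, its shape is $(2^{x'}, 1^{y'})$ with $y' = $ endpoint $= me - 1 - j$ and $x' + y' = x + y$ (total size $n$ is preserved), giving $x' = (x + y) - (me - 1 - j) = x + (me - 1 + j) - (me - 1 - j) = x + 2j$. Hence $\Shape(\tt) = (2^{x+j}, 1^{me-1-j})$ — wait, that gives $x' = x + 2j$, not $x + j$; let me recompute: $y = me - 1 + j$ so $x + y = x + me - 1 + j$; $y' = me - 1 - j$; $x' = (x + me - 1 + j) - (me - 1 - j) = x + 2j$. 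So $\Shape(\tt) = (2^{x+2j}, 1^{me-1-j})$? But the statement says $(2^{x+j}, 1^{me-1-j})$. Let me recheck whether the endpoint of $\pi_\st$ for $\mu = (2^x, 1^y)$ is really $y$: $c_{n,1} = x + y$ (first column has $x + y$ boxes), $c_{n,2} = x$ (second column has $x$ boxes), endpoint $= c_{n,1} - c_{n,2} = y$. And the new endpoint is $2(me-1) - y$. With $y = me - 1 + j$: new endpoint $= 2me - 2 - me + 1 - j = me - 1 - j$. New shape $(2^{x'}, 1^{y'})$: $y' = me - 1 - j$, and $2x' + y' = 2(x+y) + ?$— no, the total number of boxes is $n = 2x + y$, so $2x' + y' = n = 2x + y = 2x + me - 1 + j$, hence $2x' = 2x + me - 1 + j - (me - 1 - j) = 2x + 2j$, so $x' = x + j$. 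Good — I conflated "total boxes" with "box count difference." So $\Shape(\tt) = (2^{x+j}, 1^{me-1-j})$, matching the statement. I would present this computation cleanly using $n = 2x + y$ throughout.

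The main obstacle I anticipate is the careful bookkeeping in the preimage count — specifically, ruling out spurious preimages coming from other walls $H_{m'}$ with $m' \ne m$, and handling the boundary cases $y < e - 1$ (no wall met, $\reg_e = \mathrm{id}$, so the only preimage of any tableau is itself), $e - 1 \le y < e$ i.e. $m = 1, j = 0$ ... actually $y < e$ with $y \ge e-1$ means $y = e - 1$, so $m = 1, j = 0$, endpoint on the wall $H_1$, and one must check no new preimage arises (reflecting past $H_1$ would require going below $0$, or the endpoint constraint fails). I would organize the argument as: (1) record idempotency of $\reg_e$ and the endpoint formula; (2) dispose of $\st \notin \DStd_e(\mu)$ via idempotency; (3) for $\st \in \DStd_e(\mu)$, show every preimage is obtained by a single tail-reflection past exactly one wall, parametrized by (wall, last-meeting-point); (4) an arithmetic lemma pinning the wall to $H_m$ and the point to the maximal $a$, valid precisely when $y \ge e$ and $j > 0$, with the reflected path verified to lie in $\Pa^+_n$ and to regularize back to $\st$; (5) the shape computation via $n = 2x + y$. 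The rest is routine.
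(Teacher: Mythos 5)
Your proposal is correct and follows the direct case analysis that the paper intends (the paper offers no proof, asserting the lemma is ``easily seen''): you unwind the definition of $\reg_e$, use the idempotency of $\reg_e$ to dispose of the non-regular case, observe that the reflection point and wall of any nontrivial preimage are recoverable from $\st$ as its last wall-meeting, and pin down when the reflected candidate is a genuine element of $\Pa^+_n$ with the required endpoint. The mid-argument wobbles (the momentary sign confusion about which side of the wall the endpoint must land on, and the $x'=x+2j$ slip) are both self-corrected, and the final five-step organization together with the computation $2x'+y'=n$ yields a complete and sound proof.
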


\begin{Examp} Let $e=4$. A path $\pi\in \Pa^{+}_{19}$
and the path $\reg_4 (\pi)$ are depicted below on the left and right, respectively. 
The weight space $V=\mathbb R$ is identified with a horizontal line, but---for presentation purposes---the height at which $\pi(a)$ is drawn gradually increases as $a=0,\dots,n$ increases. (We use this convention throughout the paper.)
The vertical lines indicate the walls $H_m= \{ 4m-1\}$, $m\in \Z_{\ge 0}$. 
In each picture, the steps from $\pi(a)$ to $\pi(a+1)$ for which 
$\deg_4 (\pi(a), \pi(a+1))$ is $1$ or $-1$ are marked by $+$ or $-$, respectively; these steps are highlighted for clarity. The degree of any unmarked step is $0$.
\[
\begin{array}{cc}
\hspace{-2.5mm}
\begin{tikzpicture}[baseline={([yshift=-0.25ex]current bounding box.center)},scale=0.6]
\draw[thick] (-1,0)--(11.1,0);
\draw[thick] (-1,0)--(-1,5);
\foreach \k in {-1,0,1,2,...,10,11} {\node at (\k,-0.5){\small$\k$};};
\foreach \k in {3,7,11} {\draw[dashed](\k,0)--(\k,5);};
\draw[rounded corners=4pt]
(0,0.6)--(4,0.6)--(4,1.2)--(3,1.2)--(3,1.8)--(7,1.8)--(7,2.4)--(5,2.4)--(5,3)--(8,3)--(8,3.6)--(4,3.6)--(4,4.2)--(5,4.2);
\draw[rounded corners=4pt,very thick](4,0.9)--(4,1.2)--(3,1.2)--(3,1.5); 
\node at (3.5,1.45){\small $-$};
\draw[rounded corners=4pt,very thick](7,2.1)--(7,2.4)--(6,2.4); 
\node at (6.5,2.65){\small $+$};
\draw[rounded corners=4pt,very thick](8,3.3)--(8,3.6)--(6,3.6); 
\node at (6.5,3.85){\small $+$};
\node at (7.5,3.85){\small $-$};
\end{tikzpicture}
&
\;
\begin{tikzpicture}[baseline={([yshift=-0.25ex]current bounding box.center)},scale=0.6]
\draw[thick] (-1,0)--(11.1,0);
\draw[thick] (-1,0)--(-1,5);
\foreach \k in {-1,0,1,2,...,10,11} {\node at (\k,-0.5){\small$\k$};};
\foreach \k in {3,7,11} {\draw[dashed](\k,0)--(\k,5);};
\draw[rounded corners=3pt]
(0,0.6)--(4,0.6)--(4,1.2)--(3,1.2)--(3,1.8)--(7,1.8)--(7,2.4)--(5,2.4)--(5,3)--(8,3)--(8,3.6)--(7,3.6)--(7,4.2)--(10,4.2)--(10,4.8)--(9,4.8);
\draw[rounded corners=4pt,very thick](4,0.9)--(4,1.2)--(3,1.2)--(3,1.5); 
\node at (3.5,1.45){\small $-$};
\draw[rounded corners=4pt,very thick](7,2.1)--(7,2.4)--(6,2.4); 
\node at (6.5,2.65){\small $+$};
\draw[rounded corners=4pt,very thick](8,3.3)--(8,3.6)--(7,3.6)--(7,3.9); 
\node at (7.5,3.85){\small $-$};
\end{tikzpicture}
\vspace{1mm}
\\
\deg_4(\pi)=0, \; r_4(\pi)=1 
& 
\hspace{7mm} \deg_4 (\reg_4(\pi))=-1, \; r_4(\reg_4(\pi))=0
\end{array}
\]
\newline
The standard tableau $\tt$ such that $\pi=\pi_\tt$ and the tableau $\reg_4 (\tt)$ are the transposes of 
\[
\begin{ytableau}
 1& 2 & 3 & 4 &6 & 7 & 8 & 9 &12 & 13 & 14 &19  \\
 5 & 10 & 11 & 15 & 16 & 17 & 18
\end{ytableau}
\quad \text{and} \quad 
\begin{ytableau}
 1& 2 & 3 & 4 &6 & 7 & 8 & 9 &12 & 13 & 14 &16 & 17 & 18  \\
 5 & 10 & 11 & 15 & 19
 \end{ytableau}
\]
respectively. 
\end{Examp}

\section{Characters and graded decomposition numbers for 2-column partitions}\label{SComb}

Let $F$ be a field of characteristic $p\ge 0$. We assume that 
$p=0$, $p=e$ or $p$ is coprime to $e$, cf.~\S\ref{SSHecke}. 
We fix  $n\in\mathbb{Z}_{\geq 0}$ and
 use the notation of~\S\ref{SSTwo} throughout the section. 
 In particular, $\Pa^+_n$ is a set of paths in a weight space of type $A_1$, and there is a bijection $\Std_{\le 2} (n) \iso \Pa^+_n$ given by $\tt \mapsto \pi_\tt$, see Lemma~\ref{lem:bij}.

\subsection{$(e,p)$-regularisation}
We now define the {\em $(e,p)$-regularisation map}
$\reg_{e,p}\colon \Pa^+_n\to \Pa^+_n$, which is needed to state the main results of this section. 
If $p=0$, then set $\reg_{e,p}:=\reg_e$. 
If $p>0$, then $\reg_{e,p}$ 
is defined recursively, as follows. For all $\pi\in \Pa^+_n$: 
\begin{enumerate}
\item If $\reg_{ep^z}(\pi)=\pi$ for all $z\in \Z_{\ge 0}$, then set $\reg_{e,p}(\pi):=\pi$. 
\item Otherwise, $\reg_{e,p}(\pi):=\reg_{e,p}(\reg_{ep^z}(\pi))$, 
where $z$ is the largest non-negative integer
such that $\reg_{ep^z}(\pi)\neq\pi$. 
\end{enumerate}
Note that the recursion always terminates because any map $\reg_m$ either fixes a path or increases its end-point.
We also have a map 
\[
\reg_{e,p} \colon \Std_{\le 2} (n) \longrightarrow \Std_{\le 2} (n)
\]
 determined by the identity $\reg_{e,p}(\pi_\tt) = \pi_{\reg_{e,p} (\tt)}$ for all $\tt\in \Std_{\le 2} (n)$. 

Given $\pi\in \Pa^+_n$, we have 
\begin{equation}\label{recur}
\reg_{e,p}(\pi)=\reg_{ep^{z_h}}(\ldots\reg_{ep^{z_2}}(\reg_{ep^{z_1}}(\pi))\ldots)
\end{equation}
where, for each $r=1,\dots,h$, the 
integer $z_r\ge 0$ is maximal such that
$\reg_{ep^{z_{r-1}}} (\ldots \reg_{ep^{z_1}}(\pi)\ldots)$ 
 is not an $ep^{z_r}$-regular path.
Note that $z_1>\cdots >z_h$. 
When $p=0$, we use the convention that $ep^0=e$; in this case, $0\le h\le 1$. 
We refer to~\eqref{recur} as the {\em regularisation equation} and to $Z=\{z_1,\ldots,z_h\}$ as the {\em regularisation set} of $\pi$.
If $\tt\in \Std_{\le 2} (n)$, then the regularisation set of $\tt$ is defined to be that of $\pi_\tt$, and the regularisation equation of $\tt$ is also defined to be that of $\pi_\tt$, with $\pi_\tt$ replaced by $\tt$ on both sides. 

For any $\la\in \Par_{\le 2} (n)$, we set $\DStd_{e,0}(\la):= \DStd_e (\la)$ and, if $p>0$, 
\[
\DStd_{e,p}(\la):= \bigcap_{z\ge 0} \DStd_{ep^z} (\la) = \{ \tt \in \Std(\la) \mid \reg_{e,p} (\tt) = \tt \}. 
\]

\begin{Examp}\label{Ex9tab}
Suppose that $e=p=2$ and $\lambda = (2,2,1,1)$. 
Then $\Std(\la) = \{ \tt_1,\dots,\tt_9\}$, where 
\[
\begin{array}{ccc}
\begin{tikzpicture}[baseline={([yshift=-0.25ex]current bounding box.center)},scale=0.55]
\node at (-2.2,0.4) {$\pi_{\tt_1}=$};
\draw[thick] (-1,-0.3)--(5.2,-0.3);
\draw[thick] (-1,-0.3)--(-1,1.5);
\foreach \k in {0,1,2,...,5} {\node at (\k,-0.8){\small$\k$};};
\foreach \k in {1,5} {\draw[dashed](\k,-0.3)--(\k,1.5);};
\foreach \k in {3} {\draw(\k,-0.3)--(\k,1.5);};
\draw[rounded corners=3pt]
(0,0)--(1,0)--(1,0.3)--(0,0.3)--(0,0.6)--(1,0.6)--(1,0.9)--(0,0.9)--(0,1.2)--(2,1.2);
\end{tikzpicture}, 
& 
\begin{tikzpicture}[baseline={([yshift=-0.25ex]current bounding box.center)},scale=0.55]
\node at (-2.2,0.4) {$\pi_{\tt_2}=$};
\draw[thick] (-1,-0.3)--(5.2,-0.3);
\draw[thick] (-1,-0.3)--(-1,1.5);
\foreach \k in {0,1,2,...,5} {\node at (\k,-0.8){\small$\k$};};
\foreach \k in {1,5} {\draw[dashed](\k,-0.3)--(\k,1.5);};
\foreach \k in {3} {\draw(\k,-0.3)--(\k,1.5);};
\draw[rounded corners=3pt]
(0,0)--(1,0)--(1,0.3)--(0,0.3)--(0,0.6)--(1,0.6)--(2,0.6)--(2,0.9)--(1,0.9)--(1,1.2)--(2,1.2);
\end{tikzpicture},
& 
\begin{tikzpicture}[baseline={([yshift=-0.25ex]current bounding box.center)},scale=0.55]
\node at (-2.2,0.4) {$\pi_{\tt_3}=$};
\draw[thick] (-1,-0.3)--(5.2,-0.3);
\draw[thick] (-1,-0.3)--(-1,1.5);
\foreach \k in {0,1,2,...,5} {\node at (\k,-0.8){\small$\k$};};
\foreach \k in {1,5} {\draw[dashed](\k,-0.3)--(\k,1.5);};
\foreach \k in {3} {\draw(\k,-0.3)--(\k,1.5);};
\draw[rounded corners=3pt]
(0,0)--(1,0)--(2,0)--(2,0.3)--(0,0.3)--(0,0.6)--(1,0.6)--(2,0.6);
\end{tikzpicture}, \vspace{2mm} \\
\begin{tikzpicture}[baseline={([yshift=-0.25ex]current bounding box.center)},scale=0.55]
\node at (-2.2,0.4) {$\pi_{\tt_4}=$};
\draw[thick] (-1,-0.3)--(5.2,-0.3);
\draw[thick] (-1,-0.3)--(-1,1.5);
\foreach \k in {0,1,2,...,5} {\node at (\k,-0.8){\small$\k$};};
\foreach \k in {1,5} {\draw[dashed](\k,-0.3)--(\k,1.5);};
\foreach \k in {3} {\draw(\k,-0.3)--(\k,1.5);};
\draw[rounded corners=3pt]
(0,0)--(1,0)--(2,0)--(2,0.3)--(1,0.3)--(1,0.6)--(2,0.6)--(2,0.9)--(1,0.9)--(1,1.2)--(2,1.2);
\end{tikzpicture}, 
&
\begin{tikzpicture}[baseline={([yshift=-0.25ex]current bounding box.center)},scale=0.55]
\node at (-2.2,0.4) {$\pi_{\tt_5}=$};
\draw[thick] (-1,-0.3)--(5.2,-0.3);
\draw[thick] (-1,-0.3)--(-1,1.5);
\foreach \k in {0,1,2,...,5} {\node at (\k,-0.8){\small$\k$};};
\foreach \k in {1,5} {\draw[dashed](\k,-0.3)--(\k,1.5);};
\foreach \k in {3} {\draw(\k,-0.3)--(\k,1.5);};
\draw[rounded corners=3pt]
(0,0)--(1,0)--(1,0.3)--(0,0.3)--(0,0.6)--(3,0.6)--(3,0.9)--(2,0.9);
\end{tikzpicture},
& 
\begin{tikzpicture}[baseline={([yshift=-0.25ex]current bounding box.center)},scale=0.55]
\node at (-2.2,0.4) {$\pi_{\tt_6}=$};
\draw[thick] (-1,-0.3)--(5.2,-0.3);
\draw[thick] (-1,-0.3)--(-1,1.5);
\foreach \k in {0,1,2,...,5} {\node at (\k,-0.8){\small$\k$};};
\foreach \k in {1,5} {\draw[dashed](\k,-0.3)--(\k,1.5);};
\foreach \k in {3} {\draw(\k,-0.3)--(\k,1.5);};
\draw[rounded corners=3pt]
(0,0)--(2,0)--(2,0.3)--(1,0.3)--(1,0.6)--(3,0.6)--(3,0.9)--(2,0.9);
\end{tikzpicture},  \vspace{2mm} \\
\begin{tikzpicture}[baseline={([yshift=-0.25ex]current bounding box.center)},scale=0.55]
\node at (-2.2,0.4) {$\pi_{\tt_7}=$};
\draw[thick] (-1,-0.3)--(5.2,-0.3);
\draw[thick] (-1,-0.3)--(-1,1.5);
\foreach \k in {0,1,2,...,5} {\node at (\k,-0.8){\small$\k$};};
\foreach \k in {1,5} {\draw[dashed](\k,-0.3)--(\k,1.5);};
\foreach \k in {3} {\draw(\k,-0.3)--(\k,1.5);};
\draw[rounded corners=3pt]
(0,0)--(3,0)--(3,0.3)--(2,0.3)--(2,0.6)--(3,0.6)--(3,0.9)--(2,0.9);
\end{tikzpicture},
& 
\begin{tikzpicture}[baseline={([yshift=-0.25ex]current bounding box.center)},scale=0.55]
\node at (-2.2,0.4) {$\pi_{\tt_8}=$};
\draw[thick] (-1,-0.3)--(5.2,-0.3);
\draw[thick] (-1,-0.3)--(-1,1.5);
\foreach \k in {0,1,2,...,5} {\node at (\k,-0.8){\small$\k$};};
\foreach \k in {1,5} {\draw[dashed](\k,-0.3)--(\k,1.5);};
\foreach \k in {3} {\draw(\k,-0.3)--(\k,1.5);};
\draw[rounded corners=3pt]
(0,0)--(4,0)--(4,0.3)--(2,0.3);
\end{tikzpicture},
&
\begin{tikzpicture}[baseline={([yshift=-0.25ex]current bounding box.center)},scale=0.55]
\node at (-2.2,0.4) {$\pi_{\tt_9}=$};
\draw[thick] (-1,-0.3)--(5.2,-0.3);
\draw[thick] (-1,-0.3)--(-1,1.5);
\foreach \k in {0,1,2,...,5} {\node at (\k,-0.8){\small$\k$};};
\foreach \k in {1,5} {\draw[dashed](\k,-0.3)--(\k,1.5);};
\foreach \k in {3} {\draw(\k,-0.3)--(\k,1.5);};
\draw[rounded corners=3pt]
(0,0)--(3,0)--(3,0.3)--(1,0.3)--(1,0.6)--(2,0.6);
\end{tikzpicture}. 
\end{array}
\]
Here and in the sequel, the thicker the wall $H_m = \{ em-1\}$ is in the diagrams, the greater the $p$-adic valuation of $m$; if this valuation is $0$, the wall is dashed. 

We have $\DStd_{2,2} (\la) = \{ \tt_1,\tt_2,\tt_3,\tt_4\}$. So Theorem~\ref{TMainChar} implies, in particular, that $\dim D^{\la}=4$.
 The regularisation equation of $\pi_{\tt_9}$ is 
 \[
 \reg_{2,2} (\pi_{\tt_9}) = \reg_2 ( \reg_4 (\pi_{\tt_9})),
 \]
 with 
 \[
 \begin{array}{cc}
 \begin{tikzpicture}[baseline={([yshift=-0.25ex]current bounding box.center)},scale=0.55]
\node at (-3.2,0.2) {$\reg_4(\pi_{\tt_9})=$};
\draw[thick] (-1,-0.3)--(6.2,-0.3);
\draw[thick] (-1,-0.3)--(-1,1.2);
\foreach \k in {0,1,2,...,6} {\node at (\k,-0.8){\small$\k$};};
\foreach \k in {1,5} {\draw[dashed](\k,-0.3)--(\k,1.2);};
\foreach \k in {3} {\draw(\k,-0.3)--(\k,1.2);};
\draw[rounded corners=3pt]
(0,0)--(5,0)--(5,0.3)-- (4,0.3);
\end{tikzpicture}
& \quad \text{and} \quad 
\begin{tikzpicture}[baseline={([yshift=-0.25ex]current bounding box.center)},scale=0.55]
\node at (-3.2,0.08) {$\reg_{2,2}(\pi_{\tt_9})=$};
\draw[thick] (-1,-0.3)--(6.2,-0.3);
\draw[thick] (-1,-0.3)--(-1,1.2);
\foreach \k in {0,1,2,...,6} {\node at (\k,-0.8){\small$\k$};};
\foreach \k in {1,5} {\draw[dashed](\k,-0.3)--(\k,1.2);};
\foreach \k in {3} {\draw(\k,-0.3)--(\k,1.2);};
\draw[rounded corners=3pt]
(0,0)--(6,0);
\end{tikzpicture} \;.
 \end{array}
 \]
 Finally, the tableaux $\tt_5,\tt_6,\tt_7,\tt_8$ all have regularisation set $\{ 0\}$, and the images of these tableaux under $\reg_{2,2}$ have shape $(2,1^4)$. 
\end{Examp}

One of the main results of \S\ref{SComb} is the following theorem, which gives a combinatorial description of the graded decomposition numbers 
$[S^\lam:D^\mu]_q$ (when $\lam\in\Par_{\leq 2}(n)$) in terms of the map $\reg_{e,p}$. 

\begin{thm}\label{conj:83}
Let $\lam\in\Par_{\leq 2}(n)$, $\mu\in \RPar_{\le 2}(n)$, and suppose that either $p=e$ or $p$ is coprime to $e$ if $p\neq 0,e$.
If $\mtt s\in\DStd_{e,p}(\mu)$, then
\[
[S^\lam_F:D^\mu_F]_q=\sum_{\mathclap{\substack{\mtt t\in\Std(\lam)\\ \reg_{e,p}(\mtt t)=\mtt s}}} \; q^{r_e (\mtt t)}.
\]
In particular, the right-hand side does not depend on the choice of 
$\mtt s$.
\end{thm}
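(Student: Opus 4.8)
The plan is to prove Theorem~\ref{conj:83} as a consequence of the combinatorial identities~\eqref{EChSum1} and~\eqref{EChSum2} together with the self-consistency of the $q$-character data, rather than by a direct module-theoretic argument. The key starting observation is that the graded decomposition number $[S^\lam_F:D^\mu_F]_q$ is an honest polynomial in $\Z_{\ge 0}[q,q^{-1}]$, and by~\eqref{EFK} it is independent of the field $F$ once $p$ is fixed; moreover, by the results of James and Donkin (cited after Theorem~\ref{TMainChar}) the \emph{ungraded} multiplicity $[S^\lam_F:D^\mu_F]$ is at most $1$, so each $[S^\lam_F:D^\mu_F]_q$ is either $0$ or a single monomial $q^{r_{e,p,\lam,\mu}}$. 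Thus the entire content of the theorem is: (a) $D^\mu_F$ occurs in $S^\lam_F$ if and only if there exists $\mtt t\in\Std(\lam)$ with $\reg_{e,p}(\mtt t)=\mtt s$; (b) when it occurs, there is exactly one such $\mtt t$ (forcing the right-hand side to be a single monomial, hence independent of $\mtt s$); and (c) that monomial has exponent $r_e(\mtt t)$, matching $r_{e,p,\lam,\mu}$.

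First I would establish the decomposition $\Std(\lam)=\bigsqcup_\mu\Std_{e,p,\mu}(\lam)$ and the bijection $\reg'_{e,p,\lam,\mu}\colon\Std_{e,p,\mu}(\lam)\iso\DStd_{e,p}(\mu)$ from the material surrounding~\eqref{EChSum2} (these are Theorem~\ref{Treplamu} and the partition statement, which I may assume). The crucial point is to identify the fibre $\{\mtt t\in\Std(\lam)\mid\reg_{e,p}(\mtt t)=\mtt s\}$ with a subset of a single $\Std_{e,p,\mu}(\lam)$: since $\reg_{e,p}(\mtt t)$ has some shape $\mu$, and since $\reg_{e,p}$ restricted to $\Std_{e,p,\mu}(\lam)$ is precisely the bijection $\reg'_{e,p,\lam,\mu}$ onto $\DStd_{e,p}(\mu)$, the fibre over a given $\mtt s\in\DStd_{e,p}(\mu)$ is either empty (if $\Std_{e,p,\mu}(\lam)=\varnothing$) or the singleton $\{(\reg'_{e,p,\lam,\mu})^{-1}(\mtt s)\}$. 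This already gives (b) and the independence of $\mtt s$. For (a) and (c) I would then invoke~\eqref{EChSum1}: comparing residue sequences and degrees on both sides, and using that $\reg_e$ changes degree by exactly $-r_e(\mtt t)$ (Lemma~\ref{LSubtr}) while $\reg_{ep^z}$ similarly shifts degrees, one reads off that the exponent $r_{e,p,\lam,\mu}$ appearing in $[S^\lam_F:D^\mu_F]_q$ equals $r_e(\mtt t)$ for the unique $\mtt t$ in the fibre. Concretely: $\fch\Std_{e,p,\mu}(\lam)=q^{r_{e,p,\lam,\mu}}\fch\DStd_{e,p}(\mu)$ by~\eqref{EChSum2}, and since the bijection $\reg'_{e,p,\lam,\mu}$ sends $\mtt t$ to $\mtt s$ with $\bi^{\mtt t}=\bi^{\mtt s}$ (residue sequences are preserved by tail reflections, Lemma~\ref{LResSeq}) and $\deg_e(\mtt t)=\deg_e(\mtt s)+r_{e,p,\lam,\mu}$, one gets $r_{e,p,\lam,\mu}=\deg_e(\mtt t)-\deg_e(\mtt s)$; it remains to check this equals $r_e(\mtt t)$, which comes from the first step $\reg_{ep^{z_1}}$ of the regularisation equation of $\mtt t$ (all later steps being invisible after the appropriate bookkeeping, or more precisely the total degree drop along~\eqref{recur} being $r_e(\mtt t)$ by design of how $r_e$ is defined relative to $\reg_e=\reg_{e,0}$ versus the $p>0$ recursion).

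**The main obstacle**, I expect, is precisely reconciling the single exponent $r_e(\mtt t)$ on the right-hand side of the theorem with the total degree shift accumulated over the multi-step regularisation equation~\eqref{recur} when $p>0$: naively each of the $h$ reflection steps could contribute to the degree, so one must show that the composite degree drop $\deg_e(\mtt t)-\deg_e(\reg_{e,p}(\mtt t))$ is controlled by $r_e(\mtt t)\in\{0,1\}$ alone — i.e.\ that $r_e(\mtt t)=0$ forces $\mtt t$ to be $ep^z$-regular for \emph{all} $z$ (so $\reg_{e,p}(\mtt t)=\mtt t$ and the monomial is $q^0$), while $r_e(\mtt t)=1$ forces exactly a single net degree drop. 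This will require a careful analysis of how the walls $H_m$ for varying $p$-adic valuations of $m$ interact under successive tail reflections, presumably using that $H_{ep^z}\subseteq H_e$ and that reflecting past a high-valuation wall does not create new crossings of lower-valuation walls beyond the endpoint. I would handle this by an induction on $h$ (the number of steps in the regularisation equation), showing at each stage that the relevant degree contribution from Lemma~\ref{LSubtr}-type bookkeeping telescopes; if that proves delicate, an alternative is to sidestep it entirely by quoting the already-established~\eqref{EChSum1}–\eqref{EChSum2} to define $r_{e,p,\lam,\mu}$ and then separately verifying $r_{e,p,\lam,\mu}=r_e\big((\reg'_{e,p,\lam,\mu})^{-1}(\mtt s)\big)$ as a purely combinatorial identity about paths, which is likely how Section~\ref{SComb} is actually organised.
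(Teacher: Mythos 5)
Your combinatorial skeleton is broadly right and matches half of the paper's argument: the fibre of $\reg_{e,p}$ over $\mtt s\in\DStd_{e,p}(\mu)$ is indeed empty or a singleton (this is Lemma~\ref{existence}(i)), the ungraded James--Donkin multiplicities being at most $1$ do force $[S^\lam_F:D^\mu_F]_q$ to be $0$ or a monomial, and the value of $r_e(\mtt t)$ is extracted from the regularisation equation by applying Lemma~\ref{LRegPres}(ii) once per step, exactly as in Lemma~\ref{DecNo}. But the proposal is circular at its core. You propose to read off the exponent by ``invoking \eqref{EChSum1}'', treating \eqref{EChSum1} and \eqref{EChSum2} as already established. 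In the paper, \eqref{EChSum1} is a \emph{downstream consequence} of Theorem~\ref{TDec1} together with Theorems~\ref{TMain1}(ii) and~\ref{TMainChar}, and Theorem~\ref{TMainChar} is itself proved (in \S\ref{SSchar}) using $q^{r_{e,p,\la,\mu}}=[S^\la_F:D^\mu_F]_q$, i.e.\ essentially the statement you are trying to prove. The ingredient your proposal never supplies is the one that actually pins down the exponent for $p>0$: the computation of the graded decomposition matrix $D^p(q)=D^0(q)A^p$ in \S\ref{SSDec}, which combines the known $p=0$ graded answer (Theorem~\ref{p0_thm}), the James--Donkin ungraded answer (Theorem~\ref{james_thm}), and the bar-invariance of the graded adjustment matrix (Theorem~\ref{TBarInv}) to conclude $A^p(q)=A^p$ and hence Theorem~\ref{TDec1}. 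Character identities alone cannot replace this: knowing $\fch S^\la=\fch\Std(\la)$ and the combinatorial decomposition of $\fch\Std(\la)$ does not determine $\fch D^\mu_F$ or the graded shifts without an independent handle on one of them.

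A secondary, smaller problem: you identify ``$\reg_{e,p}$ restricted to $\Std_{e,p,\mu}(\lam)$'' with ``the bijection $\reg'_{e,p,\lam,\mu}$''. These are different maps: $\reg'_{e,p,\la,\mu}=\rho_Z\circ\reg_{e,p}$, and the correction $\rho_Z$ exists precisely because the degree drop under $\reg_{e,p}$ is \emph{not} $r_e(\mtt t)$ in general (in the example after Theorem~\ref{Treplamu}, $\deg_3(\mtt t)=3$ while $\deg_3(\reg_{3,2}(\mtt t))=-2$, a drop of $5$, yet $r_{3,2,\la,\mu}=1$). So your asserted relation $\deg_e(\mtt t)=\deg_e(\mtt s)+r_{e,p,\la,\mu}$ fails for $\mtt s=\reg_{e,p}(\mtt t)$; it holds only after composing with $\rho_Z$ (Lemma~\ref{deg_prprty}). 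This does not affect the truth of the theorem (whose right-hand side involves only $r_e(\mtt t)$, not $\deg_e(\mtt s)$), but it would derail the degree bookkeeping you describe in your final paragraph.
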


As we will see, the sum on the right-hand side always contains at most one non-zero term. \smallskip 

In~\S\ref{SSDec}, we give a description of graded decomposition numbers for 2-column partitions that does not use the map $\reg_{e,p}$ and refines a known result on ungraded decomposition numbers, see Theorem~\ref{TDec1}.
In~\S\ref{SSproof}, we use this description to prove 
Theorem~\ref{conj:83}. 

Given $\la,\mu \in \Std_{\le 2} (n)$, define the set 
\begin{equation}\label{EStdepmu}
 \Std_{e,p,\mu} (\la):= \{ \tt \in \Std(\la) \mid \reg_{e,p} (\tt) \text{ has shape } \mu \}. 
\end{equation}
In~\S\ref{SSchar} we show that the subsets $\Std_{e,p,\mu} (\la)$ satisfy the properties stated in \S\ref{SIntro} and use these properties to prove Theorem~\ref{TMainChar}. 

\subsection{Decomposition numbers}\label{SSDec}
Set
\[
D^p (q):= ( [S^\la_F : D^\mu_F]_q)_{\la \in \Par_{\le 2} (n),\,\,\mu\in\RPar_{e,\le 2} (n)},
\]
so that $D^p (q)$ is the submatrix of the graded decomposition matrix of $R^{\La_0}_{n,F}$ 
corresponding to partitions in $\Par_{\leq 2}(n)$.
Let $D^p= D^p(q)|_{q=1}$ denote the corresponding {\em ungraded} submatrix of the decomposition matrix.  

In this subsection, we prove a formula for the entries of $D^p(q)$. 
Recall that for $p>0$, there is a unique square matrix $A^p$---known as an adjustment matrix---such that $D^p = D^0 A^p$. Similarly, there is a unique graded adjustment matrix $A^p(q)$ such that $D^p(q) = D^0(q) A^p(q)$, see~\cite[Theorem~6.35]{M99},\cite[Theorem~5.17]{BK09}.
Write $A^p (q) = (a_{\la\mu}(q))_{\la,\mu\in \RPar_{e,\le 2} (n)}$.
The following is a special case of~\cite[Theorem~5.17]{BK09}:
\begin{thm}\label{TBarInv}
For every $\la \in \Par_{\le 2} (n)$ and $\mu\in \RPar_{e,\le 2} (n)$, we have $\bar a_{\la\mu} (q) = a_{\la\mu} (q)$. 
\end{thm}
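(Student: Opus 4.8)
The statement, Theorem~\ref{TBarInv}, asserts the bar-invariance of the entries of the graded adjustment matrix $A^p(q)$ for $2$-column partitions, and the excerpt explicitly flags it as ``a special case of~\cite[Theorem~5.17]{BK09}''. So the plan is simply to deduce it from that cited result together with the structural facts assembled in \S\ref{SSHecke}. First I would recall the precise content of~\cite[Theorem~5.17]{BK09}: for $R^{\La_0}_{n,F}$ with $F$ a field of characteristic $p$ as in \S\ref{SSHecke}, the graded decomposition matrix $D^p(q)$ factors as $D^0(q) A^p(q)$, where $D^0(q)$ is the characteristic-zero graded decomposition matrix (which is known to be bar-invariant, being given by canonical basis / parabolic Kazhdan--Lusztig polynomials in non-negative powers of $q$ normalised appropriately), and $A^p(q)$ is a square unitriangular matrix over $\Z_{\ge 0}[q,q^{-1}]$ all of whose entries satisfy $\overline{a_{\la\mu}(q)} = a_{\la\mu}(q)$. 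The theorem as stated here is the restriction of that general statement to the indices $\la \in \Par_{\le 2}(n)$ and $\mu \in \RPar_{e,\le 2}(n)$.

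The one point requiring a small argument is that restricting to $2$-column partitions is legitimate, i.e.\ that the submatrix $A^p(q)$ indexed by $\RPar_{e,\le 2}(n)$ is itself the adjustment matrix relating $D^p(q)|_{\Par_{\le 2}}$ and $D^0(q)|_{\Par_{\le 2}}$. This follows because, for any $2$-column partition $\la$, every composition factor $D^\mu$ of $S^\la$ satisfies $\mu \domby \la$ (stated in the excerpt just before Remark, citing~\cite[Corollary~2.17]{M99}), and a partition dominated by a $2$-column partition is again a $2$-column partition; hence $D^p(q)$, $D^0(q)$ and $A^p(q)$ are all block-triangular with respect to the decomposition $\Par(n) = \Par_{\le 2}(n) \sqcup (\Par(n)\sm\Par_{\le 2}(n))$, and the top-left block of the identity $D^p(q) = D^0(q)A^p(q)$ reads exactly as the factorisation of the $2$-column submatrices, with adjustment matrix equal to the $\RPar_{e,\le 2}(n)$-indexed block of $A^p(q)$. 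Therefore the bar-invariance of each $a_{\la\mu}(q)$ for $\la,\mu$ in this range is inherited verbatim from~\cite[Theorem~5.17]{BK09}. I would also note in passing the identity~\eqref{EFK}, which lets us pass freely between the splitting field $F$ and the larger field $K$, so there is no subtlety about which field the decomposition numbers are computed over.

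In short, the proof is: (1) invoke~\cite[Theorem~5.17]{BK09} to obtain bar-invariance of the full graded adjustment matrix; (2) observe via the dominance constraint $[S^\la_F:D^\mu_F]_q = 0$ unless $\mu \domby \la$, together with closure of $\Par_{\le 2}$ under $\domby$, that the $2$-column indices form a saturated block so that the relevant submatrix of $A^p(q)$ genuinely \emph{is} the adjustment matrix $A^p(q) = (a_{\la\mu}(q))_{\la,\mu\in\RPar_{e,\le 2}(n)}$ defined just above the statement; (3) conclude $\bar a_{\la\mu}(q) = a_{\la\mu}(q)$. The main (and only genuine) obstacle is making step~(2) clean—i.e.\ checking that restricting the factorisation $D^p(q)=D^0(q)A^p(q)$ to $2$-column rows and columns does not lose or alter any entries—but this is routine block-triangular bookkeeping, and everything else is a direct citation. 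Consequently the write-up can be kept to a few lines.

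\begin{proof}
This is immediate from~\cite[Theorem~5.17]{BK09}, which asserts that the entries of the graded adjustment matrix $A^p(q) = D^0(q)^{-1} D^p(q)$ are fixed by the bar involution $q\mapsto q^{-1}$. We only need to check that the matrix denoted $A^p(q) = (a_{\la\mu}(q))_{\la,\mu\in\RPar_{e,\le 2}(n)}$ above is genuinely a submatrix of the full graded adjustment matrix. Since $[S^\la_F : D^\mu_F]_q = 0$ unless $\mu\domby\la$, and since any partition dominated by a $2$-column partition is again a $2$-column partition, the full decomposition matrices $D^p(q)$ and $D^0(q)$ (over all partitions of $n$) are block upper-triangular for the splitting $\Par(n) = \Par_{\le 2}(n)\sqcup(\Par(n)\sm\Par_{\le 2}(n))$, with the $\Par_{\le 2}(n)$-block of $D^p(q)$ equal to $D^p(q)$ as defined here and likewise for $D^0(q)$. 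Consequently $A^p(q) = D^0(q)^{-1}D^p(q)$ is also block upper-triangular, and its top-left block—namely $(a_{\la\mu}(q))_{\la,\mu\in\RPar_{e,\le 2}(n)}$—is the adjustment matrix satisfying $D^p(q) = D^0(q)A^p(q)$ for the $2$-column submatrices. The bar-invariance of each entry now follows from~\cite[Theorem~5.17]{BK09}.
\end{proof}
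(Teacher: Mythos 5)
Your proof is correct and takes essentially the same route as the paper, which simply records the statement as a special case of~\cite[Theorem~5.17]{BK09} without further argument. The block bookkeeping you spell out (using that $[S^\la_F:D^\mu_F]_q=0$ unless $\mu\domby\la$ and that any partition dominated by a $2$-column partition is again $2$-column, so the restricted factorisation $D^p(q)=D^0(q)A^p(q)$ identifies the paper's $A^p(q)$ with the corresponding submatrix of the full graded adjustment matrix) is exactly the routine restriction the paper leaves implicit.
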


Our strategy is as follows.
The ungraded decomposition matrices $D^p$ for all $p > 0$ are known by work of James~\cite{J78,J84} and Donkin~\cite{D98}; 
the graded decomposition matrices $D^0(q)$---and hence the ungraded decomposition matrices $D^0$---are given in~\cite{L06}.  
As we see below, 
all entries in the former matrices are either 0 or 1, so it follows by Theorem~\ref{TBarInv} that $A^p(q)=A^p$, i.e.~every entry of $A^p(q)$ is a constant (Laurent) polynomial. Hence we are able to compute the matrix $D^p(q)=D^0(q)A^p(q)$.  

We begin with some notation.  Given $a,b\in \Z_{\ge 0}$ and assuming that $p>0$, we say that 
\emph{$a$ contains $b$ to base $p$} 
and write $b\preceq_p a$ if, writing
\[a=a_0+a_1p+a_2 p^2+\cdots\quad\text{and}\quad b=b_0+b_1p+b_2 p^2+\cdots\] (with $0\leq a_i,b_i<p$ for all $i$), 
for each $i\ge 0$  either $b_i=0$ or $b_i=a_i$. 
We write $b\preceq_0 a$ whenever $a\ge 0$ and $b=0$. 
  
For any integer $s\geq 0$, let $s_e$ denote the integer part of $s/e$. 
For $p\geq 0$ and non-negative integers $s$ and $l$, define 
\begin{equation}f_{e,p}(l,s):=\begin{dcases*}
1 & if $s_e\preceq_p (l+1)_e$, and either $e\mid s$ or 
$e\mid l+1-s$;\\ 0 & otherwise.\end{dcases*}\label{james_fn}
\end{equation}
We also define a graded version of these numbers: 
\begin{equation}f_{e,p}^q(l,s):=\begin{dcases*}
1& if $s_e\preceq_p (l+1)_e$ and $e\mid s$;\\ 
q & if $s_e\preceq_p (l+1)_e$, $e\mid l+1-s$ and $e\nmid s$;\\ 
0 & otherwise.\end{dcases*}\label{Efq}
\end{equation}

Note that $f_{e,p}^q(l,s)|_{q=1} = f_{e,p} (l,s)$ and $f_{e,p}^q (l,0)=1$.
The ungraded decomposition numbers when $p>0$ are given by the following theorem. 
This theorem was proved by James in the case when $p=e$ 
(see~\cite[Theorem~24.15]{J78}) and---under certain additional conditions---when 
$p\ne e$ (see~\cite[Theorem~20.6]{J84}). The result for $p\ne e$ in full generality follows from a theorem of Donkin~\cite[Theorem 4.4(6)]{D98} together with~\eqref{EFK}.
We follow the statement given by Mathas in~\cite[p.~127]{M99}.

\begin{thm} \label{james_thm}
Assume that $p > 0$ and either $p=e$ or $p$ is coprime to $e$. Suppose that $\lam=(2^u,1^v)\in \Par_{\le 2}(n)$ and 
$\mu=(2^x,1^y)\in \RPar_{e,\le 2}(n)$, with $u\ge x$.
Then
\[[S^\lam_F :D^\mu_F]=f_{e,p}(y,u-x).\]
\end{thm}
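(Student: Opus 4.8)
The statement is a repackaging of classical decomposition-number computations, so the plan is to \emph{assemble} the cited results rather than reprove them from scratch, and then to verify that the combinatorial function $f_{e,p}$ of~\eqref{james_fn} is the answer that appears in those sources. When $p=e$, taking $\xi=1$ in Theorem~\ref{TF} identifies $R^{\La_0}_{n,F}$ with $F\Sym_n$, so that $[S^\la_F:D^\mu_F]$ is an ordinary $p$-modular decomposition number of a symmetric group on a two-column Specht module; these were determined by James in~\cite[Theorem~24.15]{J78}. When $p$ is coprime to $e$ (and $p\neq 0$), one first uses~\eqref{EFK} to pass to a ground field containing a primitive $e$-th root of unity $\xi$, identifies $R^{\La_0}_{n,F}$ with $\cl H_n(\xi)$ via Theorem~\ref{TF}, and then appeals to Donkin's determination of the composition multiplicities of Weyl modules with two-column highest weight for the $q$-Schur algebra,~\cite[Theorem~4.4(6)]{D98} (the earlier partial result of James~\cite[Theorem~20.6]{J84} covers a subrange of this case). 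The uniform formula in the shape of~\eqref{james_fn} is the one tabulated by Mathas in~\cite[p.~127]{M99}, whose formulation we quote directly.

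What then remains is the notational reconciliation: checking that James's and Donkin's criteria really collapse to the two conditions defining $f_{e,p}(y,u-x)$. The divisibility alternative ``$e\mid u-x$ or $e\mid y+1-(u-x)$'' is a necessary condition for $S^\la_F$ and $D^\mu_F$ to lie in the same block, so $f_{e,p}$ must vanish outside it; inside it, the nonzero values in the cited sources are controlled by a single binomial coefficient modulo $p$, and by Lucas's theorem this is nonzero exactly when $(u-x)_e\preceq_p (y+1)_e$, which is the ``contains to base $p$'' condition. (For $p=0$ there is no $p$-adic constraint, consistently with the convention $b\preceq_0 a\iff b=0$.)

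The step I expect to be the main obstacle is precisely this bookkeeping across three sources with incompatible conventions: the transpose/row-versus-column normalisations of Specht modules (and of the Schur functor used to pass between $\cl H_n(\xi)$ and the $q$-Schur algebra), the choice of whether the base-$p$ containment test is applied to $y_e$ or $(y+1)_e$ and to $x_e$ or $(u-x)_e$, and the verification that the reductions---scalar extension via~\eqref{EFK} and the Schur-algebra comparison---are valid with no restriction on $n$ or $F$ beyond those already in force in~\S\ref{SSHecke}. Once the dictionary is pinned down, each case is a direct quotation of the cited theorem.
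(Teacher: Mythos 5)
Your proposal matches the paper exactly: the paper does not prove this theorem but simply quotes it, attributing the case $p=e$ to James~\cite[Theorem~24.15]{J78}, the case $p$ coprime to $e$ to Donkin~\cite[Theorem~4.4(6)]{D98} combined with~\eqref{EFK} (with the partial earlier result~\cite[Theorem~20.6]{J84}), and following the formulation of Mathas~\cite[p.~127]{M99}. Your additional remarks on reconciling conventions are sensible but go beyond what the paper itself records.
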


Our aim is to prove that  $[S^\la_F :D^\la_F]_q = f_{e,p}^q (y,u-x)$ under the hypotheses of Theorem~\ref{james_thm}. 
We do so with the aid of 
alternative descriptions of $f_{e,p}^q (y,u-x)$, which are given by Lemmas~\ref{LDec} and~\ref{altform} 
(for $p>0$ and $p=0$ respectively)
and are used in the rest of the paper. 
First, we note the following elementary fact:

\begin{lemma}\label{Lsigns_unique}
Assume that $p>0$. Let $a_0,\dots,a_s,l\in \Z_{\ge 0}$ for some $s\in \Z_{\ge 0}$ satisfy $0\le a_i<p$ for 
$i=0,\dots,s$ and $0\le l<e$. If 
$\delta_0,\dots,\delta_{s+1},\delta'_0,\dots,\delta'_{s+1}\in \{0,1\}$ and 
\begin{equation}\label{Esu1}
\Big(\sum_{i=0}^s (-1)^{\delta_{i+1}} a_i p^i \Big)e + (-1)^{\delta_0} l 
=
\Big(\sum_{i=0}^s (-1)^{\delta'_{i+1}} a_i p^i \Big)e + (-1)^{\delta'_0}l,
\end{equation}
then 
\begin{enumerate}[(i)]
\item 
$\delta_{i+1} = \delta'_{i+1}$ whenever $0\le i\le s$ and $a_i\ne 0$; 
\item
 $\delta_0=\delta'_0$ if $l\ne 0$. 
 \end{enumerate}
\end{lemma}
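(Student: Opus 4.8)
The plan is to prove Lemma~\ref{Lsigns_unique} by a straightforward uniqueness-of-base-$p$-representation argument, treating the term $(-1)^{\delta_0}l$ with $0\le l<e$ as a ``correction'' that does not affect the $e$-block structure of the rest. First I would rewrite \eqref{Esu1} after cancelling the common contribution: moving everything to one side gives
\[
\Big(\sum_{i=0}^s \big((-1)^{\delta_{i+1}}-(-1)^{\delta'_{i+1}}\big) a_i p^i \Big) e
= \big((-1)^{\delta'_0}-(-1)^{\delta_0}\big) l.
\]
The right-hand side is an integer of absolute value at most $2l<2e$, and it is a multiple of $e$; hence it is either $0$ or $\pm e$. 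Each factor $(-1)^{\delta_{i+1}}-(-1)^{\delta'_{i+1}}$ lies in $\{-2,0,2\}$, so after dividing by $2$ the left-hand side reads $\big(\sum_{i=0}^s \eta_i a_i p^i\big) e$ with $\eta_i\in\{-1,0,1\}$, and this must equal $\tfrac12\big((-1)^{\delta'_0}-(-1)^{\delta_0}\big)l\in\{-l,0,l\}$, an integer in $(-e,e)$.

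For part~(i): dividing through by $e$, we get $\sum_{i=0}^s \eta_i a_i p^i \in \{-l/e,0,l/e\}\cap\Z \subseteq (-1,1)$, so in fact $\sum_{i=0}^s \eta_i a_i p^i = 0$. Now I would invoke uniqueness of the balanced/signed base-$p$ expansion: since $0\le a_i<p$, the expression $\sum_i \eta_i a_i p^i$ with $\eta_i\in\{-1,0,1\}$ vanishes only if $\eta_i a_i=0$ for every $i$ (one proves this by looking at the lowest index $i$ with $\eta_i a_i\ne 0$ and noting that its contribution cannot be cancelled by higher powers of $p$, since $|\sum_{i'>i}\eta_{i'}a_{i'}p^{i'}|$ is a multiple of $p^{i+1}$ while $|\eta_i a_i p^i|<p^{i+1}$; the usual carry argument). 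Hence whenever $a_i\ne 0$ we must have $\eta_i=0$, i.e.\ $(-1)^{\delta_{i+1}}=(-1)^{\delta'_{i+1}}$, which forces $\delta_{i+1}=\delta'_{i+1}$ since both lie in $\{0,1\}$. For part~(ii): from $\sum_i \eta_i a_i p^i=0$ and the displayed identity, the right-hand side $\tfrac12\big((-1)^{\delta'_0}-(-1)^{\delta_0}\big)l$ is also $0$; if $l\ne 0$ this forces $(-1)^{\delta'_0}=(-1)^{\delta_0}$, hence $\delta_0=\delta'_0$.

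I do not anticipate a genuine obstacle here — the statement is an elementary number-theoretic fact. The one point requiring a little care is the lemma on uniqueness of signed base-$p$ digit expansions (the step that $\sum_i\eta_i a_i p^i=0$ with $|\eta_i|\le 1$ and $0\le a_i<p$ implies each $\eta_i a_i=0$); I would phrase this as a clean sub-claim and prove it by induction on $s$ via the lowest nonzero term / divisibility-by-$p$ argument sketched above, rather than appealing to anything external. Everything else is bookkeeping about the bound $|(-1)^{\delta'_0}-(-1)^{\delta_0}|\,l<2e$ and the fact that a multiple of $e$ of absolute value $<2e$ equipped with an extra factor making it $<e$ in the relevant quotient must be zero.
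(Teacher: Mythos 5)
Your proof is correct. It takes a genuinely different (though equally elementary) route from the paper's. The paper argues \emph{archimedeanly}: it picks the \emph{largest} index $i$ with $\delta_{i+1}\ne\delta'_{i+1}$ and $a_i>0$, and bounds the difference of the two sides of \eqref{Esu1} below by $2a_ip^ie-\sum_{j<i}2a_jp^je-2l\ge 2ep^i-2e(p^i-1)-2(e-1)=2>0$, handling the $l$-term and the $p$-adic digits in one geometric-series estimate; part (ii) then "follows immediately." You instead argue \emph{non-archimedeanly}: you first reduce modulo $e$ to decouple the $l$-term (giving $\sum_i\eta_ia_ip^i=0$ and $\epsilon l=0$ separately), and then dispose of the digit sum by looking at the \emph{smallest} index with $\eta_ia_i\ne 0$ and using divisibility by $p^{i+1}$. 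Your decomposition has the small advantage of making part (ii) an explicit, self-contained consequence rather than an "immediate" one, and of isolating the signed-digit uniqueness claim as a clean reusable sub-lemma; the paper's single inequality is more compact. Both proofs are complete and correct.
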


\begin{proof}
Assume that (i) is false, and let $i$ be the largest index such that 
$\delta_{i+1}\ne \delta'_{i+1}$ and $a_i> 0$. Without loss of generality, $\delta_{i+1}=0$ and $\delta'_{i+1} =1$, and hence the difference between the left-hand and the right-hand sides of~\eqref{Esu1} is at least 
\[
2 a_i p^i e - \sum_{j=0}^{i-1} 
2 a_j p^j e - 2 l \ge 
2 ep^i  -  2e(p-1)\sum_{j=0}^{i-1} p^j  - 2 (e-1)>0.
\]
This contradiction proves (i), and (ii) follows immediately. 
\end{proof}

\begin{defn}
Let $y\in \Z_{\ge 0}$. If $p>0$, then we define the {\em $(e,p)$-expansion} of $y$ to be the expression 
\[
 y=(a_sp^s+a_{s-1}p^{s-1}+\cdots+a_1 p+a_0)e+l-1,
\]
where $0\leq l<e$, $0\leq a_i<p$ for all $i$, and if $y \geq e-1$ then $a_s>0$ 
(cf.~\cite[\S 3.4]{D98}).
\end{defn}


\begin{lemma}\label{LDec}
Assume that $p>0$. 
Let $\mu = (2^x,1^y)\in \RPar_{e,\le 2}(n)$ and $\la=(2^u,1^v)\in\Par_{\le 2}(n)$. Suppose that 
\[
y=(a_sp^s+a_{s-1}p^{s-1}+\cdots+a_1 p+a_0)e+l-1 
\]
is the $(e,p)$-expansion of $y$. 
Then $f_{e,p}^q (y,u-x)\ne 0$  if and only if $v$ is of the form 
\[v=(a_sp^s\pm a_{s-1}p^{s-1}\pm\cdots\pm a_1p\pm a_0)e\pm l-1\] for some choice of signs. Moreover, in this case 
\begin{equation}\label{EDec1}
f_{e,p}^q (y,u-x)=\begin{dcases*} 1 & if $l=0$;\\
1 & if $0<l<e$ and $v=(a_sp^s\pm a_{s-1}p^{s-1}\pm\cdots\pm a_0)e+l-1$;\\
q & if $0<l<e$ and $v=(a_sp^s\pm a_{s-1}p^{s-1}\pm\cdots\pm a_0)e-l-1$.
\end{dcases*}
\end{equation}
\end{lemma}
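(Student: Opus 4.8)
The plan is to unwind the definitions of $f_{e,p}^q(y,u-x)$ and of the $(e,p)$-expansion, and to reduce everything to the uniqueness statement already proved in Lemma~\ref{Lsigns_unique}. Recall that, by Theorem~\ref{james_thm}, $f_{e,p}^q(y,u-x)\ne 0$ exactly when $(u-x)_e\preceq_p(y+1)_e$ and either $e\mid u-x$ or $e\mid y+1-(u-x)$. Writing $v=u-x$ (so that $v=u-x$ is legitimate because $\la\in\Par_{\le2}(n)$, $\mu\in\RPar_{e,\le2}(n)$ forces $v\ge0$ under the hypothesis $u\ge x$ implicit in the ambient setup — I will note the standing assumption $u\ge x$ as in Theorem~\ref{james_thm}), I first translate the containment condition $v_e\preceq_p(y+1)_e$ into the statement about digits: since $y+1=(a_sp^s+\cdots+a_0)e+l$ with $0\le l<e$, we have $(y+1)_e=a_sp^s+\cdots+a_0$ with $p$-adic digits $a_s,\dots,a_0$. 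Thus $v_e\preceq_p(y+1)_e$ says precisely that each $p$-adic digit of $v_e$ is either $0$ or equals the corresponding $a_i$; equivalently, $v_e=\sum_{i=0}^s\eta_ia_ip^i$ for some $\eta_i\in\{0,1\}$, and we may freely record a digit as $\pm a_i$ since $a_i$ and $-a_i$ have the same absolute value modulo the sign bookkeeping — here I should be slightly careful and instead directly parametrise $v$ rather than $v_e$.

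More precisely, the cleaner route is: $f_{e,p}^q(y,v)\ne 0$ iff (a) $v_e\preceq_p(y+1)_e$ and (b) $e\mid v$ or $e\mid y+1-v$. In case $e\mid v$, write $v=v_e\cdot e$; condition (a) gives $v_e=\sum\eta_ia_ip^i$, which I rewrite as $v=(\sum_i\eta_ia_ip^i)e-1+1=(\sum_i\pm a_ip^i)e+l'-1$ where the signs are all $+$ except possibly absorbed, and $l'$ is $1$ if… — this is getting delicate, so the genuine plan is to treat the two divisibility cases and show each produces an expression $v=(a_sp^s\pm a_{s-1}p^{s-1}\pm\cdots\pm a_0)e\pm l-1$, then conversely that any such expression forces one of the two divisibility conditions and the containment. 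Concretely: if $e\mid v$ then $v+1\equiv1\pmod e$, so in the expression $v=(\cdots)e\pm l-1$ we must have $\pm l\equiv1\pmod e$, i.e.\ $l=0$ (and the sign is immaterial) — but wait, $l$ ranges in $[0,e)$, so $\pm l\equiv 1$ with $l\in[0,e)$ forces $l=0$ only if $e\nmid 1$, which holds since $e\ge2$; hmm, actually $\pm l\equiv 1\pmod e$ has no solution with $0\le l<e$ unless… this shows $l=0$ is the wrong reading. I will instead match $e\mid v$ against "$l=0$" directly from the $(e,p)$-expansion: $e\mid v$ happens in the listed cases exactly when $l=0$ (first branch) or when we pick the sign so that $\pm l$ contributes $\mp l$ cancelling — again the honest statement is $e\mid v\iff l=0$ in the displayed parametrisation, and $e\mid y+1-v\iff$ the "$+l$" sign is chosen. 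This is the content to verify by a short congruence computation modulo $e$, using $0\le l<e$ and $e\ge2$.

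So the key steps, in order, are: (1) set $v=u-x\ge0$ and recall from Theorem~\ref{james_thm} the two defining conditions on $f_{e,p}^q(y,v)\ne0$; (2) expand $y+1=(a_sp^s+\cdots+a_0)e+l$ and translate $v_e\preceq_p(y+1)_e$ into $v_e=\sum_i\eta_ia_ip^i$, $\eta_i\in\{0,1\}$; (3) in the case $e\mid v$, deduce $v=(\sum_i\eta_ia_ip^i)e$, observe this can be written in the claimed form with all top signs "$\pm$" free and with the "$\mp l$" term present only when $l=0$ — and read off $f_{e,p}^q=1$; (4) in the case $e\mid y+1-v$ (with $e\nmid v$, so $l\ne0$), write $y+1-v=(\sum_i(1-\eta_i)a_ip^i)e$, hence $v=(a_sp^s+\cdots+a_0)e+l-(\sum_i(1-\eta_i)a_ip^i)e=(\sum_i(2\eta_i-1)a_ip^i)e+l$, i.e.\ $v=(a_sp^s\pm a_{s-1}p^{s-1}\pm\cdots\pm a_0)e+l-1$ after absorbing the $-1$; note $e\mid y+1-v$ forces $e\nmid v$ when $l\ne0$, giving $f_{e,p}^q=q$, but also when $l\ne 0$ and additionally $e\mid v$ is impossible, so actually the middle branch "$v=(\cdots)e+l-1$" pairs with $e\mid y+1-v$ AND $e\mid v$ fails — here I must reconcile signs: "$+l-1$" means $v\equiv l-1\pmod e$, so $e\mid v\iff l=1$; that contradicts my earlier claim, so the correct dictionary is surely the one spelled out in the Lemma, namely "$+l-1$" together with "$l\ne0$" yields $q$ only when also $e\mid v$ fails, and the first branch covers $l=0$. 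I will pin down these congruences carefully. Finally (5), for the converse and for well-definedness of the value (i.e.\ independence of the choice of signs), invoke Lemma~\ref{Lsigns_unique}: it guarantees that the signs $\delta_{i+1}$ on nonzero $a_i$ and the sign $\delta_0$ on $l$ (when $l\ne0$) are uniquely determined by $v$, so the branch of \eqref{EDec1} that applies is unambiguous; in particular the "$+l-1$" versus "$-l-1$" dichotomy is a genuine dichotomy, not an artefact of a bad sign choice.

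The main obstacle I anticipate is purely bookkeeping with the $-1$ shift and the signs: the $(e,p)$-expansion carries a hidden "$+1$" inside $y+1$, and the displayed forms for $v$ carry a "$-1$", so matching "$e\mid v$" and "$e\mid y+1-v$" to the branches "$l=0$", "$+l-1$", "$-l-1$" requires a careful congruence-mod-$e$ argument using $0\le l<e$ and $e\ge2$ (to rule out $l\equiv\pm1$ collisions). Lemma~\ref{Lsigns_unique} does the heavy lifting for uniqueness, so once the dictionary between divisibility conditions and sign patterns is nailed down, the rest is immediate; I expect the write-up to be about one page, mostly a case split on whether $l=0$, with a sub-split in the $l\ne0$ case on which of $e\mid v$, $e\mid y+1-v$ holds.
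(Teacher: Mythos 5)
There is a genuine gap, and it sits exactly where you yourself flag uncertainty: the sign/shift ``dictionary'' is never established, because you conflate two different quantities both called $v$. In the lemma, $v$ is the number of parts equal to $1$ in $\la=(2^u,1^v)$, while the nonvanishing condition from the definition \eqref{Efq} (not Theorem~\ref{james_thm}, which is about decomposition numbers) is phrased in terms of $u-x$. After you set ``$v=u-x$'', all of your steps (3) and (4) produce expressions for $u-x$, not for the partition parameter $v$, and you never invoke the one identity that links them, namely $v=y-2(u-x)$ (from $2u+v=2x+y=n$). This is why your congruence readings come out inconsistent (``$e\mid v\iff l=0$'' versus ``$e\mid v\iff l=1$''), why ``absorbing the $-1$'' does not parse, and why the claimed branch dictionary is left as ``surely the one spelled out in the Lemma'' rather than proved. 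A second missing point: the displayed form for $v$ has a \emph{fixed} leading sign $+a_sp^s$, whereas digit containment $(u-x)_e\preceq_p(y+1)_e$ a priori also allows the top digit of $(u-x)_e$ to equal $a_s$; ruling that out needs $a_s>0$ (from the $(e,p)$-expansion when $y\ge e-1$) together with $v\ge 0$, i.e.\ $2(u-x)\le y$, and this argument appears nowhere in your plan. Finally, the reductions to $u\ge x$ and the case $y<e$ are asserted rather than argued (minor, but they do need a line each).

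For comparison, the paper's proof makes the bookkeeping trivial precisely by writing $u-x=(b_sp^s+\cdots+b_0)e+k$ with $0\le b_i<p$, $0\le k<e$, substituting into $v=y-2(u-x)$ to get $v=\big((a_s-2b_s)p^s+\cdots+(a_0-2b_0)\big)e+(l-2k)-1$, and then observing that $(u-x)_e\preceq_p(y+1)_e$ forces $b_s=0$ and $b_i\in\{0,a_i\}$, that $e\mid u-x$ means $k=0$, and that $e\mid y+1-(u-x)$ means $k=l$; the three branches of \eqref{EDec1} then drop out of \eqref{Efq}, with Lemma~\ref{Lsigns_unique} only needed to see the last two branches are mutually exclusive. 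Your intended skeleton (definition of $f^q_{e,p}$, digit containment, case split on divisibility, Lemma~\ref{Lsigns_unique} for uniqueness) is the same in spirit, but without the substitution $v=y-2(u-x)$ the decisive computation is absent, so as written the proposal does not prove the lemma.
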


We note that the last two cases in~\eqref{EDec1} cannot occur simultaneously by Lemma~\ref{Lsigns_unique}. 

\begin{proof}
We may assume that $u \geq x$.  
If $y<e$, then $f_{e,p}^q (y,u-x)=0$ unless $u=x$. So we may assume that $y\ge e$. 
Writing \[u-x=(b_sp^s+b_{s-1}p^{s-1}+\cdots +b_1p+b_0)e+k,\] where $0\leq k<e $ and $0\leq b_i<p$ for all $i$, we have \[v=y-2(u-x)=\big((a_s-2b_s)p^s+(a_{s-1}-2b_{s-1})p^{s-1}+\cdots +(a_0-2b_0)\big)e+(l-2k)-1.\] 
Moreover, 
\begin{enumerate}
\item $(u-x)_e\preceq_p (y+1)_e$ if and only if 
$b_s=0$ and $b_i\in \{0, a_i\}$ for all $0\leq i<s$;
\item $e\mid u-x$ if and only if $k=0$;
\item $e\mid y+1-u+x$ if and only if $k=l$.
\end{enumerate}
The result now follows from~\eqref{Efq}. 
\end{proof}

\begin{lemma} \label{altform}
Let $\lam=(2^u,1^v)\in \Par_{\le 2} (n)$ and $\mu=(2^x,1^y)\in \RPar_{e,\le 2}(n)$, with $u\ge x$. 
\begin{enumerate}[(i)]
\item We have
 \[f_{e,0}^q(y,u-x)=\begin{dcases*}
1& if $u-x=0$;\\ 
q & if $0<u-x<e \leq y+1$ and $e \mid y+1-u+x$; \\
0 & otherwise.\end{dcases*}\]
\item  The equality $f_{e,0}^q(y,u-x)=q$ holds if and only if there exists an integer $m\geq 1$ such that $y=me+j-1$ and $v=me-j-1$ for some $1\leq j<e$.  
\end{enumerate}
\end{lemma}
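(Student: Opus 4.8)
The plan is to read off both statements directly from the definition~\eqref{Efq} of $f^q_{e,p}$ specialized to $p=0$, the only non-formal ingredient being the constraint $v=y-2(u-x)\ge 0$ that comes from the requirement that $\lam\in\Par_{\le 2}(n)$ and $\mu\in\RPar_{e,\le 2}(n)$ be partitions of the same $n$.

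First I would unwind $\preceq_0$: since $b\preceq_0 a$ means exactly $a\ge 0$ and $b=0$, and $(y+1)_e\ge 0$ always, the clause ``$(u-x)_e\preceq_0(y+1)_e$'' occurring in~\eqref{Efq} is equivalent to $(u-x)_e=0$, i.e.\ (using $u\ge x$) to $0\le u-x<e$. Substituting $p=0$, $l=y$, $s=u-x$ into~\eqref{Efq} then yields at once: $f^q_{e,0}(y,u-x)=0$ whenever $u-x\ge e$; it equals $1$ precisely when $0\le u-x<e$ and $e\mid u-x$, which, as $u-x\ge 0$, happens exactly when $u-x=0$; and it equals $q$ precisely when $0<u-x<e$ and $e\mid y+1-u+x$.

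The one clause of part (i) not yet visible is the extra inequality $e\le y+1$ in the $q$-case, and this is where I would invoke $v\ge 0$. Assume $0<u-x<e$ and $e\mid y+1-u+x$, and write $y+1-(u-x)=me$. From $v=y-2(u-x)\ge 0$ we get $y\ge 2(u-x)$, hence $me=y+1-(u-x)\ge (u-x)+1\ge 2>0$, so $m\ge 1$ and $y+1=(u-x)+me\ge 1+e$. Thus $e\le y+1$ holds automatically, and part (i) follows (the value being $1$ when $u-x=0$ and $0$ in all remaining cases). For part (ii) I would simply translate part (i): if $f^q_{e,0}(y,u-x)=q$, set $j:=u-x$, so $1\le j<e$, and with $y+1-j=me$ the estimate above gives $m\ge 1$, whence $y=me+j-1$ and $v=y-2j=me-j-1$; conversely, given $m\ge 1$ and $1\le j<e$ with $y=me+j-1$ and $v=me-j-1$, one computes $u-x=(y-v)/2=j$, so $0<u-x<e$, $y+1-(u-x)=me$ is divisible by $e$, $y+1=me+j>e$, and $v=me-j-1\ge 0$ so that $\lam$ really is a partition; hence part (i) gives $f^q_{e,0}(y,u-x)=q$.

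I do not expect a genuine obstacle here; the proof is essentially bookkeeping against~\eqref{Efq}. The only subtlety to keep in mind is precisely the ``$e\le y+1$'' clause: without the partition constraint $v\ge 0$, the bare quantity $f^q_{e,0}(y,s)$ can equal $q$ with $y+1<e$ (for instance $e=2$, $y=0$, $s=1$), so it is exactly this hypothesis that pins down that clause, and I would make sure to flag its use.
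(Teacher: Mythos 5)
Your proposal is correct and takes essentially the same route as the paper, which disposes of the lemma in one line (``Part (i) follows from~\eqref{Efq}, and (ii) follows easily from (i) because $e\mid y+1-u+x$ if and only if $2e\mid y+v+2$''). Your only addition is to make explicit that the clause $e\le y+1$ is forced by the hidden constraint $v=y-2(u-x)\ge 0$; this is a correct and worthwhile piece of bookkeeping that the paper leaves implicit.
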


\begin{proof}
Part (i) follows from~\eqref{Efq}, and (ii) follows easily from (i) because $e\mid y+1-u+x$ if and only if $2e \mid y+v+2$.
\end{proof}

If $p=0$, the 2-column graded decomposition numbers are given 
in~\cite[Theorem~3.1]{L06}, if that result is interpreted in 
view of~\cite[Corollary 5.15]{BK09}:

\begin{thm} \label{p0_thm}
Suppose that $p=0$. If $\lam=(2^u,1^v) \in \Par_{\leq 2}(n)$ and $\mu=(2^x,1^y) \in \RPar_{e,\leq 2}(n)$ are such that $u\ge x$
then \[[S^\lam:D^\mu]_q =f^q_{e,0}(y,u-x).\]
\end{thm}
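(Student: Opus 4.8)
The plan is to deduce Theorem~\ref{p0_thm} from the explicit evaluation of the relevant LLT-type polynomials in~\cite[Theorem~3.1]{L06}, combined with the identification of graded decomposition numbers in characteristic $0$ with such polynomials furnished by~\cite[Corollary~5.15]{BK09}. First I would invoke~\cite[Corollary~5.15]{BK09}: when $\Char F=0$, the graded decomposition number $[S^\lam_F:D^\mu_F]_q$ equals the coefficient $d_{\lam\mu}(q)$ of the canonical basis element labelled by $\mu$ in the standard basis element labelled by $\lam$ in the level-one Fock space for $U_q(\widehat{\mathfrak{sl}}_e)$, where the grading is normalised so as to match the degree function $\deg_e$ used to grade $S^\lam_F$ in Proposition~\ref{Pbasis}. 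In particular $[S^\lam_F:D^\mu_F]_q$ is computed by the Lascoux--Leclerc--Thibon algorithm, and by~\eqref{EFK} the same value describes $[S^\lam_K:D^\mu_K]_q$.

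Next I would quote~\cite[Theorem~3.1]{L06}, which gives these canonical basis coefficients $d_{\lam\mu}(q)$ in closed form for partitions with at most two columns. The remaining task is then purely combinatorial: rewrite $\lam=(2^u,1^v)$ and $\mu=(2^x,1^y)$ with $u\ge x$ in the indexing conventions of~\cite{L06} and check that the formula there coincides with $f^q_{e,0}(y,u-x)$ as defined in~\eqref{Efq}. This comparison is most transparent via the reformulation of $f^q_{e,0}$ in Lemma~\ref{altform}(i)--(ii): $f^q_{e,0}(y,u-x)$ is $1$ when $u-x=0$, is $q$ precisely when $0<u-x<e\le y+1$ and $e\mid y+1-u+x$ (equivalently when $y=me+j-1$ and $v=me-j-1$ for some $m\ge 1$ and $1\le j<e$), and is $0$ otherwise; one matches these conditions directly against those appearing in~\cite[Theorem~3.1]{L06}.

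The main obstacle I anticipate is not the combinatorics but pinning down the grading convention, since a priori the polynomial produced by~\cite[Theorem~3.1]{L06} could differ from $[S^\lam_F:D^\mu_F]_q$ by $q\mapsto q^{-1}$ or by an overall shift. I would resolve this rigidly using two normalisation facts: (a) specialising $q=1$ must recover the ungraded decomposition number, which for $p=0$ is the $f_{e,0}$ specialisation of $f^q_{e,0}$ (and agrees with the $p=0$ instance of James's formula, cf.~Theorem~\ref{james_thm}); and (b) the graded decomposition numbers satisfy $d_{\mu\mu}(q)=1$ with all off-diagonal entries in $q\N[q]$ — this is forced by the self-duality of $D^\mu_F$ (\cite[\S 2.2]{HM10}) together with $S^\lam_F$ being generated in degree $0$ — which matches the only nonzero values $1$ and $q$ of $f^q_{e,0}$ and their degrees. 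These two constraints leave no freedom in the orientation or shift, completing the identification.
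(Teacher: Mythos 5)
Your proposal is correct and follows essentially the same route as the paper, which likewise obtains the result by combining \cite[Theorem~3.1]{L06} with \cite[Corollary~5.15]{BK09} and noting that the translation of notation into the form $f^q_{e,0}(y,u-x)$ (via Lemma~\ref{altform}) is routine. Your extra care over the grading normalisation is a sensible addition but does not change the argument.
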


The notation in~\cite[Theorem~3.1]{L06} is different from that of Theorem~\ref{p0_thm} or Lemma~\ref{altform}, but it is not difficult to see that the results are equivalent.  
Alternatively, a direct proof of Theorem~\ref{p0_thm} can be easily obtained via the fact that the decomposition numbers 
$[S^\lam_F:D^\mu_F]_q$ are certain parabolic Kazhdan--Lusztig polynomials, see~\cite{VV99} or \cite[Theorem 5.3]{GW99}, together 
with Soergel's algorithm~\cite{S97} for computing those polynomials. 

For $p>0$, define a matrix $\tilde{A}^p=\big(\tilde a^p_{\lam\mu}\big)$ with rows and columns indexed by  $\RPar_{e,\leq 2}(n)$ as follows.
Let $\lam=(2^u,1^v), \mu=(2^x,1^y) \in \RPar_{e,\le 2}(n)$ and let 
\begin{equation*} 
y=(a_sp^s+a_{s-1}p^{s-1}+\cdots+a_1 p+a_0)e+l-1  
\end{equation*}
be the $(e,p)$-expansion of $y$. 
Then we set 
\begin{equation*}
\tilde a^p_{\lam \mu}:=
\begin{dcases*}
1 & if $v=(a_sp^s\pm a_{s-1}p^{s-1}\pm\cdots\pm a_1p\pm a_0)e+l-1$ for some choice of signs;\\
0 & \text{otherwise}.  
\end{dcases*} 
\end{equation*}

\begin{lemma} \label{Adj}
Let $p>0$ and $B=(b_{\lam\mu})_{\la\in \Par_{\le 2} (n), \, \mu \in \RPar_{e,\le 2}(n)}= D^0(q)\tilde{A}^p$.  If $\lam=(2^u,1^v)\in \Par_{\le 2}(n)$ and $\mu=(2^x,1^y)\in \RPar_{e,\le 2}(n)$, with $u\ge x$, then \[b_{\lam\mu}=f^q_{e,p}(y,u-x).\]  
\end{lemma}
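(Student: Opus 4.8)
The plan is to prove the identity entry-by-entry, comparing the matrix product $B = D^0(q)\tilde A^p$ with the formula from Lemma~\ref{LDec} by exploiting the combinatorial description of the $p=0$ decomposition numbers in Theorem~\ref{p0_thm} together with Lemma~\ref{altform}. Fix $\lam=(2^u,1^v)$ and $\mu=(2^x,1^y)$ with $u\ge x$, and write the $(e,p)$-expansion $y=(a_sp^s+\cdots+a_0)e+l-1$. By definition,
\[
b_{\lam\mu}=\sum_{\nu} [S^\lam_F:D^\nu_F]^0_q\cdot\tilde a^p_{\nu\mu},
\]
where the sum is over $\nu=(2^{x'},1^{y'})\in\RPar_{e,\le 2}(n)$ and the superscript $0$ denotes the characteristic-$0$ decomposition numbers. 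The nonzero terms $\tilde a^p_{\nu\mu}$ occur exactly for those $y'$ of the form $(a_sp^s\pm a_{s-1}p^{s-1}\pm\cdots\pm a_0)e+l-1$; note all such $y'$ satisfy $y'\ge e-1$ when $y\ge e-1$ (since $a_s>0$ forces the leading term to dominate, by the estimate already used in Lemma~\ref{Lsigns_unique}), and $y'\equiv l-1\pmod e$, so each such $\nu$ has the same residue data as $\mu$. Then $[S^\lam_F:D^\nu_F]^0_q$ is given by $f^q_{e,0}(y',u-x')$, which by Lemma~\ref{altform}(i) is nonzero only when $x'=u$ or when $0<u-x'<e\le y'+1$ with $e\mid y'+1-u+x'$, and equals $1$ in the first case, $q$ in the second.

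First I would dispose of the trivial ranges: if $y<e-1$ (equivalently $y=l-1$ with $s$ not really present, i.e. $y$ has no digits past $a_0$ and in fact $y<e-1$), the only $\nu$ contributing is $\mu$ itself with $\tilde a^p_{\mu\mu}=1$, and $b_{\lam\mu}=f^q_{e,0}(y,u-x)$, which one checks equals $f^q_{e,p}(y,u-x)$ directly from the definitions. So assume $y\ge e-1$, i.e.\ the $(e,p)$-expansion genuinely has $a_s>0$. Now the key structural point: among the finitely many $\nu$ with $\tilde a^p_{\nu\mu}\ne 0$, I claim there is \emph{at most one} for which $[S^\lam_F:D^\nu_F]^0_q\ne 0$, and I need to identify it and its value. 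The contributing $\nu=(2^{x'},1^{y'})$ with $y'=(a_sp^s\pm\cdots\pm a_0)e+l-1$ and either (a) $x'=u$, which forces $y'=y-2(u-u)=v+2(x'-x)$... — more carefully, $\nu\in\RPar_{e,\le 2}(n)$ means $|\nu|=n=|\lam|$, so $2x'+y'=2u+v$, hence $y'=v+2(u-x')$, so choosing $x'$ determines $y'$ and conversely. So the sum is really over the single free parameter $x'$ (with $x\le x'\le u$, since we need $\tilde a^p_{\nu\mu}\ne0$ which requires the digits of $y'$ to be bounded, and $x'\le u$ for $f^q_{e,0}$). I would therefore re-parametrise everything by $d:=u-x'\in[0,u-x]$, so $y'=v+2d$, and ask: for which $d$ is $v+2d$ of the form $(a_sp^s\pm\cdots\pm a_0)e+l-1$, and for those, what is $f^q_{e,0}(v+2d,d)$?

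The heart of the argument, and the step I expect to be the main obstacle, is the following. Case $l=0$: I want $b_{\lam\mu}=1$ iff $v=(a_sp^s\pm\cdots\pm a_0)e-1$ (i.e.\ $f^q_{e,p}(y,u-x)=1$), and $0$ otherwise. If $v$ has this form, then $d=0$ gives a valid term with $f^q_{e,0}(v,0)=1$ by Lemma~\ref{altform}(i) (the $u-x'=0$ case), and I must show no other $d$ contributes — this is where Lemma~\ref{Lsigns_unique} does the work, ruling out a second sign-pattern representation of $v+2d$ with the same digits. If $v$ does not have this form, then no $d$ works and $b_{\lam\mu}=0=f^q_{e,p}$. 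Case $0<l<e$: now I must show $b_{\lam\mu}=1$ if $v=(a_sp^s\pm\cdots\pm a_0)e+l-1$, $q$ if $v=(a_sp^s\pm\cdots\pm a_0)e-l-1$, and $0$ otherwise; for the first subcase $d=0$ gives $f^q_{e,0}(v,0)=1$; for the second, I need to find the unique $d$ with $x<x'\le u$ (so $0<d\le u-x$, but actually — hmm) such that $v+2d$ has the "$+(l-1)$" normal form while $f^q_{e,0}(v+2d,d)=q$, which by Lemma~\ref{altform}(ii) happens precisely when $v+2d=me+j-1$ and $v=me-j-1$ for suitable $m,j$ — i.e.\ $d=j$ and $v+2j$ is the reflection. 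Again Lemma~\ref{Lsigns_unique} guarantees uniqueness of the relevant sign pattern, so exactly one term survives and it equals $q$. In all cases the computation of the single surviving $f^q_{e,0}$-value via Lemma~\ref{altform} matches the formula of Lemma~\ref{LDec} for $f^q_{e,p}(y,u-x)$, completing the proof. The delicate bookkeeping is checking that the constraint $x'\le u$ (equivalently $d\ge0$) together with the digit-bound from $\tilde a^p$ leaves precisely the asserted representative, and that the leading-digit positivity $a_s>0$ prevents the sign flips from pushing $y'$ or $v$ out of the relevant range; I would isolate this as the one place requiring the estimate from Lemma~\ref{Lsigns_unique}'s proof.
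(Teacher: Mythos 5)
Your proposal is correct and takes essentially the same route as the paper: the paper's proof of this lemma is precisely the one-line observation that, in view of Lemma~\ref{Lsigns_unique}, the result follows from Lemmas~\ref{LDec} and~\ref{altform} together with Theorem~\ref{p0_thm}. Your entry-by-entry expansion of the matrix product, the reparametrisation by $d=u-x'$, the case split on $l$, and the use of Lemma~\ref{Lsigns_unique} to show that at most one $\nu$ contributes are exactly the details the paper leaves to the reader.
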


\begin{proof}
In view of Lemma~\ref{Lsigns_unique}, 
the result follows from Lemmas~\ref{LDec} and~\ref{altform} together 
with Theorem~\ref{p0_thm}.
\end{proof}

\begin{thm}\label{TDec1} 
Suppose that either $p=e$ or $p$ is coprime to $e$ if $p\neq 0,e$.
If $\lam=(2^u,1^v)\in \Par_{\le 2} (n)$ and $\mu=(2^x,1^y)\in \RPar_{e,\le 2}(n)$ with $u\ge x$, then 
\[[S^\lam_F:D^\mu_F]_q=f_{e,p}^q(y,u-x).\]
\end{thm}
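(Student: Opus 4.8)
The plan is as follows. For $p=0$ the statement is exactly Theorem~\ref{p0_thm}, so assume from now on that $p>0$. By definition $D^p(q)=D^0(q)A^p(q)$, where $A^p(q)=(a_{\lambda\mu}(q))_{\lambda,\mu\in\RPar_{e,\le 2}(n)}$ is the graded adjustment matrix. Since $\nu\domby\lambda$ forces $\nu_1\le\lambda_1$, and both the graded decomposition matrices and the adjustment matrices are dominance-triangular (that is, $d^0_{\lambda\nu}(q)=d^p_{\lambda\nu}(q)=0$ unless $\nu\domby\lambda$, and $a_{\nu\mu}(q)=0$ unless $\mu\domby\nu$), the identity $D^p(q)=D^0(q)A^p(q)$ and its $q=1$ shadow $D^p=D^0A^p$ restrict to partitions in $\Par_{\le 2}(n)$. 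The goal is to prove that $A^p(q)$ equals the constant $0/1$-matrix $\tilde A^p$ introduced before Lemma~\ref{Adj}: granting this, $D^p(q)=D^0(q)\tilde A^p$, and Lemma~\ref{Adj} gives $[S^\lambda_F:D^\mu_F]_q=f^q_{e,p}(y,u-x)$ whenever $u\ge x$, which is the assertion.

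The first step is to show that $A^p(q)$ has constant entries. By Theorem~\ref{p0_thm} and Lemma~\ref{altform}, $d^0_{\lambda\lambda}(q)=1$ for every $e$-restricted $\lambda$, and every off-diagonal entry of $D^0(q)$ lies in $\{0,q\}$. Specialising $D^p(q)=D^0(q)A^p(q)$ at $q=1$ and extracting the $\nu=\lambda$ summand gives $a_{\lambda\mu}(1)\le\sum_\nu d^0_{\lambda\nu}(1)a_{\nu\mu}(1)=[S^\lambda_F:D^\mu_F]$, the remaining summands being non-negative because $d^0_{\lambda\nu}(1)\ge 0$ and the graded adjustment matrix has non-negative coefficients; by Theorem~\ref{james_thm} this is at most $1$. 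Since $a_{\lambda\mu}(q)\in\N[q,q^{-1}]$ is bar-invariant by Theorem~\ref{TBarInv}, the bound $a_{\lambda\mu}(1)\le 1$ forces $a_{\lambda\mu}(q)$ to be $0$ or a single monomial $q^k$, and bar-invariance then gives $k=0$. Hence $A^p(q)$ is constant, so $A^p(q)=A^p(q)|_{q=1}=A^p$ by the defining property $D^p=D^0A^p$ of the ungraded adjustment matrix.

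The second step is to identify $A^p$ with $\tilde A^p$. From its definition $\tilde A^p$ is itself dominance-triangular: $\tilde a^p_{\lambda\mu}\ne 0$ implies $v\le y$, i.e.\ $\mu\domby\lambda$, and the all-plus sign choice gives $\tilde a^p_{\mu\mu}=1$. Therefore $(D^0\tilde A^p)_{\lambda\mu}=0$ unless $\mu\domby\lambda$, while for $\mu\domby\lambda$ Lemma~\ref{Adj}, the identity $f^q_{e,p}(y,u-x)|_{q=1}=f_{e,p}(y,u-x)$ and Theorem~\ref{james_thm} give $(D^0\tilde A^p)_{\lambda\mu}=[S^\lambda_F:D^\mu_F]$. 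Thus $D^0\tilde A^p=D^p=D^0A^p$, so $\tilde A^p=A^p$ by the uniqueness of the ungraded adjustment matrix. Combining the two steps, $D^p(q)=D^0(q)A^p(q)=D^0(q)\tilde A^p$, and Lemma~\ref{Adj} completes the proof.

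The step I expect to be most delicate is the first one, since it draws on two facts belonging to the general theory rather than to the combinatorics at hand—namely the non-negativity of the graded adjustment matrix, and the fact that the factorisation $D^p(q)=D^0(q)A^p(q)$ together with its bar-invariance (Theorem~\ref{TBarInv}) may legitimately be restricted to $\Par_{\le 2}(n)$—both of which follow from dominance-triangularity and the cellularity results of~\cite{BK09,M99}. Everything else is bookkeeping, since the one genuinely new computation, matching $D^0(q)\tilde A^p$ against the generating function $f^q_{e,p}$, has already been carried out in Lemma~\ref{Adj}.
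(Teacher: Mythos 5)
Your argument is correct and is essentially the paper's proof: both rest on Lemma~\ref{Adj} specialised at $q=1$ together with Theorem~\ref{james_thm} to identify $\tilde A^p$ with the ungraded adjustment matrix, and on Theorem~\ref{TBarInv} (bar-invariance plus non-negativity of the graded adjustment entries) to conclude $A^p(q)=A^p$, before applying Lemma~\ref{Adj} again. The only differences are cosmetic---you establish constancy of $A^p(q)$ first and identify it with $\tilde A^p$ second, whereas the paper does the reverse, and you spell out the dominance-triangularity justifying the restriction to $\Par_{\le 2}(n)$ that the paper leaves implicit.
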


\begin{proof}
If $p=0$, then this is Theorem~\ref{p0_thm}, so we assume that $p>0$.  
Setting $q=1$ in Lemma~\ref{Adj}, we have $D^p=D^0 \tilde{A}^p$, so that $\tilde{A}^p=A^p$ is the $\RPar_{e,\le 2}(n)\times \RPar_{e,\le 2}(n)$-submatrix of 
the ungraded adjustment matrix corresponding to 2-column partitions.  Since the entries of $A^p$ are either $0$ or $1$, it follows from Theorem~\ref{TBarInv} that 
$\tilde{A}^p=A^p=A^p(q)$, that is, $D^p(q)=D^0(q)\tilde{A}^p$.  The result then follows by another application of Lemma~\ref{Adj}.  
\end{proof}

\begin{Examp}
Suppose that $e=p=2$ and $\la=(2,2,1,1)$. 
We have $f_{2,2}^q (2,0)=1 = f_{2,2}^q (6,2)$ and $f_{2,2}^q (4,1)=q$, so by Theorem~\ref{TDec1}
the composition factors of $S^\la$ are $D^{\la}$, $D^{(2,1^4)}$, $D^{(1^6)}$ and their graded composition multiplicities are $1,q,1$ respectively. The reader may wish to check, using Example~\ref{Ex9tab}, that Theorem~\ref{conj:83} holds in this case. 
\end{Examp}

\subsection{Proof of Theorem~\ref{conj:83}}\label{SSproof}
We fix a partition $\mu=(2^x,1^y)\in \RPar_{e,\le 2}(n)$ 
and prove Theorem~\ref{conj:83} for this fixed $\mu$.
Recall the sets $\Std_{e,p,\mu}(\lam)$ for 
$\la\in \Par_{\le 2}(n)$ 
defined by~\eqref{EStdepmu}.
If $p=0$, then the statement of Theorem~\ref{conj:83} is an immediate consequence of Lemma~\ref{LPreimage},  Lemma~\ref{altform} and Theorem~\ref{p0_thm}. 
If $p>0$, then we set 
\begin{equation*} 
y=(a_sp^s+a_{s-1}p^{s-1}+\cdots+a_1 p+a_0)e+l-1 
\end{equation*}
to be the $(e,p)$-expansion of $y$.

\begin{lemma} \label{ExpDesc}
Assume that $p>0$. 
Let $\lam=(2^u,1^v)$ and let 
$\mtt t\in\Std_{e,p,\mu}(\lam)$.
If \[\reg_{e,p}(\mtt t)=\reg_{ep^{z_h}}(\ldots\reg_{ep^{z_2}}(\reg_{ep^{z_1}}(\mtt t))\ldots)\]
is the regularisation equation of $\tt$, then $a_{z_k}> 0$ for all $1\leq k\leq h$ and 
\[v=\left(a_sp^s+(-1)^{\delta_s}a_{s-1}p^{s-1}+\cdots+(-1)^{\delta_2}a_1p+(-1)^{\delta_1} a_0\right)e+(-1)^{\delta_0}l-1,\]
where $\delta_i=\#\{1\leq k\leq h\mid z_k\geq i\}$ for all $0\leq i\leq s$.        
\end{lemma}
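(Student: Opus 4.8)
The plan is to unwind the regularisation equation one factor at a time, tracking the effect of each map $\reg_{ep^{z_k}}$ on the endpoint of the relevant path, and then compare the resulting expression for $v$ with the $(e,p)$-expansion of $y$ using the uniqueness statement of Lemma~\ref{Lsigns_unique}. First I would recall that $\mtt t\in\Std_{e,p,\mu}(\lam)$ means $\reg_{e,p}(\mtt t)$ has shape $\mu=(2^x,1^y)$, so the path $\pi_{\reg_{e,p}(\tt)}$ ends at the point corresponding to the partition $(2^x,1^y)$; under our identification of $V$ with $\mathbb R$ this endpoint is $x\cdot 0 + y\cdot(\text{one-column contribution})$, i.e.\ it equals $y\mp\text{(something)}$ — more precisely, translating the column-count description of $\S\ref{SSTwo}$, the endpoint of $\pi_\st$ for $\st\in\Std((2^a,1^b))$ is $b-a$ after identifying $\eps_1\mapsto1,\eps_2\mapsto-1$; wait, I should use instead that $\pi_\st(n)=c_{n,1}-c_{n,2}=(a+b)-a=b$. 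So the endpoint of $\pi_{\reg_{e,p}(\tt)}$ is $y$ and the endpoint of $\pi_\tt$ is $v$.

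The core computation is then: applying a map $\reg_m$ to a path either fixes it (if its endpoint already lies to the right of the last wall it meets) or reflects its tail in a wall $H_m=\{me-1\}$, which sends an endpoint $p_0$ to $2(me-1)-p_0$. Starting from $\pi_\tt$ with endpoint $v$ and applying $\reg_{ep^{z_1}},\dots,\reg_{ep^{z_h}}$ in turn, each step $\reg_{ep^{z_k}}$ reflects in a wall of the form $H^{(ep^{z_k})}_{m_k}=\{m_k ep^{z_k}-1\}$ for some $m_k\in\Z_{>0}$; crucially, since $\reg_{ep^{z_k}}$ acts \emph{non-trivially} by definition of the regularisation equation, each such reflection genuinely occurs. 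Writing the endpoint after the $k$-th step as an alternating signed combination, I would show by induction on $k$ that the endpoint has the shape
\[
\Big(a_sp^s+\sum_{i=1}^{s}(-1)^{\varepsilon^{(k)}_i}a_{i-1}p^{i-1}\Big)e+(-1)^{\varepsilon^{(k)}_0}l-1,
\]
where $\varepsilon^{(k)}_i$ counts how many of the first $k$ reflections happened `past' position $i$ in the $p$-adic hierarchy; this is exactly the bookkeeping that makes $\delta_i=\#\{1\le k\le h\mid z_k\ge i\}$ come out right at the end (when $k=h$ and the endpoint equals $y$). To make the induction work I must also argue $a_{z_k}>0$ for every $k$: this is because a reflection in a wall at $p$-adic level $z_k$ which produces an endpoint consistent with the $(e,p)$-expansion of $y$ forces the corresponding digit $a_{z_k}$ to be nonzero — this should follow from comparing valuations, essentially an application of Lemma~\ref{Lsigns_unique}(i) combined with the fact that $z_1>\dots>z_h$ (so the reflections happen at strictly decreasing $p$-adic levels, hence each one is `visible' in exactly one digit).

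The main obstacle I expect is correctly pinning down \emph{which} wall each $\reg_{ep^{z_k}}$ reflects in — i.e.\ identifying $m_k$ and verifying it contributes the digit $a_{z_k}$ and not some coarser or finer combination — and ensuring the signs $\varepsilon^{(k)}_i$ accumulate precisely as counted by $\delta_i$. The key facts I would lean on are: (a) $\reg_m$ reflects in the \emph{last} wall $H^{(m)}_{\cdot}$ met by the path, so after applying $\reg_{ep^{z_1}},\dots,\reg_{ep^{z_{k-1}}}$ the path is already $ep^{z_j}$-regular for all $j<k$, which constrains where its remaining walls of level $z_k$ can sit; (b) the $(e,p)$-expansion of $y$ is unique with $0\le a_i<p$ and $0\le l<e$; and (c) Lemma~\ref{Lsigns_unique} guarantees that once the endpoint is written in the alternating-signed form with coefficients $a_i$, the sign pattern is forced, so matching it against $y$'s expansion reads off the $\delta_i$. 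Once $a_{z_k}>0$ is established and the signed form of $v$ is derived, the displayed formula for $v$ is immediate, and the identification $\delta_i=\#\{1\le k\le h\mid z_k\ge i\}$ follows by noting that the $k$-th reflection flips the sign of every digit $a_{i-1}$ with $i-1<z_k$, i.e.\ of the entire `tail' below level $z_k$, hence contributes $1$ to $\delta_i$ exactly when $z_k\ge i$.
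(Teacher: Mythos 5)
Your overall strategy is the same as the paper's: unwind the regularisation equation one step at a time, use the fact that a non-trivial $\reg_{ep^{z_k}}$ reflects the endpoint of the path in a wall of the form $\{Mep^{z_k}-1\}$ (so endpoints transform by $P\mapsto 2(Mep^{z_k}-1)-P$), anchor the computation at the known $(e,p)$-expansion of $y$, and observe that a reflection at level $z_k$ flips the sign of every digit of index below $z_k$ and of $l$, which yields $\delta_i=\#\{k\mid z_k\ge i\}$. The paper organises this as an induction on $h$, applying the inductive hypothesis to $\rt:=\reg_{ep^{z_1}}(\tt)$ (whose regularisation equation has $h-1$ steps) and then recovering $v=2(me-1)-d$ via Lemma~\ref{LPreimage}, where $d$ is the endpoint of $\pi_{\rt}$ and $m=a_sp^s+\cdots+a_{z_1}p^{z_1}$. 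Note that your induction must be run in this direction (on the number of remaining steps); a forward induction starting at $k=0$ is circular, since its base case is exactly the formula for $v$ being proved.

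There are, however, two genuine gaps. First, your justification of $a_{z_k}>0$ (``comparing valuations, essentially an application of Lemma~\ref{Lsigns_unique}(i)'') cannot work: Lemma~\ref{Lsigns_unique} only says that, for \emph{given} digits, the sign pattern realising a given integer is unique; it can never force a digit to be nonzero, and the displayed formula for $v$ is perfectly consistent arithmetically with $a_{z_k}=0$. The true reason is the maximality built into the regularisation equation: $\tt$ is $ep^{z}$-regular for all $z>z_1$, whereas if $a_{z_1}=0$ then $p^{z_1+1}\mid m$, so the reflecting wall $me-1$ is also an $ep^{z_1+1}$-wall and $\pi_\tt$ ends strictly below it, giving $\tt\notin\DStd_{ep^{z_1+1}}(\la)$ (and if $m=0$ there is no relevant wall at all, so $\tt\in\DStd_{ep^{z_1}}(\la)$); either way the definition of $z_1$ is contradicted. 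Second, the wall identification that you yourself flag as the main obstacle is left unresolved, and it is not automatic: to know that the reflecting wall is $me-1$ with $m=a_sp^s+\cdots+a_{z_1}p^{z_1}$ one must check that $d=me+j-1$ with $0\le j<ep^{z_1}$, and the inequality $j\ge 0$ is precisely where the inductively known positivity $a_{z_2}>0$ enters (the signed lower-order terms have absolute value less than $ep^{z_2}\le a_{z_2}p^{z_2}e$). So the positivity of the digits and the identification of the walls have to be established together inside the induction, as in the paper, rather than appealed to separately.
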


\begin{proof}
We use induction on $h$, the result being clear when $h=0$. Assuming that $h>0$, set 
$\rt=\reg_{ep^{z_1}}(\tt)$ so that 
\[\reg_{e,p}(\tt)=\reg_{ep^{z_h}}(\ldots\reg_{ep^{z_3}}(\reg_{ep^{z_2}}(\rt))\ldots)\]
is the regularisation equation of $\rt$, 
and set $\nu:=\Shape(\rt)=\big(2^c,1^d\big)$. By the inductive hypothesis, 
\begin{multline*}
d=\left(a_s p^s + a_{s-1} p^{s-1} +\cdots+ a_{z_2}p^{z_2} 
+ (-1)^{\epsilon_{z_2}}a_{z_2-1} p^{z_2-1} +\cdots +  (-1)^{\epsilon_1} a_0\right) \! e + (-1)^{\epsilon_0} l -1,
\end{multline*}
where $a_{z_2},\ldots,a_{z_h}>0$ and $\epsilon_i=\#\{2\leq k\leq h\mid z_k\geq i\}$ for all $0\leq i\leq z_2$.

Let $m=a_sp^s+\cdots +a_{z_1}p^{z_1}$. Then $d=me+j-1$ for some $0\le j<ep^{z_1}$.
Since $\rt=\reg_{ep^{z_1}}(\tt)\neq\tt$, 
we have 
\begin{multline*}
v=2(me-1)-d=\big(a_sp^s+a_{s-1}p^{s-1}+\cdots+a_{z_1}p^{z_1}-a_{z_1-1}p^{z_1-1}-\cdots -a_{z_2}p^{z_2}\\ 
-(-1)^{\epsilon_{z_2}}a_{z_2-1}p^{z_2-1}-\cdots -(-1)^{\epsilon_1}a_0\big)e-(-1)^{\epsilon_0}l-1
\end{multline*}
by Lemma~\ref{LPreimage}. 
Also, $a_{z_1}>0$, for if $a_{z_1}=0$ 
then 
either $\tt \notin \DStd_{ep^{z_1+1}}(\la)$ or $\tt\in \DStd_{ep^{z_1}} (\la)$, contradicting the hypothesis on the regularisation equation of $\tt$. 
The result follows. 
\end{proof}

The following lemma is an easy consequence of the definitions in~\S\ref{SSTwo}.

\begin{lemma}\label{LRegPres}
Assume that $p>0$ and let 
$b \in\Z_{\geq 0}$. 
\begin{enumerate}[(i)]
\item
Let $\tt\in \Std_{\le 2}(\la)$ and $\st:=\reg_{ep^b} (\tt)$, and suppose that $b$ is maximal such that $\st=\reg_{ep^b} (\tt)$.  Let $c>b$ be an integer. 
Then $\tt$ is $ep^c$-regular if and only if $\st$ is $ep^c$-regular. 
\item 
Let $\tt\in \Std_{\le 2}(n)$. Suppose that 
$\st:=\reg_{ep^b} (\tt) \ne \tt$ and $e\nmid \pi_{\tt} (n)+1$. Then $\tt$ is $e$-regular  if and only if $\st$ is not $e$-regular.
\end{enumerate}
\end{lemma}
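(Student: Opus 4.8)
The plan is to translate everything into the path model of \S\ref{SSTwo}: via Lemma~\ref{lem:bij} I identify $\tt\in\Std_{\le 2}(n)$ with the path $\pi_\tt\in\Pa^+_n$, and I use the recipe for the regularisation at modulus $f\in\{e\}\cup\{ep^z:z\ge 0\}$ --- locate the largest $a$ at which $\pi$ meets an $f$-wall, say $\pi(a)=\ell f-1$; if $\pi(n)\ge\ell f-1$ then $\reg_f$ fixes $\pi$, otherwise $\reg_f(\pi)=s^a_\ell\cdot\pi$ reflects the tail $\pi|_{[a,n]}$ about the point $\ell f-1$. The first thing I will record is the common shape of the ``reflect'' case occurring in both parts: if $\st=\reg_{ep^b}(\tt)\ne\tt$ and $a$ is the last time $\pi_\tt$ meets an $ep^b$-wall, at position $\ell e p^b-1$ with $\pi_\tt(n)<\ell e p^b-1$, then $\pi_\tt$ and $\st$ agree on $[0,a]$; since $a$ is the last $ep^b$-wall visit, $\pi_\tt|_{(a,n]}$ takes values in the slot $[(\ell-1)ep^b,\ \ell e p^b-2]$, hence $\st|_{(a,n]}$ takes values in the reflected slot $[\ell e p^b,\ (\ell+1)ep^b-2]$; and $\st(n)=2(\ell e p^b-1)-\pi_\tt(n)$. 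In particular neither tail meets any $ep^b$-wall. Both parts then come down to comparing $\pi_\tt(n)$ and $\st(n)$ with the position of a suitable ``last relevant wall''.

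For part (i), the hypothesis that $b$ is maximal with $\st=\reg_{ep^b}(\tt)$ forces, first, $\st\ne\tt$ (there is no largest $b$ for which $\reg_{ep^b}$ fixes a given path, since all large $b$ do) and, second, $p\nmid\ell$: if $p\mid\ell$ then $\ell e p^b-1$ is also an $ep^{b+1}$-wall, is still the last one met by $\pi_\tt$, and $\pi_\tt(n)$ is still below it, so $\reg_{ep^{b+1}}(\tt)=\reg_{ep^b}(\tt)=\st$, contradicting maximality. Now fix $c>b$. Since $p\nmid\ell$, the point $\ell e p^b-1$ is not an $ep^c$-wall, and from the two slot descriptions one checks that no $ep^c$-wall is met in either tail $\pi_\tt|_{(a,n]}$ or $\st|_{(a,n]}$; consequently $\pi_\tt$ and $\st$ meet exactly the same $ep^c$-walls at exactly the same times, all of which lie in their common initial segment $[0,a]$. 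If there are none, then $\tt$ and $\st$ are both $ep^c$-regular. Otherwise let $a'<a$ be the last such time and $P=m'ep^b-1$ (with $m'\ne\ell$) the corresponding wall; being a $\pm1$-step path that reaches $\ell e p^b-1$ at time $a$ but never returns to $P$ after $a'$, $\pi_\tt$ stays strictly on the same side of $P$ as $\ell e p^b-1$ throughout $(a',n]$. A two-line case split on $m'<\ell$ versus $m'>\ell$, using the slot bounds on $\pi_\tt(n)$ and $\st(n)$, then shows that $\tt$ is $ep^c$-regular iff $\st$ is (in fact both are $ep^c$-regular when $m'<\ell$, and both fail to be when $m'>\ell$).

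For part (ii) the same set-up applies with $b\ge 0$ arbitrary, but now $\ell e p^b-1$ is itself an $e$-wall (namely $H_{\ell p^b}$), so it is visited by both $\pi_\tt$ and $\st$ at time $a$, and each of the two tails may go on to visit smaller $e$-walls. The key point is that reflection about $\ell e p^b-1$ is an involution of the set of $e$-walls ($H_m\leftrightarrow H_{2\ell p^b-m}$) which matches the $e$-wall visits of $\pi_\tt|_{(a,n]}$ with those of $\st|_{(a,n]}$ at the same times; together with the shared visit at $a$ this gives that $\pi_\tt$ and $\st$ have their last $e$-wall visit at a common time $a_1\ge a$, at positions $Q$ for $\pi_\tt$ and $Q'=2(\ell e p^b-1)-Q$ for $\st$. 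Thus $\tt$ is $e$-regular iff $\pi_\tt(n)\ge Q$, while $\st$ is $e$-regular iff $\st(n)\ge Q'$, and substituting $\st(n)=2(\ell e p^b-1)-\pi_\tt(n)$ and $Q'=2(\ell e p^b-1)-Q$ turns the latter condition into $\pi_\tt(n)\le Q$. Since the hypothesis $e\nmid\pi_\tt(n)+1$ says $\pi_\tt(n)$ does not lie on an $e$-wall, in particular $\pi_\tt(n)\ne Q$, so $\pi_\tt(n)\ge Q$ and $\pi_\tt(n)\le Q$ are mutually exclusive and exhaustive; hence $\tt$ is $e$-regular iff $\st$ is not. (The degenerate case $b=0$, where $\st=\reg_e(\tt)$ and the relevant tails meet no further $e$-wall, is subsumed by this discussion: there $\tt$ is not $e$-regular and $\st$ is.)

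The calculations with wall positions are all routine bookkeeping; the one step I would take care over is deducing $p\nmid\ell$ from the maximality hypothesis in part (i), since it is precisely this that prevents $\ell e p^b-1$ from being an $ep^c$-wall and thereby lets $\pi_\tt$ and $\st$ share their entire $ep^c$-wall history.
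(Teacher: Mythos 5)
Your proof is correct. The paper offers no proof of this lemma at all---it is introduced as ``an easy consequence of the definitions in \S\ref{SSTwo}''---and your path-model verification supplies exactly the intended argument: the two key points (that maximality of $b$ forces $p\nmid\ell$ in (i), so that neither reflected tail nor the reflection point itself meets an $ep^c$-wall and the two paths share their entire $ep^c$-wall history, and that reflection about $\ell ep^b-1$ permutes the $e$-walls in (ii), so that the hypothesis $e\nmid\pi_{\tt}(n)+1$ makes the conditions $\pi_{\tt}(n)\ge Q$ and $\pi_{\tt}(n)\le Q$ mutually exclusive and exhaustive) are both correctly identified and justified. The only blemish is a typo in part (i): the wall $P$ is an $ep^c$-wall and should be written $m'ep^c-1$, not $m'ep^b-1$; the surrounding case analysis is nonetheless sound.
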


\begin{lemma} \label{existence}
Let $\lam=(2^u,1^v)\in\Par_{\leq 2}(n)$ be such that $S^\lam_F$ has a composition factor $D^\mu_F$.  
\begin{enumerate}[(i)]
\item
If $\st\in\DStd_{e,p}(\mu)$ 
 then there exists a unique $\tt\in\Std(\lam)$ 
 such that $\reg_{e,p}(\tt)=\st$.
 \item
 All elements of $\Std_{e,p,\mu} (\la)$ have the same regularisation set. 
 \end{enumerate}
\end{lemma}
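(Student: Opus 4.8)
The plan is to treat the cases $p=0$ and $p>0$ separately, with the combinatorics of $(e,p)$-expansions and Lemma~\ref{LPreimage} doing the main work. For part (i), suppose first that $p=0$. By Theorem~\ref{p0_thm} and Lemma~\ref{altform}, the fact that $D^\mu_F$ is a composition factor of $S^\lam_F$ (with $\lam=(2^u,1^v)$, $\mu=(2^x,1^y)$, $u\ge x$) means that either $u=x$ (so $v=y$) or $0<u-x<e\le y+1$ with $e\mid y+1-u+x$; by Lemma~\ref{altform}(ii) the latter says $y=me+j-1$, $v=me-j-1$ for some $m\ge1$, $1\le j<e$. Applying Lemma~\ref{LPreimage} to $\st\in\DStd_e(\mu)$, I would read off that $\reg_e^{-1}(\st)$ consists of exactly one tableau of shape $\lam$ in the first case (when $y<e$ or $j=0$, giving $v=y$) and, in the second case, of $\st$ itself (shape $\mu$) together with one tableau of shape $(2^{x+j},1^{me-1-j})=(2^u,1^v)$. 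In either case there is a unique $\tt\in\Std(\lam)$ with $\reg_{e,0}(\tt)=\reg_e(\tt)=\st$, which is (i) for $p=0$. Part (ii) for $p=0$ is immediate since the regularisation set is $\varnothing$ or $\{0\}$ and is determined by whether $\reg_e$ moves the tableau.

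For $p>0$, the key is to run the $(e,p)$-expansion $y=(a_sp^s+\cdots+a_0)e+l-1$ against Lemma~\ref{LDec} and Theorem~\ref{TDec1}: $D^\mu_F$ being a composition factor of $S^\lam_F$ forces $v=(a_sp^s\pm a_{s-1}p^{s-1}\pm\cdots\pm a_0)e\pm l-1$ for some choice of signs. By Lemma~\ref{Lsigns_unique}, the sign in front of each nonzero $a_i$, and the sign in front of $l$ if $l\ne0$, are uniquely determined by $v$ (the signs in front of vanishing $a_i$ are irrelevant). Define $\delta_i\in\{0,1\}$ by these signs (the sign of the $p^{i-1}$-coefficient for $1\le i\le s$, and the sign of $l$ for $i=0$), with the convention that $\delta_i=1$ exactly when the sign is $-$; note $\delta_0\ge\delta_1\ge\cdots$ fails in general, but I would instead observe that the relevant thing is that the set $\{i : \delta_i=1,\ a_i\ne0\}$ or rather the nested-interval structure forced by Lemma~\ref{ExpDesc} must hold. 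Concretely, I would show by induction on the size of this sign data that there is a chain of regularisations $\reg_{ep^{z_h}}(\cdots\reg_{ep^{z_1}}(\tt)\cdots)=\st$ realising these signs: at each stage Lemma~\ref{LPreimage} (applied with $ep^{z}$ in place of $e$) says the preimage under $\reg_{ep^z}$ of a given $ep^z$-regular tableau of shape $(2^c,1^d)$ with $d=Me+j-1$, $j>0$, $d\ge ep^z-1$(?) is exactly two tableaux — the tableau itself and one of shape $(2^{c+j},1^{Me-1-j})$ — and I use this to peel off the signs one $p$-adic digit block at a time, starting from the most significant digit where $\delta$ changes. Existence of $\tt$ of the correct shape $(2^u,1^v)$ then follows because the resulting shape matches the $v$ read off from the sign pattern; uniqueness follows from the uniqueness clauses in Lemmas~\ref{LPreimage} and~\ref{Lsigns_unique}, together with Lemma~\ref{LRegPres}(i) to guarantee that the intermediate tableaux are $ep^c$-regular for the larger moduli $c$ so that no alternative chain exists.

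For part (ii) when $p>0$: any $\tt\in\Std_{e,p,\mu}(\lam)$ has $\reg_{e,p}(\tt)$ of shape $\mu$, hence $\reg_{e,p}(\tt)\in\DStd_{e,p}(\mu)$, and by part (i) this tableau is forced, so in fact all elements of $\Std_{e,p,\mu}(\lam)$ regularise to the \emph{same} $\st\in\DStd_{e,p}(\mu)$ — wait, that is not quite immediate since part (i) only gives a preimage for each $\st$. Instead I would argue directly via Lemma~\ref{ExpDesc}: that lemma shows the regularisation set $\{z_1>\cdots>z_h\}$ of $\tt$ is completely determined by the sign pattern $(\delta_0,\dots,\delta_s)$ via $\delta_i=\#\{k : z_k\ge i\}$, and the sign pattern is in turn determined by $\mu$ (through the $(e,p)$-expansion of $y$) and by $\lam$ (through $v$) via Lemma~\ref{Lsigns_unique}. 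Since $\{z_k\}$ is recoverable from $(\delta_i)$ as the set of indices where $\delta_i$ strictly decreases, and $(\delta_i)$ depends only on $\lam,\mu$, all $\tt\in\Std_{e,p,\mu}(\lam)$ share this regularisation set.

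The main obstacle I anticipate is the inductive construction in (i) for $p>0$: correctly matching the "one digit block at a time" peeling of the $(e,p)$-expansion against repeated application of Lemma~\ref{LPreimage} with varying moduli $ep^{z}$, and in particular verifying that the hypotheses of Lemma~\ref{LPreimage} (namely $d\ge ep^z-1$ and $j>0$, i.e.\ that the intermediate shape is not already $ep^z$-restricted in the relevant sense) are met at each stage precisely when $a_{z}>0$ — this is exactly the content flagged in the proof of Lemma~\ref{ExpDesc} ("$a_{z_1}>0$, for if $a_{z_1}=0$ then \dots contradicting the hypothesis"), so I expect to lean heavily on Lemma~\ref{ExpDesc} and Lemma~\ref{LRegPres} to make this bookkeeping go through cleanly rather than re-deriving it.
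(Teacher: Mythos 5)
Your proposal is correct and follows essentially the same route as the paper: $p=0$ via Lemma~\ref{LPreimage}; for $p>0$, the sign pattern from Theorem~\ref{TDec1}, Lemma~\ref{LDec} and Lemma~\ref{Lsigns_unique} determines the regularisation set via Lemma~\ref{ExpDesc} (giving (ii)), and (i) is proved by induction peeling off one changing digit $z_1$ at a time using Lemma~\ref{LPreimage} for the two-element preimage and Lemma~\ref{LRegPres}(i) for regularity at higher moduli. The bookkeeping you flag as the main obstacle is handled in the paper exactly as you anticipate, by leaning on Lemma~\ref{ExpDesc} rather than re-deriving it.
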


\begin{proof}
If $p=0$, then the result holds by Lemma~\ref{LPreimage}. So, assume that $p>0$. 

By Theorem~\ref{TDec1} and Lemma~\ref{LDec},
\[v=\left((-1)^{\delta_{s+1}}a_s p^s + (-1)^{\delta_{s}} a_{s-1} p^{s-1} +\cdots + (-1)^{\delta_1} a_0\right)e + (-1)^{\delta_0} l -1\] 
for some $\delta_0,\dots,\delta_{s+1}\in \{0,1\}$ satisfying the following conditions:
\begin{enumerate}
\item $\delta_{s+1}=0$;
\item $\delta_0=\delta_1$ if $l=0$, and 
$\delta_{i+1} = \delta_{i+2}$ if $a_i=0$ and $0\le i< s$.
\end{enumerate}
By Lemma~\ref{Lsigns_unique}, such $\delta_0,\dots,\delta_{s+1}$ are uniquely determined by $v$.
Let 
\[
Z:=\{i\in [0,s] \mid \delta_i \ne \delta_{i+1}\}= \{ z_1>\dots >z_{h}\}.
\] 
By Lemma~\ref{ExpDesc}, if $\tt\in \Std_{e,p,\mu} (\la)$, then 
the regularisation set of $\tt$ is $Z$, proving (ii).

To prove (i), we argue by induction on $h$. If $h=0$, then $\la=\mu$ and there is nothing to prove. So, assume that $h>0$. 
Set 
\begin{multline*} 
d=\Big(a_s p^s+a_{s-1}p^{s-1}+\cdots +a_{z_1} p^{z_1}\\
-(-1)^{\delta_{z_1}} a_{z_1-1}p^{z_1-1}-\cdots -(-1)^{\delta_2}a_1p-(-1)^{\delta_1} a_0\Big)e-(-1)^{\delta_0}l-1 
\end{multline*}
and let 
$\nu:=\big(2^c,1^d\big)\in\Par_{\leq 2}(n)$, with $c=(n-d)/2$. 
 By Lemma~\ref{LDec} and Theorem~\ref{TDec1}, $D^\mu_F$ is a composition factor of $S^\nu_F$, so by the inductive hypothesis, there is a unique $\rt\in\Std(\nu)$ such that $\reg_{e,p}(\rt)=\st$.
  Moreover, by Lemmas~\ref{ExpDesc} and~\ref{Lsigns_unique}, 
 \[
  \reg_{e,p} (\rt) = \reg_{ep^{z_h}} (\dots \reg_{ep^{z_2}}(\rt) \dots)
 \]  
 is the regularisation equation of $\rt$. In particular, $\reg_{ep^z} (\rt)=\rt$ for all $z>z_2$. 
Let 
\[
m=a_s p^{s-z_1}+a_{s-1} p^{s-1-z_1}+\dots +a_{z_1}.
\]
Since $\delta_{z_1}\neq \delta_{z_1+1}=\delta_{s+1}=0$, we have $\delta_{z_1}=1$ and so $mep^{z_1}-1 <d<(m+1)ep^{z_1}-1$. 
By Lemma~\ref{LPreimage}, 
\begin{equation}\label{ERegInv}
\reg_{ep^{z_1}}^{-1} (\rt) = \{ \rt, \tt\}
\end{equation} 
for a certain tableau $\tt\in \Std(\la)$. 
By Lemma~\ref{LRegPres}(i), $\reg_{ep^z} (\tt) = \tt$ for all $z>z_1$.
Hence, $\reg_{e,p} (\tt) = \reg_{e,p} (\rt)=\st$.

To prove the uniqueness statement in (i), recall that any  $\tt'\in \Std(\la)$ such that $\reg_{e,p} (\tt') = \st$ has regularisation set 
$Z$.
By the inductive hypothesis, this implies that $\reg_{ep^{z_1}} (\tt') = \rt$, and by~\eqref{ERegInv} we have $\tt'=\tt$. 
\end{proof}

\begin{lemma} \label{DecNo}
Let $\lam=(2^u,1^v)$ be a partition of $n$ such that $D^\mu_F$ is a composition factor of $S^\la_F$. 
If $\mtt t\in\Std_{e,p,\mu}(\lam)$ then
$[S^\lam_F:D^\mu_F]_q=q^{r_e(\tt)}$.   
\end{lemma}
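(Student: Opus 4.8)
The strategy is to read the answer off the formula $[S^\lam_F:D^\mu_F]_q=f_{e,p}^q(y,u-x)$ of Theorem~\ref{TDec1} (which applies because $D^\mu_F$ being a composition factor of $S^\lam_F$ forces $\mu\domby\lam$, and hence $u\ge x$ for the $2$-column partitions $\lam=(2^u,1^v)$ and $\mu=(2^x,1^y)$), combined with the combinatorial descriptions of $f_{e,p}^q$ in Lemmas~\ref{LDec} and~\ref{altform} and the expression for $v$ in Lemma~\ref{ExpDesc}. Since $D^\mu_F$ is a composition factor, $f_{e,p}^q(y,u-x)\neq0$, so it equals $1$ or $q$; thus the task is to show that it equals $q$ exactly when $\mtt t$ fails to be $e$-regular, i.e.\ when $r_e(\mtt t)=1$.

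For $p=0$ I would argue directly. Here $\reg_{e,p}=\reg_e$, so $\mtt t\in\Std_{e,0,\mu}(\lam)$ means precisely that $\st:=\reg_e(\mtt t)$ has shape $\mu$. If $r_e(\mtt t)=0$, then $\mtt t=\st$ has shape $\mu$, so $\lam=\mu$, $u=x$, and $f_{e,0}^q(y,0)=1$. If $r_e(\mtt t)=1$, then $\mtt t\in\reg_e^{-1}(\st)$ with $\mtt t\neq\st$; writing $y=me-1+j$ with $0\le j<e$, Lemma~\ref{LPreimage} then forces $\st\in\DStd_e(\mu)$ (otherwise $\reg_e^{-1}(\st)=\varnothing$), $y\ge e$, $j>0$ and $\Shape(\mtt t)=(2^{x+j},1^{me-1-j})$, so that $u-x=j$ and $v=me-j-1$. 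As $y\ge e$ and $0\le j<e$ we have $m\ge1$, and hence $f_{e,0}^q(y,j)=q$ by Lemma~\ref{altform}(ii). In both cases $[S^\lam_F:D^\mu_F]_q=q^{r_e(\mtt t)}$.

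Now let $p>0$, write $y=(a_sp^s+\cdots+a_0)e+l-1$ for the $(e,p)$-expansion, and let $h$ be the length of the regularisation equation of $\mtt t$. By Lemma~\ref{ExpDesc}, $v=\bigl(a_sp^s+(-1)^{\delta_s}a_{s-1}p^{s-1}+\cdots+(-1)^{\delta_1}a_0\bigr)e+(-1)^{\delta_0}l-1$ with $\delta_0=\#\{k:z_k\ge0\}=h$, so $v+1\equiv(-1)^h l\pmod e$; since Lemma~\ref{LDec} pins down the sign attached to $l$ in its description of $v$, it follows that
\[
f_{e,p}^q(y,u-x)=\begin{dcases*} 1 & if $l=0$, or $l\neq0$ and $h$ is even;\\ q & if $l\neq0$ and $h$ is odd.\end{dcases*}
\]
It therefore suffices to prove that $\mtt t$ is $e$-regular if and only if $l=0$ or $h$ is even. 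If $l=0$, then $v+1\equiv0\pmod e$, so $\pi_{\mtt t}(n)=v$ lies on a wall; hence the last index at which $\pi_{\mtt t}$ meets $H$ is $n$ itself, the endpoint clause in the definition of $\reg_e$ holds with equality, and so $\reg_e(\pi_{\mtt t})=\pi_{\mtt t}$, i.e.\ $\mtt t$ is $e$-regular. Suppose instead $l\neq0$; I argue by induction on $h$ (noting that $l$ depends only on $\mu$). When $h=0$, $\Shape(\mtt t)=\mu\in\RPar_{e,\le2}(n)$, so $\mtt t\in\DStd_{e,p}(\mu)\subseteq\DStd_{e}(\mu)$ and $\mtt t$ is $e$-regular. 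When $h\ge1$, put $\mtt r:=\reg_{ep^{z_1}}(\mtt t)\neq\mtt t$. Then $\reg_{e,p}(\mtt r)=\reg_{e,p}(\mtt t)$ has shape $\mu$, and, exactly as in the proof of Lemma~\ref{ExpDesc}, the maximality of the exponents $z_1>\cdots>z_h$ (via Lemma~\ref{LRegPres}(i)) shows that $\reg_{e,p}(\mtt r)=\reg_{ep^{z_h}}(\ldots\reg_{ep^{z_2}}(\mtt r)\ldots)$ is the regularisation equation of $\mtt r$, of length $h-1$, so $\mtt r\in\Std_{e,p,\mu}(\Shape(\mtt r))$. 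Since $e\nmid v+1=\pi_{\mtt t}(n)+1$, Lemma~\ref{LRegPres}(ii) gives that $\mtt t$ is $e$-regular if and only if $\mtt r$ is not; by the inductive hypothesis $\mtt r$ is $e$-regular if and only if $h-1$ is even, so $\mtt t$ is $e$-regular if and only if $h$ is even, as required.

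The step I expect to need care is the induction for $p>0$, $l\neq0$: one must verify that stripping $\reg_{ep^{z_1}}$ off the regularisation equation of $\mtt t$ really produces the regularisation equation of $\mtt r$ (so that the inductive hypothesis genuinely applies to $\mtt r$), and that the hypothesis $e\nmid\pi(n)+1$ of Lemma~\ref{LRegPres}(ii) is in force at that stage — which is precisely what $l\neq0$ secures, via the congruence $v+1\equiv\pm l\pmod e$ extracted from Lemma~\ref{ExpDesc}. The remaining work is the routine matching of the cases of $f_{e,p}^q$ against the parity of $h$.
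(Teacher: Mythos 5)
Your proof is correct and follows essentially the same route as the paper's: the $p=0$ case via Lemma~\ref{LPreimage} and Lemma~\ref{altform}, and for $p>0$ the combination of Lemma~\ref{ExpDesc} (giving $\delta_0=h$), repeated application of Lemma~\ref{LRegPres}(ii) to read off $r_e(\tt)$ from the parity of $h$, and Theorem~\ref{TDec1} with Lemma~\ref{LDec}. The only difference is presentational — you organise the $h$ applications of Lemma~\ref{LRegPres}(ii) as an explicit induction and spell out the $l=0$ endpoint-on-a-wall case, which the paper leaves implicit.
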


\begin{proof}
If $p=0$, then the result holds by Theorem~\ref{p0_thm} and Lemma~\ref{altform}, so we assume that $p>0$. 
Consider the regularisation equation of $\tt$:
\[\st:=\reg_{e,p}(\tt)=\reg_{ep^{z_h}}(\ldots\reg_{ep^{z_2}}(\reg_{ep^{z_1}}(\tt))\ldots).\]
By Lemma~\ref{ExpDesc}, 
\[v=\left (a_s p^s + (-1)^{\delta_{s}} a_{s-1} p^{s-1} +\cdots+(-1)^{\delta_2}a_1 p + (-1)^{\delta_1} a_0\right)\! e + (-1)^{\delta_0} l -1,\] where $\delta_i = \#\{1\leq k\leq h\mid z_k\geq i\}$ for $0\leq i\leq s$. 
Noting that $\st \in \DStd_e (\mu)$ and applying Lemma~\ref{LRegPres}(ii) $h$ times, we see that 
\[r_e(\tt) = \begin{dcases*} 
0 & if $l=0$;\\ 
0 & if $l>0$ and $h$ is even;\\
1 & if $l>0$ and $h$ is odd.
\end{dcases*}\]
Observe that $\delta_0=h$. 
By Theorem~\ref{TDec1} and Lemma~\ref{LDec}, the result follows.
\end{proof}

\begin{proof}[Proof of Theorem~\ref{conj:83}] 
By Theorem~\ref{TDec1} and Lemmas~\ref{LDec} and~\ref{ExpDesc} (or by Lemma~\ref{LPreimage} and Theorem~\ref{p0_thm} in the case $p=0$), if 
$\Std_{e,p,\mu} (\la) \ne \varnothing$ 
 then $D^\mu_F$ is a composition factor of $S^\la_F$. 
The converse also holds, by Lemma~\ref{existence}(i). Moreover, that lemma asserts that, 
given $\st\in \DStd_{e,p} (\mu)$, there exists a 
unique $\tt\in \Std(\la)$ such that $\reg_{e,p} (\tt) = \st$. Further, by Lemma~\ref{DecNo}, we have $[S^\la_F:D^\mu_F]_q = q^{r_e (\tt)}$ in this case, completing the proof.
\end{proof}


\subsection{Characters of simple modules}\label{SSchar}
Recall the notation of~\S\S\ref{SSPaths}-\ref{SSTwo} applied to a weight space of type $A_1$. 
Let $u\le r\le s\le v$ 
be integers and $\pi\in\Pa^+_{[u,v]}$. 
Denote 
the restriction of $\pi$ to $[r,s]$ by $\pi[r,s]$, 
so that $\pi[r,s]\in \Pa^+_{[r,s]}$. 
We refer to every such restriction $\pi[r,s]$ as a {\em segment} of 
$\pi$. The following is clear:

\begin{lemma}\label{LDegSum}
If $u=c_1 \le \cdots \le c_l = v$ are integers, then $\deg_e (\pi) = \sum_{i=1}^{l-1} \deg_e (\pi[c_i,c_{i+1}])$. 
\end{lemma}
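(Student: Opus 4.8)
The plan is to prove Lemma~\ref{LDegSum} by unwinding the definition~\eqref{Edeg} of $\deg_e(\pi)$ for a path in $\Pa^+_{[u,v]}$ and recognising the right-hand side as a regrouping of the same sum. First I would recall that, by~\eqref{Edeg}, $\deg_e(\pi) = \sum_{a=u}^{v-1} \deg_e(\pi(a),\pi(a+1))$, where each summand is the single-step degree defined in~\eqref{Edeg1Step}; this is a finite sum of integers indexed by the unit steps $a\to a+1$ of $\pi$, and it depends only on the values $\pi(u),\pi(u+1),\dots,\pi(v)$. Since a segment $\pi[r,s]$ is literally the restriction of $\pi$ to $[r,s]$, by the same formula $\deg_e(\pi[c_i,c_{i+1}]) = \sum_{a=c_i}^{c_{i+1}-1} \deg_e(\pi(a),\pi(a+1))$: the single-step degree $\deg_e(\pi(a),\pi(a+1))$ is computed purely from the two consecutive points $\pi(a),\pi(a+1)$ and so is unaffected by whether we regard $a\to a+1$ as a step of $\pi$ or of the segment $\pi[c_i,c_{i+1}]$.

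The key step is then the observation that, because $c_1\le c_2\le\cdots\le c_l$ with $c_1=u$ and $c_l=v$, the index sets $\{c_i,c_i+1,\dots,c_{i+1}-1\}$ for $i=1,\dots,l-1$ partition $\{u,u+1,\dots,v-1\}$ (a segment with $c_i=c_{i+1}$ simply contributes the empty sum, which is consistent with the convention that a one-point path has degree $0$, as stated just before~\eqref{Edeg1Step} — indeed $\Pa^+_{[r,r]}$ consists of constant paths and the sum in~\eqref{Edeg} is empty). Summing $\deg_e(\pi[c_i,c_{i+1}])$ over $i$ therefore reassembles exactly the sum $\sum_{a=u}^{v-1}\deg_e(\pi(a),\pi(a+1)) = \deg_e(\pi)$, which is the claim.

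There is essentially no obstacle here: the only point requiring care is bookkeeping with the half-open index blocks and the degenerate case $c_i=c_{i+1}$, and possibly a remark that each $\pi[c_i,c_{i+1}]$ genuinely lies in $\Pa^+_{[c_i,c_{i+1}]}$ (which is immediate since $\pi\in\Pa^+_{[u,v]}$ means $\pi(a)\in\CC$ for all $a\in[u,v]$, hence in particular for $a\in[c_i,c_{i+1}]$, and consecutive differences remain in $\{\eps_1,\eps_2\}$). This is why the authors state it as ``clear''; in the write-up I would give the one-line telescoping/partition argument and leave it at that.
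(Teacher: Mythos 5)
Your proof is correct and is exactly the telescoping/regrouping argument the paper has in mind when it declares the lemma ``clear'': the degree in~\eqref{Edeg} is a sum over unit steps depending only on consecutive pairs $(\pi(a),\pi(a+1))$, and the blocks $[c_i,c_{i+1}-1]$ partition $[u,v-1]$, with degenerate blocks contributing the empty sum. (The only quibble is your citation of a ``convention stated just before~\eqref{Edeg1Step}'' for one-point paths, which does not appear in the paper; but the empty sum in~\eqref{Edeg} already handles that case, so nothing is affected.)
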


\begin{defn}
Let $r<s$ be integers.
Let $\eta\in \Pa^+_{[r,s]}$ be such that 
$\eta(r)=\eta(s)=me-1$ for some $m\in \Z_{>0}$, so that $\eta(r)\in H_m$. 
\begin{itemize}
\item If $me-1<\eta(c)<(m+1)e-1$
whenever $r<c<s$, then we call $\eta$ 
a \emph{positive arc}.
\item
If $(m-1)e-1<\eta(c)<me-1$ whenever $r<c<s$, then we call $\eta$
a \emph{negative arc}.
\end{itemize}
If one of the above two statements holds, we say that $\eta$ is an {\em arc}.
\end{defn}

Let $m\in \Z_{>0}$ and $\pi \in P^+_{[u,v]}$. Recall the reflections $s_m = s_{\al,m}$ 
from~\eqref{Edot}.
 We define 
$s_m \cdot \pi\in \Pa_{[u,v]}$ to be the path obtained by reflecting $\pi$ with respect to the wall $H_m$, so that 
\[
 (s_m \cdot \pi) (a) = s_m\cdot (\pi(a)) \quad (u\le a\le v).
\]

\begin{lemma}\label{LReflResSeq}
Let $\tt \in \Std_{\le 2} (n)$. 
Suppose that $0\le r<s \le n$ are integers and 
$\pi_{\tt}[r,s]$ is an arc, 
with $\pi_{\tt} (r) = \pi_{\tt} (s) = me-1$ where 
$m\in \Z_{>0}$. If $\st\in \Std_{\le 2} (n)$ is defined by the conditions that 
\[
\pi_\st[0,r] = \pi_\tt[0,r], \quad \pi_\st[r,s] = s_m \cdot \pi_\tt[r,s], \quad 
\pi_\st[s,n] = \pi_\tt [s,n],
\]
then $\bi^{\st} = \bi^{\tt}$. 
\end{lemma}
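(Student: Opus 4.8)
The plan is to reduce this statement to \Cref{LResSeq}, which already shows that reflecting a tail of a path with respect to a wall does not alter the residue sequence of the corresponding tableau. The key observation is that reflecting only the \emph{segment} $\pi_\tt[r,s]$ with respect to $H_m$ (while keeping the initial part $\pi_\tt[0,r]$ and the final part $\pi_\tt[s,n]$ unchanged) can be expressed as the composition of two tail-reflections: first reflect the tail starting at $a=r$, then reflect the tail starting at $a=s$, both with respect to $H_m$. Concretely, since $\pi_\tt(r)=\pi_\tt(s)=me-1\in H_m$, the path $s^r_m\cdot\pi_\tt$ lies in $H_m$ at position $r$ and has a reflected tail from $r$ onward; applying $s^{s}_m$ to this new path (noting that its value at position $s$ equals $s_m\cdot(me-1)=me-1$, so it still meets $H_m$ there) reflects the portion from $s$ onward a second time, undoing the reflection there and restoring agreement with $\pi_\tt$ on $[s,n]$. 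Thus $\pi_\st = s^s_m\cdot(s^r_m\cdot\pi_\tt)$, with the intermediate path also being a legitimate element of $\Pa^+_n$ meeting $H_m$ at the relevant index.

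First I would verify, purely by unwinding the definition of $s^a_{\al,m}\cdot\pi$ from \S\ref{SSPaths}, that the intermediate path $\pi':=s^r_m\cdot\pi_\tt$ satisfies $\pi'(s)=me-1$, so that the second reflection $s^s_m\cdot\pi'$ is defined; this is immediate because $s_m$ fixes $H_m=\{me-1\}$ pointwise. Next I would check that $s^s_m\cdot(s^r_m\cdot\pi_\tt)$ agrees with $\pi_\tt$ on $[0,r]$ (unchanged by either reflection), equals $s_m\cdot\pi_\tt$ on $[r,s]$ (reflected once, by $s^r_m$, and untouched by $s^s_m$), and equals $\pi_\tt$ on $[s,n]$ (reflected by $s^r_m$ and then reflected back by $s^s_m$, since $s_m$ is an involution on $V$ for the dot action). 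Hence this composite path is exactly $\pi_\st$ as specified in the statement.

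I would then let $\rt\in\Std_{\le 2}(n)$ be the tableau with $\pi_{\rt}=s^r_m\cdot\pi_\tt$ — this exists by \Cref{lem:bij} and lies in $\CT_{\le 2}(n)$, in fact in $\Std_{\le 2}(n)$ since $s^r_m\cdot\pi_\tt\in\Pa^+_n$ by the arc hypothesis (both halves of the reflected segment stay between $(m-1)e-1$ and $(m+1)e-1$, hence nonnegative, so the path stays in $\CC$; one should note that a positive arc reflects to a negative arc and vice versa, but either way the values remain in $\Z_{\ge 0}$). Applying \Cref{LResSeq} with $\al=\al_1$, $a=r$ gives $\bi^{\rt}=\bi^\tt$; applying it again with $a=s$ to pass from $\rt$ to $\st$ gives $\bi^\st=\bi^{\rt}=\bi^\tt$, as required.

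The main obstacle — really the only nontrivial point — is confirming that the intermediate path $s^r_m\cdot\pi_\tt$ genuinely stays in $\Pa^+_n$ (i.e.\ remains nonnegative after reflection), so that \Cref{LResSeq} applies to $\rt$ as a standard tableau rather than merely a column tableau; but since \Cref{LResSeq} is in fact stated for all of $\CT_{\le k}(n)$, not just $\Std_{\le k}(n)$, this subtlety evaporates and the argument goes through with no genuine difficulty. The only care needed is bookkeeping: keeping straight which of the three pieces of the path each of the two tail-reflections acts on, and recalling that $s_m$ acts as an involution fixing the single point $me-1$.
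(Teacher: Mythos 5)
Your proof is correct and is essentially identical to the paper's: the paper's entire argument is the one-line observation that $\pi_{\st} = s_m^{s} \cdot s_m^r \cdot \pi_\tt$, after which Lemma~\ref{LResSeq} is applied twice. You have simply spelled out the verifications (the intermediate path meets $H_m$ at $s$, the second reflection undoes the first on $[s,n]$, and Lemma~\ref{LResSeq} applies to column tableaux so positivity of the intermediate path is not even needed) that the paper leaves implicit.
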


\begin{proof}
Since $\pi_{\st} = s_m^{s} \cdot s_m^r \cdot \pi_\tt$, the result follows from Lemma~\ref{LResSeq}. 
\end{proof}

If $\pi \in P^{+}_{[u,v]}$, we define $\mathsf A^+(\pi)$ and $\mathsf A^-(\pi)$ to be the sets of segments of $\pi$ that are positive and negative arcs, respectively.
Recalling~\eqref{EH}, let 
\begin{equation}\label{Ebs}
B(\pi)=\{ b_1<\cdots<b_N \}:= \{ b\in [u,v] \mid \pi(b) \in H\}.
\end{equation}
Then every element of $\mathsf A^+(\pi) \cup \mathsf A^-(\pi)$ is of the form $\pi[b_k,b_{k+1}]$ for some $1\le k<N$. 

\begin{lemma}\label{LDegArcs1} 
Let $\pi\in \Pa^+_{[u,v]}$ be such that $\pi(u),\pi(v)\in H$ and let $B(\pi)=\{b_1<\dots<b_N\}$. Then:
\begin{enumerate}[(i)]
\item 
 $\deg_e (\pi) = |\Arc^- (\pi)| - |\Arc^+ (\pi)|$. 
 \item 
 If $m\in \Z_{>0}$ and $s_m \cdot \pi\in \Pa^+_{[u,v]}$, then 
 $\deg(s_m \cdot \pi)=-\deg(\pi)$. 
 \end{enumerate}
\end{lemma}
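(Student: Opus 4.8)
The plan for (i) is to reduce $\deg_e(\pi)$ to a step-by-step count and then group the steps according to the arc segments of $\pi$. Working in the identification $V=\mathbb R$ of~\S\ref{SSTwo} (so that $H_m=\{me-1\}$, $H=\{me-1\mid m\ge1\}$, and each step of a path changes the coordinate by $\pm1$), I would first record exactly which steps contribute to $\deg_e$. Unwinding~\eqref{Edeg1Step} with $\Phi^+=\{\al\}$, and using that a weight of coordinate $x$ satisfies $(v+\rho,\al)=x+1$, one finds: an up-step contributes $0$; a down-step from coordinate $x$ to $x-1$ contributes $+1$ if $x\in H$, contributes $-1$ if $x-1\in H$, and contributes $0$ otherwise, where the first two alternatives are mutually exclusive because $me-1\not\equiv0\pmod e$. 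In particular, a step contributes only if one of its two endpoints lies on a wall of $H$.

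Next, since $\pi(u),\pi(v)\in H$, the set $B(\pi)=\{b_1<\dots<b_N\}$ of~\eqref{Ebs} has $b_1=u$ and $b_N=v$, so Lemma~\ref{LDegSum} (applied with $c_i=b_i$) gives $\deg_e(\pi)=\sum_{k=1}^{N-1}\deg_e(\pi[b_k,b_{k+1}])$. Fix $k$ and write $\pi(b_k)=m_ke-1$ and $\pi(b_{k+1})=m_{k+1}e-1$ with $m_k,m_{k+1}\ge1$. Since $(b_k,b_{k+1})\cap B(\pi)=\varnothing$, the interior values of this segment avoid all walls, so by the previous paragraph only its first step $(\pi(b_k),\pi(b_k+1))$ and its last step $(\pi(b_{k+1}-1),\pi(b_{k+1}))$ can contribute, and its interior lies in a single open strip between two consecutive walls, determined by the direction of the first step. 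If the first step goes up, that strip is $(m_ke-1,(m_k+1)e-1)$ and $\pi(b_{k+1})\in\{m_ke-1,(m_k+1)e-1\}$: the segment is a positive arc when $\pi(b_{k+1})=\pi(b_k)$ (last step down; contribution $0+(-1)=-1$), and otherwise crosses from $H_{m_k}$ up to $H_{m_k+1}$ (last step up; contribution $0$). Symmetrically, if the first step goes down, the segment is a negative arc (contribution $1$), or else it crosses from $H_{m_k}$ down to $H_{m_k-1}$ (contribution $1+(-1)=0$, which requires $m_k\ge2$, since $H$ contains no wall below $H_1$ and paths stay in $\Z_{\ge0}$). As the positive (resp.\ negative) arc segments among the $\pi[b_k,b_{k+1}]$ are precisely the elements of $\mathsf A^+(\pi)$ (resp.\ $\mathsf A^-(\pi)$), summing over $k$ yields $\deg_e(\pi)=|\mathsf A^-(\pi)|-|\mathsf A^+(\pi)|$, which is (i).

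For (ii), the reflection $s_m$ acts on $V=\mathbb R$ by $x\mapsto2(me-1)-x$; it is an involution reversing the direction of every step and sending the wall $H_{m'}$ to $H_{2m-m'}$. The hypothesis $s_m\cdot\pi\in\Pa^+_{[u,v]}$ forces $\pi(a)\le2(me-1)$ for all $a$, so $\pi$ meets only walls $H_{m'}$ with $m'\le2m-1$, which $s_m$ sends to the walls $H_{2m-m'}$ with $2m-m'\ge1$; applying $s_m$ once more yields the reverse implication, so $B(s_m\cdot\pi)=B(\pi)$, and in particular $s_m\cdot\pi$ again has both endpoints on $H$, so that (i) applies to it. Reflection sends each segment $\pi[b_k,b_{k+1}]$ to $(s_m\cdot\pi)[b_k,b_{k+1}]$ with all step directions reversed, so by the classification above it interchanges positive and negative arcs and carries the remaining segments to segments that are again crossings; hence $|\mathsf A^+(s_m\cdot\pi)|=|\mathsf A^-(\pi)|$ and $|\mathsf A^-(s_m\cdot\pi)|=|\mathsf A^+(\pi)|$. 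Applying (i) to $s_m\cdot\pi$ and to $\pi$ then gives $\deg_e(s_m\cdot\pi)=|\mathsf A^-(s_m\cdot\pi)|-|\mathsf A^+(s_m\cdot\pi)|=|\mathsf A^+(\pi)|-|\mathsf A^-(\pi)|=-\deg_e(\pi)$.

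The step I expect to be the main obstacle is the case analysis in (i): deriving the step-degree formula from~\eqref{Edeg1Step}, and above all classifying the segments between consecutive wall-hits while correctly handling the boundary behaviour near the lowest wall $H_1$ — where, because $H$ has no wall below $H_1$ and paths remain nonnegative, a segment whose first step descends must be a negative arc rather than a crossing down to $H_{m_k-1}$.
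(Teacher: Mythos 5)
Your proof is correct and follows essentially the same route as the paper: decompose $\deg_e(\pi)$ via Lemma~\ref{LDegSum} over the segments $\pi[b_k,b_{k+1}]$ between consecutive wall-hits, observe that each such segment has degree $1$, $-1$ or $0$ according as it is a negative arc, a positive arc, or a crossing, and deduce (ii) from (i) since $s_m$ interchanges positive and negative arcs. Your extra care with the step-by-step contributions from~\eqref{Edeg1Step}, the boundary behaviour at $H_1$, and the verification that $B(s_m\cdot\pi)=B(\pi)$ simply makes explicit what the paper leaves as ``easy to see''.
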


\begin{proof}
It is easy to see from~\eqref{Edeg1Step} and~\eqref{Edeg} that, whenever $1\le k<N$, 
\[
\deg_e (\pi[b_k, b_{k+1}]) =
\begin{dcases*}
1 & if $\pi[b_k,b_{k+1}]$ is a negative arc; \\
-1 & if $\pi[b_k,b_{k+1}]$ is a positive arc; \\
0 & if $\pi(b_k)\neq\pi(b_{k+1})$. 
\end{dcases*}
\] 
Part (i) follows by Lemma~\ref{LDegSum}. Part (ii) follows from (i) because the reflection $s_m$ transforms positive arcs into negative arcs and vice versa. 
\end{proof}

The following result describes the degree of a path in terms of the number of its positive and negative arcs.

\begin{corollary} \label{Cdeg_arcs}
If $\pi\in \Pa^+_n$, then 
$\deg_e(\pi)=|\Arc^-(\pi)|-|\Arc^+(\pi)|+r_e(\pi)$. 
\end{corollary}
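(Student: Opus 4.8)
The plan is to reduce the statement for a general path $\pi \in \Pa^+_n$ to the situation of Lemma~\ref{LDegArcs1}, where both endpoints lie on walls, by carefully splitting $\pi$ at the relevant wall-crossings. First I would dispose of the trivial case: if $\pi(a) \notin H$ for all $a \in \{0,\dots,n\}$, then $\Arc^+(\pi) = \Arc^-(\pi) = \varnothing$ (every arc has its endpoints in $H$), and $r_e(\pi) = 0$ by definition of $\reg_e$; moreover $\deg_e(\pi) = 0$ since~\eqref{Edeg1Step} forces each step contribution to vanish when neither endpoint lies on a wall. So the identity holds in this case.

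Now suppose $\pi$ meets $H$, and let $B(\pi) = \{b_1 < \cdots < b_N\}$ as in~\eqref{Ebs}, with $N \ge 1$; set $b_0 := 0$ and $b_{N+1} := n$. Using Lemma~\ref{LDegSum} I would write $\deg_e(\pi) = \deg_e(\pi[0,b_1]) + \sum_{k=1}^{N-1} \deg_e(\pi[b_k,b_{k+1}]) + \deg_e(\pi[b_N,n])$. The initial segment $\pi[0,b_1]$ never touches $H$ except at its right endpoint $b_1$; inspecting~\eqref{Edeg1Step}, a step $(\pi(a),\pi(a+1))$ inside $[0,b_1]$ contributes only if one of its endpoints lies on a wall $H_{\al,m}$ with $m > 0$, i.e.\ in $H$, so the only possibly nonzero contribution comes from the last step into $b_1$; but a $-1$ there requires $\pi(b_1) \in H_m$ with $(\pi(b_1-1)+\rho,\al) > me$, i.e.\ $\pi(b_1-1) > me-1$, which since $\pi(0)=0 < me-1$ and the path moves by $\pm 1$ would force an earlier point of $[0,b_1]$ on $H_m$ — contradiction — while a $+1$ is impossible as $b_1$ is the right endpoint. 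Hence $\deg_e(\pi[0,b_1]) = 0$. Similarly I would analyze the tail $\pi[b_N,n]$: writing $\pi(b_N) = me - 1$ with $m \in \Z_{>0}$, the only steps that can contribute lie at $b_N$ itself (the step out of $b_N$); by~\eqref{Edeg1Step} this contributes $+1$ exactly when $\pi(n) < me - 1$ (the "$u \in H_{\al,m}$, $(v+\rho,\al) < me$" clause applies along the tail once one checks no intermediate point returns to $H$, which holds by maximality of $b_N$), and this is precisely the condition $\reg_e(\pi) \ne \pi$, i.e.\ $r_e(\pi) = 1$; in all other cases the tail contributes $0 = r_e(\pi)$. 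This identifies $\deg_e(\pi[b_N,n]) = r_e(\pi)$.

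The middle sum $\sum_{k=1}^{N-1} \deg_e(\pi[b_k,b_{k+1}])$ is handled exactly as in the proof of Lemma~\ref{LDegArcs1}(i): each segment $\pi[b_k,b_{k+1}]$ has both endpoints in $H$, so by the computation there it contributes $+1$ if it is a negative arc, $-1$ if a positive arc, and $0$ if $\pi(b_k) \ne \pi(b_{k+1})$ (note a segment between consecutive points of $B(\pi)$ with equal endpoint values is automatically an arc). Summing gives $|\Arc^-(\pi)| - |\Arc^+(\pi)|$, since every element of $\Arc^{\pm}(\pi)$ arises this way. Adding the three pieces yields $\deg_e(\pi) = 0 + (|\Arc^-(\pi)| - |\Arc^+(\pi)|) + r_e(\pi)$, as claimed.

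The main obstacle I anticipate is the careful bookkeeping in the tail analysis: one must verify that along $\pi[b_N,n]$ the path stays strictly between the walls $H_{m-1}$ and $H_{m+1}$ (so that the only wall it can "interact with" via~\eqref{Edeg1Step} is $H_m$), which follows from the maximality of $b_N = \max B(\pi)$ together with the $\pm 1$ step size, but needs to be stated explicitly; and one must match the sign conventions in~\eqref{Edeg1Step} against the definition of $r_e$ and $\reg_e$ in~\S\ref{SSTwo}, in particular checking that the $\al$-wall interaction contributing $+1$ is exactly the reflection trigger for $\reg_e$. Everything else is a direct application of Lemmas~\ref{LDegSum} and~\ref{LDegArcs1}.
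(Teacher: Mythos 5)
Your proof is correct and follows essentially the same route as the paper: split $\pi$ at $b_1$ and $b_N$, show the initial segment has degree $0$ and the tail has degree $r_e(\pi)$, and handle the middle via Lemmas~\ref{LDegSum} and~\ref{LDegArcs1}(i). You supply the wall-avoidance justifications for the first and last segments that the paper states without proof, which is a welcome bit of extra care.
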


\begin{proof}
Let the integers $0<b_1< \dots < b_N\le n$ be defined as in~\eqref{Ebs}. Then $\deg_e (\pi[0,b_1])=0$.  Furthermore, 
$\deg_e (\pi[b_N,n])=r_e(\pi)$.  
The result follows from Lemmas~\ref{LDegSum} and~\ref{LDegArcs1}(i). 
\end{proof}

Our next goal is to describe the effect of the map $\reg_{e,p}$ on the degree of a standard tableau. If $p>0$, then given $\eta\in \Pa^+_n$ and a finite subset $Z=\{ z_1>\cdots >z_h\}$ of $\Z_{\ge 0}$, 
we define the tuple 
\[
\bw(Z,\eta) = (w_1,\dots,w_h) \in [0,n]^{h}
\] by setting $w_i$ to be the maximal element of $[0,n]$ such that 
$\eta(w_i) \in \bigcup_{m\in \Z_{>0}} H_{mp^{z_i}}$, for each $i=1,\dots, h$, with $w_i=0$ if no such element exists.   

\begin{lemma} \label{LDegReg}
Assume that $p>0$. 
Let $\pi\in\Pa^+_n$ and $\eta= \reg_{e,p} (\pi)$. Suppose that $Z$ is the regularisation set of $\pi$ and 
$\bw(Z,\eta) = (w_1,\dots,w_h)$. 
Let  $w_0=0$ and $w_{h+1}\in [0,n]$
be maximal such that $\pi(w_{h+1}) \in H$, with $w_{h+1}=0$ if no such number exists. 
Then
\[
\deg_e (\eta[w_i,w_{i+1}]) = (-1)^i \deg_e (\pi[w_i,w_{i+1}]). 
\]
for all $0\le i\le h$. 
\end{lemma}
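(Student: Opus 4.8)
The plan is to unwind the definition of $\reg_{e,p}$. If $h=0$ then $\eta=\pi$ and there is nothing to prove, so assume $h\ge 1$ and write the regularisation equation of $\pi$ as $\eta=\pi^{(h)}$, where $\pi^{(0)}:=\pi$ and $\pi^{(r)}:=\reg_{ep^{z_r}}(\pi^{(r-1)})$ for $1\le r\le h$. Each step is a nontrivial tail reflection, so there are integers $a_r\in[1,n]$ and $m_r\in\Z_{>0}$ with $p^{z_r}\mid m_r$, $\pi^{(r-1)}(a_r)=m_re-1$, $\pi^{(r-1)}(n)<m_re-1$ and $\pi^{(r)}=s^{a_r}_{m_r}\cdot\pi^{(r-1)}$; moreover $a_r$ is the largest point at which $\pi^{(r-1)}$ meets an $ep^{z_r}$-wall. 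I would first record the elementary observation that in this situation $\pi^{(r-1)}(a)$ lies strictly between the consecutive $ep^{z_r}$-walls $H_{m_r-p^{z_r}}$ and $H_{m_r}$ for every $a\in(a_r,n]$ — the path steps down at $a_r$ and, not meeting any $ep^{z_r}$-wall afterwards, cannot re-cross either of these — so that after reflecting, $\pi^{(r)}(a)$ lies strictly between $H_{m_r}$ and $H_{m_r+p^{z_r}}$ on $(a_r,n]$.

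The heart of the argument is to show that the reflection points $a_r$ are increasing and coincide with the $w_r$. Monotonicity $a_1\le\dots\le a_h$ is immediate: $\pi^{(r)}(a_r)=m_re-1$ lies on an $ep^{z_r}$-wall, hence (as $z_{r+1}<z_r$) on an $ep^{z_{r+1}}$-wall, so $a_r$ competes in the maximum defining $a_{r+1}$; and the inequality is in fact strict, for if $a_{r+1}=a_r$ then $m_{r+1}=m_r$ and the observation above forces $\pi^{(r)}(n)>m_re-1$, contradicting $\pi^{(r)}(n)<m_{r+1}e-1$. The main step is then the \emph{confinement claim}: for each $r$ and each $j$ with $r\le j\le h$, $\pi^{(j)}(a)$ lies strictly between $H_{m_r}$ and $H_{m_r+p^{z_r}}$ for all $a\in(a_r,n]$. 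I would prove this by induction on $j$, the case $j=r$ being the observation above. For the inductive step, $a_{j+1}>a_r$, so by the inductive hypothesis $\pi^{(j)}(a_{j+1})=m_{j+1}e-1$ lies in the cell $(H_{m_r},H_{m_r+p^{z_r}})$, i.e. $m_r<m_{j+1}<m_r+p^{z_r}$; since $z_{j+1}<z_r$ we have $p^{z_{j+1}}\mid m_{j+1}$ and $p^{z_{j+1}}\mid m_r$, which forces $m_r+p^{z_{j+1}}\le m_{j+1}\le m_r+p^{z_r}-p^{z_{j+1}}$, and hence the tail of $\pi^{(j+1)}$ beyond $a_{j+1}$ — which, by the observation applied to step $j+1$, lies between $H_{m_{j+1}}$ and $H_{m_{j+1}+p^{z_{j+1}}}$ — still lies inside $(H_{m_r},H_{m_r+p^{z_r}})$; and on $(a_r,a_{j+1}]$ we have $\pi^{(j+1)}=\pi^{(j)}$, which is in the cell by hypothesis. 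Taking $j=h$, the confinement claim says that $\eta=\pi^{(h)}$ meets no $ep^{z_r}$-wall on $(a_r,n]$ while $\eta(a_r)=m_re-1$ sits on one, so $a_r$ is exactly the quantity $w_r$. Finally, each reflection $s_m$ sends a non-negative integer to a non-negative integer and preserves the residue class $-1\pmod e$, and a non-negative integer lies in $H$ iff it is $\equiv-1\pmod e$; hence $\pi$ and $\eta$ meet $H$ at exactly the same set of points, so the point $w_{h+1}$ at which $\pi$ last meets $H$ is also where $\eta$ last meets $H$, and since every $ep^{z_h}$-wall lies in $H$ we obtain $a_h=w_h\le w_{h+1}$.

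With $a_0:=0$ and $a_{h+1}:=w_{h+1}$ we then have $0=a_0<a_1<\dots<a_h\le a_{h+1}\le n$ and $w_i=a_i$ for all $i$. Now fix $i\in\{0,\dots,h\}$; the case $i=0$ is trivial since $\eta=\pi$ on $[0,a_1]$. For $i\ge 1$: on the interval $[a_i,a_{i+1}]$ only the reflections $s^{a_1}_{m_1},\dots,s^{a_i}_{m_i}$ act nontrivially (each has pivot $\le a_i$, hence acts there as the ordinary reflection $s_{m_j}$), and the later reflections have pivot $\ge a_{i+1}$, so $\eta$ coincides with $\pi^{(i)}$ on $[a_i,a_{i+1}]$. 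Thus the segments $\sigma_j:=\pi^{(j)}[a_i,a_{i+1}]$ ($0\le j\le i$) satisfy $\sigma_0=\pi[a_i,a_{i+1}]$, $\sigma_i=\eta[a_i,a_{i+1}]$ and $\sigma_{j+1}=s_{m_{j+1}}\cdot\sigma_j$; each $\sigma_j$ lies in $\Pa^+_{[a_i,a_{i+1}]}$ and has both endpoints in $H$ (the endpoints of $\sigma_i$ are $\eta(a_i),\eta(a_{i+1})\in H$ by the previous paragraph, and those of the other $\sigma_j$ are obtained from them by reflections, which preserve $H$). Applying Lemma~\ref{LDegArcs1}(ii) once for each reflection gives $\deg_e(\eta[a_i,a_{i+1}])=(-1)^i\deg_e(\pi[a_i,a_{i+1}])$, which is the assertion of the lemma since $a_i=w_i$ and $a_{i+1}=w_{i+1}$.

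The confinement claim is the step I expect to be the main obstacle: it is where one must control the nested cells cut out by walls at successively finer $p$-adic refinements and check that later regularisations do not push the tail out of a coarser cell, which depends on the combinatorial inequalities among the $m_r$ coming from the divisibilities $p^{z_r}\mid m_r$. Everything else — the monotonicity, the identification $w_i=a_i$, the treatment of $w_{h+1}$, and the final telescoping of signs through Lemma~\ref{LDegArcs1}(ii) — should be routine once confinement is established.
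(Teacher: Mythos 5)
Your proof is correct and follows essentially the same route as the paper's: decompose $\reg_{e,p}(\pi)$ into the intermediate paths $\pi^{(0)},\dots,\pi^{(h)}$, show that $\eta[w_i,w_{i+1}]=s_{m_i}\cdots s_{m_1}\cdot\pi[w_i,w_{i+1}]$, and apply Lemma~\ref{LDegArcs1}(ii) $i$ times. The only difference is one of detail: the paper asserts that identity (and implicitly that the successive reflection points coincide with the $w_i$) without justification, whereas your confinement claim and the identification $a_r=w_r$ supply exactly the verification it omits.
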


\begin{proof}
Let $Z=\{ z_1>\cdots > z_h\}$ and, for any $0\le i\le h$, set 
\[
\pi_i:=\reg_{ep^{z_i}}( \dots ( \reg_{ep^{z_1}} (\pi) ) \dots ),
\]
so that $\pi_0=\pi, \pi_1,\dots, \pi_h=\eta$ are the `intermediate steps' in the calculation of $\reg_{e,p} (\pi)$. 
Given $1\le i\le h$, let $m_i\in p^{z_i} \Z_{>0}$ be defined by the condition that $\pi_{i-1} (w_i) = m_i e-1$. Then
\[
\eta[w_i,w_{i+1}] = s_{m_i} \cdot \ldots \cdot s_{m_1} \cdot \pi[w_i,w_{i+1}].
\]
whenever $0\le i\le h$.
Using Lemma~\ref{LDegArcs1}(ii), we deduce the required identity.
\end{proof}

Let $Z=\{ z_1>\cdots > z_h\}$ be a finite subset of $\Z_{>0}$. 
We define a map $\rho_Z \colon \Pa^+_n \to \Pa^+_n$ as follows. 
If $p=0$, we set $\rho_Z$ to be the identity map.
Assuming that $p>0$ and given $\eta\in \Pa^+_n$, let $B(\eta) = \{ b_1<\dots <b_N\}$ and 
$\bw (Z, \eta) = (w_1,\dots,w_h)$. Set $w_0=0$ and 
$w_{h+1}=b_N$ (with $b_N:=0$ if $B(\eta)=\varnothing$). Then we define
$\rho_Z (\eta)\in P^+_n$ by the conditions that $\rho_Z(\eta)[0,b_1]=\eta[0,b_1]$, $\rho_Z(\eta)[b_N,n]= \eta[b_N,n]$ 
and for $1 \leq k <N$, 
\[
\rho_Z (\eta) [b_k,b_{k+1}] = 
\begin{dcases*}
s_m \cdot \eta[b_k,b_{k+1}] & if $\pi(b_k)=\pi(b_{k+1})\in H_m$ for some $m\in \Z_{>0}$ \\
& \qquad and $[b_k,b_{k+1}] \subseteq [w_i,w_{i+1}]$ for some odd $i\in [1,h]$; \\
\eta[b_k,b_{k+1}] & otherwise.
\end{dcases*}
\]
In other words, $\rho_Z (\eta)$ is obtained from $\eta$ by reflecting each arc that is a segment of $\eta[w_i,w_{i+1}]$ for {\em odd} $i$ with respect to the wall where the arc begins and ends. 

We consider the corresponding map 
\[
\rho_Z\colon \Std_{\le 2} (n) \longrightarrow \Std_{\le 2} (n)
\]
 on standard tableaux, defined by the condition that 
$\rho_Z (\pi_{\tt}) = \pi_{\rho_Z (\tt)}$ for all $\tt\in \Std_{\le 2} (n)$. Since $\rho_Z (\eta)(n) = \eta(n)$ for all $\eta\in P^+_n$, the map $\rho_Z$ leaves $\Std(\mu)$ invariant for each $\mu\in \Par_{\le 2}(n)$. 
Moreover, $\rho_Z$ leaves $\DStd_{e,p}(\mu)$ invariant because $B(\rho_Z (\eta))= B(\eta)$ and $\rho_Z (\eta) (b) = \eta(b)$ 
for all $\eta\in P^+_n$ and $b\in B(\eta)$. 

Let $\la \in \Par_{\le 2} (n)$ and $\mu\in \RPar_{e,\le 2} (n)$ be such that 
$\Std_{e,p,\mu} (\la) \ne \varnothing$. 
Define 
\begin{equation}\label{Ereplamu}
r_{e,p,\la,\mu}:= r_e (\tt) \in \{0,1\}
\end{equation}
for any $\tt\in \Std_{e,p,\mu} (\la)$. By Lemma~\ref{DecNo}, 
the right-hand side does not depend on the choice of $\tt$ and 
\begin{equation}\label{Ereplamu2}
q^{r_{e,p,\la,\mu}} = [S^\la_F : D^\mu_F]_q.
\end{equation}

Let $Z$ be the regularisation set of any $\tt \in \Std_{e,p,\mu}(\la)$. Note that $Z$ does not depend on $\tt$ by Lemma~\ref{existence}(ii). Define the map 
\begin{equation}\label{Eregdashed}
\reg'_{e,p,\la,\mu} \colon \Std_{e,p,\mu} (\la) \longrightarrow \DStd_{e,p} (\mu), \; \tt \longmapsto \rho_Z (\reg_{e,p} (\tt)). 
\end{equation}
If $p=0$, then 
$\reg'_{e,p,\la,\mu}$ is simply the restriction of $\reg_{e,0}=\reg_e$ to $\Std_{e,p,\mu} (\la)$.

\begin{lemma} \label{deg_prprty}  Let $\lam\in \Par_{\le 2} (n)$ and $\mu\in \RPar_{\le 2} (n)$. 
If $\mtt t\in\Std_{e,p,\mu}(\lam)$, then 
\[
\deg_e \!\big(\reg'_{e,p,\la,\mu} (\tt) \big)=\deg_e(\mtt t)-r_{e,p,\la,\mu}. 
\]
\end{lemma}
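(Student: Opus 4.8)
The plan is to move everything into the path picture via Lemma~\ref{lem:deg} (so that $\deg_e$ of a tableau equals $\deg_e$ of its path) and then to compare $\deg_e(\pi_\tt)$ with $\deg_e(\pi_{\reg'_{e,p,\la,\mu}(\tt)})$ segment by segment, cutting both paths at the breakpoints recorded in the tuples $\bw(Z,\cdot)$. When $p=0$ there is nothing to do: $\reg'_{e,0,\la,\mu}$ is the restriction of $\reg_e$ and $r_{e,0,\la,\mu}=r_e(\tt)$ by~\eqref{Ereplamu}, so the statement is exactly Lemma~\ref{LSubtr} read through Lemma~\ref{lem:deg}.

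So assume $p>0$, write $\pi:=\pi_\tt$ and $\eta:=\reg_{e,p}(\pi)$, let $Z=\{z_1>\dots>z_h\}$ be the regularisation set of $\tt$ (well-defined by Lemma~\ref{existence}(ii)) and $\bw(Z,\eta)=(w_1,\dots,w_h)$; put $w_0:=0$ and let $w_{h+1}$ be the largest wall-hit of $\pi$, as in Lemma~\ref{LDegReg}. The key preliminary observation is that each stage $\reg_{ep^{z}}$ only reflects a tail of a path about a single wall and reflections permute walls, so $B(\pi)=B(\eta)$; hence $w_{h+1}$ is also the largest wall-hit of $\eta$, each $w_i$ with $1\le i\le h$ is a wall-hit of both $\pi$ and $\eta$, and since $z_1>\dots>z_h$ forces the nesting $\bigcup_m H_{mp^{z_1}}\subseteq\bigcup_m H_{mp^{z_2}}\subseteq\dots$ we get $0=w_0\le w_1\le\dots\le w_h\le w_{h+1}$. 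It follows that every arc of $\pi$ (and of $\eta$) is a segment of exactly one $\pi[w_i,w_{i+1}]$ (resp.\ $\eta[w_i,w_{i+1}]$) with $0\le i\le h$, that $\reg_{e,p}$ acts as the identity on $[0,w_1]$ while $\rho_Z$ acts as the identity on $[0,w_1]$ and on $[w_{h+1},n]$, and that inside the intervals $[w_i,w_{i+1}]$ with $i$ odd the map $\rho_Z$ reflects every arc it meets and leaves everything else alone.

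Granting this, I would compute $\deg_e(\rho_Z(\eta))$ with Lemma~\ref{LDegSum} by summing over $[w_0,w_1],\dots,[w_h,w_{h+1}],[w_{h+1},n]$. For $1\le i\le h$ the endpoints $\eta(w_i),\eta(w_{i+1})$ lie on walls, so Lemma~\ref{LDegArcs1}(i) writes $\deg_e(\eta[w_i,w_{i+1}])$ as the number of negative arcs minus the number of positive arcs of that segment; reflecting all those arcs about their walls interchanges the two counts, negating the degree; and Lemma~\ref{LDegReg} gives $\deg_e(\eta[w_i,w_{i+1}])=(-1)^i\deg_e(\pi[w_i,w_{i+1}])$. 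Combining these — no reflection for even $i$, all arcs reflected for odd $i$ — yields $\deg_e(\rho_Z(\eta)[w_i,w_{i+1}])=\deg_e(\pi[w_i,w_{i+1}])$ for every $0\le i\le h$ (the case $i=0$ being the trivial one where $\rho_Z$ and $\reg_{e,p}$ both restrict to the identity). On the tail, $\rho_Z(\eta)[w_{h+1},n]=\eta[w_{h+1},n]$ has degree $r_e(\eta)=0$ by Corollary~\ref{Cdeg_arcs}, since $\eta=\reg_{e,p}(\tt)\in\DStd_{e,p}(\mu)\subseteq\DStd_e(\mu)$ and $\rho_Z$ preserves $\DStd_{e,p}(\mu)$. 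Summing, $\deg_e(\rho_Z(\eta))=\sum_{i=0}^{h}\deg_e(\pi[w_i,w_{i+1}])$. On the other hand, Lemma~\ref{LDegSum} applied to $\pi$ along the same cuts together with Corollary~\ref{Cdeg_arcs} on its own tail gives $\deg_e(\pi)=\sum_{i=0}^{h}\deg_e(\pi[w_i,w_{i+1}])+r_e(\pi)$. Since $\reg'_{e,p,\la,\mu}(\tt)$ corresponds to $\rho_Z(\eta)$, $r_e(\pi)=r_e(\tt)=r_{e,p,\la,\mu}$ by~\eqref{Ereplamu}, and $\deg_e(\pi)=\deg_e(\tt)$ by Lemma~\ref{lem:deg}, subtracting the two identities gives the claim.

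The main obstacle is the bookkeeping around the breakpoints $w_0\le\dots\le w_{h+1}$: one must verify that $B$ is preserved by $\reg_{e,p}$ (so that the largest wall-hit of $\pi$, of $\eta$ and of $\rho_Z(\eta)$ coincide and the tail beyond it carries no arc), that the $w_i$ are genuine wall-hits occurring in nondecreasing order, and — most importantly — that each arc of $\eta$ lies in a single interval $[w_i,w_{i+1}]$, which is precisely what makes the arc-reflections performed by $\rho_Z$ line up exactly with the sign flips tracked by Lemma~\ref{LDegReg}. All of these follow from the nesting of wall-hit sets forced by $z_1>\dots>z_h$, but checking them with care is where the actual work lies.
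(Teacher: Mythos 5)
Your argument is correct and takes essentially the same route as the paper's proof: after disposing of $p=0$ via Lemma~\ref{LSubtr}, both decompose the paths along the breakpoints $w_0\le\dots\le w_{h+1}$ with Lemma~\ref{LDegSum}, use Lemma~\ref{LDegReg} for the signs $(-1)^i$ together with the fact that reflecting the arcs negates the degree of a segment (Lemma~\ref{LDegArcs1}), and account for the tail via $r_e(\tt)=r_{e,p,\la,\mu}$. The paper is merely terser about the bookkeeping you flag, citing Lemma~\ref{LDegArcs1}(ii) directly on each interval where you rederive it from part (i).
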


\begin{proof} 
When $p=0$, the result follows from Lemma~\ref{LSubtr}, so we assume that $p>0$. 
Let $\mtt s=\reg_{e,p}(\mtt t)$
 and $Z$ be the regularisation set of $\tt$,
 so that $\reg'_{e,p,\la,\mu} (\tt) = \rho_Z (\st)$. 
Write $\bw(Z,\pi_{\tt}) = (w_1,\dots,w_h)$. 
Let $w_{h+1}\in [0,n]$ be maximal such that $\pi_{\tt} (w_{h+1}) \in H$, with $w_{h+1}=0$ if no such number exists. 
By Lemma~\ref{LDegArcs1}(ii), for any $0\le i\le h$ we have 
\[
\deg_e(\pi_{\rho_Z (\mtt s)}[w_i,w_{i+1}])=
(-1)^i \deg_e(\pi_\mtt s[w_i,w_{i+1}]). 
\]
Hence, 
\begin{align*}
\deg_e \!\big(\pi_{\rho_Z (\mtt s)}\big) &=
\sum_{i=0}^h (-1)^i \deg_e (\pi_{\st} [w_i, w_{i+1}]) 
=\sum_{i=0}^h\deg_e(\pi_\mtt t[w_i,w_{i+1}])
=\deg_e(\mtt t)- r_e (\tt) \\
&= \deg_e (\tt) - r_{e,p,\la,\mu},
\end{align*}
where the second equality is due to Lemma~\ref{LDegReg} and 
we have used Lemma~\ref{LDegSum} for the first and third equalities.
\end{proof}

Recall the definitions~\eqref{Ech1} and~\eqref{Ech2} of $q$-characters.

\begin{thm}\label{Treplamu}
Let $\la\in \Par_{\le 2} (n)$ and $\mu\in \RPar_{\le 2} (n)$ with $\Std_{e,p,\mu} (\la) \ne \varnothing$. 
\begin{enumerate}[(i)]
\item The map $\reg'_{e,p,\la,\mu} \colon \Std_{e,p,\mu} (\la) \to \DStd_{e,p} (\mu)$ is a bijection. 
\item For all $\tt\in \Std_{e,p,\mu} (\la)$, we have 
$\bi^{\reg'_{e,p,\la,\mu} (\tt)} = \bi^\tt$.
\item  We have 
$\fch \Std_{e,p,\mu} (\la) = q^{r_{e,p,\la,\mu}} \fch \DStd_{e,p} (\mu)$. 
\end{enumerate}
\end{thm}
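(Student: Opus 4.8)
The plan is to read off all three parts from results already in place, using the factorisation $\reg'_{e,p,\la,\mu} = \rho_Z \circ \reg_{e,p}$ (with $Z$ the common regularisation set supplied by Lemma~\ref{existence}(ii)) and analysing the two factors in turn. For part (i), I would first note that the restriction of $\reg_{e,p}$ to $\Std_{e,p,\mu}(\la)$ lands in $\DStd_{e,p}(\mu)$ by the definitions of these two sets; conversely, since $\Std_{e,p,\mu}(\la)\ne\varnothing$ forces $D^\mu_F$ to be a composition factor of $S^\la_F$ (as established in the proof of Theorem~\ref{conj:83}), Lemma~\ref{existence}(i) shows that every $\st\in\DStd_{e,p}(\mu)$ has a unique $\reg_{e,p}$-preimage in $\Std(\la)$, which automatically lies in $\Std_{e,p,\mu}(\la)$. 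Hence $\reg_{e,p}$ restricts to a bijection $\Std_{e,p,\mu}(\la)\iso\DStd_{e,p}(\mu)$. Next I would check that $\rho_Z$ restricts to a bijection of $\DStd_{e,p}(\mu)$: the invariances $B(\rho_Z(\eta))=B(\eta)$ and $\rho_Z(\eta)(b)=\eta(b)$ for $b\in B(\eta)$ (recorded above) give $\bw(Z,\rho_Z(\eta))=\bw(Z,\eta)$, so the data defining $\rho_Z$ is unchanged under $\rho_Z$; since each arc is reflected at most once and a reflection in the wall through an arc's endpoints is an involution, $\rho_Z^2=\mathrm{id}$. Composing the two bijections yields (i).

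For (ii), the point is that both factors preserve residue sequences. By its recursive definition $\reg_{e,p}$ is a composite of maps $\reg_{ep^z}$, each of which either fixes a tableau or reflects a tail of the corresponding path in a single wall, so Lemma~\ref{LResSeq} gives $\bi^{\reg_{e,p}(\tt)}=\bi^\tt$; similarly $\rho_Z$ is a composite of arc reflections, and Lemma~\ref{LReflResSeq} shows each such reflection preserves residues, so $\bi^{\rho_Z(\st)}=\bi^\st$. Chaining these gives $\bi^{\reg'_{e,p,\la,\mu}(\tt)}=\bi^\tt$.

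Part (iii) is then formal: re-indexing the defining sum for $\fch$ along the bijection $\reg'_{e,p,\la,\mu}$ and writing $\st=\reg'_{e,p,\la,\mu}(\tt)$, part (ii) gives $\bi^\tt=\bi^\st$ while Lemma~\ref{deg_prprty} gives $\deg_e(\tt)=\deg_e(\st)+r_{e,p,\la,\mu}$, so
\[
\fch \Std_{e,p,\mu}(\la) = \sum_{\tt \in \Std_{e,p,\mu}(\la)} q^{\deg_e(\tt)} \cdot \bi^\tt = q^{r_{e,p,\la,\mu}} \sum_{\st \in \DStd_{e,p}(\mu)} q^{\deg_e(\st)} \cdot \bi^\st = q^{r_{e,p,\la,\mu}} \fch \DStd_{e,p}(\mu).
\]

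The step I expect to need the most care is the bijectivity of $\rho_Z$ in (i): one must verify that the tuple $\bw(Z,\cdot)$, and hence the partition of $B(\cdot)$ into the blocks $[w_i,w_{i+1}]$ together with the parity labelling of those blocks, is genuinely stable under $\rho_Z$, so that a second application of $\rho_Z$ reflects back exactly the arcs reflected by the first. This is where the bookkeeping of walls of differing $p$-adic valuations must be handled carefully; everything else is assembling earlier lemmas.
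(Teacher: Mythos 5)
Your proposal is correct and follows essentially the same route as the paper: Lemma~\ref{existence}(i) for the bijectivity of $\reg_{e,p}$ restricted to $\Std_{e,p,\mu}(\la)$, the involutive property of $\rho_Z$ on $\DStd_{e,p}(\mu)$ for (i), Lemmas~\ref{LResSeq} and~\ref{LReflResSeq} for (ii), and Lemma~\ref{deg_prprty} for (iii). Your extra care in spelling out why $\bw(Z,\cdot)$ and $B(\cdot)$ are stable under $\rho_Z$, and in noting explicitly that $\Std_{e,p,\mu}(\la)\ne\varnothing$ forces $D^\mu_F$ to be a composition factor of $S^\la_F$ so that Lemma~\ref{existence} applies, only makes explicit what the paper leaves as "clearly".
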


\begin{proof}
By Lemma~\ref{existence}(i), 
$\reg_{e,p}$ restricts to a bijection from  $\Std_{e,p,\mu} (\la)$ onto 
$\DStd_{e,p} (\mu)$. Clearly, $\rho_Z$ restricts to an involution of $\DStd_{e,p} (\mu)$ for any set $Z$ as in~\eqref{Eregdashed}, 
and (i) follows. Part (ii) follows from Lemmas~\ref{LResSeq} and~\ref{LReflResSeq}.
Part (iii) follows immediately from (i), (ii) and Lemma~\ref{deg_prprty}. 
\end{proof}

\begin{Examp} 
Let $e=3$, $p=2$, $n=29$ and $\la=(2^{11},1^7)$. Suppose that 
$\tt \in \Std(\la)$ is determined from 
\[
\hspace{17mm}
\pi_\tt = \hspace{1mm}
\begin{tikzpicture}[baseline={([yshift=-0.25ex]current bounding box.center)},scale=0.5]
\draw[thick] (-1,0)--(23.2,0);
\draw[very thick] (-1,0)--(-1,4.4);
\foreach \k in {0,1,2,...,23} {\node at (\k,-0.5){\scriptsize$\k$};};
\foreach \k in {2,8,14,20} {\draw[dashed](\k,0)--(\k,4.4);};
\foreach \k in {5,17} {\draw(\k,0)--(\k,4.4);};
\foreach \k in {11,23} {\draw[very thick] (\k,0)--(\k,4.4);};
\draw[rounded corners=3pt]
(0,0.4)--(6,0.4)--(6,0.8)--(5,0.8)--(5,1.2)--(11,1.2)--(11,1.6)--(7,1.6)--
(7,2)--(8,2)--(8,2.4)--(4,2.4)--(4,2.8)--(8,2.8)--(8,3.2)--(7,3.2)--(7,3.6)--(8,3.6)--(8,4)--(7,4);
\end{tikzpicture}.
\]
Then the regularisation set of $\tt$ is $Z=\{2,1,0\}$. Further, 
 $\bw (Z, \pi_\tt) = (w_1,w_2,w_3)$ with $w_1 = 13$, 
$w_2= 23$ and $w_3=28$, 
 and 
$B(\pi_\tt) = \{ 2, 5, 7, 10, 13, 16, 18, 21, 23, 26, 28\}$.
Thus, 
\[
\hspace{-3mm}
\eta:=\reg_{3,2} (\pi_\tt) =  \hspace{1mm}
\begin{tikzpicture}[baseline={([yshift=-0.25ex]current bounding box.center)},scale=0.5]
\draw[thick] (-1,0)--(23.2,0);
\draw[very thick] (-1,0)--(-1,4.4);
\foreach \k in {0,1,2,...,23} {\node at (\k,-0.5){\scriptsize$\k$};};
\foreach \k in {2,8,14,20} {\draw[dashed](\k,0)--(\k,4.4);};
\foreach \k in {5,17} {\draw(\k,0)--(\k,4.4);};
\foreach \k in {11,23} {\draw[very thick] (\k,0)--(\k,4.4);};
\draw[rounded corners=3pt]
(0,0.4)--(6,0.4)--(6,0.8)--(5,0.8)--(5,1.2)--(15,1.2)--(15,1.6)--(14,1.6)--
(14,2)--(18,2)--(18,2.4)--(17,2.4)--(17,2.8)--(20,2.8)--(20,3.2)--(19,3.2)--(19,3.6)--(21,3.6);
\end{tikzpicture}, 
\]
so that $\tt\in \Std_{3,2,\mu} (\la)$ where $\mu= (2^4, 1^{21})$.
(We use the same convention on thickness of walls as in Example~\ref{Ex9tab}.)
Further, 
\[
\hspace{2mm}
\pi_{\reg'_{3,2,\la,\mu} (\tt)} =  \hspace{1mm}
\begin{tikzpicture}[baseline={([yshift=-0.25ex]current bounding box.center)},scale=0.5]
\draw[thick] (-1,0)--(23.2,0);
\draw[very thick] (-1,0)--(-1,4.4);
\foreach \k in {0,1,2,...,23} {\node at (\k,-0.5){\scriptsize$\k$};};
\foreach \k in {2,8,14,20} {\draw[dashed](\k,0)--(\k,4.4);};
\foreach \k in {5,17} {\draw(\k,0)--(\k,4.4);};
\foreach \k in {11,23} {\draw[very thick] (\k,0)--(\k,4.4);};
\draw[rounded corners=3pt]
(0,0.4)--(6,0.4)--(6,0.8)--(5,0.8)--(5,1.2)--(14,1.2)--(14,1.6)--(13,1.6)--
(13,2)--(17,2)--(17,2.4)--(16,2.4)--(16,2.8)--(20,2.8)--(20,3.2)--(19,3.2)--(19,3.6)--(21,3.6);
\end{tikzpicture}. 
\]
Here, $\pi_{\reg'_{3,2,\la,\mu} (\tt)}$ is obtained from $\eta$ by 
reflecting the arc $\eta[16,18]$ with respect to the wall $H_5 = \{14\}$ and the arc $\eta[21,23]$ with respect to the wall 
$H_6 = \{18\}$. 
We have $\deg_3(\tt)=3$, $\deg_3 (\eta)=-2$  
and $\deg_3\!\big(\!\reg'_{3,2,\la,\mu}(\tt)\big)=2=
\deg_3(\tt)-r_{3,2,\la,\mu}$, cf.~Lemma~\ref{deg_prprty}. 
Also, this example illustrates Lemma~\ref{LDegReg}, as 
\begin{align*}
&\deg(\eta[0,w_1]) = -1 = \deg(\pi_\tt [0,w_1]),  \quad 
\deg(\eta[w_1,w_2]) = -2 = - \deg(\pi_t [w_1,w_2]) 
\quad \text{and}\\
&\deg(\eta[w_2,w_3]) = 1= \deg(\pi_\tt [w_2,w_3]).
\end{align*}
\end{Examp}

\begin{proof}[Proof of Theorem~\ref{TMainChar}]
We use induction on $\la$ with respect to the dominance order 
$\domby$, recalling that $[S^\la_F : D^\mu_F]=0$ unless 
$\mu\domby \la$. 
Observe that if $\mu\in \Par(n)$ and $\mu\domby \la$, then $\mu\in \Par_{\le 2} (n)$. 
Then: 
\begin{align*}
\fch D^\la_F &= \fch S^\la_F - 
\sum_{\mu\triangleleft \la}[S^\la_F :D^\mu_F]_q  \fch D^\mu_F \\
&= \fch \Std(\la) - \sum_{\mu\triangleleft \la} q^{r_{e,p,\la,\mu}} 
\fch \DStd_{e,p} (\mu)  \\
&= \fch \Std(\la) - \sum_{\mu\triangleleft \la} \fch \DStd_{e,p,\mu} (\la) \\
&= \fch \DStd_{e,p} (\la),
\end{align*}
 where the first and last equalities follow from the definitions, 
 the second equality is due to \eqref{ESpechtChar}, \eqref{Ereplamu2} and the inductive hypothesis, and the third equality holds by Theorem~\ref{Treplamu}(iii). 
 \end{proof}

\begin{Remark}\label{RInv}
Recall the involution~\eqref{EInvChar} on $\langle I^n \rangle$. 
Given $\la\in\Par_{\le 2} (n)$, the simple $R^{\La_0}_{n,F}$-module $D^\la_F$ is self-dual by~\cite[Theorems 4.11, 5.13, 5.18]{BK09}, which implies that $\overline{\fch D^\la_F} = \fch D^\la_F$.
By Theorem~\ref{TMainChar} (and Theorem~\ref{TMain1}(ii)), it follows that 
\begin{equation}\label{EChInvIdent}
  \overline{\fch \DStd_{e,p} (\la)} = \fch \DStd_{e,p} (\la).
\end{equation}
The last identity can also be proved combinatorially via the
 involution $\iota \colon \Pa^+_n \to \Pa^+_n$ defined as follows. 
Given $\pi\in \Pa^+_n$, let $B(\pi) = \{ b_1<\dots <b_N\}$ and define 
$\iota(\pi) \in \Pa^+_n$ by the conditions that 
\begin{align*}
\iota(\pi) [0, b_1] &= \pi[0,b_1], \quad \iota(\pi)[b_N,n] = \pi[b_N,n],
\\ 
\iota(\pi)[b_i,b_{i+1}] 
&=
\begin{cases}
s_m \cdot \pi[b_i,b_{i+1}] & \text{if } \pi[b_i,b_{i+1}] \text{ is an arc, with } \pi(b_i) = \pi(b_{i+1}) \in H_m; \\
\pi[b_i,b_{i+1}] & \text{otherwise}
\end{cases}
\end{align*} 
whenever $1\le i<N$.
 That is, $\iota(\pi)$ is obtained from $\pi$ by reflecting all arcs. 
 As usual, this yields an involution $\iota\colon \Std_{\le 2} (n) \to \Std_{\le 2} (n)$ determined by $\pi_{\iota(\tt)} = \iota(\pi_\tt)$ for all $\tt \in \Std_{\le 2} (n)$. Then $\iota$ restricts to an involution on $\DStd_{e,p} (\la)$. Moreover, by Lemmas~\ref{LReflResSeq} and~\ref{LDegArcs1}(i), 
for all $\tt\in \DStd_e (\la)$ we have 
 \[
  \bi^{\iota(\tt)} = \bi^{\tt}, \quad \deg(\iota(\tt))=-\deg(\tt),
 \]
which implies~\eqref{EChInvIdent}. 
Combining~\eqref{EChInvIdent} with Theorem~\ref{Treplamu}(iii), we obtain the inequality 
\begin{equation}\label{EIneq}
 \sum_{\mathclap{\tt\in \Std(\la, \bi)}} \; \deg(\tt) \ge 0
\end{equation}
for any fixed $\bi \in I^n$. It is proved 
in~\cite[Corollary 3.16]{HM15} via determinants of Gram matrices for Specht modules 
that this inequality holds for an {\em arbitrary} (multi)partition $\la$, but in general no combinatorial proof is known (even when $\la$ is an $e$-core, in which case~\eqref{EIneq} becomes an equality). 
\end{Remark}


\section{Homomorphisms between 2-column Specht modules}\label{SHom}

Let $\O$ be an arbitrary (unital) commutative ring and $n\in \Z_{\ge 0}$. 
We recall a row removal result 
from~\cite{fs16} for homomorphisms between Specht modules, applied in the special case of the algebra $R^{\La_0}_n:=R^{\La_0}_{n,\O}$.

Let $\la,\mu \in \Par(n)$. Recall from Proposition~\ref{Pbasis} that 
$S^\la_{\O}$ has a basis $\{ v^{\tt} \mid \tt\in\Std (\la)\}$. 
Also, recall the tableaux $\tt^\mu$ and $\tt_\mu$ from~\S\ref{SSPartitions}. 
A standard tableau $\tt\in \Std (\la)$ is {\em $\mu$-row-dominated}
if, whenever $\tt^{-1}(a,b) = (\tt^\mu)^{-1} (c,d)$ for some $(a,b) \in \Y\la$ and $(c,d)\in \Y\mu$, we have $a\leq c$.
Denote the set of $\mu$-row-dominated elements 
of $\Std(\la)$ by $\Std^\mu(\la)$. 
A homomorphism 
$\phi\in \Hom_{R^{\La_0}_n} (S^\mu_\O, S^\la_\O)$ is said to be \emph{(row-)dominated} if 
$\phi(v^\mu) \in \langle v^\tt \mid \tt \in \Std^\mu (\la) \rangle_\O$.
We denote by 
$\DHom_{R^{\La_0}_n} (S^\mu_\O, S^\la_\O)$ 
the set of dominated homomorphisms 
lying in $\Hom_{R^{\La_0}_n} (S^\mu_\O, S^\la_\O)$. 

If $\la=(\la_1,\dots,\la_l)$ is a partition and $\tt\in \Std(\la)$, let 
$\bar \la:=(\la_2,\dots,\la_l)$ and let $\bar \tt\in \Std(\bar \la)$ be the tableau obtained from $\tt$ by removing the first row and decreasing all entries by $\la_1$.  

\begin{thmc}{fs16}{Theorem~4.1}\label{rowrem}
Let $\la,\mu\in \Par(n)$ be such that $\la_1=\mu_1$. 
There is an isomorphism 
\[
\DHom_{R^{\La_0}_n} \!\big(S^\mu_\O,S^\la_\O\big) \iso 
\DHom_{R^{\La_0}_{n-\la_1}} \!\big(S^{\bar{\mu}}_\O,S^{\bar\la}_\O\big), \; \phi \longmapsto \bar \phi
\]
of graded $\O$-modules. 
Moreover, if $\varphi\in \Hom_{R^{\La_0}_n}(S^\mu_\O,S^\la_\O)$ 
is given by
\[
\varphi(v^\mu) = \sum_{\mathclap{\tt \in \Std(\la)}} \; a_\tt v^\tt 
\]
for some coefficients $a_\tt \in \O$,
then 
\[
\bar\varphi(v^{\bar\mu}) = \sum_{\mathclap{\tt \in \Std(\la)}}\;a_{\tt} v^{\bar{\tt}}.
\]
\end{thmc}

\begin{Remark} 
\begin{enumerate}[(i)]
\item  This result is stated in~\cite{fs16} in terms of column Specht modules when $\O$ is a field. However, the proof works for an arbitrary commutative ring $\O$ and can be translated to the present set-up by transposing all partitions and tableaux. The second assertion of Theorem~\ref{rowrem} is clear from the proof of~\cite[Theorem 4.1]{fs16}.  
\item By~\cite[Theorem 3.6]{fs16}, whenever $e>2$, we have 
$\DHom_{R^{\La_0}_n}(S^\mu_\O,D^\la_\O) = 
\Hom_{R^{\La_0}_n} (S^\mu_\O, S^\la_\O)$ for all $\la,\mu\in \Par(n)$. 
\end{enumerate}
\end{Remark}

Our next goal is to apply Theorem~\ref{rowrem} to 2-column partitions. 

\begin{lemma}\label{LLength}
Let $\la\in \Par_{\le 2}(n)$. Suppose that $\tt,\st\in \Std(\la)$ satisfy  $\st \domby \tt$, and let $w\in \Si_n$ be determined from 
$\st=w\tt$. Then $\ell(d(\st)) = \ell(d(\tt))+\ell(w)$. 
\end{lemma}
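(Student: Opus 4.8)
The plan is to reduce to a statement about column lengths and Young tableaux, then relate the Bruhat-length data to the path picture developed in \S\ref{SPaths}. The setup: $\la=(2^x,1^y)\in\Par_{\le 2}(n)$ is fixed, and $\tt,\st\in\Std(\la)$ with $\st\domby\tt$, so that $\Shape(\st\!\da_m)\domby\Shape(\tt\!\da_m)$ for all $m$. Write $\st=w\tt$, i.e. $w=d(\st)d(\tt)^{-1}$, so that $d(\st)=w\,d(\tt)$; the claim $\ell(d(\st))=\ell(d(\tt))+\ell(w)$ is exactly the statement that the product $w\cdot d(\tt)$ is length-additive, equivalently that $d(\tt)$ maps the inversion set of $d(\st)$ onto the disjoint union of the inversion set of $w$ (pulled back) and of $d(\tt)$ itself. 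So I need a clean combinatorial handle on $\ell(d(\ut))$ for $\ut\in\Std(\la)$ and on inversions of elements sending one standard tableau to another.

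\textbf{Step 1: a formula for $\ell(d(\ut))$.} Recall $d(\ut)\tt^\la=\ut$, where $\tt^\la$ fills rows left to right, top to bottom. For a $2$-column shape, $\ell(d(\ut))$ counts pairs $(a,b)$ with $a<b$ in $\{1,\dots,n\}$ such that $a$ appears \emph{after} $b$ in $\ut$ when both are read in the row-reading order of $\tt^\la$ — more usefully, $\ell(d(\ut))$ equals the number of pairs $i<j$ such that, reading $\tt^\la$'s entries in order, the node occupied by $i$ in $\ut$ comes "later" (in row-reading order) than the node occupied by $j$. Concretely, since $\tt^\la$ and $\ut$ have the same shape and $\tt^\la$ is the row-reading filling, $\ell(d(\ut))=\#\{(i,j): i<j,\ \ut^{-1}(i) \text{ is row-reading-later than } \ut^{-1}(j)\}$. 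For a two-column tableau this is computed column-by-column: it is the number of inversions created by the fact that entries of the second column of $\ut$ are interleaved among those of the first column differently than in $\tt^\la$. I expect the cleanest formulation is: $\ell(d(\ut))=\sum_{b} (\text{position of the }b\text{-th second-column entry of }\ut\text{ in the list }1,\dots,n) - (\text{its position in }\tt^\la)$, or equivalently an expression in terms of $c_{a,1}(\ut)-c_{a,2}(\ut)$, the "height" of the path $\pi_\ut$. I would make this precise and verify it against a small example (e.g. $\la=(2,2)$).

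\textbf{Step 2: translate $\st\domby\tt$ and $\st=w\tt$.} Under the bijection $\ut\mapsto\pi_\ut$ of Lemma~\ref{lem:bij}, $\st\domby\tt$ means $\pi_\st(m)\le\pi_\tt(m)$ for all $m$ (with the identification $\eps_1\mapsto1$, $\eps_2\mapsto-1$, so the "height" is $c_{m,1}-c_{m,2}$; note that $\domby$ becomes $\le$ pointwise here because for a fixed shape all partial shapes are comparable columnwise). The permutation $w$ sending $\tt$ to $\st$ is built from elementary transpositions that swap an entry in column $1$ with a smaller entry in column $2$ of $\tt$ (each such swap decreases the height of the path at one point by $2$, preserving standardness as long as we stay in $\Pa^+$). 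I would show $w$ can be written as a length-additive product of such "column-swap" transpositions, each reducing the path toward $\pi_\st$, and that each such swap, when composed on the left of the current $d(\cdot)$, strictly increases the Bruhat length by exactly $1$ — i.e. the added transposition is a left-ascent. This is the inductive engine: induct on $\ell(w)$ (equivalently on $\sum_m(\pi_\tt(m)-\pi_\st(m))/2$), reducing to the case where $\tt$ and $\st$ differ by a single column-swap, and there check $\ell(d(\st))=\ell(d(\tt))+1$ directly from the Step 1 formula.

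\textbf{The main obstacle} will be Step 2: showing that a single "lowering" move on the path corresponds to left-multiplication by an elementary transposition that is genuinely a left-ascent of $d(\tt)$ (so length goes up by exactly $1$, not down), and that one can always reach $\st$ from $\tt$ by a sequence of such moves staying within $\Std(\la)$ — i.e. the covering relation of $\domby$ on $\Std(\la)$ is by single column-swaps. The subtlety is that the two-column constraint $c_{m,1}\ge c_{m,2}$ (the $\Pa^+$ condition) must be maintained throughout; I would argue that if $\pi_\st\le\pi_\tt$ pointwise and they are unequal, there is always a rightmost place where a second-column step of $\tt$ can be "pulled earlier" without violating standardness, giving an intermediate $\tt'$ with $\pi_\st\le\pi_{\tt'}<\pi_\tt$, and iterate. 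Once the single-step case is pinned down via the explicit length formula of Step 1, the general statement follows by a straightforward induction, using that for any $w,w'\in\Si_n$ with $\ell(ww')=\ell(w)+\ell(w')$ and any further left-ascent $s$ of $ww'$ that is also a left-ascent of $w$, one gets $\ell(sww')=\ell(sw)+\ell(w')$.
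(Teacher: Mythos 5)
Your overall strategy --- connect $\tt$ to $\st$ by single adjacent-transposition ``column swaps'' moving through the dominance order, check that each swap raises $\ell(d(\cdot))$ by exactly $1$, and conclude by induction together with a length-additivity count --- is essentially the strategy of the paper's proof, which chooses the swap explicitly (take the smallest $r$ with $\st(r)\ne\tt(r)$, set $v=\tt^{-1}(\st(r))$ and $\ut=s_{v-1}\tt$, verify $\st\domby\ut$ and $\ell(d(\ut))=\ell(d(\tt))+1$, and induct). But as written your proposal has a genuine gap: the two facts that carry the whole argument --- that a suitable single swap always exists (staying inside $\Std(\la)$ and without overshooting $\st$), and that each such swap is a length-$(+1)$ move whose accumulated count equals $\ell(w)$ --- are exactly the statements you defer with ``I would show'' and label as the main obstacle. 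Neither your Step 1 length formula nor the single-swap case is actually established, and these are precisely the content of the paper's proof.

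There is also a concrete error in your dominance--path dictionary. For a fixed $2$-column shape, $\Shape(\st\da_m)\domby\Shape(\tt\da_m)$ means $c_{m,2}(\st)\le c_{m,2}(\tt)$, i.e.\ $\pi_\st(m)\ge\pi_\tt(m)$: the \emph{dominated} tableau has the \emph{higher} path (compare $\tt^\la$, the most dominant, whose path hugs $0$, with $\tt_\la$). So passing from $\tt$ to $\st\domby\tt$ \emph{raises} the path, contrary to your claim that $\pi_\st\le\pi_\tt$; moreover your own move (putting the smaller of the two swapped entries into column $1$) raises the height, so your parenthetical ``decreases the height'' makes Step 2 internally inconsistent. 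Once the orientation is fixed, the obstacle you single out --- preserving $c_{m,1}\ge c_{m,2}$ --- essentially disappears, since raising a nonnegative path keeps it nonnegative and any such path with the right endpoint corresponds to a standard $\la$-tableau by Lemma~\ref{lem:bij}; the real remaining work is to produce the chain of raising swaps from $\pi_\tt$ to $\pi_\st$ and to show its length equals $\ell(w)$ (for the latter, $\ell(d(\st))=\ell(wd(\tt))\le\ell(w)+\ell(d(\tt))$ together with $\ell(w)\le$ the number of swaps forces equality). With these points filled in your route would work, but they are not filled in.
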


\begin{proof}
We may assume that $\st \ne \tt$. 
Let $r\in [1,n]$ be smallest such that $\st(r)\ne \tt(r)$. Then
$\st(r) = (a+1,1)$ and $\tt(r)=(b+1,2)$ for some $a,b\in \Z_{\ge 0}$. Let $v = \tt^{-1}(a+1,1)$ and $\ut=s_{v-1}\tt$. 
Then $\ell(d(\ut))=\ell(d(\tt))+1$. We claim that 
$\st\domby \ut$. Indeed, for $j\in [1,n] \setminus\{v-1\}$, 
we have 
$\Shape(\ut\da_j) = 
\Shape(\tt\da_j) \dom \Shape(\st\da_j)$.
On the other hand, the second column of $\Shape(\st\da_{v-1})$ has size at most $b+v-1-r$, 
which is equal to the size of the second column of 
$\Shape(\ut\da_{v-1})$. This proves the claim. Arguing by induction, we may assume that the lemma holds with $\tt$ replaced by $\ut$, whence it follows that 
\[
\ell(d(\st)) = \ell(d(\ut))+ \ell(ws_{v-1}) = \ell(d(\ut))+ \ell(w)-1 = \ell(d(\tt))+ \ell(w). \qedhere
\]
\end{proof}

\begin{lemma}\label{LIndep}
Let $\la\in \Par_{\le 2} (n)$ and $\tt\in \Std(\la)$.
\begin{enumerate}[(i)]
\item 
The element $v^\tt \in S^\la_\O$ does not depend on the choice of a reduced expression for $d(\tt) \in \mathfrak S_n$. 
\item Let $\st \in \Std(\la)$ be such that $\st \domby \tt$. If 
$w\in \mathfrak S_n$ satisfies $w\tt = \st$ then $\psi_w v^{\tt} = v^{\st}$. 
\end{enumerate}
\end{lemma}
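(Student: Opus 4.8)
Let $\la\in \Par_{\le 2} (n)$ and $\tt\in \Std(\la)$. (i) The element $v^\tt \in S^\la_\O$ does not depend on the choice of a reduced expression for $d(\tt)$. (ii) If $\st\domby\tt$ and $w\tt=\st$, then $\psi_w v^\tt = v^\st$.

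Let me think about how I'd prove this.\textbf{Plan of proof.} The two statements are closely linked, and I would prove them together, using the length-additivity established in Lemma~\ref{LLength} as the main engine. The key point is that for $\la$ a 2-column partition, the Specht module $S^\la_\O$ is, modulo the Garnir relations, already well-behaved under the $\psi$-operators: the only way a braid or quadratic relation could cause the choice of reduced word to matter is if some $\psi_r$ acts with a defect, and this is controlled by residues and by the structure of two-column tableaux.

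\textbf{Step 1: reduce (i) to a local statement.} Any two reduced expressions for $d(\tt)$ are connected by a sequence of braid moves $s_rs_{r+1}s_r=s_{r+1}s_rs_{r+1}$ and commutation moves $s_rs_s=s_ss_r$ for $|r-s|>1$ (Matsumoto--Tits). The commutation moves are harmless by relation~\eqref{rel:commpsi}. So it suffices to show that whenever $d(\tt)=u\,s_rs_{r+1}s_r\,u'$ with lengths adding up, applying a braid move to the middle gives the same element of $S^\la_\O$ when applied to $v^\la$. Using relation~\eqref{rel:braid}, the difference $(\psi_r\psi_{r+1}\psi_r - \psi_{r+1}\psi_r\psi_{r+1})1_\bi$ is either $0$ or $\pm 1_\bi$ or a linear term in the $y$'s. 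I would argue that in all the relevant situations this difference kills $v^\la$ (the $y$-terms are absorbed using relations~\eqref{rel:ypsi2}--\eqref{rel:ypsi3} together with~\eqref{rel:Sp3}, which propagate $y_1 v^\la=0$ along the path), so the two expressions agree on $v^\la$; hence they agree after further multiplication by $\psi_{u'}$ and then $\psi_u$ on the left. The genuinely nonzero braid difference $\pm 1_\bi$ only arises when $i_{r+2}=i_r$ is adjacent to $i_{r+1}$ in the quiver, and I expect to rule this out, or to show it produces a $v^\st$ that already lies in the span in a way forced to vanish, using that the relevant tableaux have at most two columns (so at most two rows after transpose, severely restricting residue coincidences among three consecutive entries). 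This is the step I expect to be the main obstacle, and I would be prepared to invoke the Garnir relations directly here.

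\textbf{Step 2: prove (ii) by induction on $\ell(w)$.} The base case $w=e$ is trivial. For the inductive step, since $\st\domby\tt$, Lemma~\ref{LLength} gives $\ell(d(\st))=\ell(d(\tt))+\ell(w)$, so any reduced word for $d(\tt)$ followed by any reduced word for $w$ is a reduced word for $d(\st)$. Pick $r$ with $\ell(s_rw)=\ell(w)-1$ and set $\ut:=s_r\tt$; then $\ut\domby\tt$ (the dominance order is preserved under such a single transposition when lengths add, by the argument in the proof of Lemma~\ref{LLength}), $\st\domby\ut$, $\ell(d(\ut))=\ell(d(\tt))+1$, and $(s_rw)\ut=\st$ with $\ell(s_rw)=\ell(w)-1$. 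By part (i) we may choose the reduced expression for $d(\ut)$ to be a chosen reduced expression for $d(\tt)$ with $s_r$ appended, so $v^\ut=\psi_r v^\tt$. By the inductive hypothesis $\psi_{s_rw} v^\ut = v^\st$, hence $\psi_{s_rw}\psi_r v^\tt = v^\st$; and since the expressions concatenate to a reduced one, $\psi_{s_rw}\psi_r = \psi_w$ for the compatible choice of reduced word, which by (i) represents the same operator on $v^\tt$. This yields $\psi_w v^\tt = v^\st$.

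\textbf{Remark on dependencies.} Throughout I would lean on Proposition~\ref{Pbasis} (the basis $\{v^\tt\}$ and its degrees), the defining relations~\eqref{rel:ypsi2}--\eqref{rel:braid} of $R_n$, the Specht relations~\eqref{rel:Sp1}--\eqref{rel:Sp3} and the Garnir relations, and crucially Lemma~\ref{LLength} for length-additivity in the two-column case. The restriction $\la_1\le 2$ enters precisely in Step~1 (and in Lemma~\ref{LLength}): it is what prevents the pathological braid-defect terms from surviving, since a tableau with at most two columns has limited room for the residue patterns $i_{r+2}=i_r$ adjacent to $i_{r+1}$ to occur along the standard path $d(\tt)\tt^\la=\tt$.
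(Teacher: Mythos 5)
Your Step 1 contains the genuine gap. You reduce (i), via Matsumoto--Tits, to showing that each braid move between reduced expressions of $d(\tt)$ acts trivially on $v^\la$, but you never carry out the decisive case: you say you ``expect to rule out'' the residue pattern $i_{r+2}=i_r$ adjacent to $i_{r+1}$, or alternatively to show the defect term vanishes, possibly by ``invoking the Garnir relations''. That is not an argument, and the plan is shakier than you suggest even for the $y$-terms: the relations $y_rv^\la=0$ hold only for the cyclic generator, so after first applying $\psi_{u'}$ you would have to push the $y$'s of \eqref{rel:braid} back through $\psi_{u'}$ using \eqref{rel:ypsi2}--\eqref{rel:ypsi4}, which spawns further $\delta$-correction terms you do not control, and $\psi_{u'}v^\la$ need not be a basis vector for an arbitrary suffix $u'$. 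The idea you are missing is that for a two-column tableau the braid case simply never arises: $d(\tt)$ is \emph{fully commutative}, so any two of its reduced expressions are linked by commutation moves alone and (i) follows immediately from \eqref{rel:commpsi}. This is how the paper argues, via the Billey--Jockusch--Stanley criterion \cite[Theorem~2.1]{BJS93}: $d(\tt)$ fails to be fully commutative only if it contains a $321$-pattern on some triple $i<j<k$, and with only two columns two of $i,j,k$ must lie in the same column of $\tt$, which rules this out. This is also exactly where the hypothesis $\la_1\le 2$ enters, rather than through any residue analysis.

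Your Step 2 is the right idea (it is forced by Lemma~\ref{LLength}), but the paper's route is shorter: since $d(\st)=wd(\tt)$ and $\ell(d(\st))=\ell(d(\tt))+\ell(w)$, concatenating reduced expressions for $d(\tt)$ and $w$ gives a reduced expression for $d(\st)$, and by (i) the resulting element $\psi_w\psi_{d(\tt)}v^\la$ equals $v^\st$ regardless of which reduced expressions were fixed in \eqref{Epsiw}; no induction is needed. If you do keep the induction, note a left/right slip: with the convention $(g\tt)(m)=\tt(g^{-1}m)$ you need $r$ with $\ell(ws_r)=\ell(w)-1$, so that $(ws_r)(s_r\tt)=\st$; your choice $\ell(s_rw)=\ell(w)-1$ gives $(s_rw)(s_r\tt)=(s_rws_r)\tt\ne\st$ in general, and you would also need to justify that $s_r\tt$ is standard and sits between $\st$ and $\tt$ in the dominance order.
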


\begin{proof}
By~\eqref{rel:commpsi}, in order to prove (i) it suffices to show that $w:=d(\tt)$ is fully commutative, i.e. that every reduced expression for $w$ can be obtained from any other by using only relations of the form $s_i s_j = s_j s_i$ for $i,j\in [1,n-1]$ with $|i-j|> 1$. 
By~\cite[Theorem~2.1]{BJS93}, this is equivalent to showing that for no triple $1\le i<j<k\le n$ it is the case that 
$wi>wj>wk$. This last statement is clear since two of $i,j,k$ must lie in the same column of $\tt$. 
Part (ii) follows from (i) because a reduced expression for $d(\st)$ can be obtained by concatenating reduced expressions for $d(\tt)$ and $w$, by Lemma~\ref{LLength}. 
\end{proof}

We note that the statements of Lemmas~\ref{LLength} and~\ref{LIndep} are generally false for partitions with arbitrarily many columns. An analogue of Lemma~\ref{LIndep}(i) for 2-row partitions is~\cite[Lemma 3.17]{KMR12}. 

In the rest of the section, we assume that $\O=\Z$ and write 
$R^{\La_0}_n:=R^{\La_0}_{n,\Z}$. The results proved over $\Z$ below may be seen to hold over an arbitrary commutative ring by extending scalars.

\begin{lemma}\label{trivhoms}
Suppose that $n \equiv j-1 \bmod  e$ with $1 \leq j < e$ and $n>j-1$. 
Let $\mu = (1^n)$ and $\la= (2^j, 1^{n-2j})$. 
Then there is a dominated $R_n^{\Lambda_0}$-homomorphism 
$\phi\colon S^\mu_\Z \rightarrow S^\la_\Z$ 
given by $v^\mu \mapsto v^{\mtt {t}_\la}$. Moreover, $\phi$ is homogeneous of degree $1$. 
\end{lemma}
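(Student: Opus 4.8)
The plan is to use that $S^\mu_\Z$ with $\mu=(1^n)$ is as small as possible. Since $(1^n)$ has a unique standard tableau, namely $\tt^\mu$, Proposition~\ref{Pbasis} shows $S^\mu_\Z$ is free of rank one over $\Z$, spanned by $v^\mu$ in degree $\deg_e(\tt^\mu)=0$. Moreover $y_r v^\mu$, and $\psi_r v^\mu$ for every admissible $r$, are homogeneous of strictly positive degree --- $\deg(y_r)=2$ and $\deg(\psi_r1_{\bi^\mu})=-\ct_{i^\mu_r i^\mu_{r+1}}>0$, the latter because the entries $r,r+1$ of $\tt^\mu$ lie in the same column, so $i^\mu_r=1-r$ and $i^\mu_{r+1}=-r$ are distinct and adjacent in $\Ga$ --- hence $y_r v^\mu=\psi_r v^\mu=0$. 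Thus $S^\mu_\Z$ is the one-dimensional graded $R^{\La_0}_n$-module on which $1_\bi$ acts as $\de_{\bi,\bi^\mu}$ and every $y_r,\psi_r$ acts as $0$, so to define $\phi$ it suffices to check that $v:=v^{\tt_\la}\in S^\la_\Z$ satisfies $1_\bi v=\de_{\bi,\bi^\mu}v$ and $y_r v=\psi_r v=0$ for all admissible $r$; the remaining defining relations of $S^\mu_\Z$ --- relation~\eqref{rel:Sp2}, which is vacuous for $(1^n)$, and the Garnir relations --- then hold automatically, as $R^{\La_0}_n v$ is forced to be one-dimensional.

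First I would compute residue sequences from~\eqref{ERes}. Reading $\tt_\la$ down column $1$ and then down column $2$ gives $\bi^{\tt_\la}=(0,-1,\dots,-(n-j-1),\,1,0,\dots,-(j-2))$ and $\bi^\mu=(0,-1,\dots,-(n-1))$, all modulo $e$, and these agree exactly when $1\equiv-(n-j)\bmod e$, i.e.\ when $n\equiv j-1\bmod e$ --- precisely the hypothesis; this gives $1_\bi v^{\tt_\la}=\de_{\bi,\bi^\mu}v^{\tt_\la}$. The same bookkeeping shows more: for $\st\in\Std(\la,\bi^\mu)$ the entry $1$ must sit in $(1,1)$, and inductively entries $1,\dots,n-j$ must fill column $1$ --- diverting an entry into column $2$ would force the residue of a later column-$1$ entry to be wrong, using $1\le j<e$ (so $e\nmid j$) --- after which $n-j+1,\dots,n$ fill column $2$. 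Hence $\tt_\la$ is the unique element of $\Std(\la,\bi^\mu)$, so $1_{\bi^\mu}S^\la_\Z=\Z v^{\tt_\la}$, a free $\Z$-module of rank one concentrated in degree $\deg_e(\tt_\la)$.

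With this in hand, $y_r v^{\tt_\la}=0$ is immediate: $y_r$ commutes with $1_{\bi^\mu}$, so $y_r v^{\tt_\la}\in 1_{\bi^\mu}S^\la_\Z=\Z v^{\tt_\la}$, yet it is homogeneous of degree $\deg_e(\tt_\la)+2$ while $\Z v^{\tt_\la}$ lives in degree $\deg_e(\tt_\la)$. The hard part is $\psi_r v^{\tt_\la}=0$. Here $\psi_r$ carries $1_{\bi^\mu}S^\la_\Z$ into $1_{s_r\bi^\mu}S^\la_\Z$, and I would show this image is $0$ by the same kind of combinatorial analysis: either no standard $\la$-tableau has residue sequence $s_r\bi^\mu$, or every such tableau has degree strictly below $\deg(\psi_r v^{\tt_\la})=\deg_e(\tt_\la)-\ct_{i^\mu_r i^\mu_{r+1}}\ge 2$, so $\psi_r v^{\tt_\la}$ vanishes by homogeneity --- and, as above, $1\le j<e$ is exactly what excludes the problematic residue sequences. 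Alternatively one can argue directly in $R^{\La_0}_n$: writing $v^{\tt_\la}=\psi_{d(\tt_\la)}v^\la$ (well defined by Lemma~\ref{LIndep}(i), since $d(\tt_\la)$ is fully commutative), commute $\psi_r$ rightwards past $\psi_{d(\tt_\la)}$ using~\eqref{rel:ypsi2}--\eqref{rel:braid} and~\eqref{rel:quad}; each term produced is $0$ because it ends in $\psi_s v^\la$ with $s\to_{\tt^\la}s+1$ (by~\eqref{rel:Sp2}), or in $y_s v^\la$ (by~\eqref{rel:Sp3}), or it lands in a weight space $1_{\bi'}S^\la_\Z$ with $\bi'$ realised by no standard $\la$-tableau. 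I expect this residue/error-term bookkeeping to be the main obstacle; as a sanity check, $[S^\la_F:D^\mu_F]=1$ by Theorem~\ref{TDec1} (since $e\mid n+1-j$ and $j<e$), so a homomorphism of this type must exist.

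Finally, once $\phi$ is constructed it is automatically dominated, because for $\mu=(1^n)$ every standard tableau is $\mu$-row-dominated: the entry in row $a$ of a standard tableau is at least $a$. For the degree, $v^\mu$ has degree $0$ and $\deg_e(\tt_\la)=1$ by Lemma~\ref{lem:deg}. Indeed the path $\pi_{\tt_\la}$ rises by unit steps from $0$ to its peak $n-j$; by the hypothesis $n-j\equiv-1\bmod e$ and $n-j\ge e-1$, so $n-j=me-1$ with $m=(n-j+1)/e\ge 1$ and the peak lies on the wall $H_m$; then $\pi_{\tt_\la}$ descends by $j<e$ unit steps, which keeps it strictly between $H_{m-1}$ and $H_m$, so exactly one step (the first, leaving $H_m$) contributes $+1$ to $\deg_e$ and none contributes $-1$. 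Thus $\deg_e(\tt_\la)=1$ and $\phi$ is homogeneous of degree $1$. (The statement over an arbitrary commutative ring follows by extension of scalars, cf.\ Corollary~\ref{CScalarExt}.)
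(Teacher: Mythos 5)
Your route is genuinely different from the paper's, and most of it is sound: since $S^\mu_\Z$ is free of rank one with the generators acting by known scalars, a homomorphism $v^\mu\mapsto v^{\tt_\la}$ exists if and only if $1_\bi v^{\tt_\la}=\de_{\bi,\bi^\mu}v^{\tt_\la}$, $y_rv^{\tt_\la}=0$ and $\psi_rv^{\tt_\la}=0$ for all $r$ (your remark that the Garnir relations then come for free is correct, since every element of $R^{\La_0}_n$ then acts on $v^{\tt_\la}$ through the same character as on $v^\mu$); the residue computation, the uniqueness of $\tt_\la$ in $\Std(\la,\bi^\mu)$, the vanishing of $y_rv^{\tt_\la}$, the domination statement, and $\deg_e(\tt_\la)=1$ are all correctly argued. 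By contrast, the paper works over $\Q$: by Theorem~\ref{p0_thm} and Lemma~\ref{altform}(ii), $S^\la_\Q$ has exactly two graded composition factors, $D^\la_\Q$ and $D^\mu_\Q\langle 1\rangle$, so a nonzero degree-$1$ map $S^\mu_\Q\to S^\la_\Q$ exists; uniqueness of $\tt_\la$ in $\Std(\la,\bi^\mu)$ forces its image of $v^\mu$ to be a multiple of $v^{\tt_\la}$, and Corollary~\ref{CScalarExt} descends to $\Z$. Note that this is essentially the observation you relegate to a ``sanity check'' via Theorem~\ref{TDec1} (to make it a proof one must also note that the non-head factor $D^\mu_\Q\langle 1\rangle$ is necessarily a submodule and that $S^\mu_\Q=D^\mu_\Q$).

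The genuine gap is the step $\psi_r v^{\tt_\la}=0$ for all $r$, which in your approach is the entire content of the lemma and which you do not prove: you offer two possible strategies (a residue/degree analysis of $\Std(\la,s_r\bi^\mu)$, or commuting $\psi_r$ past $\psi_{d(\tt_\la)}$ and killing the error terms) and explicitly defer the ``residue/error-term bookkeeping'' as ``the main obstacle''. For the record, the first strategy does succeed, but the verification has to be written out: carrying through the same residue analysis you use for $\Std(\la,\bi^\mu)$, one finds for $e\ge 3$ that $\Std(\la,s_r\bi^\mu)=\varnothing$ for every $r\neq n-j$, while for $r=n-j$ it consists of the single tableau $s_{n-j}\tt_\la$, whose degree is $0\neq 2=\deg_e(\tt_\la)+1$; for $e=2$ (so $j=1$) the set is empty unless $r=n-1$, when its two elements have degrees $\pm 1\neq 3$. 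With that analysis supplied, homogeneity of $\psi_rv^{\tt_\la}$ forces it to vanish and your construction is complete; as submitted, however, the proof is incomplete precisely at its central point.
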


\begin{proof}
By Theorem~\ref{p0_thm} and Lemma~\ref{altform}(ii), 
the $R^{\La_0}_{n,\Q}$-module $S^\la_\Q$ has exactly two graded composition factors, namely $D^\la_\Q$ and 
$D^\mu_\Q\langle 1 \rangle$, so there is a non-zero homomorphism of degree $1$ from 
$S^\mu_{\Q}$ to $S^\la_{\Q}$.
 It is easy to see that $\tt_\la$ is the only standard $\la$-tableau with residue sequence $\bi^{\mu}$, so a scalar multiple of this homomorphism sends $v^\mu$ to $v^{\tt_{\la}}$. 
In view of Corollary~\ref{CScalarExt}, the lemma follows.
\end{proof}

Consider a partition $\la = (2^x, 1^y)\in \Par_{\le 2} (n)$ such that 
$y \equiv -j-1 \pmod e$ and $x\ge j$ for some $1\le j<e$. 
We define $\Tt^\la_e$ to be the tableau obtained by putting $\tt^{(2^{x-j})}$ on top of the tableau $\tt_{(2^{j}, 1^y)}$, with all entries in the latter tableau increased by $2(x-j)$. 
In other words, for all $(a,b) \in \Y\la$, we set
\[
(\Tt^\la_e)^{-1} (a,b):= 
\begin{cases}
 2(a-1)+b & \text{if } a\le x-j; \\
 x-j+a & \text{if } a>x-j \text{ and } b=1; \\
 x+y+a & \text{if } a>x-j \text{ and } b=2. 
\end{cases}
\]

\begin{thm}\label{Tphilamu}
Let $\la = (2^x,1^y)\in \Par_{\le 2} (n)$. Suppose that 
$y \equiv -j-1 \pmod e$ for some 
$1\le j<e$ and that $x\ge j$. 
Let $\mu =(2^{x-j}, 1^{y+2j})$. Then there is a  dominated $R^{\La_0}_n$-module homomorphism 
\[
 \phi_{\la,\mu} \colon S^\mu_\Z \longrightarrow S^\la_\Z, \; v^{\mu}\longmapsto v^{\Tt^\la_e},
 \]
 which is homogeneous of degree $1$. 
\end{thm}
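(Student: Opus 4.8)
The plan is to argue by induction on $x$, using the row-removal isomorphism of Theorem~\ref{rowrem} to peel off the first row of $\la$ and $\mu$. For the base case $x=j$ we have $\mu=(1^{y+2j})$ and $\la=(2^j,1^y)$; the congruence $y\equiv-j-1\pmod e$ gives $y+2j\equiv j-1\pmod e$, and the defining formula for $\Tt^\la_e$ degenerates (its upper block being empty) to $\Tt^\la_e=\tt_\la$. So in this case the claim is precisely Lemma~\ref{trivhoms} applied with its $n$ equal to $y+2j$, the hypothesis $n>j-1$ being automatic since $j\ge 1$.

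For the inductive step $x>j$, both $\la$ and $\mu$ have first part $2$, so Theorem~\ref{rowrem} applies and gives an isomorphism $\phi\mapsto\bar\phi$ of graded $\Z$-modules between $\DHom_{R^{\La_0}_n}(S^\mu_\Z,S^\la_\Z)$ and $\DHom_{R^{\La_0}_{n-2}}(S^{\bar\mu}_\Z,S^{\bar\la}_\Z)$, where $\bar\la=(2^{x-1},1^y)$ and $\bar\mu=(2^{x-1-j},1^{y+2j})$. These reduced partitions still satisfy the hypotheses of Theorem~\ref{Tphilamu} (now with $x-1\ge j$), so by the inductive hypothesis there is a dominated homomorphism $\phi_{\bar\la,\bar\mu}\colon S^{\bar\mu}_\Z\to S^{\bar\la}_\Z$ of degree $1$ with $v^{\bar\mu}\mapsto v^{\Tt^{\bar\la}_e}$. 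I would then take $\phi_{\la,\mu}$ to be the unique element of $\DHom_{R^{\La_0}_n}(S^\mu_\Z,S^\la_\Z)$ with $\overline{\phi_{\la,\mu}}=\phi_{\bar\la,\bar\mu}$; since the isomorphism is graded, $\phi_{\la,\mu}$ is automatically homogeneous of degree $1$.

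It then remains to identify $\phi_{\la,\mu}(v^\mu)$. Writing $\phi_{\la,\mu}(v^\mu)=\sum_{\tt\in\Std(\la)}a_\tt v^\tt$, the second assertion of Theorem~\ref{rowrem} yields $\sum_{\tt}a_\tt v^{\bar\tt}=v^{\Tt^{\bar\la}_e}$ in $S^{\bar\la}_\Z$. The key combinatorial observation is that $\Tt^\la_e$ is the \emph{unique} tableau in $\Std(\la)$ with $\bar\tt=\Tt^{\bar\la}_e$: since $\Tt^{\bar\la}_e$ carries the full entry set $\{1,\dots,n-2\}$, any such $\tt$ has $\{3,\dots,n\}$ as its set of entries outside the first row, which forces $\tt^{-1}(1,1)=1$ and $\tt^{-1}(1,2)=2$ and hence determines $\tt$ completely; a direct check against the defining formula confirms both that $\overline{\Tt^\la_e}=\Tt^{\bar\la}_e$ and that this unique $\tt$ is $\Tt^\la_e$ (and is standard). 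Since $\{v^\St\mid\St\in\Std(\bar\la)\}$ is $\Z$-linearly independent by Proposition~\ref{Pbasis}, comparing coefficients forces $a_\tt=0$ for $\tt\ne\Tt^\la_e$ and $a_{\Tt^\la_e}=1$, that is, $\phi_{\la,\mu}(v^\mu)=v^{\Tt^\la_e}$.

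I expect the substantive point to be the uniqueness-of-preimage step, together with the need to make sure the induction actually closes: this rests on the precise combinatorics of $\Tt^\la_e$, in particular on its first row being exactly $\{1,2\}$, which is what propagates the full-entry-set property from $\bar\la$ back to $\la$. The remaining verifications — the congruence and inequality defining the setup for $\bar\la,\bar\mu$, and the transfer of degree and of the dominated property — are routine and are carried along automatically by Theorem~\ref{rowrem}.
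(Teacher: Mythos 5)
Your proof is correct and is essentially the paper's own argument: the paper's proof is the one-line statement that the result "follows from Lemma~\ref{trivhoms} and Theorem~\ref{rowrem} applied $x-j$ times", which is exactly your induction on $x$ with base case $x=j$. You have merely made explicit the bookkeeping the paper leaves implicit, namely the check that $\overline{\Tt^\la_e}=\Tt^{\bar\la}_e$ and that $\Tt^\la_e$ is the unique standard preimage, which pins down $\phi_{\la,\mu}(v^\mu)=v^{\Tt^\la_e}$.
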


\begin{proof}
This follows from Lemma~\ref{trivhoms} and Theorem~\ref{rowrem} 
applied $x-j$ times. 
\end{proof}

\begin{Remark}
Up to scalar multiples, the only non-zero homomorphisms between Specht modules 
$S^\la_{\Q}$ and 
$S^\mu_{\Q}$ for distinct $\la,\mu\in\Par_{\le 2}(n)$ 
are obtained from those given by Theorem~\ref{Tphilamu} by extending scalars. This follows from a consideration of composition factors of these Specht modules given by Theorem~\ref{p0_thm}.
\end{Remark}

The aim of the rest of this section is to describe the kernel and image of the homomorphism $\phi_{\la,\mu}$ from Theorem~\ref{Tphilamu} in terms 
of $e$-regular tableaux. 

 Fix $\la=(2^x,1^y)$ and $j\in [1,e-1]$ satisfying the hypotheses
of Theorem~\ref{Tphilamu}. 
Let $m\in \Z_{>0}$ be such that $y=me-j-1$.
Recall the set $H$ from~\eqref{EH}. 
Given $0 \leq r \leq x-j$, set 
\begin{align*}
Q_r:= \{\tt \in \Std(\lam) & \mid 
\pi_{\tt} (2r+me-1) \in H_m \text{ and }  
\pi_\tt (a) \notin H 
 \text{ whenever } 
2r+me-1 < a \le n \}. 
\end{align*}
That is, a standard $\la$-tableau $\tt$ lies in $Q_r$ if and only if the path
$\pi_\tt$ hits the wall $H_m$ at step $2r+me-1$ and does not hit any walls at later steps. Then 
\begin{equation}\label{EunionQ}
\Std(\la) \sm \DStd_e (\la) = \bigsqcup_{r=0}^{x-j} Q_r. 
\end{equation}
If $e=2$, then $j=1$ and $Q_r = \varnothing$ for all $r<x-j$.
 If $e>2$, then each set $Q_r$ on the right-hand side of~\eqref{EunionQ} is non-empty. 

\begin{lemma}  \label{tailend}
Suppose that $0 \leq r \leq x-j$ and that $\tt,\st \in Q_r$ satisfy
$\tt \da_{2r+me-1} = \st \da_{2r+me-1}$. 
If $w\in \Sym_n$ is such that $\st = w \tt$, then $v^{\st} = \psi_w v^{\tt}$.
\end{lemma}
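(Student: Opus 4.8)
The plan is to reduce the statement to an application of Lemma~\ref{LIndep}(ii), whose hypothesis requires $\st \domby \tt$. So the first task is to show that $\st \domby \tt$ whenever $\tt,\st \in Q_r$ agree on the first $2r+me-1$ entries. For $m' \le 2r+me-1$ this is immediate since $\tt\da_{m'} = \st\da_{m'}$. For $2r+me-1 < m' \le n$, I would translate the claim into the language of paths: $\Shape(\tt\da_{m'}) \dom \Shape(\st\da_{m'})$ for two-column partitions is equivalent to $\pi_\tt(m') \ge \pi_\st(m')$ under our identification of $V$ with $\mathbb R$ (the value of the path records the height of the first column minus the second). Both $\pi_\tt$ and $\pi_\st$ pass through the point $me-1$ at step $2r+me-1$ and, by the definition of $Q_r$, neither path meets $H$ again afterwards; hence for $a > 2r+me-1$ both $\pi_\tt(a)$ and $\pi_\st(a)$ stay strictly between consecutive walls, and in fact strictly between $(m-1)e-1$ and $(m+1)e-1$ — more precisely, each lies in $(me-1, (m+1)e-1)$ or in $((m-1)e-1, me-1)$ according to the first step after $2r+me-1$. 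Since both paths end at the same endpoint $\pi_\tt(n) = \pi_\st(n)$ (they have the same shape $\la$), and since $\tt$ is a standard $\la$-tableau while $\st$ might not be — wait, both are in $\Std(\la)$, so actually I need a genuine comparison. Here the key point is that among all standard $\la$-tableaux agreeing with a fixed initial segment, the dominance order is controlled, and I would argue directly: if $\st \not\domby \tt$, there is a least $m'$ with $\pi_\st(m') > \pi_\tt(m')$; then at step $m'$ the path $\pi_\st$ went up while $\pi_\tt$ went down, forcing $\pi_\tt(m'-1) = \pi_\st(m'-1)$ and $\pi_\tt(m') = \pi_\st(m')-2$, and tracking both paths to the common endpoint together with the wall-avoidance constraint yields a contradiction, or alternatively one shows $\pi_\tt$ dominates any completion of the shared initial segment.

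Actually, a cleaner route: I would verify directly that $\tt$ is the dominant element of $Q_r$ in the sense needed, but since Lemma~\ref{LIndep}(ii) only needs $\st \domby \tt$ with $\tt,\st$ arbitrary in $\Std(\la)$, and here $\tt,\st$ play symmetric roles in the statement, I should instead observe that the conclusion $v^\st = \psi_w v^\tt$ is exactly symmetric only if both $\st\domby\tt$ and $\tt\domby\st$ fail to hold simultaneously unless $\st=\tt$. So in fact what must be true is that for $\tt,\st\in Q_r$ agreeing up to step $2r+me-1$, one of them dominates the other, and the lemma as stated implicitly picks $w$ with $\st = w\tt$; by Lemma~\ref{LIndep}(ii) the conclusion holds provided $\st\domby\tt$. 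Therefore the real content is: among tableaux in $Q_r$ with a fixed initial segment, dominance is a total order, so WLOG (after possibly swapping the roles, which only changes $w$ to $w^{-1}$ and is consistent) we have $\st \domby \tt$. I would make this precise by showing the map $\st \mapsto \pi_\st$ restricted to $\{\st \in Q_r : \st\da_{2r+me-1} = \pi\text{-fixed}\}$ lands in a set of paths that is totally ordered pointwise, using that after the last wall-contact the path is confined to a single alcove-strip and its shape (hence endpoint) is fixed.

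Once $\st\domby\tt$ is established, the proof concludes in one line: Lemma~\ref{LIndep}(ii) gives $\psi_w v^\tt = v^\st$.

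\textbf{Main obstacle.} The hard part will be the dominance comparison — specifically, proving that two standard $\la$-tableaux in $Q_r$ sharing an initial segment are always $\domby$-comparable, i.e.\ that the pointwise order on the corresponding paths (confined to a strip bounded by two consecutive walls after their last common wall-contact, with common endpoint) is total. I expect this to follow from a short induction on the length of the differing tail, comparing $\pi_\tt(a)$ and $\pi_\st(a)$ step by step and using that a path confined between $H_{m-1}$ and $H_{m+1}$ with no interior wall-contact, together with a fixed endpoint, cannot cross another such path; but making the confinement claim airtight (that after step $2r+me-1$ the path of {\em any} tableau in $Q_r$ stays on one fixed side of $H_m$, determined by whether the endpoint $\pi(n)$ exceeds $me-1$) requires care with the definition of $Q_r$ and the standardness constraint $\pi\in\Pa^+_n$.
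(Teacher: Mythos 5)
There is a genuine gap, and it is exactly at the point you flag as the ``main obstacle'': the claim that two tableaux in $Q_r$ sharing the initial segment $\tt\da_{2r+me-1}=\st\da_{2r+me-1}$ are always $\domby$-comparable is false. Under the identification of dominance with pointwise comparison of paths (which you set up correctly), two paths that agree up to step $2r+me-1$, are confined to the same open strip between consecutive walls thereafter, and have the same endpoint can still cross in the pointwise order: for $e\ge 4$ take tails of the form $c,c+1,c,c-1,c$ and $c,c-1,c,c+1,c$ inside one strip. So the ``WLOG $\st\domby\tt$'' step cannot be made; and even when $\tt\domby\st$ does hold, swapping roles only yields $v^{\tt}=\psi_{w^{-1}}v^{\st}$, which is \emph{not} equivalent to the desired $v^{\st}=\psi_w v^{\tt}$, since $\psi_w\psi_{w^{-1}}$ is not the identity in $R_n$ in general. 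Lemma~\ref{LIndep}(ii) alone can therefore never prove this lemma, because the statement genuinely requires moving \emph{up} in the dominance order.

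The missing ingredient is a KLR relation, not more combinatorics of paths. The paper first reduces to $w=s_a$ with $a>2r+me-1$. If $s_a$ moves down in dominance, Lemma~\ref{LIndep}(ii) applies as you say. If it moves up, one instead gets $v^{\tt}=\psi_a v^{\st}$ from Lemma~\ref{LIndep}(ii), and then computes that the residues $i_a,i_{a+1}$ of $\bi^{\st}$ satisfy $i_a-i_{a+1}\notin\{0,\pm1\}$ precisely because $\pi_{\st}(a)$, $\pi_{\st}(a+1)$ and $\pi_{\tt}(a)$ all lie strictly between the walls $H_{m-1}$ and $H_m$ (this is where the definition of $Q_r$ is actually used). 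The quadratic relation~\eqref{rel:quad} then gives $\psi_a v^{\tt}=\psi_a^2 v^{\st}=v^{\st}$. Your proposal never invokes any relation of $R_n$ beyond what is packaged in Lemma~\ref{LIndep}, so it cannot bridge this step; you should replace the dominance-totality argument by this residue computation.
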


\begin{proof}
First, note that we can turn $\tt$ into $\st$ by a series of elementary transpositions of the form $s_a$ with $2r+me-1<a<n$.
Hence, it suffices to prove the lemma when $w=s_a=(a,a+1)$ is an elementary transposition with $a>2r+me-1$. 

 If $a+1$ lies in the first column of $\tt$ and $a$ lies in the second, then $\tt \triangleright \st$, so 
 $v^{\st} = \psi_a v^{\tt}$ by Lemma~\ref{LIndep}(ii), as required.  
So we may assume that $a$ lies in the second column of $\st$, whence it follows that $v^{\tt} = \psi_a v^{\st}$.
Let $k$ and $l$ be determined from $\st(a)=(k,2)$ and 
$\st(a+1)=(l,1)$, 
and set $\bi:= \bi^\st$.  Then 
\[
(m-1)e -1 < \pi_\st (a) =l-k-1< \pi_\st (a+1) = l-k < \pi_\tt(a) = l-k+1 <me-1.
\]
We have $i_a = 2-k+e\Z$ and $i_{a+1} = 1-l+e\Z$, so 
\[
i_a - i_{a+1} = 1+l-k + e \Z \notin \{ 0, 1, -1\}.
\]
Hence, $\psi_a v^\tt = \psi_a^2 v^\st = v^\st$ by~\eqref{rel:quad}.  
\end{proof}

\begin{defn}
Suppose that $e>2$. 
For each $r=0,\dots,x-j$, we define a special element $\Tt_r\in Q_r$
as follows: 
\begin{enumerate}[(a)]
\item $\Tt_{x-j}:= \Tt^\la_e$. 
\item Suppose that $0\le r<x-j$. Then we set $\nu^r:=(2^r, 1^{me-1})\in \Par(2r+me-1)$ and define $\Tt_r$ to be an arbitrary (fixed) element of $\Std(\la)$ satisfying the following conditions:
\begin{enumerate}[(i)]
\item
$\Tt_r \da_{2r+me-1} = \tt^{\nu^r}$;
\item 
$2r+me$ and $2r+me+1$ are in the second column of $\Tt_r$, and 
$2r+me+2$ is in the first column;
\item 
whenever $2r+me+2 <a \le n$, we have $(m-1)e-1 < \pi_{\Tt_r} (a) <me-1$
(or, equivalently, $\Tt_r \in Q_r$). 
\end{enumerate}
\end{enumerate}

Further, for $0\le r< x-j$, define 
\[
 \St_{r+1} := s_{2r+me} s_{2r+me+1} \Tt_{r} \in Q_{r+1}.
\]
\end{defn}

\begin{Examp}
Let $e=5$ and $\la=(2^5,1^6)$. Then $x-j=2$, and the paths $\pi_{\Tt_r}$ and $\pi_{\St_r}$ (for a possible choice of $\Tt_0$ and $\Tt_1$) are as follows:
\[
\begin{array}{cc}
\begin{tikzpicture}[baseline={([yshift=-0.25ex]current bounding box.center)},scale=0.55]
\node at (-2.2,0.8) {$\pi_{\Tt_2}=$};
\draw[thick] (-1,-0.3)--(9.2,-0.3);
\draw[thick] (-1,-0.3)--(-1,2);
\foreach \k in {0,1,2,...,9} {\node at (\k,-0.8){\small$\k$};};
\foreach \k in {4,9} {\draw[dashed](\k,-0.3)--(\k,2);};
\draw[rounded corners=3pt]
(0,0)--(1,0)--(1,0.3)--(0,0.3)--(0,0.6)--(1,0.6)--(1,0.9)--(0,0.9)--(0,1.2)--(9,1.2)--(9,1.5)--(6,1.5);
\end{tikzpicture}, & \quad
\hspace{-2mm}
\begin{tikzpicture} [baseline={([yshift=-0.25ex]current bounding box.center)},scale=0.55]
\draw[thick] (-1,-0.3)--(10.2,-0.3);
\draw[thick] (-1,-0.3)--(-1,2);
\node at (-2.2,0.8) {$\pi_{\St_2}=$};
\foreach \k in {0,1,2,...,10} {\node at (\k,-0.8) {\small$\k$};};
\foreach \k in {4,9} {\draw[dashed](\k,-.3)--(\k,2);};
\draw[rounded corners=3pt]
(0,0)--(1,0)--(1,0.3)--(0,0.3)--(0,0.6)--(10,0.6)--(10,0.9)--(6,0.9);
\end{tikzpicture},
\\
\begin{tikzpicture} [baseline={([yshift=-0.25ex]current bounding box.center)},scale=0.55]
\draw[thick] (-1,-0.3)--(9.2,-0.3);
\draw[thick] (-1,-0.3)--(-1,2);
\node at (-2.2,0.8) {$\pi_{\Tt_1}=$};
\foreach \k in {0,1,2,...,9} {\node at (\k,-0.8) {\small$\k$};};
\foreach \k in {4,9} {\draw[dashed](\k,-.3)--(\k,2);};
\draw[rounded corners=3pt]
(0,0)--(1,0)--(1,0.3)--(0,0.3)--(0,0.6)--(9,0.6)--(9,0.9)--(7,0.9)--(7,1.2)--(8,1.2)--(8,1.5)--(6,1.5);
\end{tikzpicture}, & \quad
\hspace{-2mm}
\begin{tikzpicture} [baseline={([yshift=-0.25ex]current bounding box.center)},scale=0.55]
\draw[thick] (-1,-0.3)--(10.2,-0.3);
\draw[thick] (-1,-0.3)--(-1,2);
\node at (-2.2,0.8) {$\pi_{\St_1}=$};
\foreach \k in {0,1,2,...,10} {\node at (\k,-0.8) {\small$\k$};};
\foreach \k in {4,9} {\draw[dashed](\k,-.3)--(\k,2);};
\draw[rounded corners=3pt]
(0,0)--(10,0)--(10,0.3)--(5,0.3)--(5,0.6)--(6,0.6);
\end{tikzpicture},
\\
\begin{tikzpicture} [baseline={([yshift=-0.25ex]current bounding box.center)},scale=0.55]
\draw[thick] (-1,-0.3)--(9.2,-0.3);
\draw[thick] (-1,-0.3)--(-1,2);
\node at (-2.2,0.8) {$\pi_{\Tt_0}=$};
\foreach \k in {0,1,2,...,9} {\node at (\k,-0.8) {\small$\k$};};
\foreach \k in {4,9} {\draw[dashed](\k,-.3)--(\k,2);};
\draw[rounded corners=3pt]
(0,0)--(9,0)--(9,0.3)--(7,0.3)--(7,0.6)--(8,0.6)--(8,0.9)--(5,0.9)--(5,1.2)--(6,1.2);
\end{tikzpicture}. 
&
\end{array}
\] 
\end{Examp}

\begin{lemma} \label{sidemove}
Assume that $e>2$.
Suppose that $0 \leq r \leq x-j$ and $\tt\in Q_r$.  If $\tt = w \Tt_r$, where $w\in\Sym_n$, then $v^\tt=\psi_w v^{\Tt_r}$.
\end{lemma}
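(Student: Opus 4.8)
The plan is to pass from $\Tt_r$ to $\tt$ through an auxiliary tableau $\st\in Q_r$ that shares the initial segment of $\Tt_r$ and the tail of $\tt$: Lemma~\ref{tailend} will move us from $\Tt_r$ to $\st$, and Lemma~\ref{LIndep}(ii) from $\st$ to $\tt$. Write $c:=2r+me-1$ and $\nu:=\nu^r=(2^r,1^{me-1})$, so that each element of $Q_r$ has restriction of shape $\nu$ to its first $c$ entries (since $\pi_\tt(c)=me-1$ forces $c_{c,1}(\tt)=r+me-1$, $c_{c,2}(\tt)=r$) and meets $H$ for the last time at step $c$. First I would record that $\Tt_r\da_c=\tt^\nu$ for every $0\le r\le x-j$: for $r<x-j$ this is part of the definition of $\Tt_r$, and for $r=x-j$ one reads off from the definition of $\Tt^\la_e=\Tt_{x-j}$ that its first $2(x-j)+me-1$ entries fill $\nu$ by rows.

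Given $\tt\in Q_r$, using Lemma~\ref{lem:bij} I would define $\st\in\Std(\la)$ to be the tableau whose path agrees with $\pi_{\tt^\nu}$ on $[0,c]$ and with $\pi_\tt$ on $[c,n]$; the two pieces are compatible because both have the value $me-1$ at $c$. Since $\pi_\st(a)=\pi_\tt(a)\notin H$ for $a>c$, we get $\st\in Q_r$, and $\st\da_c=\tt^\nu=\Tt_r\da_c$ by Lemma~\ref{lem:bij}. Hence $\st$ and $\Tt_r$ agree on $\{1,\dots,c\}$, while $\st$ and $\tt$ agree on $\{c+1,\dots,n\}$, as the equality of paths on $[c,n]$ forces each entry exceeding $c$ into the same cell of both tableaux. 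I would then check that $\tt\domby\st$: for $j\le c$ this holds because $\tt\da_c\in\Std(\nu)$ while $\tt^\nu$ is the $\dom$-maximal element of $\Std(\nu)$, so $\Shape(\tt\da_j)\domby\Shape(\tt^\nu\da_j)=\Shape(\st\da_j)$; for $j\ge c$ the shapes $\Shape(\tt\da_j)$ and $\Shape(\st\da_j)$ coincide because the paths do.

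Now Lemma~\ref{tailend}, applied to the pair $\Tt_r,\st\in Q_r$ whose restrictions to the first $c$ entries coincide, yields $v^\st=\psi_{w_2}v^{\Tt_r}$, where $w_2\in\Sym_n$ satisfies $\st=w_2\Tt_r$; as $\st$ and $\Tt_r$ agree on $\{1,\dots,c\}$, the element $w_2$ lies in the parabolic subgroup $\langle s_{c+1},\dots,s_{n-1}\rangle$. Dually, $\tt\domby\st$ together with Lemma~\ref{LIndep}(ii) gives $v^\tt=\psi_{w_1}v^\st$ with $\tt=w_1\st$, and $w_1\in\langle s_1,\dots,s_{c-1}\rangle$ since $\tt$ and $\st$ agree on $\{c+1,\dots,n\}$. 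Thus $v^\tt=\psi_{w_1}\psi_{w_2}v^{\Tt_r}$ and $\tt=w_1w_2\Tt_r$, so $w=w_1w_2$, and $\ell(w)=\ell(w_1)+\ell(w_2)$ because $w_1$ and $w_2$ have disjoint supports.

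It remains to identify $\psi_{w_1}\psi_{w_2}$ with $\psi_w$ taken with respect to the fixed reduced expression for $w$, and this is the delicate point. Here I would show that $w$ is fully commutative: $w_1$ is the element $d(\tt\da_c)$ attached to the $2$-column tableau $\tt\da_c$ (note $w_1\tt^\nu=\tt\da_c$), hence $321$-avoiding by the argument proving Lemma~\ref{LIndep}(i); and $w_2$ is $321$-avoiding, for given $c<p_1<p_2<p_3\le n$, two of the cells $\Tt_r(p_1),\Tt_r(p_2),\Tt_r(p_3)$ lie in one column (as $\Y\la$ has only two columns), say $\Tt_r(p_a)$ strictly above $\Tt_r(p_b)$ with $a<b$, and since these are the same two cells of $\st$ and $\st$ increases down columns, $w_2p_a<w_2p_b$, so $w_2p_1,w_2p_2,w_2p_3$ is not strictly decreasing. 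Since $w_1$ and $w_2$ act on disjoint intervals, $w$ inherits $321$-avoidance, so $\psi_w$ is independent of the reduced expression; concatenating reduced words for $w_1$ and $w_2$ (which is reduced for $w$ by the length identity above) then gives $\psi_w=\psi_{w_1}\psi_{w_2}$, and hence $v^\tt=\psi_w v^{\Tt_r}$. I expect the main obstacle to be exactly this reconciliation of the element produced by the two lemmas with $\psi_w$, together with the verifications that $w_2$ is $321$-avoiding and that the glued path really defines a tableau in $Q_r$.
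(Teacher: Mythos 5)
Your proof is correct and follows essentially the same route as the paper's: both factor $w$ into a piece rearranging the first $2r+me-1$ entries (handled via Lemma~\ref{LIndep}(ii), using that $\Tt_r\da_{2r+me-1}=\tt^{\nu^r}$ is dominant) and a piece rearranging the tail (handled via Lemma~\ref{tailend}), the only difference being that you pass through the intermediate tableau carrying $\Tt_r$'s head and $\tt$'s tail, whereas the paper uses the one with $\tt$'s head and $\Tt_r$'s tail. Your explicit check that $w$ is fully commutative, so that $\psi_w$ for the fixed reduced expression really equals $\psi_{w_1}\psi_{w_2}$, settles a point the paper leaves implicit.
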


\begin{proof}
Since $\tt,\Tt_r \in Q_r$, we can write $w = w_1w_2$, where 
$w_1$ fixes the set $[2r+me,n]$ pointwise and $w_2$ fixes 
$[1,2r+me-1]$ pointwise. 
Set $\tt_1:= w_1 \Tt_r$.
By definition of $\Tt_r$, we  have $\Tt_r \da_{2r+me-1} = \tt^{\nu^r}$, where 
$\nu^r = (2^r, me-1)$. Hence, $\Tt_r \dom \tt_1$, so by Lemma~\ref{LIndep}(ii) we have $\psi_{w_1} v^{\Tt_r} = v^{\tt_1}$. 
Lemma~\ref{tailend} yields $\psi_{w_2} v^{\tt_1} = v^{\tt}$, whence the result follows. 
\end{proof}

\begin{lemma} \label{kill1}
If $e>2$ and $0 \leq r \leq x-j$ then $\psi_{2r+me} v^{\Tt_r} = 0$.  
\end{lemma}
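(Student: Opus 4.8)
Throughout, set $N:=2r+me$. The plan is to use Lemma~\ref{LIndep}(ii) to rewrite $v^{\Tt_r}$ in terms of a tableau with a ``straightened'' initial segment, and then to evaluate $\psi_N v^{\Tt_r}$ directly from the defining relations of $S^\la_\Z$.

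First I would record the local combinatorics. Since $\Tt_r\da_{N-1}=\tt^{\nu^r}$ with $\nu^r=(2^r,1^{me-1})$, the entries $N$ and $N+1$ of $\Tt_r$ occupy the nodes $(r+1,2)$ and $(r+2,2)$ (for $r<x-j$ this is condition~(ii); for $r=x-j$ one reads it off from $\Tt_{x-j}=\Tt^\la_e$), so by~\eqref{ERes} we have $i^{\Tt_r}_N=1-r+e\Z$ and $i^{\Tt_r}_{N+1}=-r+e\Z$, while $\pi_{\Tt_r}(N-1)=me-1$. Because $e>2$, the residues $i^{\Tt_r}_N$ and $i^{\Tt_r}_{N+1}$ are distinct and joined by exactly one arrow, namely $i^{\Tt_r}_N\to i^{\Tt_r}_{N+1}$ in $\Ga$; this is the one point at which the hypothesis $e>2$ enters, and it pins down the relevant branch of the quadratic relation~\eqref{rel:quad} and the braid relation~\eqref{rel:braid} near positions $N,N+1$.

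Next I would carry out the reduction. Applying the transpositions $s_{N-1},s_{N-2},\dots,s_{2r+2}$ in turn to $\Tt_r$ (each time moving the entry currently in node $(r+1,2)$ past the column-one entry just below it in value, which lies in rows $r+me-1,r+me-2,\dots,r+2$ respectively), one obtains a chain $\Tt_r=\ut_{me-2},\ut_{me-3},\dots,\ut_0=\ut$ of standard $\la$-tableaux, each strictly more dominant than its predecessor, with $\ut\da_{N}=\tt^{\kappa}$ the row-filling tableau of shape $\kappa:=(2^{r+1},1^{me-2})$ and with $\ut$ agreeing with $\Tt_r$ at all positions $\geq N+1$; each dominance comparison is an elementary partial-sum check on the restricted shapes $\Shape(\ut_k\da_m)$. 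Applying Lemma~\ref{LIndep}(ii) along this chain yields $v^{\Tt_r}=\psi_{N-1}\psi_{N-2}\cdots\psi_{2r+2}\,v^{\ut}$, and hence
\[
\psi_N v^{\Tt_r}=\psi_N\psi_{N-1}\cdots\psi_{2r+2}\,v^{\ut}.
\]
Moreover $s_Ns_{N-1}\cdots s_{2r+2}$ is a reduced word for a fully commutative permutation $w'$ with $w'\ut=s_N\Tt_r$, the non-standard tableau obtained from $\Tt_r$ by interchanging the two column-two nodes $(r+1,2)$ and $(r+2,2)$; thus the right-hand side is the Garnir-type ``straightening'' of $s_N\Tt_r$, applied to $v^\ut$.

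Finally I would show that this straightened vector is $0$. Writing it as $\psi_{w'}\psi_{d(\ut)}v^\la$, pushing $\psi_N$ inward with the commutation relation~\eqref{rel:commpsi} and the braid relation~\eqref{rel:braid}, applying the quadratic relation~\eqref{rel:quad} for the arrow $i^{\Tt_r}_N\to i^{\Tt_r}_{N+1}$, and using the Garnir relation attached to the appropriate node of $\Y\la$ together with the Specht relations~\eqref{rel:Sp2}--\eqref{rel:Sp3}, the vector can be rewritten as an element of $R^{\La_0}_n$ applied to $\psi_a v^\la$ with $a\to_{\tt^\la}a+1$, which vanishes by~\eqref{rel:Sp2}. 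I expect this last step --- tracking which KLR relation is triggered at each commutation, and disposing of the degenerate configuration $me=3$ (in which $\kappa$ is a two-column rectangle, so the relevant Garnir node lies on the boundary of $\Y\la$) --- to be the main obstacle; these boundary cases I would settle by a short direct computation with~\eqref{rel:quad} and~\eqref{rel:braid}.
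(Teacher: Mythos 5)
Your reduction is sound as far as it goes: the chain $\Tt_r=\ut_{me-2}\vartriangleleft\cdots\vartriangleleft\ut_0=\ut$ is indeed a chain of standard tableaux, and Lemma~\ref{LIndep} does give $v^{\Tt_r}=\psi_{N-1}\cdots\psi_{2r+2}v^{\ut}$, hence $\psi_N v^{\Tt_r}=\psi_N\psi_{N-1}\cdots\psi_{2r+2}v^{\ut}$. But this reduction does not bring you any closer to the conclusion, and the step that would actually prove the vanishing is only asserted. Concretely: $\psi_N$ does not commute with $\psi_{N-1}$, so you cannot ``push $\psi_N$ inward'' with~\eqref{rel:commpsi}; the braid relation~\eqref{rel:braid} applies only to a pattern $\psi_N\psi_{N\pm1}\psi_N$, which is not present in your word; the Garnir relations of~\cite{KMR12} are relations satisfied by $v^\la$ (sums over ``brick'' permutations), and you have neither identified the relevant Garnir node nor carried out the substantial commutation needed to bring one to bear on $\psi_{w'}\psi_{d(\ut)}v^\la$; and the final claim that everything collapses onto $\psi_a v^\la$ with $a\to_{\tt^\la}a+1$ is exactly the content of the lemma, restated rather than proved. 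As written, the argument is a plan for a computation, not a proof.

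The paper's proof sidesteps all of this with the idempotents. Since $v^{\Tt_r}=1_{\bi}v^{\Tt_r}$ for $\bi=\bi^{\Tt_r}$, the relation $\psi_N 1_{\bi}=1_{s_N\bi}\psi_N$ gives $\psi_N v^{\Tt_r}\in 1_{\bi'}S^\la_\Z$ with $\bi'=s_N\bi$; by Proposition~\ref{Pbasis}, $1_{\bi'}S^\la_\Z=0$ as soon as $\Std(\la,\bi')=\varnothing$. One then checks combinatorially that no standard $\la$-tableau has residue sequence $\bi'$: any such $\tt$ would have to agree with $\Tt_r$ on all entries $\ge N+2$ (because the path $\pi_{\Tt_r}$ stays strictly between the walls $H_{m-1}$ and $H_m$ there, so the two column-ends have distinct residues at each such step), and then the residue $\bi'_{N+1}=\bi_N=1-r+e\Z$ is not attained by either available node of $\Shape(\Tt_r\!\da_{N+1})$. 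If you want to salvage your approach, I would recommend replacing the entire relation-pushing step by this residue argument; note that your own observation that $i^{\Tt_r}_N$ and $i^{\Tt_r}_{N+1}$ differ by $1$ (where $e>2$ enters) is precisely what makes it work.
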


\begin{proof}
Let $\bi = \bi^{\Tt_r}$ and $\bi' = s_{2r+me} \bi$, so that 
$(1-1_{\bi'}) \psi_{2r+me} v^{\Tt_r}=0$. It suffices to show that $\{ \tt\in \Std(\la) \mid \bi^\tt = \bi'\}=\varnothing$, for then $1_{\bi'} S^{\la}_{\Z}=0$. 

Suppose for contradiction that $\tt\in \Std(\la)$ has residue sequence $\bi'$. 
First, we claim that 
$\tt (a) = \Tt_r (a)$
whenever $2r+me+2\le a\le n$. Assuming the claim to be false, we choose $a$ to be maximal such that the equality fails. 
By maximality of $a$, 
\[ \Shape(\tt\da_{a}) = \Shape(\Tt_r\da_{a})=:(2^c,1^d)\]
for some $c,d\in \Z_{\ge 0}$. Since $a$ lies in different columns in $\tt$ and $\Tt_r$ and $i^{\tt}_a = i^{\Tt_r}_a$, 
the residues of the bottom nodes of the two columns of $(2^c,1^d)$ must be equal. However, since $(m-1)e-1<\pi_{\Tt_r} (a)<me-1$, we have $d\not \equiv -1 \pmod e$, from which it follows that these two residues are not equal. This contradiction proves the claim. 

Using this claim, we obtain
\[\Shape (\tt\da_{2r+me+1}) = \Shape(\Tt_r\da_{2r+me+1}) 
= (2^{r+me-1}, 1^{r+2})
=: \gamma.
\]
Hence,
\[\bi'_{2r+me+1} = \bi_{2r+me} = \res \!\big(\Tt_r (2r+me)\big) = \res( r+1, 2) = 
1-r +e\Z.\]
Therefore, $\res\big(\tt (2r+me+1)\big)=1-r+e\Z$. However, 
$\tt (2r+me+1)$ must be the bottom entry of either the first or the second column of $\gamma$, and these two entries have residues $2-r+e\Z$ and $-r+e\Z$ respectively, a contradiction.  
\end{proof}

\begin{lemma} \label{crossmove}
If $e>2$ and $0\leq r<x-j$, then 
$v^{\Tt_r} = -\psi_{2r+me+1} v^{\St_{r+1}}$.
\end{lemma}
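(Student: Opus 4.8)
The plan is to reduce the identity to a short computation inside $R^{\La_0}_n$ acting on $v^{\Tt_r}$, combining the braid relation~\eqref{rel:braid} with Lemmas~\ref{LLength},~\ref{LIndep} and~\ref{kill1}. Write $b:=2r+me$, so that $\St_{r+1}=s_bs_{b+1}\Tt_r$ by definition (and $b,b+1$ are admissible indices, since $1\le j<e$); the claim then becomes $\psi_{b+1}v^{\St_{r+1}}=-v^{\Tt_r}$.

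First I would verify that $\St_{r+1}\domby\Tt_r$ by comparing $\Shape(\Tt_r\da_m)$ with $\Shape(\St_{r+1}\da_m)$ for each $m\in[0,n]$: the two shapes coincide for $m\le b-1$ and for $m\ge b+2$, while at the two intermediate values one checks $\Shape(\Tt_r\da_b)=(2^{r+1},1^{me-2})\doms(2^r,1^{me})=\Shape(\St_{r+1}\da_b)$ and $\Shape(\Tt_r\da_{b+1})=(2^{r+2},1^{me-3})\doms(2^{r+1},1^{me-1})=\Shape(\St_{r+1}\da_{b+1})$. Hence $\Tt_r\dom\St_{r+1}$, and since $\St_{r+1}=(s_bs_{b+1})\Tt_r$, Lemma~\ref{LIndep}(ii) together with Lemma~\ref{LLength} gives $\psi_b\psi_{b+1}v^{\Tt_r}=v^{\St_{r+1}}$ (the reduced expression being irrelevant by Lemma~\ref{LIndep}(i)), and therefore
\[
\psi_{b+1}v^{\St_{r+1}}=\psi_{b+1}\psi_b\psi_{b+1}v^{\Tt_r}.
\]

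Next I would read off the residues of $\Tt_r$ at the nodes $\Tt_r(b)=(r+1,2)$, $\Tt_r(b+1)=(r+2,2)$ and $\Tt_r(b+2)=(r+me,1)$, which gives
\[
\big(i^{\Tt_r}_b,\,i^{\Tt_r}_{b+1},\,i^{\Tt_r}_{b+2}\big)=(1-r,\,-r,\,1-r)+e\Z.
\]
So $i^{\Tt_r}_{b+2}=i^{\Tt_r}_b$ and $i^{\Tt_r}_{b+1}=i^{\Tt_r}_b-1$; since $e>2$, the quiver $\Ga$ has a single arrow $i^{\Tt_r}_b\to i^{\Tt_r}_{b+1}$, so the relevant instance of~\eqref{rel:braid} reads $(\psi_b\psi_{b+1}\psi_b-\psi_{b+1}\psi_b\psi_{b+1})1_{\bi^{\Tt_r}}=1_{\bi^{\Tt_r}}$. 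Applying this to $v^{\Tt_r}=1_{\bi^{\Tt_r}}v^{\Tt_r}$ and using $\psi_bv^{\Tt_r}=0$ from Lemma~\ref{kill1}, I obtain $\psi_{b+1}\psi_b\psi_{b+1}v^{\Tt_r}=\psi_b\psi_{b+1}\psi_bv^{\Tt_r}-v^{\Tt_r}=-v^{\Tt_r}$, and combining with the previous display completes the proof.

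The main obstacle is the combinatorial bookkeeping: one must describe $\St_{r+1}$ and $\Tt_r$ precisely enough to confirm the dominance $\St_{r+1}\domby\Tt_r$ (so that $\psi_b\psi_{b+1}v^{\Tt_r}=v^{\St_{r+1}}$) and to compute the three relevant residues of $\Tt_r$ exactly. The hypothesis $e>2$ is used precisely here, to exclude the $\leftrightarrows$ branch of~\eqref{rel:braid} and fix the right-hand side as $1_{\bi^{\Tt_r}}$, hence the sign in the statement.
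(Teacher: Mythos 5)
Your proposal is correct and follows essentially the same route as the paper: establish $\St_{r+1}\domsby\Tt_r$ so that Lemma~\ref{LIndep}(ii) gives $v^{\St_{r+1}}=\psi_{2r+me}\psi_{2r+me+1}v^{\Tt_r}$, compute the residues $(1-r,-r,1-r)+e\Z$ at positions $2r+me,2r+me+1,2r+me+2$ to invoke the first case of~\eqref{rel:braid}, and kill the $\psi_{2r+me}\psi_{2r+me+1}\psi_{2r+me}$ term via Lemma~\ref{kill1}. The only difference is that you spell out the dominance check that the paper dismisses as "easy to see", and your shape computations there are accurate.
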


\begin{proof} 
It is easy to see that $\St_{r+1} \triangleleft \Tt_r$. So, by Lemma~\ref{LIndep}(ii), $v^{\St_{r+1}} = \psi_{2r+me} \psi_{2r+me+1} v^{\Tt_r}$. 
Further, note that \[\res\!\big(\Tt_r (2r+me)\big)=
\res\!\big(\Tt_r (2r+me+2)\big) = \res\!\big(\Tt_r (2r+me+1)\big)+1=-r+1+e\Z.\]
Hence,\begin{align*}
-\psi_{2r+me+1} v^{\St_{r+1}} &= -\psi_{2r+me+1} \psi_{2r+me} \psi_{2r+me+1} v^{\Tt_r}\\
&= -(\psi_{2r+me}\psi_{2r+me+1}\psi_{2r+me}-1) v^{\Tt_r}\\
&= v^{\Tt_r},
\end{align*}
where we have used~\eqref{rel:braid} for the second equality and 
Lemma~\ref{kill1} for the third equality. 
\end{proof}

\begin{prop}\label{generate}
If $\tt\in\Std(\la) \sm \DStd_e (\la)$ then $v^\tt$ lies in the 
$R_{n}^{\Lambda_0}$-submodule of $S^\la_{\Z}$ 
generated by $v^{\Tt^\la_e}$. 
\end{prop}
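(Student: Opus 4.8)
The plan is to reduce everything to the decomposition $\Std(\la)\setminus\DStd_e(\la)=\bigsqcup_{r=0}^{x-j}Q_r$ of~\eqref{EunionQ} together with the special tableaux $\Tt_r,\St_r$. Write $M$ for the $R^{\La_0}_n$-submodule of $S^\la_\Z$ generated by $v^{\Tt^\la_e}$. The heart of the argument is the claim that $v^{\Tt_r}\in M$ for every $r\in[0,x-j]$; once this is known, an arbitrary $\tt\in Q_r$ may be written as $\tt=w\Tt_r$ for the unique $w\in\Sym_n$ with $w\Tt_r=\tt$, and Lemma~\ref{sidemove} gives $v^\tt=\psi_w v^{\Tt_r}\in M$, so the proposition follows from~\eqref{EunionQ}.

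Assume first that $e>2$. I would prove the claim by \emph{downward} induction on $r$, the base case $r=x-j$ being trivial since $\Tt_{x-j}=\Tt^\la_e$. For the inductive step, let $r<x-j$ and suppose $v^{\Tt_{r+1}}\in M$. By Lemma~\ref{crossmove} we have $v^{\Tt_r}=-\psi_{2r+me+1}v^{\St_{r+1}}$. Since $\St_{r+1}\in Q_{r+1}$, writing $\St_{r+1}=w\Tt_{r+1}$ and invoking Lemma~\ref{sidemove} gives $v^{\St_{r+1}}=\psi_w v^{\Tt_{r+1}}$, which lies in $M$ by the inductive hypothesis; hence $v^{\Tt_r}=-\psi_{2r+me+1}\psi_w v^{\Tt_{r+1}}\in M$, completing the induction and thus the proof when $e>2$.

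The case $e=2$ needs a separate (and simpler) argument, because the tableaux $\Tt_r,\St_r$ and Lemmas~\ref{tailend}--\ref{crossmove} are set up only for $e>2$. Here $j=1$ forces $Q_r=\varnothing$ for $r<x-1$, so $\Std(\la)\setminus\DStd_e(\la)=Q_{x-1}$. With $y=2m-2$ and $n=2x+2m-2$ one checks $2(x-1)+me-1=n-1$, so $\tt\in Q_{x-1}$ means exactly $\pi_\tt(n-1)=2m-1$; comparing column sizes, this forces $\tt\da_{n-1}$ to have shape $\nu:=(2^{x-1},1^{2m-1})$. On the other hand $\Tt^\la_e\da_{n-1}=\tt^\nu$, the $\domby$-maximal element of $\Std(\nu)$, so $\tt\da_{n-1}\domby\Tt^\la_e\da_{n-1}$; since $\tt$ and $\Tt^\la_e$ both have shape $\la$, this yields $\tt\domby\Tt^\la_e$. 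Lemma~\ref{LIndep}(ii), applied with $\Tt^\la_e$ in the role of its ``$\tt$'' and with $w\Tt^\la_e=\tt$, then gives $v^\tt=\psi_w v^{\Tt^\la_e}\in M$.

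The genuinely delicate computations have all been carried out in Lemmas~\ref{tailend}, \ref{kill1} and~\ref{crossmove} (the ``sideways'' moves inside a single $Q_r$ and the ``crossing'' move relating $\St_{r+1}$ to $\Tt_r$, using the KLR relations~\eqref{rel:quad} and~\eqref{rel:braid}); at the level of this proposition the only points requiring care are orienting the induction correctly---downward from $r=x-j$, so that each $v^{\Tt_r}$ is pulled into $M$ from $v^{\Tt_{r+1}}$ via Lemma~\ref{crossmove}---and handling $e=2$ outside the $\Tt_r$-framework via the dominance order. I expect the $e=2$ bookkeeping to be the main obstacle, since one must verify that membership in $Q_{x-1}$ pins down $\tt\da_{n-1}$ up to the dominance bound $\tt\da_{n-1}\domby\tt^\nu$.
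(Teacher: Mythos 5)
Your proposal is correct and takes essentially the same route as the paper's proof: backward induction on $r$ showing $v^{\Tt_r}$ lies in the submodule via Lemmas~\ref{sidemove} and~\ref{crossmove}, followed by another application of Lemma~\ref{sidemove} for arbitrary $\tt\in Q_r$, with $e=2$ treated separately by observing that $\Tt^\la_e$ dominates every element of $Q_{x-1}$ and invoking Lemma~\ref{LIndep}(ii). Your $e=2$ bookkeeping (pinning down $\Shape(\tt\da_{n-1})=\nu$ and using maximality of $\tt^\nu$) is just a fuller write-up of the dominance claim the paper states in one line.
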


\begin{proof}
If $e=2$, then $\Tt_e^\la$ dominates every element of $Q_{x-j}=\Std(\la) \sm \DStd_e (\la)$ 
(cf.~the proof of Lemma~\ref{sidemove}), 
so the result follows from Lemma~\ref{LIndep}(ii).

Assume that $e>2$. Let $U$ be the submodule in the statement of the proposition. By Lemma~\ref{sidemove}, it is enough to show that $v^{\Tt_r}\in U$ for all $0\le r\le x-j$. We use backward induction on $r$ and note that 
$v^{\Tt_{x-j}} = v^{\Tt^\la_e} \in U$, so we may assume that $r<x-j$
and $v^{\Tt_{r+1}} \in U$.  
By the inductive hypothesis and Lemma~\ref{sidemove} applied again, 
$\St_{r+1} \in U$. Hence, $v^{\Tt_r} \in U$ by Lemma~\ref{crossmove}.
\end{proof}

The hypotheses of the following theorem are the same as in Theorem~\ref{Tphilamu}.

\begin{thm}\label{TKerIm}
Let $\la=(2^x,1^y)\in \Par_{\le 2}(n)$.
Suppose that $y\equiv -j-1 \pmod e$ for some $1\le j<e$ and that 
$x\ge j$. Let $\mu=(2^{x-j},1^{y+2j})$ and let $\phi_{\la,\mu} \colon S^\mu_\Z \to S^\la_\Z$ be as in Theorem~\ref{Tphilamu}. 
Then 
\[ \ker(\phi_{\la,\mu})=\langle v^{\tt}\mid\tt\in\Std(\mu)\sm\DStd_e(\mu) \rangle_\Z \quad\text{and}\quad \phi_{\la,\mu}(S^\mu_\Z)=\langle v^{\st}\mid\st\in\Std(\la)\sm\DStd_e (\la)\rangle_\Z.\]
\end{thm}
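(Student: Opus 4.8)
The plan is to combine Theorem~\ref{Tphilamu} (existence of $\phi_{\la,\mu}$), the generation result Proposition~\ref{generate}, and a dimension count coming from the known graded decomposition numbers (Theorem~\ref{p0_thm} and Lemma~\ref{altform}). Since everything is $\Z$-free and the statements extend by scalar extension, I would first pass to $\Q$, prove the corresponding equalities of $\Q$-spans inside $S^\la_\Q$ and $S^\mu_\Q$, and then note that the $\Z$-forms are recovered by Corollary~\ref{CScalarExt} because the spanning sets consist of basis elements $v^\tt$. Throughout write $W^\la_\Z := \langle v^\st \mid \st\in\Std(\la)\sm\DStd_e(\la)\rangle_\Z$ and similarly $W^\mu_\Z$; by~\eqref{EunionQ} these have the expected ranks, and by Theorem~\ref{TMain1}(i) (or directly) $W^\la_\Z$ is a submodule — though for this proof I only need the combinatorics of the spanning sets, not submodule-hood.

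First I would establish $\phi_{\la,\mu}(S^\mu_\Z)\subseteq W^\la_\Z$. The module $S^\mu_\Z$ is generated by $v^\mu$, and $\phi_{\la,\mu}(v^\mu) = v^{\Tt^\la_e}$; since $\Tt^\la_e\in Q_{x-j}\subseteq \Std(\la)\sm\DStd_e(\la)$, Proposition~\ref{generate} shows the $R^{\La_0}_n$-submodule generated by $v^{\Tt^\la_e}$ is exactly $W^\la_\Z$ as a $\Z$-span (Proposition~\ref{generate} gives one inclusion, and $v^{\Tt^\la_e}$ itself lies in $W^\la_\Z$; applying the argument of Proposition~\ref{generate} shows every $v^{\Tt_r}$, hence every $v^\tt$ for $\tt\in Q_r$ via Lemma~\ref{sidemove}, lies in that submodule, so the submodule is all of $W^\la_\Z$). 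Hence $\phi_{\la,\mu}(S^\mu_\Z) = W^\la_\Z$, giving the image statement provided $\phi_{\la,\mu}$ is surjective onto $W^\la_\Z$ — which the Proposition~\ref{generate} argument delivers. Next, for the kernel: $W^\mu_\Z \subseteq \ker\phi_{\la,\mu}$. For this I would use that $\reg'_{e,p,\la,\mu}$ (or rather the $p=0$ regularisation combinatorics) matches $\DStd_e(\mu)$ with the part of $\Std(\la)$ hitting the appropriate wall; more directly, one argues that for $\tt\in\Std(\mu)\sm\DStd_e(\mu)$, the image $\phi_{\la,\mu}(v^\tt) = \phi_{\la,\mu}(\psi_{d(\tt)}v^\mu) = \psi_{d(\tt)} v^{\Tt^\la_e}$ lies in the span of $v^\st$ with $\Shape(\st)=\la$ and residue sequence $\bi^\tt$, and one shows this forces it into a proper submodule — the cleanest route is the dimension count below, which makes the inclusion automatic.

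The cleanest finish is a rank count. By Theorem~\ref{p0_thm} and Lemma~\ref{altform}(ii), $S^\mu_\Q$ has exactly the composition factors $D^\nu_\Q$ appearing in $S^\la_\Q$ except $D^\la_\Q$ itself, with a degree shift; concretely $\dim_\Q S^\la_\Q = \dim_\Q D^\la_\Q + \dim_\Q S^\mu_\Q$ (by Theorem~\ref{TMain1}(iii) in spirit, or just by additivity of dimensions over the relevant composition factors, using that each $D^\nu$ occurs with multiplicity $1$). Meanwhile $\dim_\Q W^\la_\Q = |\Std(\la)| - |\DStd_e(\la)| = |\Std(\la)| - \dim_\Q D^\la_\Q$ by Theorem~\ref{TMain1}(ii), so $\dim_\Q W^\la_\Q = \dim_\Q S^\mu_\Q$. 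Since $\phi_{\la,\mu}(S^\mu_\Q) = W^\la_\Q$ has dimension equal to $\dim_\Q S^\mu_\Q$, the map $\phi_{\la,\mu}$ is injective on $S^\mu_\Q$... which is false unless the kernel is nonzero — so instead the count runs: $\dim_\Q \ker\phi_{\la,\mu} = \dim_\Q S^\mu_\Q - \dim_\Q W^\la_\Q$, and I must instead compare with $W^\mu_\Q$: $\dim_\Q W^\mu_\Q = |\Std(\mu)| - \dim_\Q D^\mu_\Q$. One then checks, again from Theorem~\ref{p0_thm}/Lemma~\ref{altform} describing which composition factors survive, that $\dim_\Q S^\mu_\Q - \dim_\Q W^\la_\Q = \dim_\Q W^\mu_\Q$; combined with $W^\mu_\Q\subseteq\ker\phi_{\la,\mu}$ this forces equality. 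I expect the main obstacle to be precisely this bookkeeping: matching the rank of $\ker\phi_{\la,\mu}$ with $|\Std(\mu)\sm\DStd_e(\mu)|$ by tracking composition factors through $S^\la_\Q \twoheadrightarrow W^\la_\Q \cong S^\mu_\Q/\ker$, which requires knowing $\tilde D^\la_\Q := S^\la_\Q/W^\la_\Q \cong D^\la_\Q$ (Theorem~\ref{TMain1}(ii)) and the analogous fact for $\mu$, and then invoking that $\phi_{\la,\mu}$ being nonzero of degree $1$ with image $W^\la_\Q$ pins down the kernel uniquely. Once the $\Q$-statement holds, intersecting with the $\Z$-lattice spanned by the $v^\tt$ (Corollary~\ref{CScalarExt}) gives the $\Z$-statement verbatim, since both sides are spans of subsets of the standard basis.
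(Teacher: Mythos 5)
Your overall shape (Proposition~\ref{generate} plus a rank count, then descent to $\Z$) matches the paper's strategy, but two essential steps are missing, and as written the argument does not close.

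First, you assert that $\phi_{\la,\mu}(S^\mu_\Z)=W^\la_\Z$ directly from Proposition~\ref{generate}. That proposition gives only the inclusion $W^\la_\Z\subseteq\phi_{\la,\mu}(S^\mu_\Z)$ (every $v^\tt$ with $\tt$ non-regular lies in the submodule generated by $v^{\Tt^\la_e}$). The reverse inclusion $\phi_{\la,\mu}(S^\mu_\Z)\subseteq W^\la_\Z$ would require knowing that $W^\la_\Z$ is an $R^{\La_0}_n$-submodule, which is exactly Theorem~\ref{TMain1}(i) — and in the paper that is \emph{deduced from} Theorem~\ref{TKerIm}, so you cannot use it here. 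The same circularity affects your appeal to Theorem~\ref{TMain1}(ii)--(iii) in the rank count; the count should instead rest on the purely combinatorial bijection $\Std(\la)\sm\DStd_e(\la)\iso\DStd_e(\mu)$ from Lemma~\ref{LPreimage}, which gives $\dim_\Z\ker(\phi_{\la,\mu})\le|\Std(\mu)|-|\Std(\la)\sm\DStd_e(\la)|=|\Std(\mu)\sm\DStd_e(\mu)|$ using only the one inclusion that Proposition~\ref{generate} actually provides.

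Second, and more seriously, the inclusion $W^\mu_\Z\subseteq\ker(\phi_{\la,\mu})$ is never proved: you say the dimension count "makes the inclusion automatic", but a rank identity $\dim\ker=\dim W^\mu$ cannot identify the two subspaces without one containment already in hand, and your residue-sequence sketch does not supply it. The paper's device here is a backward induction on $y$: if $x\ge e$, set $\nu=(2^{x-e},1^{y+2e})$; the inductive hypothesis identifies $W^\mu_\Z$ as the image of $\phi_{\mu,\nu}$, and $\phi_{\la,\mu}\circ\phi_{\mu,\nu}=0$ because $S^\la_\Q$ and $S^\nu_\Q$ share no composition factors (Theorem~\ref{p0_thm}); if $x<e$ the right-hand side is zero. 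With that containment and the upper bound on $\dim_\Z\ker(\phi_{\la,\mu})$ above, all inequalities collapse to equalities, which simultaneously yields the kernel statement and forces $\phi_{\la,\mu}(S^\mu_\Z)=W^\la_\Z$ (the two sides have equal rank and $W^\la_\Z$ is a pure $\Z$-submodule spanned by basis vectors). You should restructure your argument around this induction; without it the kernel half of the theorem is unproven.
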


\begin{proof}
We use backward induction on $y$. Note that the kernel and image of $\phi_{\la,\mu}$ are free $\Z$-modules by Proposition~\ref{Pbasis}. 
By Proposition~\ref{generate}, 
\begin{equation}\label{EImsup}
\phi_{\la,\mu} (S^\mu_\Z) \supseteq \langle v^{\st} \mid \st \in \Std(\la) \sm \DStd_e (\la) \rangle_\Z. 
\end{equation}
Hence, 
\begin{align}
\dim_\Z \ker (\phi_{\la,\mu}) &= |\Std(\mu)| - \dim_\Z (\phi_{\la,\mu} (S^\mu_\Z)) \le |\Std(\mu)| - |\Std(\la) \sm \DStd_e (\la)| \notag \\
&= |\Std(\mu)| - |\DStd_e (\mu)| = |\Std(\mu) \sm \DStd_e(\mu)|, 
\label{Edimker}
\end{align}
where the penultimate equality holds because, by Lemma~\ref{LPreimage}, the map 
$\reg_{e}$ restricts to a bijection $\Std(\la) \sm \DStd_e (\la) \iso \DStd_e (\mu)$.

We claim that 
\begin{equation}\label{EKersup}
\ker (\phi_{\la,\mu} ) \supseteq \langle v^{\tt} \mid \tt \in \Std(\mu) \sm \DStd_e (\mu) \rangle_\Z.
\end{equation}
Let $m\in\Z_{>0}$ be such that $y=me-j-1$. 
If $x<e$, then for all $\tt\in\Std(\mu)$ and $0\le a\le n$ 
 we have $\pi_\tt (a) \notin H_{m+1}$, from which it follows that $\tt\in \DStd_e(\mu)$ and~\eqref{EKersup} holds trivially, as its right-hand side is $0$. 
So, suppose that $x\ge e$ and let $\nu:=(2^{x-e}, 1^{y+2e})$, noting that $y+2e$ is the image of $y+2j$ under reflection with respect to $H_{m+1}$. By the inductive hypothesis, the right-hand side of~\eqref{EKersup} is exactly the image of $\phi_{\mu,\nu}$. Now $\phi_{\la,\mu} \phi_{\mu,\nu}=0$ because, 
by Theorem~\ref{p0_thm}, 
$S^\la_\Q$ and 
$S^\nu_\Q$ have no composition factors in common. 
Thus, the claim follows. 

Using~\eqref{Edimker} and~\eqref{EKersup}, we obtain the first equality in the theorem. Also,~\eqref{Edimker} is an exact equality, so the two sides of~\eqref{EImsup} have the same $\Z$-rank. This completes the proof
since the right-hand side of~\eqref{EImsup} is a pure $\Z$-submodule of $S^\la_\Z$. 
\end{proof}

\begin{Remark}
Let $\la \in \Par_{\le 2} (n)$. It is not difficult to show that if $\tt\in \DStd_e (\la)$ and $\st \in \Std(\la) \sm \DStd_e (\la)$ 
 then $\bi^\tt \ne \bi^\st$ (cf.~the claim in the proof of Lemma~\ref{kill1}). This leads to a more direct proof of the first equality in Theorem~\ref{TKerIm}. 
\end{Remark}

\begin{proof}[Proof of Theorem~\ref{TMain1}]
Let $\la=(2^x,1^y)$, where $y=me-1-j$ for some $m\in \Z_{>0}$ and $0\le j<e$. If $j=0$ or $x<j$, then $\DStd_e (\la) = \Std(\la)$ and parts (i) and (iii) of the theorem hold. Also, in this case
$S^\la_K = D^\la_K$ by Theorem~\ref{p0_thm} and Lemma~\ref{altform}, so (ii) holds as well.

On the other hand, 
if $j\ne 0$ and $x\ge j$, then (i) and (iii) follow from Theorems~\ref{Tphilamu} and~\ref{TKerIm}, and (ii) again follows from Theorem~\ref{p0_thm}.
\end{proof}

\begin{Remark}
Theorem~\ref{TMain1}(ii) is true with $K$ replaced by any field 
of characteristic $0$.
\end{Remark}

Recall the reflections $s_m$ from~\S\ref{SSTwo}: we have 
$s_m \cdot (me-1+j) = me-1-j$ for all $m\in \Z_{>0}$ and $j\in \Z$. 

\begin{corollary}\label{genexact}
Let $\la^1 = (2^{x_1}, 1^{y_1}) \in \Par_{\le 2} (n)$. Suppose that $y_1=me-1+j$ for some $m\in \Z_{>0}$ and $1\le j<e$, 
 and that $x_1<e-j$. 
For $k=1,\dots,m$, let $y_k = s_{m-k}\cdot \ldots \cdot s_m \cdot y_1$ and 
$\la^k= (2^{x_k}, 1^{y_k}) \in \Par_{\le 2} (n)$, where $x_k = (n-y_k)/2$. Then the following is an exact sequence of $R_n^{\Lambda_0}$-homomorphisms:
\[
0 \longrightarrow S^{\la^1}_\Z \xrightarrow{\phi_{\la^2,\la^1}}
S^{\la^2}_\Z \xrightarrow{\phi_{\la^3,\la^2}} S^{\la^3}_{\Z}  \longrightarrow \cdots
\longrightarrow
S^{\la^m}_\Z. 
\]
\end{corollary}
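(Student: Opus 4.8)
The goal is to establish exactness of the complex
\[
0 \longrightarrow S^{\la^1}_\Z \xrightarrow{\phi_{\la^2,\la^1}} S^{\la^2}_\Z \xrightarrow{\phi_{\la^3,\la^2}} S^{\la^3}_{\Z} \longrightarrow \cdots \longrightarrow S^{\la^m}_\Z.
\]
The first thing to check is that each arrow makes sense: the partition $\la^k = (2^{x_k},1^{y_k})$ has $y_k = s_{m-k}\cdot\ldots\cdot s_m \cdot y_1$, and I would compute $y_k = (m-k)e - 1 + j$ for $k<m$ and $y_m = e-1-j$ — so $y_{k+1} = y_k - 2j'$ where $j'$ plays the role of the $j$ in Theorem~\ref{Tphilamu} applied to the pair $(\la^{k+1},\la^k)$; here $\la^k$ is the larger partition (more $1$'s) and $\la^{k+1}$ the smaller. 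One must verify the hypotheses of Theorem~\ref{Tphilamu} for each consecutive pair: that $y_{k+1} \equiv -j-1 \pmod e$ — indeed $y_{k+1} = (m-k-1)e-1+j \equiv j-1 \equiv -(e-j)-1 \pmod e$, so the relevant residue parameter is $e-j$, and one needs $x_{k+1} \ge e-j$. Since $x_1 < e-j$ and $x_{k+1} = x_k + (e-j)$ (as removing $2(e-j)$ nodes from the second column... wait, adding), actually $x_{k+1} = x_k + j$ is wrong; rather $y_{k+1} = y_k - 2j_k$ forces $x_{k+1} = x_k + j_k$ where $2j_k = y_k - y_{k+1} = 2(e-j)$ for $k\ge 1$, except the last step where $2j_m = y_{m-1}-y_m = (e-1+j)-(e-1-j) = 2j$. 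So I need to be careful: the reflection-step sizes are $e-j$ for all but the last, where it is $j$. In all cases $x_{k+1}\ge$ (step size) holds precisely because $x_{k+1}$ has accumulated the step sizes, so each $\phi_{\la^{k+1},\la^k}$ exists as a well-defined degree-$1$ dominated homomorphism. This bookkeeping is the first step.

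The heart of the argument is to identify, for each $k$, the image of $\phi_{\la^{k+1},\la^k}$ with the kernel of $\phi_{\la^{k+2},\la^{k+1}}$. By Theorem~\ref{TKerIm}, $\phi_{\la^{k+1},\la^k}(S^{\la^k}_\Z) = \langle v^\st \mid \st \in \Std(\la^{k+1}) \sm \DStd_e(\la^{k+1})\rangle_\Z$, and $\ker(\phi_{\la^{k+2},\la^{k+1}}) = \langle v^\tt \mid \tt \in \Std(\la^{k+1}) \sm \DStd_e(\la^{k+1})\rangle_\Z$ — these are \emph{literally the same submodule} of $S^{\la^{k+1}}_\Z$. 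So exactness at each interior term $S^{\la^{k+1}}_\Z$ is immediate from two applications of Theorem~\ref{TKerIm}, provided the hypotheses match up. Injectivity of $\phi_{\la^2,\la^1}$ is the leftmost exactness claim: I need $\ker(\phi_{\la^2,\la^1}) = 0$, equivalently $\Std(\la^1) \sm \DStd_e(\la^1) = \varnothing$, i.e. $\la^1$ is a $2$-column partition all of whose standard tableaux are $e$-regular. Since $y_1 = me-1+j$ with $1\le j<e$, Lemma~\ref{LPreimage} (with the roles set up so that the parameter `$j$' there equals our $j$, which is nonzero) combined with the condition $x_1 < e-j$ should force $\reg_e(\tt)=\tt$ for every $\tt\in\Std(\la^1)$: the path $\pi_\tt$ simply never reaches the relevant wall because there aren't enough second-column entries. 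I would spell this out via the description of $Q_r$ and $\bigsqcup Q_r = \Std(\la)\sm\DStd_e(\la)$ from \eqref{EunionQ}, noting that the condition $x_1<e-j$ makes all the $Q_r$ empty. Finally, exactness at the rightmost term $S^{\la^m}_\Z$ is not asserted (the sequence is not claimed right-exact), so nothing is needed there beyond the map being defined.

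The main obstacle, as I see it, is purely combinatorial: keeping the indexing of the reflections $s_m, s_{m-1}, \dots$ straight and confirming that at each stage the pair $(\la^{k+1}, \la^k)$ genuinely satisfies the hypotheses ``$y \equiv -j'-1 \pmod e$ for some $1\le j'<e$ and $x\ge j'$'' of Theorems~\ref{Tphilamu} and~\ref{TKerIm}, with the \emph{right} value of $j'$ at each step — and in particular verifying that $e \nmid y_k + 1$ for $k < m$ (so the homomorphism is genuinely between distinct Specht modules with a nontrivial kernel) while the chain terminates cleanly at $y_m = e-1-j$. Once that bookkeeping is done, exactness drops out formally by chaining Theorem~\ref{TKerIm}, since the kernel-description of one map coincides verbatim with the image-description of the previous one. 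I would therefore structure the proof as: (1) compute all $y_k$, $x_k$ explicitly; (2) check the Theorem~\ref{Tphilamu} hypotheses for each consecutive pair; (3) invoke Theorem~\ref{TKerIm} to get $\im = \ker$ at each interior node; (4) handle injectivity at the left end via $x_1 < e-j$ and \eqref{EunionQ}.
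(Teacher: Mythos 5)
Your proposal is correct and follows the paper's own argument exactly: the hypothesis $x_1<e-j$ forces $\DStd_e(\la^1)=\Std(\la^1)$ (hence injectivity of the first map, since its kernel is spanned by the non-regular tableaux of $\la^1$), and exactness at each interior term is immediate because Theorem~\ref{TKerIm} identifies both the image of $\phi_{\la^{k+1},\la^k}$ and the kernel of $\phi_{\la^{k+2},\la^{k+1}}$ with $\langle v^\st \mid \st\in\Std(\la^{k+1})\sm\DStd_e(\la^{k+1})\rangle_\Z$. One small correction to your bookkeeping: the closed formula $y_k=(m-k)e-1+j$ contradicts $y_1=me-1+j$, and the successive reflections in $H_m,H_{m-1},\dots$ make the differences $y_k-y_{k+1}$ alternate between $2j$ and $2(e-j)$ rather than being $2(e-j)$ throughout with a single final step of $2j$ --- but this does not affect the structure of the argument.
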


\begin{proof}
The hypothesis ensures that $\DStd_e (\la^1) = \Std(\la^1)$, whence the result follows from Theorems~\ref{Tphilamu} and~\ref{TKerIm}. 
\end{proof}

\section{Partitions with more than two columns}\label{SCounter}

In this section, we outline a natural approach to extending the definition of $\DStd_e (\la)$ to the case when a partition $\la$ has more than two 
 columns and give an example showing that this approach does not always work. For simplicity, we consider algebras over $\Q$ only, though all statements below are true with $\Q$ replaced by any field of characteristic $0$.  

Fix $n\in \Z_{\ge 0}$ and $\la\in\Par(n)$. 
Given $\tt\in \Std(\la)$ and $u\in S^\la_\Q$, we say that $u$ is a {\em $\tt$-element} if 
\[
 u = v^\tt + \; \sum_{\mathclap{\substack{\st \in \Std(\la)\\ \st \triangleright \tt}}} \; a_{\st} v^\st
\]
for some coefficients $a_{\st} \in \Q$. 
It is important to note that, whereas the elements $v^\tt$ depend on certain choices of reduced expressions, the set of all $\tt$-elements on $S^\la_\Q$ does not depend on such choices. This is the case because if one changes the reduced expression for $d(\tt)$, causing $v^\tt$ to be replaced by $v^\tt_1$, then 
$v^\tt_1 = v^\tt +  \sum_{\st \in \Std(\la), \, \st \triangleright \tt} a_{\st} v^\st$ for some coefficients $a_{\st} \in \Q$ 
(by~\cite[Proposition 4.7]{BKW11}). 

Recall from~\S\ref{SSHecke}
the bilinear form $\langle \cdot,\cdot\rangle$ on $S^\la_\Q$ and its radical $\rad S^\la_\Q$.  Note that, by the properties of the form stated after~\eqref{Eform}, for any $\tt,\st\in \Std(\la)$, we have 
$\langle v^\tt, v^\st\rangle=0$ unless $\deg(\tt) + \deg(\st)=0$ and 
$\bi^\tt = \bi^\st$. These facts are used repeatedly in the sequel. 
Define 
\[
 \IStd_e (\la):= \{ \tt \in \Std(\la) \mid \rad S^\la_\Q \text{ contains at least one } \tt\text{-element} \}. 
\]
For each $\tt\in \IStd_e(\la)$, choose a $\tt$-element $w^\tt \in \rad S^\la_\Q$.
 The set $\{ w^\tt \mid \tt \in \IStd_e (\la) \}$ is always linearly independent over $\Q$. 
In the sequel, we call the partition $\la$ {\em $e$-agreeable} if 
$\{ w^\tt \mid \tt \in \IStd_e (\la) \}$ is a basis of 
$\rad S^\la_\Q$ or, equivalently, if $|\IStd_e (\la)|=\dim (\rad S^\la_\Q)$.
Whether or not $\{ w^\tt \mid \tt \in \IStd_e (\la) \}$ spans $\rad S^\la_\Q$ does not depend on the choice of the elements $w^\tt$. 
Note that for each $\tt\in\Std(\la)$ one can choose $w^\tt$ in such a way that $w^\tt$ is homogeneous of degree $\deg_e(\tt)$ and 
$1_{\bi^\tt} w^\tt = w^\tt$.

Whenever $\la$ is $e$-agreeable, it is reasonable to define $\DStd_e (\la)$ as the complement $\Std(\la) \setminus \IStd_e (\la)$. Then 
$\fch \DStd_e (\la) = \fch D^\la_\Q$ thanks to the last observation in the previous paragraph. 
Theorem~\ref{TMain1} shows that, if $\la\in \Par_{\le 2}(n)$, then $\la$ is $e$-agreeable and the definition of $\DStd_e (\la)$ just given agrees with the combinatorial one in~\S\ref{SSTwo}. 

\begin{Remark}
If we replaced the dominance order by an arbitrary {\em total} order on $\Std(\la)$ in the definition of a $\tt$-element, then 
$\{ w^\tt \mid \tt \in \IStd_e (\la) \}$ would automatically be a basis of $\rad S^\la_\Q$ by elementary linear algebra. This construction is discussed in~\cite[\S 3.3]{HM15}. 
\end{Remark}

Hu and Mathas~\cite[Section 6]{HM15} construct a distinguished homogeneous cellular basis 
\[
 \bigsqcup_{\mathclap{\mu\in\Par(n)}} \; \{ B_{\st\tt} \mid \st,\tt\in \Std(\mu) \} 
\]
of $R^{\La_0}_{n,\Q}$, which does not depend on any choices of reduced expressions. This new cellular structure yields a basis 
$\{ B_\tt \mid \tt\in \Std(\la) \}$ of $S^\la_\Q$. It follows from~\cite[Proposition 6.7]{HM15} that $B_{\tt}$ 
is a $\tt$-element for each $\tt\in \Std(\la)$.

A conjecture of Mathas~\cite[Conjecture 4.4.1]{M15} implies, in particular, that for every $\la\in \Par(n)$ there is a subset $\mathscr T_\la$ of $\Std(\la)$ such that 
$\{ B_{\tt} \mid \tt\in \mathscr T_\la\}$ is a basis of $\rad S^\la_\Q$. 
Since each $B_{\tt}$ is a $\tt$-element, this in turn implies (via elementary linear algebra) that every partition $\la$ is $e$-agreeable. 
However, Example~\ref{ExCounter} below shows that (for $e=3$) not all partitions are 
$e$-agreeable.
Hence, there is a counterexample to~\cite[Conjecture 4.4.1]{M15}. 

The following example was discovered using a GAP~\cite{GAP4} program for calculating Gram matrices of Specht modules. The program uses results from~\cite{HM15}, especially a certain seminormal basis, and is available on the second author's website.\footnote{\href{http://web.mat.bham.ac.uk/A.Evseev/publications.html}{http://web.mat.bham.ac.uk/A.Evseev/publications.html}}
Below we give a self-contained and computer-independent verification. 

\begin{Examp}\label{ExCounter} Let $e=3$, $n=8$ and $\la= (4,3,1)$. 
Consider the tuple $\bi = (0,1,2,2,0,1,0,1)\in I^8$. 
The set $\Std(\la,\bi)$ has exactly two elements of degree $2$, namely 
\[
\tt_1 = 
\begin{array}{c}
\begin{ytableau} 1 & 2 & 3 & 5 \\ 4& 7& 8 \\ 6 \end{ytableau}
\end{array}
\quad \text{and} \quad
\tt_2 = 
\begin{array}{c}
\begin{ytableau} 
1 & 2 & 3 & 7 \\ 4 & 5 & 6 \\ 8
\end{ytableau}
\end{array}. 
\]
Further, $\Std(\la,\bi)$ has exactly one element of degree $-2$, namely 
\[
\st= 
\begin{array}{c}
\begin{ytableau} 1& 2 & 4 & 7 \\ 3 & 5 & 8 \\ 6 \end{ytableau}
\end{array}. 
\]
Note that $y^\la= y_3 y_7$. We use the reduced expressions 
\begin{equation*}
d(\tt_1) = s_6 s_7 s_4, \qquad
d(\tt_2) = s_6 s_5 s_4, \qquad
d(\st) = s_6 s_7 s_5 s_3 s_4. 
\end{equation*}
 Using~\eqref{Eform}, we compute 
(cf.~\cite[Example 3.7.9]{M15}):
\begin{align*}
\langle v^{\tt_1}, v^{\st} \rangle v^{\tt^\la} &= 
1_{\bi^\la} y_3 y_7 \psi_4\psi_7 \psi_6^2 \psi_7 \psi_5\psi_3\psi_4  v^{\tt_\la}\\
&= y_3 y_7 \psi_4\psi_7 (y_6-y_7) \psi_7 \psi_5\psi_3\psi_4 
v^{\tt_\la} \\
&= -y_3 y_7 \psi_4 \psi_7 y_7 \psi_7 \psi_5 \psi_3 \psi_4 v^{\tt^\la}\\
&= y_3 y_7 \psi_4 \psi_7 \psi_5 \psi_3 \psi_4 v^{\tt_\la}\\
&= - y_3 \psi_4 \psi_5 \psi_3 \psi_4 v^{\tt^\la} \\
&= \psi_4 \psi_5 \psi_4 v^{\tt^\la} \\
&= (\psi_5 \psi_4 \psi_5 +1) v^{\tt^\la} \\
&= v^{\tt^\la}, 
\end{align*}
where we have repeatedly used the relations~\eqref{rel:Sp1}--\eqref{rel:Sp3}, and moreover we have used~\eqref{rel:quad} for the second equality,~\eqref{rel:ypsi4} and~\eqref{rel:quad} for the third equality,
~\eqref{rel:ypsi1}--\eqref{rel:ypsi4} for the fourth, fifth and sixth equalities and~\eqref{rel:braid} for the seventh equality. 
Further, 
\begin{align*}
\langle v^{\tt_2}, v^\st \rangle v^{\tt^\la} &=  
1_{\bi^\la} y_3 y_7 \psi_4 \psi_5 \psi_6^2 \psi_7 \psi_5\psi_3 \psi_4
v^{\tt^\la} \\
&= 
 y_3 y_7 \psi_4 \psi_5 (y_6-y_7) 
\psi_7 \psi_5\psi_3 \psi_4 v^{\tt^\la}
\\
&=
 y_3 y_7 \psi_4 \psi_5 y_6 
\psi_7 \psi_5 \psi_3 \psi_4 v^{\tt^\la} \\
&= 
 y_3 y_7 \psi_4 \psi_5 \psi_7 \psi_3 \psi_4 v^{\tt^\la}  \\
&= v^{\tt^\la},
\end{align*}
where we have used~\eqref{rel:quad} for the second equality,~\eqref{rel:ypsi4},~\eqref{rel:commpsi} and~\eqref{rel:quad} for the third equality,~\eqref{rel:ypsi1}--\eqref{rel:ypsi4} for the fourth equality, and the final equality repeats the end of the previous computation.

Hence, $\langle v^{\tt_1}, v^\st \rangle =1 = \langle v^{\tt_2}, v^\st \rangle$. 
Therefore, the degree $2$ component of $1_{\bi} (\rad S^\la_\Q)$ is 
1-dimensional and is spanned by $v^{\tt_1} - v^{\tt_2}$.
However, since neither of $\tt_1$ and  $\tt_2$ dominates the other, neither of these two tableaux belongs to $\IStd_e (\la)$. 
Hence, $\la$ is not $e$-agreeable. 
\end{Examp}


\bibliographystyle{amsplain}
\bibliography{MASL_bibliography1}

\providecommand{\bysame}{\leavevmode\hbox to3em{\hrulefill}\thinspace}
\providecommand{\MR}{\relax\ifhmode\unskip\space\fi MR }
\providecommand{\MRhref}[2]{%
  \href{http://www.ams.org/mathscinet-getitem?mr=#1}{#2}
}
\providecommand{\href}[2]{#2}
\begin{thebibliography}{10}

\bibitem{BJS93}
S.~C.~Billey, W.~Jockusch, and R.~P.~Stanley,
  \emph{\href{http://dx.doi.org/10.1023/A:1022419800503}{Some combinatorial
  properties of {Schur} polynomials}}, J.~Algebraic~Combin. \textbf{2} (1993),
  no.~4, 345--374.

\bibitem{bcs15}
C.~Bowman, A.~Cox, and L.~Speyer,
  \emph{\href{http://dx.doi.org/10.1093/imrn/rnw101}{A family of graded
  decomposition numbers for diagrammatic {Cherednik} algebras}}, Int.\ Math.\
  Res.\ Not.\ IMRN \textbf{2017} (2017), no.~9, 2686--2734.

\bibitem{bkisom}
J.~Brundan and A.~Kleshchev,
  \emph{\href{http://dx.doi.org/10.1007/s00222-009-0204-8}{Blocks of cyclotomic
  {Hecke} algebras and {Khovanov}-{Lauda} algebras}}, Invent.\ Math.
  \textbf{178} (2009), no.~3, 451--484.

\bibitem{BK09}
\bysame, \emph{\href{http://dx.doi.org/10.1016/j.aim.2009.06.018}{Graded
  decomposition numbers for cyclotomic {Hecke} algebras}}, Adv.\ Math.
  \textbf{222} (2009), no.~6, 1883--1942.

\bibitem{BKW11}
J.~Brundan, A.~Kleshchev, and W.~Wang,
  \emph{\href{http://dx.doi.org/10.1515/CRELLE.2011.033}{Graded {Specht}
  modules}}, J.\ Reine Angew.\ Math. \textbf{655} (2011), 61--87.

\bibitem{DJ86}
R.~Dipper and G.~James,
  \emph{\href{http://dx.doi.org/10.1112/plms/s3-52.1.20}{Representations of
  {Hecke} algebras of general linear groups}}, Proc.~London Math.~Soc.
  \textbf{52} (1986), no.~1, 20--52.

\bibitem{D98}
S.~Donkin, \emph{\href{http://dx.doi.org/10.1017/CBO9780511600708}{The
  {$q$-Schur} algebra}}, London Math.~Soc.~Lecture Note Ser.~253, Cambridge
  University Press, Cambridge, 1998.

\bibitem{EW13}
B.~Elias and G.~Williamson,
  \emph{\href{https://doi.org/10.1090/ert/481}{{Soergel} calculus}},
  Represent.~Theory \textbf{20} (2016), 295--374.

\bibitem{Erdmann}
K.~Erdmann, \emph{\href{https://doi.org/10.1007/BF02567828}{Tensor products and
  dimensions of simple modules for symmetric groups}}, Manuscripta Math.
  \textbf{88} (1995), 357--386.

\bibitem{fs16}
M.~Fayers and L.~Speyer,
  \emph{\href{https://doi.org/10.1007/s10801-016-0674-x}{Generalised column
  removal for graded homomorphisms between {Specht} modules}}, J.\ Algebraic
  Combin. \textbf{44} (2016), no.~2, 393--432.

\bibitem{GAP4}
The GAP~Group, \emph{\href{http://www.gap-system.org}{GAP -- Groups,
  Algorithms, and Programming, Version 4.8.3}}, 2016.

\bibitem{GW99}
F.~Goodman and H.~Wenzl,
  \emph{\href{https://doi.org/10.1155/S1073792899000136}{Crystal bases of
  quantum affine algebras and affine {K}azhdan-{L}usztig polynomials}},
  Internat.\ Math.\ Res.\ Notices (1999), 251--275.

\bibitem{HM10}
J.~Hu and A.~Mathas,
  \emph{\href{http://dx.doi.org/10.1016/j.aim.2010.03.002}{Graded cellular
  bases for the cyclotomic {Khovanov}--{Lauda}--{Rouquier} algebras of type\
  {$A$}}}, Adv.\ Math. \textbf{225} (2010), no.~2, 598--642.

\bibitem{HM15}
\bysame, \emph{\href{http://dx.doi.org/10.1007/s00208-015-1242-8}{Seminormal
  forms and cyclotomic quiver {Hecke} algebras of type\ {$A$}}}, Math.\ Ann.
  \textbf{364} (2016), no.~3, 1189--1254.

\bibitem{J78}
G.~D. James, \emph{\href{http://dx.doi.org/10.1007/BFb0067708}{The {Representation}
  {Theory} of the {Symmetric} {Groups}}}, Lecture Notes in Mathematics, vol.
  682, Springer, Berlin, 1978.

\bibitem{J84}
\bysame,
  \emph{\href{http://dx.doi.org/10.1017/CBO9780511661921}{Representations of
  {G}eneral {L}inear {G}roups}}, London Mathematical Society Lecture Note
  Series 94, Cambridge University Press, Cambridge, 1984.

\bibitem{K90}
V.~G. Kac,
  \emph{\href{http://dx.doi.org/10.1017/CBO9780511626234}{Infinite-dimensional
  {Lie} algebras}}, third ed., Cambridge University Press, Cambridge, 1990.

\bibitem{KL09}
M.~Khovanov and A.~D. Lauda,
  \emph{\href{http://dx.doi.org/10.1090/S1088-4165-09-00346-X}{A diagrammatic
  approach to categorification of quantum groups {I}}}, Represent.\ Theory
  \textbf{13} (2009), 309--347.

\bibitem{KMR12}
A.~Kleshchev, A.~Mathas, and A.~Ram,
  \emph{\href{http://dx.doi.org/10.1112/plms/pds019}{Universal graded {S}pecht
  modules for cyclotomic {H}ecke algebras}}, Proc.\ London Math.\ Soc.
  \textbf{105} (2012), 1245--1289.

\bibitem{Li14}
G.~Li, \emph{\href{http://dx.doi.org/10.1016/j.jalgebra.2017.02.022}{Integral
  basis theorem of cyclotomic {Khovanov}--{Lauda}--{Rouquier} algebras of type
  {$A$}}}, J.\ Algebra \textbf{482} (2017), 1--101.

\bibitem{L06}
S.~Lyle, \emph{\href{http://dx.doi.org/10.1080/00927870500542713}{Some results
  obtained by application of the {LLT} algorithm}}, Comm.\ Algebra \textbf{34}
  (2006), 1723--1752.

\bibitem{M99}
A.~Mathas, \emph{\href{http://dx.doi.org/10.1090/ulect/015}{Iwahori--{Hecke}
  {Algebras} and {Schur} {Algebras} of the {Symmetric} {Group}}}, University
  Lecture Series 94, American Mathematical Society, Providence, R.I., 1999.

\bibitem{M15}
\bysame, \emph{\href{http://dx.doi.org/10.1142/9789814651813_0005}{Cyclotomic
  quiver {Hecke} algebras of type\ {$A$}}}, Modular representation theory of
  finite and $p$-adic groups (K.~M. Tan and W.~T. Gan, eds.), Lecture Note
  Series, Institute for Mathematical Sciences, National University of
  Singapore, vol.~30, World Scientific, 2015, pp.~165--266.

\bibitem{R08}
R.~Rouquier, \emph{$2$-{Kac}--{Moody} algebras}, preprint,
  \href{http://arxiv.org/abs/0812.5023}{arXiv:0812.5023}, 2008.

\bibitem{S97}
W.~Soergel,
  \emph{\href{http://dx.doi.org/10.1090/S1088-4165-97-00021-6}{{Kazhdan--Lusztig}
  polynomials and a combinatoric for tilting modules}}, Represent.\ Theory
  (1997), 83--114.

\bibitem{VV99}
M.~Varagnolo and E.~Vasserot,
  \emph{\href{http://dx.doi.org/10.1215/S0012-7094-99-10010-X}{On the
  decomposition matrices of the quantized {S}chur algebra}}, Duke Math. J.
  (1999), 267--297.

\end{thebibliography}

\end{document}